% to generalize the sheaf model to type theory, see Lurie's introduction.

\documentclass[a4paper,12pt,reqno,oneside]{amsart} % amsart loads amsmath, amsthm and amsfonts

\usepackage{tikz}
\usetikzlibrary{decorations.pathmorphing}
\usetikzlibrary{cd}

\usepackage{setspace} \setstretch{1.1}
\usepackage{typearea} % European margins
\usepackage{amscd,amssymb,verbatim} % amssymb loads amsfonts
\usepackage%[pdftex]
{graphicx} \setkeys{Gin}{width=\linewidth}
\usepackage{enumitem}
\usepackage{bbold}

\usepackage{xr-hyper}
\usepackage{cite}
\usepackage[colorlinks,backref,final,bookmarksnumbered,bookmarks]{hyperref}
\usepackage[backrefs]{amsrefs}

\usepackage{ifthen}
\newcommand{\arxiv}[2][]{\ifthenelse{\equal{#1}{}}
{\href{http://arxiv.org/abs/#2}{\tt arXiv:#2}}
{\href{http://arxiv.org/abs/math/#2}{\tt arXiv:math.#1/#2}}}

\externaldocument[g1:]{galois1}
\externaldocument[g2:]{galois2}
%\externaldocument[g3:]{galois3}

\usepackage[T1,T2A]{fontenc}
\usepackage[utf8]{inputenc}

\makeatletter
\renewcommand\subsection{\@startsection
{subsection}{2}{0cm} % name, level, indent
{-\baselineskip}     % before-skip
{0.5\baselineskip}   % after-skip
{\sffamily}} % style
\makeatother

\theoremstyle{plain}
\newtheorem{theorem}{Theorem}[section]
\newtheorem{lemma}[theorem]{Lemma}
\newtheorem{corollary}[theorem]{Corollary}
\newtheorem{proposition}[theorem]{Proposition}

\theoremstyle{definition}
\newtheorem{example}[theorem]{Example}
\newtheorem{convention}[theorem]{Convention}

\newtheoremstyle{remark}
{}{}{}{}{\itshape}{}{ }{\thmname{#1}\thmnumber{ \itshape #2.}}
\theoremstyle{remark}
\newtheorem{remark}[theorem]{Remark}

\newtheoremstyle{concise}
{}{}{}{}{\bfseries}{}{ }{\thmnumber{#2.}\thmnote{ #3.}}
\theoremstyle{concise}

\DeclareFontEncoding{LS1}{}{}
\DeclareFontSubstitution{LS1}{stix}{m}{n}
\DeclareFontEncoding{LS2}{}{}
\DeclareFontSubstitution{LS2}{stix}{m}{n}

\DeclareMathVersion{language}
\DeclareMathVersion{metalanguage}

\def\formulas{\mathversion{language}}

\def\metameta{\mathversion{normal}}

\DeclareMathAlphabet{\mf}{OML}{zplm}{m}{it} % pazo's \mathit
\DeclareMathAlphabet{\ts}{OT1}{cmss}{m}{sl} % cm's \mathsfit (no lowercase greek?)

\DeclareMathAlphabet{\fm}{U}{eur}{m}{n} % euler's roman (no slanted version)
\DeclareMathAlphabet{\tr}{OT1}{cmss}{m}{n} % standard (cm's) \mathsf (no lowercase greek?)
\DeclareMathAlphabet{\mm}{OML}{cmm}{m}{it} % usual \mathnormal

\def\FM#1{\[\fm{#1}\]}

\DeclareMathAlphabet{\script}{LS1}{stixscr}{m}{n}
\DeclareMathAlphabet{\mathbbb}{U}{bbold}{m}{n}

\SetSymbolFont{letters}{language}{U}{eur}{m}{n}
\SetSymbolFont{letters}{metalanguage}{OML}{zplm}{m}{it}

%\DeclareMathAlphabet{\fm}{LS1}{stix}{m}{n} % stix's \mathrm%
%\DeclareMathAlphabet{\tr}{LS1}{stixsf}{m}{n} % stix's \mathsf
%\DeclareMathAlphabet{\fm}{OT1}{zplm}{m}{n} % pazo's \mathrm (no greek)
%\DeclareMathAlphabet{\tr}{OT1}{zplm}{b}{n} % pazo's \mathbf (no greek)
%\DeclareMathAlphabet{\tr}{OML}{zplm}{b}{it} % pazo's \mathit
%\DeclareMathAlphabet{\tr}{LS1}{stixsf}{m}{it} % stix's \mathsfit
%\DeclareMathAlphabet{\kurierlmathit}{OML}{kurierl}{m}{it} % kurier light's mathit
%\DeclareMathAlphabet{\kuriermathit}{OML}{kurier}{m}{it} % kurier's mathit
%\DeclareMathAlphabet{\mathsfit}{\encodingdefault}{\sfdefault}{m}{sl} % cm's \mathsfit (no greek)
%\DeclareMathAlphabet{\stixmathsfit}{LS1}{stix}{m}{it} % stix's \mathsfit
%\DeclareMathAlphabet{\cmbrmathit}{OML}{cmbrm}{m}{sl} % cm bright's \mathit
%\DeclareMathAlphabet{\stixmathsfit}{LS1}{stixsf}{m}{it} % stix's \mathsfit
%\DeclareMathAlphabet{\stixmathbfsf}{LS1}{stixsf}{b}{n} % stix's \mathbfsf
%\DeclareMathAlphabet{\stixmathtt}{LS1}{stixtt}{m}{n}
%\DeclareMathAlphabet{\mathbbb}{LS1}{stixbb}{m}{n}

%\DeclareSymbolFont{arrows1-stix}{LS1}{stixsf}{m}{n}
%\DeclareMathSymbol{\rightarrowstix}{\mathrel}{arrows1-stix}{"99}
%\def\too{\rightarrowstix}

%\DeclareSymbolFont{largesymbolsstix}{LS2}{stixex}{m}{n}
%\DeclareMathDelimiter{\lgroup}{\mathopen}{largesymbolsstix}{"DC}{largesymbolsstix}{"6A}
%\DeclareMathDelimiter{\rgroup}{\mathclose}{largesymbolsstix}{"DD}{largesymbolsstix}{"6B}

\newcommand{\newsym}[5]{\fontfamily{#2}\fontencoding{#1}\fontseries{#3}\fontshape{#4}\selectfont\char#5}
\makeatletter
\newcommand{\newmathsymbol}[6]{#1{\@Pimathsymbol{#2}{#3}{#4}{#5}{#6}}}
\def\@Pimathsymbol#1#2#3#4#5{\mathchoice
  {\@Pim@thsymbol{#1}{#2}{#3}{#4}{#5}\tf@size}
  {\@Pim@thsymbol{#1}{#2}{#3}{#4}{#5}\tf@size}
  {\@Pim@thsymbol{#1}{#2}{#3}{#4}{#5}\sf@size}
  {\@Pim@thsymbol{#1}{#2}{#3}{#4}{#5}\ssf@size}}
\def\@Pim@thsymbol#1#2#3#4#5#6{\mbox{\fontsize{#6}{#6}\newsym{#1}{#2}{#3}{#4}{#5}}}
\makeatother

\def\too{\newmathsymbol{\mathrel}{LS1}{stixsf}{m}{n}{"99}}
\newcommand{\Dfrac}[2]{\genfrac{}{}{1.2pt}0{#1}{#2}}

\def\lgroup{\newmathsymbol{\mathopen}{LS2}{stixex}{m}{n}{"DC}}
\def\rgroup{\newmathsymbol{\mathclose}{LS2}{stixex}{m}{n}{"DD}}
\newcommand{\mq}[1]{\lgroup #1\rgroup\,}
\newcommand{\eq}[1]{\left<#1\right>}
\def\mc#1{\mq{\oldstylenums{#1}}}
\def\ec#1{\eq{\oldstylenums{#1}}}
\def\prin{\boldsymbol\cdot\hskip1.5pt}

\def\cltop{\top}
\def\clbot{\bot}
\def\ab{{\mathchoice
  {\mbox{\larger[1]$\times$}}
  {\mbox{\larger[1]$\times$}}
  {\mbox{\larger[-2]$\times$}}
  {\mbox{\larger[-4]$\times$}}
}}
\def\triv{\checkmark}
\def\Bot{\newmathsymbol{\mathord}{U}{cmllr}{m}{n}{13}}
\def\Top{\raisebox{8pt}{\scalebox{1}[-1]{$\Bot$}}}

\def\KK{\hskip1pt\tikz[baseline=0pt,decoration={snake,amplitude=0.6pt,segment length=3pt}]
\draw[decorate] (0pt,-3pt) -- (0pt,9pt);\hskip1pt}

\def\from{\leftarrow}
\def\tofrom{\leftrightarrow}
\def\To{\ \longrightarrow\ }
\def\Tofrom{\ \longleftrightarrow\ }
\def\imp{\Rightarrow}
\def\when{\Leftarrow}
\def\iff{\Leftrightarrow}
\def\Imp{\ \Longrightarrow\ }
\def\Iff{\ \Longleftrightarrow\ }

\def\turnstile{\vdash}
\def\Turnstile{\vDash}

\DeclareMathSymbol{\impord}{\mathord}{symbols}{41}
\DeclareMathSymbol{\mand}{\mathbin}{operators}{`\&}
\DeclareMathOperator{\varmand}{\,\tilde\&\,}
\DeclareMathOperator{\varmetabot}{\,\tilde\metabot\,}
\DeclareMathOperator{\varmor}{\,\tilde\parallel\,}
\newcommand{\vareq}[1]{\widetilde{\left<#1\right>}}

\DeclareMathSymbol{\oc}{\mathord}{operators}{`!}
\DeclareMathSymbol{\wn}{\mathord}{operators}{`?}

\def\ocf{\newmathsymbol{\mathord}{OT1}{cmr}{m}{it}{"21}}
\def\wnf{\newmathsymbol{\mathord}{OT1}{cmr}{m}{it}{"3F}}
\def\metabot{\newmathsymbol{\mathord}{LS2}{stixtt}{m}{n}{"A4}}

%\DeclareMathSymbol{\Gamma}      {\mathalpha}{operators}{"00}
%\DeclareMathSymbol{\Delta}      {\mathalpha}{operators}{"01}
%\DeclareMathSymbol{\Theta}      {\mathalpha}{operators}{"02}
%\DeclareMathSymbol{\Lambda}     {\mathalpha}{operators}{"03}
%\DeclareMathSymbol{\Xi}         {\mathalpha}{operators}{"04}
%\DeclareMathSymbol{\Pi}         {\mathalpha}{operators}{"05}
%\DeclareMathSymbol{\Sigma}      {\mathalpha}{operators}{"06}
%\DeclareMathSymbol{\Upsilon}    {\mathalpha}{operators}{"07}
%\DeclareMathSymbol{\Phi}        {\mathalpha}{operators}{"08}
%\DeclareMathSymbol{\Psi}        {\mathalpha}{operators}{"09}
%\DeclareMathSymbol{\Omega}      {\mathalpha}{operators}{"0A}

\DeclareMathSymbol{\alpha}     {\mathalpha}{letters}{"0B}
\DeclareMathSymbol{\beta}      {\mathalpha}{letters}{"0C}
\DeclareMathSymbol{\gamma}     {\mathalpha}{letters}{"0D}
\DeclareMathSymbol{\delta}     {\mathalpha}{letters}{"0E}
\DeclareMathSymbol{\epsilon}   {\mathalpha}{letters}{"0F}
\DeclareMathSymbol{\zeta}      {\mathalpha}{letters}{"10}
\DeclareMathSymbol{\eta}       {\mathalpha}{letters}{"11}
\DeclareMathSymbol{\theta}     {\mathalpha}{letters}{"12}
\DeclareMathSymbol{\iota}      {\mathalpha}{letters}{"13}
\DeclareMathSymbol{\kappa}     {\mathalpha}{letters}{"14}
\DeclareMathSymbol{\lambda}    {\mathalpha}{letters}{"15}
\DeclareMathSymbol{\mu}        {\mathalpha}{letters}{"16}
\DeclareMathSymbol{\nu}        {\mathalpha}{letters}{"17}
\DeclareMathSymbol{\xi}        {\mathalpha}{letters}{"18}
\DeclareMathSymbol{\pi}        {\mathalpha}{letters}{"19}
\DeclareMathSymbol{\rho}       {\mathalpha}{letters}{"1A}
\DeclareMathSymbol{\sigma}     {\mathalpha}{letters}{"1B}
\DeclareMathSymbol{\tau}       {\mathalpha}{letters}{"1C}
\DeclareMathSymbol{\upsilon}   {\mathalpha}{letters}{"1D}
\DeclareMathSymbol{\phi}       {\mathalpha}{letters}{"1E}
\DeclareMathSymbol{\chi}       {\mathalpha}{letters}{"1F}
\DeclareMathSymbol{\psi}       {\mathalpha}{letters}{"20}
\DeclareMathSymbol{\omega}     {\mathalpha}{letters}{"21}
\DeclareMathSymbol{\varepsilon}{\mathalpha}{letters}{"22}
\DeclareMathSymbol{\vartheta}  {\mathalpha}{letters}{"23}
\DeclareMathSymbol{\varpi}     {\mathalpha}{letters}{"24}
\DeclareMathSymbol{\varrho}    {\mathalpha}{letters}{"25}
\DeclareMathSymbol{\varsigma}  {\mathalpha}{letters}{"26}
\DeclareMathSymbol{\varphi}    {\mathalpha}{letters}{"27}

\DeclareMathSymbol{\bbomega}{\mathalpha}{letters}{"7F}

\def\N{\mathbbb{N}}
\def\R{\mathbbb{R}}
\def\Z{\mathbbb{Z}}
\def\Q{\mathbbb{Q}}
\def\C{\mathbbb{C}}
\def\D{\script{D}}
\def\Ds{\mathcal{D}}
\def\E{\script{E}}
\def\F{\mathcal{F}}
\def\G{\mathcal{G}}
\def\H{\mathcal{H}}

\def\I{\script{i}}
\def\J{\script{j}}
\def\K{\script{k}}
\def\L{\script{L}}

\def\O{\script{O}}

\def\Ru{\mathcal{R}}
\def\Su{\mathcal{S}}
\def\Fm{\script{F}}
\def\Qm{\script{Q}}
\def\Rt{\mathbbb{\omega}}
\def\T{\script{T}}
\def\Th{\mathcal{T}}
\def\0{\mathbbb{0}}
\def\1{\mathbbb{1}}
\def\m{\mathbbb{\mu}}
\def\q{\mathbbb{q}}
\def\iass{\script{u}}
\def\pval{\script{v}}
\def\preass{\script{U}}
\def\preval{\script{V}}

\def\var{\mathbbb{x}}
\def\pr{\mathbbb{p}}
\def\inc{\mathbbb{i}}

\def\x{\times}
\def\but{\setminus}
\def\emb{\hookrightarrow}

\def\eps{\varepsilon}
\def\phi{\varphi}
\def\emptyset{\varnothing}
\def\xr#1{\xrightarrow{#1}}
\def\xl#1{\xleftarrow{#1}}
\renewcommand{\:}{\colon}
\def\normal#1{{\textstyle #1}} % regular parentheses in footnotes

\DeclareMathOperator{\id}{id}
\DeclareMathOperator{\Int}{Int}
\DeclareMathOperator{\Cl}{Cl}
\DeclareMathOperator{\Fr}{Fr}

\DeclareMathOperator{\Hom}{Hom}
\DeclareMathOperator{\Supp}{Supp}
\DeclareMathOperator{\Char}{Char}
\DeclareMathOperator{\Dom}{Dom}
\DeclareMathOperator{\solve}{\wr}

\DeclareMathOperator{\Bew}{Bew}
\DeclareMathOperator{\Prov}{Prov}
\DeclareMathOperator{\Consis}{Consis}

\def\bGamma{\Gamma}
\def\bDelta{\Delta}

\def\bTheta{\Theta}
\def\bP{P}
\def\bQ{Q}
\def\bR{R}

\def\CH{\text{{\rm CH}}}

\def\ZFC{\text{{\rm ZFC}}}

\begin{document}
\title{Mathematical semantics of intuitionistic logic}
\author{Sergey A. Melikhov}
%\date{\today}
\address{Steklov Mathematical Institute of the Russian Academy of Sciences,
ul.\ Gubkina 8, Moscow, 119991 Russia}
\email{melikhov@mi.ras.ru}
\thanks{Supported by Russian Foundation for Basic Research Grant No.\ 15-01-06302}

\begin{abstract}
This work is a mathematician's attempt to understand intuitionistic logic.
It can be read in two ways: as a research paper interspersed with lengthy digressions into rethinking 
of standard material; or as an elementary (but highly unconventional) introduction to first-order 
intuitionistic logic.
For the latter purpose, no training in formal logic is required, but a modest literacy in mathematics, 
such as topological spaces and posets, is assumed.

The main theme of this work is the search for a formal semantics adequate to Kolmogorov's informal 
interpretation of intuitionistic logic (whose simplest part is more or less the same as the so-called
BHK interpretation).
This search goes beyond the usual model theory, based on Tarski's notion of semantic consequence, 
and beyond the usual formalism of first-order logic, based on schemata.
Thus we study formal semantics of a simplified version of Paulson's meta-logic, used in 
the {\tt Isabelle} prover.

By interpreting the meta-logical connectives and quantifiers constructively, we get a generalized
model theory, which covers, in particular, realizability-type interpretations of intuitionistic logic.
On the other hand, by analyzing Kolmogorov's notion of semantic consequence (which is an alternative 
to Tarski's standard notion), we get an alternative model theory.
By using an extension of the meta-logic, we further get a generalized alternative model theory, which
suffices to formalize Kolmogorov's semantics.

On the other hand, we also formulate a modification of Kolmogorov's interpretation, which is compatible
with the usual, Tarski-style model theory.
Namely, it can be formalized by means of sheaf-valued models, which turn out to be a special case of Palmgren's
categorical models; intuitionistic logic is complete with respect to this semantics. 
\end{abstract}

\maketitle
\tableofcontents

\section{Introduction}

\subsection{Formal semantics}

Much of this long treatise is devoted to understanding in rigorous mathematical terms A. Kolmogorov's 
1932 short note ``On the interpretation of intuitionistic logic'' \cite{Kol}.
What the logical community has grasped of this text is contained in the so-called BHK interpretation,
often considered to be ``the intended interpretation'' of intuitionistic logic and very extensively
analyzed in the literature (see \S\S\ref{BHK}--\ref{about-bhk} below). 
However, as observed in the classic textbook by Troelstra and van Dalen \cite{TV}, ``the BHK-interpretation 
in itself has no `explanatory power'\,'' since ``on a very `classical' interpretation of [the informal 
notions of] construction and mapping,'' its six clauses ``justify the principles of two-valued (classical) logic.''
But as discussed in \S\S\ref{confusion}--\ref{Medvedev-Skvortsov} below, this arguably does not apply to 
Kolmogorov's original interpretation.

The problem with Kolmogorov's original interpretation is that it is incompatible with the usual model theory, 
based on Tarski's notion of semantic consequence; worse yet, it is an interpretation of something that 
does not exist in the usual formalism of first-order logic, based on schemata.

Instead of this usual formalism we use a simplified (and, eventually, slightly extended) version of 
L. Paulson's 1989 meta-logic \cite{Pau1}, which is the basis of the {\tt Isabelle} prover (see 
\S\S\ref{formal-intro}--\ref{logics}).
(In fact, it builds not on the standard modern formalism but rather on the early tradition of 
first-order logic, as in the textbooks by Hilbert--Bernays \cite{HB} and Hilbert--Ackermann \cite{HA}.)
The meta-logical connectives and quantifiers can be used to express, for instance, the horizontal bar 
and comma in inference rules (in fact, the entire deductive system of a first-order logic is 
a single meta-formula); derivability of a formula or a rule; implication between principles 
(in the usual sense, which has nothing to do with ordinary implication between formulas or schemata); 
variables fixed in a derivation (in the sense of Kleene), etc. 
In this approach, principles and rules, and even implications between them become finite entities within
the formal language (without any meta-variables --- even in situations where the standard formalism would 
have side conditions such as ``where $x$ is not free in $F$'' or ``where $t$ is free for $x$ in $F(x)$'')
and so one can speak of their interpretation by mathematical objects.

Formal semantics of the meta-logic of first-order logics seems to have never been developed, however; 
thus we have to undertake this task (see \S\S\ref{models0}--\ref{classical models} and
\S\ref{meta-semantics}).
In usual model theory, regarded as a particular semantics of the meta-logic, the meta-logical connectives and 
quantifiers are interpreted classically (and moreover in the two-valued way) --- even though they are constructive.
By interpreting them constructively, we get a generalized model theory 
(see \S\ref{generalized Tarski-style}).
It suffices to formalize, in particular, realizability-type interpretations of intuitionistic logic,
which do not fit in the usual model theory (see \S\ref{uniform realizability}).
However, to formalize Kolmogorov's interpretation of inference rules (which is a substantial part of 
his interpretation of intuitionistic logic, and seems to be uniquely possible in his approach), we need 
an alternative generalized model theory (see \S\ref{Frege-style2}).
It is nothing new for formulas and principles, but an entirely different interpretation of rules 
(and more complex meta-logical constructs) already in the case where the meta-logical connectives 
and quantifiers have a classical, two-valued interpretation (see \S\ref{Frege-style} concerning this case). 

We also formulate an alternative interpretation of intuitionistic logic, called ``modified 
BHK-interpretation'', which is compatible with the usual, Tarski-style model theory.
A specific class of usual models of intuitionistic logic that fully meets the expectations
of the modified BHK-interpretation is described.
These are sheaf-valued, or ``proof-relevant topological'' models (\S\ref{sheaves}) of intuitionistic logic
--- not to be confused with the usual open set-valued ``sheaf models'' (see \cite{AB00} and references there).
As one could expect, sheaf-valued models turned out to be a special case of something well-known: 
the ``categorical semantics'' (cf.\ \cite{LS}), and more specifically Palmgren's models of intuitionistic 
logic in locally cartesian closed categories with finite sums \cite{Pa1}.
But unexpectedly, this very simple and natural special case with obvious relevance for the informal semantics 
of intuitionistic logic does not seem to have been considered per se (apart from what amounts to 
the $\land,\to$ fragment \cite{Aw00}).
We prove completeness of intuitionistic logic with respect to sheaf-valued models by showing that in
a certain rather special case (sheaves over zero-dimensional separable metrizable spaces) they interpret
the usual topological models over the same space.
Normally (e.g.\ for sheaves over Euclidean spaces or Alexandroff spaces) this is far from being so.
On the other hand, our sheaf-valued models also interpret Medvedev--Skvortsov models
(\S\ref{Medvedev-Skvortsov}).

\subsection{Informal semantics}

We adopt and develop Kolmogorov's understanding of intuitionistic logic as the logic of schemes of
solutions of mathematical problems (understood as tasks).
In this approach, intuitionistic logic is viewed as an extension package that upgrades classical logic without
removing it --- in contrast to the standard conception of Brouwer and Heyting, which regards intuitionistic logic
as an alternative to classical logic that criminalizes some of its principles.
The main purpose of the upgrade comes, for us, from Hilbert's idea of equivalence between proofs of a given theorem,
and from the intuition of this equivalence relation as capable of being nontrivial.
This idea of ``proof relevance'' amounts to a categorification; thus, in particular, our sheaf-valued semantics
of intuitionistic logic can be viewed as a categorification of the familiar Leibniz--Euler--Venn subset-valued
semantics of classical logic.

Traditional ways to understand intuitionistic logic (semantically) have been rooted either
in philosophy --- with emphasis on the development of knowledge (Brouwer, Heyting, Kripke)
or in computer science --- with emphasis on effective operations (Kleene, Markov, Curry--Howard).
Kolmogorov's approach emphasizes the right order of quantifiers, and
is rooted in the mathematical tradition, dating back to the Antiquity, of solving of problems
(such as geometric construction problems) and in the mathematical idea of a method (of solution or proof).
The approach of Kolmogorov has enjoyed some limited appreciation in Europe in the 1930s,
and had a deeper and more lasting impact on the Moscow school of logic%
\footnote{In the introduction to her survey of logic in the USSR up to 1957, S. A. Yanovskaya emphasizes
``The difference of the viewpoints of the `Moscow' school of students and followers of P.~S.~Novikov and
A.~N.~Kolmogorov and the `Leningrad' school of students of A.~A.~Markov on the issue of the meaning of
non-constructive objects and methods'', where the former viewpoint ``consists in allowing in one's work
not only constructive, but also classical methods of mathematics and mathematical logic'' \cite{Ya}.
In the West, the principles of the `Leningrad' school are better known as ``Russian constructivism''
(cf.\ \cite{TV}).}
--- but nowadays is widely seen as a mere historical curiosity.
One important reason for that%
\footnote{Besides the lack of an English translation of Kolmogorov's paper \cite{Kol} until the 1990s,
and the lack of a word in English to express the notion of a ``mathematical task''
(German: Aufgabe, Russian: задача) as distinguished from the notion of an ``open problem''
(German: Problem, Russian: проблема).
Let us note, in particular, that Kolmogorov himself, when writing in French, took care to mention
the German word and so spoke of ``probl\`emes (Aufgaben)'' \cite{Kol2}.}
could have been the lack of a sufficiently clear formulation and of any comprehensive exposition
of intuitionistic logic based on this approach.
The present treatise attempts to remedy this deficiency.

Our introduction to some basic themes of intuitionistic logic (\S\ref{topology}) is in the language of its usual 
topological models, which were discovered independently by Stone, Tang and Tarski in the 1930s but largely 
went out of fashion since the 60s.
(However, in the zero-order case these models include, in disguise, Kripke models, which have had far better 
luck with fashion.)
We present a motivation of topological models through a model of a classical first-order theory extracted from 
the clauses of the BHK interpretation (\S\ref{weak BHK}).

\subsection*{Disclaimers}
(i) This unconventional introduction to intuitionistic logic should suffice for the purposes of
the author's papers \cite{M1}, \cite{M2}, \cite{M3}, but as such it does not pretend to be complete or
even finished.
It is hoped that a future version will cover certain additional topics and will be accompanied by
an unconventional introduction to homotopy type theory.

(ii) Most translations quoted in this paper have been edited by the present author in order to improve
syntactic and semantic fidelity.
When emphasis is present in quoted text, it is always original.

\subsection*{Acknowledgements}
I am indebted to A. Bauer, L. Beklemishev, M. Bezem, K. Do\v sen, and N. Polyakov
for important remarks, corrections, and improving my understanding of some basics.
I'm also grateful to T. Coquand, D. Grayson, H. Gylterud, S. Kuznetsov, E. V. Melikhova, K. Mineshima, 
F. Pakhomov, D. Saveliev, G. Schlaepfer, O. K. Sheinman, V. B. Shehtman, D. Skvortsov and 
G. Tolokonnikov for stimulating discussions and useful remarks.
A part of this work was carried out while enjoying the hospitality and stimulating atmosphere of
the Institute for Advanced Study and its Univalent Foundations program in Spring 2013.
There I gradually realized that in order to achieve my goals related to Univalent Foundations I first need
a better understanding of intuitionistic logic; and O. Viro further convinced me that in order
to truly understand the latter, I have no choice but to recreate it in a way more compatible with
the mentality of a ``working mathematician''.
I'm also grateful D. Botin and D. Cenzer, who taught me some bits of logic many years ago.

\section{Motivation: homotopies between proofs}\label{motivation}

\subsection{Hilbert's 24th Problem}
In 1900, Hilbert published (and partly presented in his lecture at the
International Congress of Mathematicians in Paris) a list of 23 open
problems in mathematics \cite{Hi1}.
Shortly before the 100th anniversary of the list, the historian
R\"udiger Thiele discovered a would-be 24th problem in
Hilbert's notebooks \cite{Th} (for a photocopy see \cite{Ko}
and \cite{Th2}).
We don't know why Hilbert refrained from publicizing it, but one
obvious possibility is that he simply did not see how to state it
precisely enough.

\smallskip
\begin{center}
\parbox{14.7cm}{\small
``The 24th problem in my Paris lecture was to be: Criteria of simplicity,
or proof of the greatest simplicity of certain proofs. Develop a theory
of the method of proof in mathematics in general. Under a given set of
conditions there can be but one simplest proof. Quite generally, if
there are two proofs for a theorem, you must keep going until you have
derived each from the other, or until it becomes quite evident what
variant conditions (and aids) have been used in the two proofs. Given
two routes, it is not right to take either of these two or to look for
a third; it is necessary to investigate the area lying between the two
routes. Attempts at judging the simplicity of a proof are in my
examination of syzygies and syzygies between syzygies (see
[\cite{Hi}], Lectures XXXII--XXXIX).
The use or the knowledge of a syzygy simplifies in an essential way
a proof that a certain identity is true. Because any process of
addition [is] an application of the commutative law of addition etc.
[and because] this always corresponds to geometric theorems or logical
conclusions, one can count these [processes], and, for instance, in
proving certain theorems of elementary geometry (the Pythagoras theorem,
[theorems] on remarkable points of triangles), one can very well decide
which of the proofs is the simplest.''\footnotemark
}
\end{center}
\smallskip
\footnotetext{Translation by Thiele, who also comments that ``a part
of the last sentence is not only barely legible in Hilbert's notebook
but also grammatically incorrect'', and points out that ``corrections
and insertions that Hilbert made in this entry show that he wrote down
the problem in haste''.}

A rather narrow understanding of the problem, as merely asking for
``a criterion of simplicity in mathematical proofs and the development
of a proof theory with the power to prove that a given proof is
the simplest possible''\footnote{Quoted from Wikipedia's
``Hilbert's 24th problem'' (English) as of 11/11/11} has became
widespread in the literature (see e.g.\ \cite{BV}*{p.\ 38},
\cite{HR}, \cite{Th}).
However, Hilbert's original text above is unambiguously posing a totally
different problem as well (in the two sentences starting with ``Quite
generally''): to develop a proof theory that can tell whether
two given proofs of a theorem are ``essentially same'', or
{\it homotopic}, in the sense that one can {\it derive each from
the other} (whatever that means).
It is this aspect of the 24th problem that we shall be concerned with.

Moreover, Hilbert's reference to syzygies between syzygies is obviously%
\footnote{Let us also note the connection between (higher) syzygies
and (higher) Martin-L\"of identity types.
In Voevodsky's model of Martin-L\"of's type theory, types are
interpreted as Kan complexes (see \cite{HoTT}), so one kind of a type
is an abelian simplicial group, which by the Dold--Kan correspondence
(see \cite{GJ}) can be identified with a pointed chain complex $C$ of
abelian groups:
$b\in C_{-1}\xl{\partial_0} C_0\xl{\partial_1} C_1\xl{\partial_2}
C_2\xl{\partial_3}\dots$.
Write $t:C$ to mean $t\in\partial_0^{-1}(b)$, and $t=u:C$ to mean
that $t:C$ and $u:C$ are homologous relative to
$\partial_0(t)=\partial_0(u)$, that is, $t-u\in\partial_1(C_1)$.
Then the identity type $Id_C(t,u)$ will be the following pointed chain
complex:
$t-u\in C_0\xl{\partial_1}C_1\xl{\partial_2}
C_2\xl{\partial_3}\dots$.}%
\,\footnote{Perhaps we should also note the connection between (higher)
identity types and (higher) ``homotopical syzygies'' (see \cite{Lo}),
which goes back to J. H. C. Whitehead's classical work on identities among
relations (see \cite{2DH}).
The role of the Dold--Kan correspondence is played here by the
not so well understood connection (see \cite{MP}) between extended group
presentations of \cite{Br}, where higher relators correspond to higher
cells of a CW complex, and free simplicial groups, such as the Kan loop
group (see \cite{GJ}) of a simplicial set.}
provoking one to think of a hypothetical space of proofs of a given
theorem and of whether it should be acyclic or contractible
(so that proofs can be simplified in a canonical way) under a given set
of conditions.

%[Simplest proof = Normalized term?]

Certainly, Hilbert was not alone in being concerned with (informal)
questions of this kind.
For instance, a sample of attempts by mathematicians other than
proof theorists at judging ``essential sameness'' of certain proofs
(of the irrationality of $\sqrt 2$ and of Gauss' quadratic
reciprocity law) can be found in T. Gowers' blog \cite{Go}.
A more systematic study of the question of identity of proofs
has taken place in intuitionistic proof theory, starting in the 70s
(Prawitz \cite{Pr}, Kreisel \cite{Kr}, Martin-L\"of \cite{ML1};
see a brief survey in \cite{D1}*{\S2}) and continued in the
context of higher categories and higher $\lambda$-calculus
\cite{LS}, \cite{Se}, \cite{Hel}, \cite{Hil}.
The more recent approach of homotopy type theory \cite{HoTT} is to
a large extent a development of this proof-theoretic tradition.
We should also point out some very different approaches to somewhat
similar questions, which cannot be discussed here in detail:
\cite{La} (and references there), \cite{Sa}, \cite{CT}, \cite{BV},
\cite{Mos}*{\S8.2}, \cite{BDG}.

So how to judge if two proofs of a theorem are ``essentially same''?
Hilbert's idea that there might be the {\it simplest} proof of
a theorem $\Theta$ and his mention of the {\it use} of variant
conditions (and aids) in proving $\Theta$ establish a context where
it seems natural to identify a proof $p$ of $\Theta$ with every its
{\it simplification}, that is, a proof of $\Theta$ obtained from $p$
by crossing out some redundant steps.
Of course, the notion of a ``redundant step'' (or a step that is not
``essentially used'' in the proof) needs an accurate definition;
we will return to this issue later.

On the other hand, Hilbert's words (``keep {\it going} until you
have {\it derived} each from the other'') may seem to hint at a formal
theory of proofs containing a number of specified elementary
transformations between proofs of the same assertion.
In this case the relation of {\it homotopy} between proofs of the same
assertion can be defined as the equivalence relation generated by
elementary transformations.

Such formal theories endowed with elementary transformations between
proofs are best known in the case of constructive proofs
(in particular, the natural deduction system of Gentzen--Prawitz
and the corresponding $\lambda$-calculus, see \cite{Gi}).
Before turning to constructive proofs, let us review the situation
with usual, non-constructive ones.

\subsection{The collapse of non-constructive proofs}\label{Lafont}
A standard theory of derivations based on classical logic is
Gentzen's sequent calculus \cite{Ge} (see \cite{Gi}).
In particular, the elementary inferences

\medskip
($Cut$) from $A\lor C$ and $C\to B$ infer $A\lor B$;

\smallskip
($C_R$) from $A\lor A$ infer $A$;

\smallskip
($W_{R1}$) from $A$ infer $A\lor B$;

\smallskip
($W_{R2}$) from $A$ infer $B\lor A$;

\smallskip
($W_L$) from $B$ infer $C\to B$

\medskip

\noindent
can be interpreted as instances of the inference rules of sequent
calculus.%
\footnote{Namely, ($Cut$) can be interpreted as an instance of
the cut rule, ($C_R$) as an instance of the right contraction rule,
($W_{R1}$) and ($W_L$) as instances of the right and left weakening
rules, and ($W_{R2}$) as ($W_{R1}$) followed by an instance of
the exchange rule.}
A number of elementary transformations between derivations of the
same proposition in sequent calculus can be found in the proof of
Gentzen's Hauptsatz on cut elimination (see \cite{Gi}, \cite{Ge}).
These {\it $\Gamma$-transformations} include, in particular,
the following ($\Gamma_L$) and ($\Gamma_R$).

\medskip
($\Gamma_L$) Suppose that $q$ is a proof of $C\to B$. Then the following
fragment of a proof:

\begin{itemize}
\item $A$ implies $A\lor C$ by ($W_{R1}$);
\item $C\to B$ by $q$;
\item hence $A\lor B$ by ($Cut$)
\end{itemize}

transforms into

\begin{itemize}
\item $A$ implies $A\lor B$ by ($W_{R1}$).
\end{itemize}
\medskip

\noindent
We note that ($\Gamma_L$) can be seen as a simplifying transformation:
it eliminates the proof $q$ of the redundant lemma $C\to B$ at the cost
of replacing one weakening ($A$ implies $A\lor C$) by another
($A$ implies $A\lor B$).

\medskip
($\Gamma_R$) Suppose that $p$ is a proof of $A\lor C$. Then the following
fragment of a proof:

\begin{itemize}
\item $B$ implies $C\to B$ by ($W_L$);
\item $A\lor C$ holds by $p$;
\item hence $A\lor B$ by ($Cut$)
\end{itemize}

transforms into

\begin{itemize}
\item $B$ implies $A\lor B$ by ($W_{R2}$).
\end{itemize}
\medskip

\noindent
The symmetry between ($\Gamma_L$) and ($\Gamma_R$) is highlighted
in Gentzen's original notation \cite{Ge}*{III.3.113.1-2}.
We can see it explicitly by rewriting $C\to B$ and $A\lor C$
respectively as $B\lor\neg C$ and $\neg C\to A$, using tautologies
of classical logic.

Let us also consider two additional {\it $\Lambda$-transformations}:

\medskip
($\Lambda_1$), ($\Lambda_2$): The following fragment of a proof can
be eliminated:
\begin{itemize}
\item $A$ implies $A\lor A$ by ($W_{R1}$) or by ($W_{R2}$);
\item $A\lor A$ implies $A$ by ($C_R$).
\end{itemize}

\begin{theorem}[Lafont \cite{Gi}*{Appendix B.1}; see also
\cite{D1}*{\S7}]
All proofs of any given theorem are equivalent modulo $\Gamma$- and
$\Lambda$-transformations.
\end{theorem}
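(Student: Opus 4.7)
The plan is to exhibit, for any two given proofs $p_1,p_2$ of a theorem $\Theta$, a single ``hybrid'' proof $P$ of $\Theta$ that incorporates both $p_1$ and $p_2$ via a cut, and which reduces to $p_1$ under one choice of transformations and to $p_2$ under another. Since $\Gamma$- and $\Lambda$-transformations generate an equivalence relation, this forces $p_1$ and $p_2$ to be equivalent. The symmetric roles of $W_{R1}$ versus $W_{R2}$ in ($\Gamma_L$) and ($\Gamma_R$) — which Gentzen's notation highlights, as the text already notes — is precisely what makes this work.

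Concretely, I would take $A=B=C=\Theta$ in the schemas of ($\Gamma_L$) and ($\Gamma_R$), and build the following proof $P$ of $\Theta$: derive $\Theta$ by $p_1$; weaken to $\Theta\lor\Theta$ by ($W_{R1}$); separately, derive $\Theta$ by $p_2$ and weaken to $\Theta\to\Theta$ by ($W_L$); apply ($Cut$) to obtain $\Theta\lor\Theta$; finally, apply ($C_R$) to conclude $\Theta$. This $P$ is deliberately set up so that the cut step fits the left-hand pattern of both ($\Gamma_L$) and ($\Gamma_R$) simultaneously: the proof of $A\lor C=\Theta\lor\Theta$ ends with ($W_{R1}$) applied to a proof of $A=\Theta$, and the proof of $C\to B=\Theta\to\Theta$ ends with ($W_L$) applied to a proof of $B=\Theta$.

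Applying ($\Gamma_L$) to $P$ deletes the sub-derivation built from $p_2$ and collapses the cut, leaving the proof ``$\Theta$ by $p_1$; $\Theta\lor\Theta$ by ($W_{R1}$); $\Theta$ by ($C_R$)''; the trailing weakening--contraction pair is then eliminated by ($\Lambda_1$), yielding exactly $p_1$. Dually, applying ($\Gamma_R$) to the same $P$ deletes the sub-derivation built from $p_1$ and yields ``$\Theta$ by $p_2$; $\Theta\lor\Theta$ by ($W_{R2}$); $\Theta$ by ($C_R$)'', which reduces to $p_2$ via ($\Lambda_2$). Thus $p_1\sim P\sim p_2$.

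I expect the genuinely substantive point — rather than a real obstacle — to be checking that the same occurrence of ($Cut$) in $P$ is indeed a legitimate redex for both ($\Gamma_L$) and ($\Gamma_R$); this is a matter of observing that the shapes of the immediate sub-derivations feeding the cut match the left-hand sides of both transformation schemas. Everything else (the final applications of $\Lambda_1$ and $\Lambda_2$, the transitivity of the equivalence) is formal. The construction makes transparent the underlying collapse: it is the presence of the contraction rule ($C_R$) together with both weakenings ($W_{R1}$, $W_{R2}$) on the right — a hallmark of classical as opposed to intuitionistic sequent calculus — that allows the same cut to be ``resolved towards'' either premise.
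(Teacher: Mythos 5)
Your construction is exactly the paper's: you build the hybrid proof $(p_1+W_{R1})+(p_2+W_L)+Cut+C_R$ and reduce it to $p_1$ via $\Gamma_L,\Lambda_1$ and to $p_2$ via $\Gamma_R,\Lambda_2$, the only (immaterial) difference being that you fix the auxiliary formula $C$ to be $\Theta$ itself where the paper leaves it as an arbitrary $\Phi$. The argument is correct and essentially identical to Lafont's proof as presented in the paper.
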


Lafont's own summary of his result is that ``classical logic is
inconsistent, not from the logical viewpoint ([falsity] is not
provable) but from an algorithmic one.''

\begin{proof}[Proof]
Let $p$ and $q$ be proofs of a theorem $\Theta$.
They can be augmented to yield a proof $p+W_{R1}$ of
$\Theta\lor\Phi$, where $\Phi$ is some fixed assertion, and a proof
$q+W_L$ of the implication $\Phi\to\Theta$.
Combining the latter two proofs, we obtain a proof
$(p+W_{R1})+(q+W_L)+Cut$ of the assertion $\Theta\lor\Theta$.
Hence we get a third proof, $(p+W_{R1})+(q+W_L)+Cut+C_R$, of $\Theta$,
which we will denote $p\oplus q$ for brevity.%
\footnote{A version of this $\oplus$ is also found in Artemov's Logic of Proofs (see \cite{JdeJ}).}

It remains to show that $p\oplus q$ can be reduced to $p$, as well as
to $q$ by $\Gamma$- and $\Lambda$-transformations.
Indeed, $\Gamma_L$ reduces $p\oplus q$ to the proof $p+W_{R1}+C_R$ of
$\Theta$ (which augments $p$ by saying that $\Theta$ implies
$\Theta\vee\Theta$, which in turn implies $\Theta$).
The latter proof is reduced to $p$ by $\Lambda_1$.
Similarly, $\Gamma_R$ reduces $p\oplus q$ to the proof $q+W_{R2}+C_R$
of $\Theta$, which in turn is reduced to $q$ by $\Lambda_2$.
\end{proof}

It should be noted that, despite Lafont's theorem, some authors have been able to
find inequivalent proofs in the framework of classical logic.
See, in particular, Do\v sen \cite{D1}
(see details in \cite{DP1}; see also \cite{DP} and references there), Kuznets \cite{Kuz} and
references there; and Guglielmi \cite{Gu1}, \cite{Gu2} (see also \cite{Gui} and references there)

\begin{remark}
One issue with $\Gamma$-transformations is that ($\Gamma_L$)
does not merely eliminate $q$, but only does so at the expense
of changing a weakening.
So it is not immediately clear that the lemma $C\to B$ is redundant in
the initial proof of ($\Gamma_L$), in the strict sense of not being
used at all in that proof.

To address this concern, let us revisit Lafont's construction.%
\footnote{What follows is only an informal discussion, perhaps
raising more issues than it attempts to settle.}
Recall that we were free to choose any assertion as $\Phi$.
We can set $\Phi$, for instance, to be the negation $\neg\Theta$
(any weaker assertion, such as $\Theta\lor\neg\Theta$,
would also do).
Then $\Theta\lor\Phi$ holds in classical logic regardless
of validity of $\Theta$.
Therefore the step $W_{R1}$ of the proof $p\oplus q$ replaces
the proved theorem $\Theta$ with the tautology $\Theta\lor\Phi$.
The effect of this step is that we forget all that we have
learned from the proof $p$.
Thus $p\oplus q$ does not essentially use $p$, so $p$ must be redundant
even in the stricter sense.

On the other hand, we could take $\Phi$ to be $\Theta$ itself
(any stronger assertion, for instance
$\Theta\land\neg\Theta$, would also do).
Then the step $W_L$ replaces the proved theorem $\Theta$ by
the tautology $\Phi\to\Theta$.
The effect of this step is that we forget all that we have
learned from $q$, and we similarly conclude that $q$ must be
redundant.
However, it appears that the meaning of the proof $p\oplus q$ should not
vary depending on the strength of $\Phi$, since for the purposes of
this proof we did not need to know anything at all about $\Phi$.

Our conclusion, that neither $p$ nor $q$ is ``essentially
used'' in $p\oplus q$, shows that the notion of a proof ``essentially
using'' its part cannot be sensibly formalized in the framework of
classical logic.
It turns out, however, that this notion can be consistently defined
once we abandon classical logic in favor of intuitionistic.
\end{remark}

\subsection{About constructive proofs}
In contrast to classical logic, where one can sometimes prove that a certain object exists without giving 
its explicit construction, proofs in intuitionistic logic can be interpreted, at least very roughly,
as algorithms (the Curry--Howard correspondence, see \cite{Gi}, \cite{SU}).
For example, if $S$ is a set, an intuitionistic proof of the assertion ``$S$ is nonempty'' can be roughly 
thought of as a program (executed on a Turing machine, say) that returns a specific element of $S$.
(These issues will be discussed in more detail in \S\ref{PEM} and \S\ref{uniform realizability}.)

In writing a computer program, it is not enough to know that some
subsidiary computation is possible in principle; for the program
to run, every step must be actually implemented.
In fact, the value of every particular variable can be chased through
all dependent subroutines using a debugger.
This is in sharp contrast with classical mathematics, where the
particular way that a lemma is proved has absolutely no effect on
the proof of the theorem modulo that lemma.
In intuitionistic logic, there can be no dependence between statements
other than a dependence between their specific proofs.
%In fact, one can think of a proposition in intuitionistic logic as
%being identified with the set of its proofs.

With the above in mind, although $\Theta\lor\Phi$ is intuitionistically
a tautology as long $\Phi$ is (for instance, $\Phi$ can be taken
to be $\Theta\to\Theta$),
the intuitionistic step $W_{R1}$, replacing the proved theorem $\Theta$
with the tautology $\Theta\lor\Phi$ does not have the effect of
forgetting how $\Theta$ was proved.
On the contrary, it has the effect of specifying that $\Theta\lor\Phi$
is to be proved ``via $\Theta$'' and not ``via $\Phi$'', and extending
a given proof of $\Theta$ to a specific proof of $\Theta\lor\Phi$.
Since the latter proof does not depend on any proof of $\Phi$, the
intuitionistic step $Cut$ promotes it to a proof of $\Theta\lor\Theta$
that does not depend on any proof assuming $\Phi$ as a premise.

It follows that if $p$ and $q$ are constructive, then $p\oplus q$,
understood constructively, essentially uses $p$ and does not essentially
use $q$.
In agreement with this, the intuitionistic version of the sequent
calculus (see \cite{Gi}) admits only asymmetric interpretations of
$p\oplus q$, which can be reduced by the intuitionistic analogues
of $\Gamma$- and $\Lambda$-transformations to $p$, but not to $q$.

\section{What is intuitionistic logic?}\label{intro}

\subsection{Semantics of classical logic}\label{Tarski truth}
In classical logic, the meaning of propositions is determined by
answering the question: {\it When is a given proposition true?}
A standard answer to this question is given by Tarski's definition
of truth.
The definition is by induction on the number of logical connectives
and quantifiers:
\begin{itemize}
\item which primitive propositions are true is assumed to be known
from context.
\end{itemize}
For instance, in the context of arithmetic, a proposition of
the form $t=s$ (with $t$ and $s$ being arithmetical expressions) is true
whenever $t$ and $s$ rewrite to the same numeral.
\begin{itemize}
\item $\bP\lor \bQ$ is true if $\bP$ is true or $\bQ$ is true;
\item $\neg \bP$ is true if $\bP$ is not true;
\item $\exists x\, \bR(x)$ is true if $\bR(x_0)$ is true for some $x_0\in\D$.
\end{itemize}
Here $\bP$, $\bQ$ are propositions and $\bR$ is a unary predicate, i.e., a proposition with one 
parameter.
In first-order logic, all parameters are understood to run over a fixed set $\D$, which is called 
the {\it domain of discourse}.
Thus $\bR(x_0)$ is a proposition for any $x_0\in\D$

The other classical connectives and quantifiers $\top$, $\bot$ $\land$, $\to$, $\forall$ do not need
to be mentioned since they are expressible in terms of $\lor$, $\neg$ and $\exists$.
For the record, we get for them, as a consequence of the above:
\begin{itemize}
\item $\top$ is true;
\item $\bot$ is false;
\item $\bP\land \bQ$ is true if $\bP$ is true and $\bQ$ is true;
\item $\bP\to \bQ$ is true if $\bQ$ is true or $\bP$ is not true;
\item $\forall x\, \bR(x)$ is true if $\bR(x_0)$ is true for all $x_0\in\D$.
\end{itemize}
We can also understand the latter six clauses the other way round, as an
``explanation'' of classical connectives and quantifiers in terms of truth of propositions.

\begin{remark}
In this chapter we always assume our meta-logic to be classical; that is, when discussing intuitionistic 
or any other logic, we always mean {\it this discussion} to take place in classical logic.
It the next chapter, some of that discussion will be formalized (in fact, more of it than in any standard
text on first-order logic), so we will speak of a formal meta-logic, which in fact will turn out to be
not exactly classical (but for many purposes it can assumed to be classical); but then 
the meta-meta-logic, in which all discussions of that formal meta-logic take place, will still 
be classical.
\end{remark}

Intuitionistic logic can be approached in several ways, of which we will discuss the two earliest ones:
the Brouwer--Heyting approach (briefly), and Kolmogorov's approach (in more detail).

\subsection{Platonism vs.\ Verificationism}\label{truth}
The tradition initiated by Brouwer in 1907 and continued by his school in Amsterdam (Heyting,
Troelstra, et al.) and elsewhere till the present day --- perhaps more successfully in computer
science than in mathematics --- presupposes a deep rethink (along the lines of Logical Positivism and
prior development of verificationist thinking; see Martin-L\"of \cite{ML12}, \cite{ML87}*{pp.\ 414--416}),
of the conventional understanding of what mathematics is and what it is not.
In the words of Troelstra and van Dalen \cite{TV}*{p.\ 4}:

\smallskip
\begin{center}
\parbox{14.7cm}{\small
``It does not make sense to think of truth or falsity of a mathematical
statement independently of our knowledge concerning the statement.
A statement is {\it true} if we have [a constructive] proof of it, and
{\it false} if we can show that the assumption that there is
a [constructive] proof for the statement leads to a contradiction.
For an arbitrary statement we can therefore not assert that it is
either true or false.''
}
\end{center}
\medskip

Thus, in the tradition of Brouwer and Heyting, propositions are considered
to have a different --- intuitionistic --- meaning, which is determined by
answering the question: {\it When do we know a constructive proof of
a given proposition?}
We will return to this question in \S\ref{BHK}, once it will have been
restated in \S\ref{Kolmogorov} so as to avoid the terminological clash with
the classical meaning of propositions.

Whether the classical meaning of propositions ``makes sense'' or not is a philosophical question
which we will try to avoid --- so as to focus on answering mathematical questions.
But at the very least, the author is determined to keep good relations with the 99\%
of mathematicians who are convinced that the classical meaning of propositions does
make sense; and so considers the {\it terminology} of Brouwer and Heyting, which is in
direct conflict with the convictions of the 99\% of mathematicians, to be unfortunate.
Indeed, a useful terminology would provoke mathematical, and not philosophical questions.

The depth of the rethink of mathematics promoted by the battlefield terminology of Brouwer
and Heyting should not be underestimated.
In the above quote, we are asked to abstain not only from using the principle of excluded middle
(in the form asserting that every statement is either true or false), but also from
understanding mathematical objects with the customary mental aid of Platonism (i.e.\ Plato's
vision of mathematical objects as ideal entities existing independently
of our knowledge about them).
To be sure, Heyting has stated it boldly enough \cite{He1}:
\smallskip
\begin{center}
\parbox{14.7cm}{\small
``Faith in transcendental existence, unsupported by concepts, must be rejected as a means of mathematical
proof, [... which] is the reason for doubting the law of excluded middle.''
}
\end{center}
\medskip

The present author is no proponent of Platonism, but, on the contrary, feels that properly understood
mathematics should not depend on any particular philosophy (for reasons related to Occam's Razor).
Heyting certainly has a point in that when we mathematicians speak of ``existence'' of mathematical
objects and ``truth'' of mathematical theorems, it is not at all clear what we mean by this;
the implicit hypothesis that this talk ``makes sense'' at all can certainly be considered as
a religious dogma.
On the other hand, basic concepts of Intuitionism such as ``construction'', ``proof'' and
``contradiction'' are just as obscure as ``existence'' and ``truth''; and the intuitionists'
conviction that these concepts (as well as their ``explanation'' by clauses of the BHK interpretation,
cf.\ \S\ref{BHK} below) ``make sense'' is certainly no less of a religious dogma.

Thus the claims of greater meaningfulness and lesser dogmatism that Intuitionism pretends to have
over classical mathematics appear to be highly controversial.
Can we, nevertheless, {\it understand} what is intuionistic mathematics?

\subsection{The issue of understanding}

From the viewpoint of Intuitionism, it is trivial to understand classical mathematics: one
just needs to explicitly mention the principle of excluded middle in the hypothesis of every classical
theorem, thus transforming it into a conditional assertion.

Moreover, as shown and emphasized by Kolmogorov \cite{Kol0}, the use of this principle alone cannot be
the source of a contradiction (see \S\ref{negneg}); nor it can lead to a contradiction when combined
with the axioms of first-order arithmetic (G\"odel--Gentzen, cf.\ \cite{ML07}).
The same is true of the second-order arithmetic (Kreisel, cf.\ \cite{Coq}) and, with some
additional effort, also of the set theory ZF (without the axiom of choice) \cite{Fr}, \cite{Po} ---
albeit this requires impredicative intuitionism, which is something that not all constructivists accept
\cite{ML07}, \cite{Coq}.
However, in the presence of the countable axiom of choice (which suffices for classical analysis) in its
standard form, interpreted intuitionistically according to Kolmogorov, the addition of the principle
of excluded middle generally increases consistency strength even with respect to impredicative intuitionism
\cite{BBC} (see also \cite{OS}).
In fact, the issue of precise formulation of the axiom of choice is important here; see
\cite{Coq}*{2.2.4} and \cite{ML08}.

This is entirely parallel to the situation within classical mathematics.
Some principles of classical set theory, including the continuum hypothesis and the axiom of choice,
are internally provable to be consistent with ZF; in particular, their addition to ZF does not increase
its consistency strength --- or in other words, their use alone cannot be the source of a contradiction.
Other principles, like large cardinal axioms or the Axiom of Determinacy, are known or conjectured to
increase consistency strength when added to ZF; yet there are various good reasons to believe that
resulting systems are still consistent, perhaps not significantly different from the reasons for one's
belief in the consistency of ZF.
Thus classical mathematics with or without the axiom of choice should be, from the viewpoint of
an intuitionist, on a par with areas of classical mathematics that depend on the said principles
as seen by a classical mathematician who does not find these principles to be intuitively justified.
Conversely, it would be beneficial to {\it understand} intuitionistic logic
and its relations with classical logic in a way that is compatible
with the plain Platonic intuition arguably shared by most ``working
mathematicians''.

First attempts at meaning explanations of intuitionistic logic in terms
of ordinary practices of classical mathematics can be found in
a 1928 paper by Orlov \cite{Or}*{\S\S6,7} (see also \cite{D92})
and in the 1930 and 1931 papers by Heyting \cite{He0}, \cite{He1}.
Both Orlov and Heyting gave explanations (see \cite{M1}*{\S\ref{g1:letters1}} for a detailed review) ---
in terms of provability of propositions (``propositions'' and ``provability'' being
understood in a sense compatible with classical logic) which directly
anticipate G\"odel's 1933 provability translation --- to be reviewed in \S\ref{provability}.
G\"odel's translation certainly yields an accurate explanation in arguably classical terms (albeit not 
exactly in terms of classical logic) of the standard formalization of intuitionistic logic.
But nevertheless it arguably misses the essence of intuitionistic logic, in that it does not
explain how one proposition can have essentially different proofs (cf.\ \S\ref{motivation}).

\subsection{Solutions of problems}\label{Kolmogorov}
The second approach to intuitionistic logic, initiated by Kolmogorov in 1932 and continued to some extent 
by his school in Moscow aimed at being less demanding of one's philosophical convictions.
In the words of Kolmogorov \cite{Kol}:

\smallskip
\begin{center}
\parbox{14.7cm}{\small
``On a par with theoretical logic, which systematizes schemes of
proofs of theoretical truths, one can systematize schemes of
solutions of problems --- for example, of geometric construction
problems.
For instance, similarly to the principle of syllogism we have
the following principle here:
{\it If we can reduce the task of solving $b$ to that of $a$, and
the task of solving $c$ to that of $b$, then we can also reduce
the task of solving of $c$ to that of $a$.}

Upon introducing appropriate notation, one can specify the rules of
a formal calculus that yield a symbolic construction of a system of
such problem solving schemes.
Thus, in addition to theoretical logic, a certain new
{\it calculus of problems} arises.
In this setting there is no need for any special, e.g.\ intuitionistic,
epistemological presuppositions.

The following striking fact holds: {\it The calculus of problems
coincides in form with Brouwer's intuitionistic logic,
as recently formalized by Mr.\ Heyting}.

In the second section we undertake a critical analysis of intuitionistic logic, accepting
general intuitionistic presuppositions;
and observe that intuitionistic logic should be replaced with the calculus of problems,
since its objects are in reality not theoretical propositions but
rather problems.''
}
\end{center}
\medskip

In this paper, we use the word {\it problem} solely in the sense of a request (or desire) to find
a construction meeting specified criteria on output and permitted means (as in ``chess problem'',
``geometric construction problem'', ``initial value problem'').
With {\it problem} understood in this sense, Kolmogorov’s interpretation is, in the words of
Martin-L\"of \cite{ML}*{p.\ 6},
\smallskip
\begin{center}
\parbox{14.7cm}{\small
``very close to programming.
`$a$ is a method [of solving the problem (doing the task) $A$]
can be read as `$a$ is a program ...'.
Since programming languages have a formal notation for the program $a$,
but not for $A$, we complete the sentence with `... which meets
the specification $A$'. In Kolmogorov's interpretation, the word {\it problem}
refers to something to be done and the word {\it program}\footnotemark\
to how to do it.''
}
\footnotetext{Kolmogorov's original interpretation does not mention any ``programs'', so this is likely 
a misprint, and the phrase is to be read as ``the word {\it solution}''.}
\end{center}
\medskip
A key difference between problems and propositions is the lack of a direct analogue of the Platonic concept
of truth in the case of problems.
In the words of Kolmogorov \cite{Kol}:

\smallskip
\begin{center}
\parbox{14.7cm}{\small ``It is, however, essential that every proved proposition is {\it correct};
for problems one has no concept corresponding to this correctness.''
}
\end{center}
\medskip

As a matter of convention, we will always understand a {\it solution} of a problem positively,
in that a proof that a problem $\bGamma$ has no solutions is not considered to be a solution of $\bGamma$.
Instead, we consider it to be a solution of a different problem, denoted $\neg\bGamma$; and
the problem of either finding a solution of $\bGamma$ or proving that one does not exist is
denoted $\bGamma\lor\neg\bGamma$.
We say that $\bGamma$ is {\it solvable} if $\bGamma$ has a solution, and {\it decidable} if
$\bGamma\lor\neg\bGamma$ has a solution.

\subsection{The Hilbert--Brouwer controversy} \label{controversy}
A problem of the form $\bGamma\lor\neg\bGamma$ may in general be very hard.
There is nothing surprising here, of course, since the principle of excluded middle applies
to propositions, whereas $\bGamma$ is a problem.
Nevertheless, the principle of decidability of every mathematical problem has been stressed forcefully
by Hilbert in the preface to his famous list of problems \cite{Hi1}:
\smallskip
\begin{center}
\parbox{14.7cm}{\small
``Occasionally it happens that we seek the solution [of a mathematical problem] under insufficient
hypotheses or in an incorrect sense, and for this reason do not succeed. The problem then
arises: to show the impossibility of the solution under the given hypotheses, or in
the sense contemplated. Such proofs of impossibility were effected by the ancients,
for instance when they showed that the ratio of the hypotenuse to the side of
an isosceles right triangle is irrational. In later mathematics, the question as to
the impossibility of certain solutions plays a preeminent part, and we perceive
in this way that old and difficult problems, such as the proof of the axiom of
parallels, the squaring of the circle, or the solution of equations of the fifth degree
by radicals have finally found fully satisfactory and rigorous solutions, although
in another sense than that originally intended. It is probably this important fact
along with other philosophical reasons that gives rise to the conviction (which
every mathematician shares, but which no one has as yet supported by a proof)
that every definite mathematical problem must necessarily be susceptible of an
exact settlement, either in the form of an actual answer to the question asked,
or by the proof of the impossibility of its solution and therewith the necessary
failure of all attempts. Take any definite unsolved problem, such as the question
of the irrationality of the Euler--Mascheroni constant $C$, or of the existence of an
infinite number of prime numbers of the form $2n+1$. However unapproachable
these problems may seem to us and however helpless we stand before them, we
have, nevertheless, the firm conviction that their solution must follow by a finite
number of purely logical processes.''
}
\end{center}
\medskip

A counterexample to Hilbert's ascription of his conviction to all mathematicians was soon provided
by Brouwer, who himself fiercely opposed it \cite{Br07}*{Statement XXI}, \cite{Br08}, \cite{Br28}.
Yet one can hardly deny that the task of {\it settlement}, in the sense of Hilbert, of mathematical
problems (including, indeed, those in Hilbert's list), i.e., the task of solving problems of
the form $\bGamma\lor\neg\bGamma$, is something that we mathematicians all constantly undertake,
with great efforts --- including those 99\% of us who consider the principle of excluded middle to be
a triviality, which can be used freely whenever the need arises.

We thus come to distinguish the {\it principle of excluded middle} (for propositions) and
the {\it principle of decidability} (for problems).
Yet the two have been systematically conflated by Heyting (see \cite{He1} and \cite{Tr99}*{p.\ 235})
and especially by Brouwer; in fact, their conflation was at the very root of Brouwer's opposition
to the principle of excluded middle, whose first published record \cite{Br08} reads:

\smallskip
\begin{center}
\parbox{14.7cm}{\small
Firstly, the {\it syllogism} infers from the embedding [inpassing]\footnotemark\
of a system [systeem] $b$ into a system $c$, along with
the embedding of a system $a$ into the system $b$, a direct embedding of the system $a$ into
the system $c$. This is nothing more than a tautology.

Likewise the principle of {\it non-contradiction} is indisputable: The accomplishment of
the embedding of a system $a$ into a system $b$ in a prescribed manner, and the
obstruction showing the impossibility of such an embedding, exclude each other.

Now consider the principle of {\it excluded middle}: It claims that every hypothesis is
either true or false; in mathematics this means that for every hypothetical embedding
of one system in another, satisfying certain given conditions, we can either accomplish
such an embedding by a construction, or we can construct the obstruction showing its
impossibility. It follows that
the question of the validity of the principle of excluded middle is equivalent to the
question whether undecidable mathematical problems can exist.
There is no indication of a justification for the occasionally pronounced (see [Hilbert's problem list
\cite{Hi1} ...]) conviction that there exist no undecidable mathematical problems.}
\footnotetext{This is the first occurrence of this term in the text, but it was explained
in Brouwer's dissertation \cite{Br07}*{p.\ 77}, where he apparently rejects the notion of
a subset defined by a property of its elements in favor of an explicitly constructed subset:
``Often it is quite easy to construct inside such a structure, independently of
how it originated, new structures, whose elements are taken from the elements of the
original structure or systems of these, arranged in a new way, but bearing in mind
their original arrangement. The so-called `properties' of a system express the
possibility of constructing such new systems having a certain connection with
the given system. And it is exactly this {\it embedding} of new systems {\it in
a given system} that plays an important part in building up mathematics, often in
the form of an inquiry into the possibility or impossibility of an embedding satisfying
certain conditions, and in the case of possibility into the ways in which it is possible.''
}
\end{center}
\medskip

This all is remarkably similar to what one could say about the {\it problem} of ``embedding of one
system in another'' (whatever that means), except for the alleged connection with
the principle of excluded middle.
Twenty years later, Brouwer has even complained that after his repeated declarations that the principle of
excluded middle is the same as the principle of decidability, Hilbert still fails to recognize their
alleged identity \cite{Br28}.
Unfortunately, neither this complaint nor the repeated declarations cited in it seem to bring in any
arguments as to why the two principles should be equated (apart from the insistence that they
obviously should).
In fact, in the opinion of Kolmogorov \cite{Kol29},
\smallskip
\begin{center}
\parbox{14.7cm}{\small ``The principle of excluded middle is, according to Brouwer, inapplicable
only to judgements of a special kind, in which a theoretical proposition is inseparably linked
with construction of the object of the proposition. Therefore, one may suppose that Brouwer’s
ideas are actually not at all in contradiction with the traditional logic, which has in fact never
dealt with judgements of this kind.''
}
\end{center}
\medskip

\subsection{Problems vs.\ conjectures}\label{conjectures}
Obviously, Brouwer committed what most mathematicians still do today: he conflated a problem (understood
as a request for finding, for instance, a proof of a certain assertion, according to a certain specification
of what counts as a proof) with a conjecture (understood as a judgement that a certain assertion is true).
Indeed, most of Hilbert's problems consist in proving a conjecture.
Let us take, for instance, his first problem, which asks to prove the Continuum Hypothesis ($\CH$):
{\it Every infinite subset of $\R$ is of the same cardinality as $\R$ or $\N$.}

Is it the case that $\CH$ is either true or false? Yes: this is a theorem of ZFC --- a direct
consequence of the principle of excluded middle.

So, is $\CH$ true? Not really: $\CH$ cannot be proved in ZFC, as shown by P. Cohen.

So, is $\CH$ false? Not really: $\CH$ cannot be disproved in ZFC, as shown by G\"odel.

This is why in modern classical logic, {\it truth} is considered to be a semantic notion, depending on
the choice of a two-valued model of the theory (cf.\ \S\ref{Tarski truth}; {\it two-valued} means that
every formula without parameters is either true or false).
For instance, $\CH$ is true in G\"odel's constructible universe, and false in Cohen's models of ZFC.

In contrast, the following problem ($\oc\CH$): {\it Prove $\CH$ in $\ZFC$} --- has no solutions
(by Cohen); and the problem ($\oc\neg\CH$): {\it Disprove $\CH$ in $\ZFC$} --- also has no solutions
(by G\"odel).
And we do not need to fix a model of ZFC for this to make sense.
Thus we should not equate the problem $\oc\CH$ with any question pertaining to {\it truth}.
Then, if we want to be precise, Hilbert's first problem should indeed be seen as a {\it problem} ---
arguably, best formalized as $\oc\CH$ --- and not a yes/no {\it question}.%
\footnote{This is even reasonably consistent with Hilbert's original wording \cite{Hi1}:
``The investigations of Cantor on such assemblages of points suggest
a very plausible theorem, which nevertheless, in spite of the most strenuous efforts, no one has
succeeded in proving. This is the theorem: Every system of infinitely many real numbers, i.e., every
assemblage of numbers (or points), is either equivalent to the assemblage of natural integers,
$1,2,3,\dots$ or to the assemblage of all real numbers''.
Alongside the Continuum Hypothesis, Hilbert also poses the problem of well-ordering of the continuum:
``It appears to me most desirable to obtain a direct proof of this remarkable statement of Cantor's,
perhaps by actually giving an arrangement of numbers such that in every partial system a first number
can be pointed out.''}
But had Hilbert used a {\it whether} question in his first problem, it could as well be formalized as
the problem $\oc\CH\lor\oc\neg\CH$, i.e., {\it Prove or disprove $\CH$ in $\ZFC$}.
Note, incidentally, that insolubility of the latter problem (which follows from G\"odel {\it and} Cohen)
does not yield
a counterexample to Hilbert's principle of decidability of all mathematical problems.
Indeed, $\oc\CH$ is decidable (by Cohen), $\oc\neg\CH$ is decidable (by G\"odel), and even
$\oc\CH\lor\oc\neg\CH$ is decidable (by G\"odel and Cohen).
But the decidability of these problems is not, of course, merely a consequence of the principle of excluded middle.

In general, a problem --- as understood in the present paper --- should not be conflated with
any yes/no question.
In particular, the problem {\it Prove $\bP$}, where $\bP$ is a proposition, is closely related to two
distinct questions: {\it Is $\bP$ true?} and {\it Does there exist a proof of $\bP$?}
Truth should not be conflated with provability, since by G\"odel's incompleteness theorem
most theories are not complete with respect to any model.
On the other hand, if $G$ and $H$ are groups, the question {\it Is $G$ isomorphic to $H$?} is closely
related to two distinct problems: {\it Prove that $G$ is isomorphic to $H$} and {\it Find
an isomorphism between $G$ and $H$}.
Proofs should not be conflated with constructions of isomorphisms: one proof might correspond to several
isomorphisms or not correspond to any specific isomorphism.

\subsection{The BHK interpretation}\label{BHK}
The mathematical meaning of problems is determined by answering the question:
{\it What does it mean to find a solution to a given problem?}
A standard but informal answer to this question was given essentially in Kolmogorov's 1932 paper 
\cite{Kol} and can also be seen as an interpretation of the intuitionistic connectives, and to some 
extent also quantifiers%
\footnote{An interpretation of $\exists$ is discussed in a slightly different language in \S2 
of Kolmogorov's paper.}
in terms of solutions of problems.
Independently%
\footnote{A note inserted at proof-reading in Kolmogorov's paper
\cite{Kol} acknowledged the connection with Heyting's \cite{He1}.
A remark found in the body of \cite{Kol}: ``This explanation of
the meaning of the sign $\turnstile$ is very different from that
by Heyting, although it leads to the same formal calculus''
must refer to Heyting's syntactic definition of $\turnstile$
and indicates that at the time of writing, Kolmogorov was unaware
of Heyting's semantic interpretation of $\turnstile$ in \cite{He1},
which is rather similar to Kolmogorov's.}
of Kolmogorov, Heyting gave rather similar interpretations for some intuitionistic connectives in terms of 
proofs of propositions ($\neg$ and $\lor$ in \cite{He1} and $\to$ in a 1930 letter to Freudenthal 
\cite{vD}, \cite{vA} and in print in a 1934 book \cite{He2}).
Kolmogorov went somewhat further than this in that he described a systematic interpretation of zero-order 
formulas of intuitionistic logic.
But much more importantly, Kolmogorov's paper also offers what in essence is an interpretation of 
the meta-logical connectives and quantifiers (as discussed in \S\ref{clarified} and \S\ref{K-consequence} below); 
to the best of the author's knowledge, this aspect of Kolmogorov's interpretation has been completely 
ignored in all of the previous literature dealing with the ``BHK-interpretation''.

A formal answer to the question {\it What does it mean to find a solution to a given problem?} (or 
{\it What does it mean to give a constructive proof to a given proposition?}) was given, in particular,
by Gentzen's 1935 natural deduction calculus \cite{Ge}.
(An independent but essentially equivalent answer is given by the $\lambda$-calculus, see 
\cite{Gi}, \cite{Ho}, \cite{Lae}, \cite{Coq}, \cite{SU}*{\S4}, \cite{TV}*{\S10.8}.)
This answer can of course be read syntactically, on a par with, say, the sequent calculus for classical logic.
Yet the introduction rules of the natural deduction calculus can be also read as a semantic interpretation of 
the intuitionistic connectives and quantifiers, which turns out to be closely related to Kolmogorov's and 
Heyting's interpretations (see \cite{FD}, \cite{ML87}*{pp.\ 410--412}).

Initially, Heyting has switched to Kolmogorov's language; in particular, the introduction into 
intuitionistic logic found in his 1934 book \cite{He2} is based on Kolmogorov's understanding of 
the connectives and quantifiers (but not the meta-quantifiers, which Heyting ignores in \cite{He2}).
More influential, however, was his later return to the ideologically loaded language of Brouwer, 
most notably in his 1956 book \cite{He3}.
This in turn inspired the modern tradition of the so-called ``BHK interpretation'', named so 
by Troelstra after Brouwer, Heyting and Kreisel \cite{Tr0}, with ``K'' later reassigned to Kolmogorov 
by Troelstra and van Dalen \cite{TV}.
It is also known as the ``standard interpretation'', and even
the ``intended interpretation'' of intuitionistic logic
(see \cite{Tr2}, \cite{SU}, \cite{St}, \cite{Gi}, \cite{vDh}, \cite{Ru}, \cite{We}).
Compared to Kolmogorov's original interpretation, the BHK interpretation
(i) omits Kolmogorov's interpretation of the meta-connectives and meta-quantifiers;
(ii) treats the quantifiers in a more systematic way;
(iii) gives a more detailed interpretation of $\to$ due to Heyting
(which is similar to Kolmogorov's interpretation of $\forall$).

Most presentations of the BHK interpretation in the literature are in Brouwer's language, but here is one 
in the language of Kolmogorov.

\medskip
\begin{itemize}
\item What are solutions of primitive problems is assumed to be known
from context.
\end{itemize}
For instance, Euclid's first three postulates are the following primitive problems:%
\footnote{See \cite{M3}*{\S\ref{g3:intro}, \S\ref{g3:postulates vs axioms}} for a thorough
discussion of postulates vs.\ axioms.}

(1) draw a straight line segment from a given point to a given point;

(2) extend any given straight line segment continuously to a longer one;

(3) draw a circle with a given center and a given radius.

\noindent
We may thus stipulate that each of the primitive problems (1) and (3) has a unique solution, and
give some description of the possible solutions of (2).

\begin{itemize}
\item A solution of $\bGamma\land\bDelta$ consists of a solution
of $\bGamma$ and a solution of $\bDelta$.

\item A solution of $\bGamma\lor\bDelta$ consists of an explicit choice
between $\bGamma$ and $\bDelta$ along with a solution of the chosen
problem.

\item A solution of $\bGamma\to\bDelta$ is a {\it reduction} of $\bDelta$ to
$\bGamma$; that is, a general method of solving $\bDelta$ on the basis
of any given solution of $\bGamma$.

\item The {\it absurdity} $\ab$ has no solutions;
$\neg\bGamma$ is an abbreviation for $\bGamma\to\ab$, and the {\it triviality} $\triv$ is an
abbreviation for $\neg\ab$.%
\footnote{The notation $\ab$, $\triv$ is non-standard; the usual one is $\bot$, $\top$ 
--- same as in classical logic.
We will eventually need to differentiate notationally between classical and intuitionistic logics, and
to this end we use $\top$, $\bot$ and Roman letters in classical contexts; and $\triv$, $\ab$ and Greek
letters in intuitionistic contexts.
This suffices to differentiate classical formulas from intuitionistic ones, and so we can afford to
keep the other connectives, and both quantifiers, identical in appearance in classical and intuitionistic 
contexts.}

\item A solution of $\exists x\, \Theta(x)$ is a solution of $\Theta(x_0)$
for some explicitly chosen $x_0\in\D$.

\item A solution of $\forall x\, \Theta(x)$ is a general method
of solving $\Theta(x_0)$ for all $x_0\in\D$.
\end{itemize}
\medskip

Kolmogorov's notion of a ``general method'' roughly corresponds to
the notion of a ``construction'' advocated by Brouwer and Heyting, but
is perhaps less rhetorical in that it puts a central emphasis on
the tangible issue of the right order of quantifiers.
According to Kolmogorov, if $\bGamma(\script X)$ is a problem depending on
the parameter $\script X$ ``of any sort'', then ``to present a general method
of solving $\bGamma(\script X)$ for every particular value of $\script X$''
should be understood as ``to be able to solve $\bGamma(\script X_0)$
for every given specific value of $\script X_0$ of the variable $\script X$
by a finite sequence of steps, known in advance (i.e.\ before the choice
of $\script X_0$)''.
This does not pretend to be a fully unambiguous definition, but
``hopefully cannot lead to confusion in specific areas of
mathematics''.

An important clarification to the BHK interpretation, emphasized by Kreisel (in somewhat different terms;
see details and further discussion in \cite{DT} and \cite{Be}*{\S5.1}), is that a solution of a problem
must not only consist of a construction of a prescribed type, but also include a proof that the proposed
construction does solve the problem.
In fact, this stipulation is clearly expressed already in Proclus' ancient commentary on Euclid's Elements
(see \cite{M3}), and so may be called the {\it Proclus--Kreisel principle}.
If we parse the clauses of the BHK interpretation, assuming that solutions of primitive problems come with
correctness proofs, we see that these easily yield correctness proofs for solutions of problems in the
$\land$, $\lor$, and $\exists$ clauses; thus in the context of the BHK interpretation, the Proclus--Kreisel
principle essentially amounts to the convention that every general method (such as those in the $\to$
and $\forall$ clauses) is supplied with a proof that it actually does what it is meant to do.%
\footnote{A well-known alternative to the Proclus--Kreisel principle is to deal with bare constructions,
and defer their correctness verifications until the solubility of the entire problem (built out 
of primitive problems with the logical connectives) is asserted.
This approach necessitates a clear distinction of ``propositions'' (or problems) vs.\ ``judgements''
and is used in Martin-L\"of's type theory (see \cite{Be}*{\S5.1.2}).
In fact, according to \cite{Gol}*{p.\ 36}, this alternative approach is implicit already in Frege's work.}

Finally, two more precautions on interpreting the BHK interpretation:
\begin{itemize}
\item The domain $\D$ must be a ``simple domain'' such as the set $\N$ of natural numbers; a subset of
$\N$ defined with aid of quantifiers would require a more elaborate version of the BHK interpretation
(see \cite{DT}).
In fact, Martin-L\"of's type theory \cite{ML} is based on extending the BHK interpretation to more
general domains.
\item The status of all primitive problems (i.e.\ if they are considered to have a solution) is
supposed to be fixed before the logical connectives can be explained, and cannot be updated as we
determine the meaning of the connectives (Prawitz; see \cite{SP}).
\end{itemize}

\subsection{Understanding the connectives}\label{about-bhk}

\subsubsection{$\lor$ and $\exists$}
The seemingly innocuous interpretations of $\lor$ and $\exists$ are
already in sharp contrast with classical logic.
Let us consider the following classical proof that $x^y$
can be rational for irrational $x$ and $y$: if $\sqrt2^{\sqrt2}$ is
rational, then $x=y=\sqrt2$ will do; else let $x=\sqrt 2^{\sqrt2}$ and
$y=\sqrt2$.
Although we have proved that the proposition $\bP(x,y)=${\it ``$x^y$ is rational''}
holds either for $(x,y)=(\sqrt2,\sqrt2)$ or for
$(x,y)=(\sqrt2^{\sqrt2},\sqrt2)$, this method is not going to prove it
for any specific pair.
Thus if $\oc\bQ$ denotes the problem {\it Prove the proposition $\bQ$,} then we have solved the problem
$\oc\big(\bP(\sqrt2,\sqrt2)\lor\bP(\sqrt2^{\sqrt2},\sqrt2)\big)$, and hence also the problem
$\oc\exists x\exists y\, \bP(x,y)$ (with $x$ and $y$ ranging over all irrational numbers),
but we have no clue about solving either $\exists x\exists y\,\oc \bP(x,y)$ or
$\oc\bP(\sqrt2,\sqrt2)\lor\oc\bP(\sqrt2^{\sqrt2},\sqrt2)$.

On the other hand, with classical meta-logic we may wonder if for a given particular problem $\bGamma$
there exists a solution of $\bGamma$.
According to the BHK interpretation (as presented above), the set of solutions of a problem
$\bGamma\lor\bDelta$ is the disjoint union of those of $\bGamma$ and $\bDelta$.
The disjoint union of two sets is nonempty if and only if at least one of them is nonempty.
Thus we conclude that $\bGamma\lor\bDelta$ is soluble if and only if
either $\bGamma$ is soluble or $\bDelta$ is soluble.
In symbols, if $\wn\bGamma$ denotes the proposition {\it ``The problem $\bGamma$ is soluble''},
then $\wn(\bGamma\lor\bDelta)$ is equivalent to $\wn\bGamma\lor\wn\bDelta$; for similar reasons,
the proposition $\wn\exists x\,\bGamma(x)$ is equivalent to $\exists x\,\wn\bGamma(x)$.

Let us note that the explicit choice between $\bGamma$ and $\bDelta$ that is present in any
solution of $\bGamma\lor\bDelta$ is forgotten (and cannot be recovered in general) when only
the existence of such a solution is asserted.
For instance, the proposition $\wn\big(\oc\bP(\sqrt2,\sqrt2)\lor\oc\bP(\sqrt2^{\sqrt2},\sqrt2)\big)$
might follow from a proof-theoretic result asserting that rationality of real numbers of a certain
form can always be either proved or disproved (in ZFC, say); this would still give us no clue which
of the two numbers is rational (unless the proof-theoretic result is proved in a constructive way).%
\footnote{In fact, irrationality of $\sqrt2^{\sqrt2}$ follows from the Gelfond--Schneider theorem
(which answered a part of Hilbert's 7th problem): $a^b$ is transcendental if $a$ and $b$ are algebraic,
$a\ne 0,1$, and $b$ is not rational.}

\subsubsection{$\forall$ and $\land$}\label{BHK-forall}
The significance of the Proclus--Kreisel principle is clear from the following {\it Schwichtenberg's paradox}.
Let us consider the problem $\oc\bP(x,y,z,n)$ of proving the following proposition $\bP(x,y,z,n)$:
$$x^n+y^n=z^n\ \ \to\ n\le 2.$$
Of course, it is trivial to devise a general method $m$ to verify the inequality $x^n+y^n\ne z^n$ and thus
to solve the problem $\oc\bP(x,y,z,n)$ for every particular choice of positive integers $x,y,z$ and $n>2$
for which it can be solved.
What is hard is to prove that $m$ actually succeeds on all inputs.
Thus it is only due to Kreisel's clarification that the problem
$\forall x,y,z,n\,\oc\bP(x,y,z,n)$ (with $x,y,z,n$ ranging over all positive integers) is nontrivial.
Yet this problem is easily seen to be equivalent to the problem $\oc\forall x,y,z,n\,\bP(x,y,z,n)$ of
proving Fermat's last theorem.
Indeed, any solution of the problem $\forall x\,\oc\bQ(x)$ is clearly a proof of the proposition
$\forall x\,\bQ(x)$; conversely, any proof of the latter proposition yields by specialization a proof of
every its instance $\bQ(x)$, and this specialization procedure is a general method with respect to
the input $x$, whose success is guaranteed by the original proof of $\forall x\,\bQ(x)$.
Let us note that, due to the existence of the general method $m$, the problem of proving Fermat's last theorem
is also equivalent to the problem $\oc\neg\wn\oc\neg\forall x\,,y,z,n\,\bP(x,y,z,n)$
of proving that Fermat's last theorem cannot be disproved.%
\footnote{A similar remark about Goldbach's conjecture is found in Kolmogorov's letter to
Heyting (see \cite{M1}*{\S\ref{g1:letters2}}), which indicates that he did implicitly
assume general methods to include verification of their own correctness
and seems to have been aware of the significance of this provision.
In his paper \cite{Kol}, he also emphasized universal acceptability of
the validity of solutions as an inherent property of logical and
mathematical problems.}

If there exists a general method of solving a problem $\bGamma(x)$ for every $x$, then, trivially,
$\bGamma(x)$ is soluble for each $x$; in symbols, $\wn\forall x\,\bGamma(x)\to\forall x\,\wn\bGamma(x)$.
This implication cannot be reversed in general.
For example, if $\bGamma(f)$ asks to find a complex root of an algebraic equation $f(x)=0$ of degree $>0$,
then a solution of $\bGamma(f)$ exists by the fundamental theorem of algebra; yet a general method of
solving all such problems $\bGamma(f)$ could be interpreted, for instance, as a formula providing
a solution in radicals.
Similarly, if $\bGamma(f)$ is an initial value problem of the form $y'=f(x,y)$, $y(x_0)=y_0$,
$f\in C^1$, then a solution of $\bGamma(f)$ exists in some neighborhood of $x_0$ by the well-known theorem;
yet a general method of solving all such problems could be interpreted, for instance, as
a formula providing a solution in terms of elementary functions.

The intuitionistic conjunction per se does not behave very differently compared to the classical conjunction.
In particular, one can see that the proposition $\wn(\bGamma\land\bDelta)$ is equivalent to
$\wn\bGamma\land\wn\bDelta$ (similarly to the above discussion of $\lor$) and that the problem
$\oc(\bP\land\bQ)$ is equivalent to $\oc\bP\land\oc\bQ$ (similarly to the above discussion of $\forall$).

\subsubsection{$\to$}\label{BHK-to}
By the {\it modus ponens} rule $$\frac{\bP,\,\bP\to\bQ}{\bQ}$$
any proof of the proposition $\bP\to\bQ$ yields
a general method of converting proofs of $\bP$ into proofs of $\bQ$, along with a proof that this method
indeed effects such conversion.
In other words, any solution of the problem $\oc(\bP\to\bQ)$ yields a solution of $\oc\bP\to\oc\bQ$.
But not conversely.
For instance, a general method of converting ZFC-proofs of the Continuum Hypothesis $\CH$ into ZFC-proofs
of $0=1$ (where $0=\emptyset$ and $1=\{\emptyset\}$, as usual) is given by simply keeping the given
ZFC-proof; and a verification of the correctness of this method is given by Cohen's proof that ZFC-proofs
of $\CH$ do not exist.
Yet we cannot possibly obtain from this a ZFC-proof that $\CH$ implies
$0=1$, since no such ZFC-proofs exist, as proved by G\"odel.
Of course, what this argument really shows is that the problems $\oc\neg\bP$ and $\neg\oc\bP$ are
inequivalent for any independent statement $\bP$.%
\footnote{\label{CH} Some issues need to be clarified here.
For the sake of definiteness, we may demand that the verifications of correctness of general methods take
place in some theory $T$.
If all atomic problems request to prove something in a certain theory, a natural choice for $T$
would be that theory.
It might seem that identifying $\ab$ with $\oc0=1$ would then get us in trouble with the second
incompleteness theorem, which says that $T$ itself does not prove $\Bew(\#0=1)\to 0=1$, where
$\Bew$ is a formula expressing G\"odel's provability predicate for $T$ (see \cite{Sm}).
However, we are dealing here with solutions rather solubility, but the incompleteness theorems arise
only because of the existential quantifier.
Namely, let us recall that $\Bew(\#\bP)$ is defined as $\exists n\Prov(n,\#\bP)$, where $\Prov(n,m)$
is a formula expressing the predicate that $n$ is the G\"odel number of a proof of the formula with
G\"odel number $m$.
$T$ proves $\Prov(n,\#\bP)$ for some numeral $n$ if and only if $T$ proves $\bP$; and
$T$ proves $\neg\Prov(n,\#\bP)$ for every numeral $n$ if and only if $T$ does not prove $\bP$
(see \cite{Sm}*{3.2.4}).
In particular, $T$ proves that $\Prov(n,\#0=1)\to 0=1$.
Thus from a proof of $0=1$ in $T$ we can certainly get a contradiction.
Also, returning to our example (where $T=$ ZFC), since ZFC does not prove either $\CH$ or $\neg\CH$ by
G\"odel and Cohen, ZFC proves $\neg\Prov(n,\CH)$ and $\neg\Prov(n,\neg\CH)$.}

By the $\to$ clause of the BHK interpretation, or by the intuitionistic {\it modus ponens} rule
$$\frac{\bGamma,\,\bGamma\to\bDelta}{\bDelta}$$
if there exists a reduction of the problem $\bDelta$ to the problem $\bGamma$, then the solubility of
$\bGamma$ implies the solubility of $\bDelta$; in symbols, $\wn(\bGamma\to\bDelta)\to(\wn\bGamma\to\wn\bDelta)$.
The converse implication does not hold in general.
For example, let $\bGamma$ be the problem $\oc\wn\bDelta$ of proving the solubility of $\bDelta$.
Clearly, if $\bGamma$ is soluble, i.e., there exists a proof of the solubility of $\bDelta$,
then $\bDelta$ is soluble.
On the other hand, a proof of the solubility of $\bDelta$ does not necessarily yield an actual solution of
$\bDelta$.%
\footnote{According to a certain formal logic QHC of $\wn$'s and $\oc$'s \cite{M1}, for this to be possible, 
$\wn\bDelta$ must be independent from the formal theory being considered 
\cite{M2}*{Remark \ref{g2:refuting Hilbert}}.
QHC also places further limitations on potential examples of the above type.
Although the implication $\neg\wn\bGamma\to\wn\neg\bGamma$ does not hold in general, the rule
$\neg\wn\bGamma\,/\,\wn\neg\bGamma$ is valid; thus one cannot prove $\neg\wn\bGamma$ without
also proving $\wn\neg\bGamma$ \cite{M1}*{\ref{g1:weak Hilbert}}.
Also, there are the rule $\wn\oc\bP\to\wn\oc\bQ\,/\,\wn(\oc\bP\to\oc\bQ)$
and the implication
$(\oc\wn\bGamma\to\oc\wn\bDelta)\to\oc(\wn\bGamma\to\wn\bDelta)$ \cite{M1}*{\ref{g1:move-oc-wn}}.}

\subsubsection{Absurdity}\label{understanding bot}
Let us now discuss the meaning of the negation, $\neg\bGamma$.
Using $\oc$ and $\wn$ as before, and writing $\neg$ also for the classical negation
of propositions, we get the problem $\oc\neg\wn\bGamma$, which reads:
{\it Prove that $\bGamma$ has no solutions}.
(As before, this refers to classical proofs, not constructive ones.)
We will now argue that this problem is equivalent to $\neg\bGamma$ on the BHK interpretation
(with our classical meta-logic), by describing solutions of the problems
$\neg\bGamma\,\to\,\oc\neg\wn\bGamma$ and $\oc\neg\wn\bGamma\,\to\,\neg\bGamma$.

Indeed, suppose that we have a solution of $\neg\bGamma$, that is, a general method $m$ of
obtaining a solution of $\ab$ based on any given solution of $\bGamma$, including a proof that
this method works.
But since $\ab$ has no solutions, we get the following proof $p_m$ that $\bGamma$ has no solutions:
``If $s$ is a solution of $\bGamma$, then $m$ applied to $s$ yields a non-existent object, which is
a contradiction; thus $\bGamma$ has no solutions.''
Moreover, it is clear that the procedure associating $p_m$ to $m$ is a general method, which
works regardless of the nature of $m$.

Conversely, let $p$ be a proof that $\bGamma$ has no solutions.
If $s$ is a solution of $\bGamma$, then $p$ yields a proof that $s$ is {\it itself} also
a solution of $\ab$; indeed, $p$ proves that $s$ does not exist, but everything is true of
a non-existent object.
Thus, given a $p$, we get a general method $m_p$ of obtaining a solution of $\ab$ on the basis of
any given solution $s$ of $\bGamma$; namely, $m_p$ returns $s$ itself and includes a verification
that $s$ is a solution of $\ab$ (which is given by $p$).
Thus $m_p$ is a solution of $\neg\bGamma$.
Moreover, it is clear that the procedure associating $m_p$ to $p$ is a general method, which
works regardless of the nature of $p$.

Curiously, this analysis is at odds with Kolmogorov's remark \cite{Kol}:
\smallskip
\begin{center}
\parbox{14.7cm}{\small
``Let us note that $\neg a$ should not be understood as the problem
`prove the unsolvability of $a$'.
In general, if one considers `unsolvability of $a$' as
a well-defined notion, then one only obtains the theorem that
$\neg a$ implies the unsolvability of $a$, but not vice versa.
If, for example, it were proved that the well-ordering of the continuum
surpasses our capabilities, one could still not claim that
the existence of such a well-ordering leads to a contradiction.''
}
\end{center}
\medskip
\noindent
Could one make any sense out of the last sentence, mathematically?
It could be referring, for instance, to the fact that the existence
of a well-ordering of the continuum might be (i) not provable in some
formal theory, say ZF, and yet (ii) consistent with this theory.
(For example, this is how Kolmogorov's words are interpreted by Coquand \cite{Coq}.)

To interpret (i) as the unsolvability of some problem $\bGamma_1$, this $\bGamma_1$ could be the problem of
deriving the existence of the well-ordering of the continuum (i.e., a special case of the axiom of choice)
from the axioms of ZF; a solution of $\bGamma_1$ would be such a derivation, and the proof of
the non-existence of such derivation would presumably be either in ZF or in some stronger theory.
But in this case, $\neg\bGamma_1$ was solved by Cohen: given such a derivation, he does get a contradiction
(by a method whose correctness is verified in ZF; cf.\ footnote \ref{CH} above).

And to interpret (ii) as the unsolvability of $\neg\bGamma_2$ for some problem $\bGamma_2$, we could understand
it syntactically or semantically.
On the syntactic reading of (ii), $\neg\bGamma_2$ could be the problem of deriving a contradiction from
the axioms of ZF and the hypothesis of the well-orderability of the continuum; but this does not seem to be
of the form $\neg\bGamma_2$ for any problem $\bGamma_2$.
On the semantic reading of (ii), $\bGamma_2$ could be the problem of constructing a model of ZF and
a well-ordering of the continuum in this model; indeed, the unsolvability of this $\neg\bGamma_2$ amounts to
the (non-constructive) existence of a model of ZF where the continuum can be well-ordered
(compare \cite{M1}*{\S\ref{g1:diamond}}).
But in this case, $\bGamma_2$ was solved by G\"odel, who constructed a certain model of ZF (G\"odel's
constructible universe), and a well-ordering of the continuum in that model.

Thus it is possible that Kolmogorov might have simply conflated $\bGamma_1$ and $\bGamma_2$, thinking of
a single informal problem, ``Find a well-ordering of the continuum''.

On the other hand, Kolmogorov's wording (``$\neg a$ implies the {\it unsolvability} of $a$'';
``the {\it existence} of such a well-ordering leads to a contradiction'') certainly conflates
implications between problems with implications between propositions; this makes us suspect that
in writing of what must have been the former, he might have actually been thinking of the latter
(in this particular remark about the well-ordering of the continuum).
Indeed, if $\bP$ is the proposition {\it The continuum is well-orderable}, then the {\it unsolvability} of
the problem of proving it, or $\neg\wn\oc\bP$ in the above notation, certainly does not imply $\neg\bP$,
the proposition that the {\it existence} of a well-ordering of the continuum leads to a contradiction.
In this connection, let us also not forget Kolmogorov's initial rejection of
the explosion principle $\ab\to\alpha$ \cite{Kol0}, \cite{vD}.
% Markov's principle?

It should be noted that Heyting, while using Kolmogorov's terminology
(and so being aware of Kolmogorov's remark quoted above) has explicitly
identified $\neg\bGamma$ with $\oc\neg\wn\bGamma$ \cite{He2}:
\smallskip
\begin{center}
\parbox{14.7cm}{\small
``It is appropriate to interpret the notion of `reduction' in such
a way that the proof of the impossibility of solving $a$ at
the same time reduces the solution of any problem whatsoever to that of
$a$.''
}
\end{center}
\medskip

\subsection{Something is missing here}\label{confusion}
Troelstra and van Dalen note of the BHK interpretation \cite{TV}*{p.\ 9 and p.\ 33}:

\smallskip
\begin{center}
\parbox{14.7cm}{\small
``This explanation is quite informal and rests itself on our understanding
of the notion of construction and, implicitly, the notion of mapping;
it is not hard to show that, on a very `classical' interpretation of
construction and mapping, H1--6 [the six clauses of the BHK interpretation] justify
the principles of two-valued (classical) logic.''

``This exercise [\cite{TV}*{Exercise 1.3.4}] shows that the BHK-interpretation in itself 
has no `explanatory power': the possibility of recognizing a classically valid logical schema as
being constructively unacceptable depends entirely on our interpretation
of `construction', `function', `operation'.''
}
\end{center}
\medskip

Indeed (see also \cite{Sato}), let $\KK\bGamma\KK$ denote the set of solutions of the problem $\bGamma$.
Then the BHK interpretation guarantees that:
\begin{itemize}
\item $\KK\bGamma\land\bDelta\KK$ is the product $\KK\bGamma\KK\x\KK\bDelta\KK$;
\item $\KK\bGamma\lor\bDelta\KK$ is the disjoint union $\KK\bGamma\KK\sqcup \KK\bDelta\KK$;
\item there is a map
$\script F\:\KK\bGamma\to\bDelta\KK\to\Hom(\KK\bGamma\KK,\KK\bDelta\KK)$ into the set of all maps;
\item $\KK\ab\KK=\emptyset$;
\item $\KK\exists x\,\bGamma(x)\KK$ is the disjoint union
$\bigsqcup_{d\in\D} \KK\bGamma(d)\KK$;
\item there is a map
$\script G\:\KK\forall x\,\bGamma(x)\KK\to\prod_{d\in\D} \KK\bGamma(d)\KK$
into the product.
\end{itemize}
If we force $\script F$ to be the identity map, we obtain
what might be called the ``classical BHK''.
Indeed, $\KK\bGamma\lor\neg\bGamma\KK=\KK\bGamma\KK\sqcup\Hom(\KK\bGamma\KK,\emptyset)$
is never empty; thus $\bGamma\lor\neg\bGamma$ has a solution for each
problem $\bGamma$.
Note that we get the same result as long as $\script F$ is surjective.

Thus the six clauses of the BHK interpretation, as presented either above or in Troelstra--van Dalen \cite{TV}, 
fail to capture the essence of intuitionistic logic.
The way to deal with this issue in the intuitionistic tradition has been, unfortunately, just to sweep it 
under the carpet.%
\footnote{And it is arguably this particular sweeping under the carpet that is
largely responsible for the fact that most mathematicians are aware of intuitionistic logic,
yet fail to understand it.}
Thus, elsewhere in the same book by Troelstra and van Dalen \cite{TV}*{p.\ 10}, we find the assertion
\smallskip
\begin{center}
\parbox{14.7cm}{\small
``Even if the explanations H1-6 [the six clauses of the BHK interpretation] leave a lot of questions open, they
suffice to show that certain logical principles should be generally acceptable from a constructive point 
of view, while some other principles from classical logic are not acceptable.''
}
\end{center}
\medskip
This is followed by an analysis of ``the principle of excluded middle'' (i.e.\ the principle of decidability 
in our terminology), which ends with a striking conclusion \cite{TV}*{p.\ 11}:
\smallskip
\begin{center}
\parbox{14.7cm}{\small
 ``Thus we cannot accept PEM [the Principle of Excluded Middle] as a universally 
valid principle on the BHK-interpretation.''
}
\end{center}
\medskip
Don't the latter two quotations directly contradict the previous two quotations from the same book, in which 
the same BHK interpretation was found to ``justify the principles of two-valued (classical) logic''?

The only sign of hope for understanding what is going on here is the clause {\it ``as a universally valid 
principle''}, whose meaning is not really explained in Troelstra--van Dalen \cite{TV}.
We are told, however, that 
\smallskip
\begin{center}
\parbox{14.7cm}{\small
``Constructively, accepting PEM [the Principle of Excluded Middle] as a general principle means that we have 
a universal method for obtaining, for any proposition $A$, [a proof of $A\lor\neg A$, that is,] either a proof 
of $A$ or a proof of $\neg A$''}
\end{center}
\medskip
This is parallel to Heyting's original assertion \cite{He1}*{p.\ 59}:
\smallskip
\begin{center}
\parbox{14.7cm}{\small
``a proof that the law of excluded middle is a general law must consist in giving a method by which, when
given an arbitrary proposition, one could always prove either the proposition itself or it negation.''
}
\end{center}
\medskip
Yet these demands for a {\it ``method''} appear out of the blue both in Heyting \cite{He1} and in 
Troelstra--van Dalen \cite{TV}, without any explanations.
Of course, what they could have said to settle the issue is that (i) it is precisely their demands 
for a ``method'' that are lacking to give ``explanatory power'' to the six clauses of the BHK-interpretation; 
(ii) thus these demands should really be included in an additional clause of the BHK-interpretation, 
in order to guarantee that it does explain intuitionistic rather than classical logic.

In fact, Kolmogorov did explicitly incorporate a similar demand (for a ``method'') in his original 
version of the BHK interpretation \cite{Kol}.
But before we are ready to formulate and discuss it in a clear language, we will need to understand what 
could have prevented Troelstra and van Dalen, as well as all other writers in the intuitionistic tradition 
(to the best of the author's knowledge) to add such a stipulation to the BHK interpretation.

One factor, which could be deciding in the case of Heyting's early work, was the attitude that, in view of 
Heyting's derivation system, intuitionistic logic needs to be explained philosophically, rather than 
mathematically.
It is obviously due to this attitude that Heyting did not attempt a systematic contentual interpretation of 
intuitionistic logic prior to Kolmogorov's paper, even though he had clearly gathered the necessary bits 
and pieces. 
But this attitude also meant that one does not even aim at delineating exactly what is included in 
the interpretation.

Another, much more serious factor was the widespread acceptance of first-order logic with its usual model theory
and a particular tradition of trying to express notions that are not expressible in it (including 
the ``principle of excluded middle'') as the default language in foundations of mathematics.
Like any language (natural or artificial), it comes with a plethora of implicit assumptions that on 
the one hand facilitate communication and thought, but on the other hand subjugate them to invisible limitations.
We will now proceed to review and overcome some of these limitations.

\subsection{Classical logic revisited}\label{universalism}

A detailed treatment of first-order formal logic will be given in the next chapter (\S\ref{formal}).
Here we include a quick glimpse of it, just enough to proceed with clarification of the BHK interpretation
in the next section (\S\ref{clarified}).

\subsubsection{Syntax}

Formal classical logic deals with formulas, such as
\[\fm{p\lor\neg q}\]
and 
\[\fm{q\lor\neg p}.\]
Here $\fm p$ and $\fm q$ are {\it propositional variables} and $\lor$, $\neg$ are {\it (formal) connectives}; 
all of these are some of the ingredients of the formal language of classical logic.
In classical logic, one cannot (formally) derive any of these two formulas from the other one (as we will  
discuss shortly).
Similarly, none of the formulas
\[\fm{p\lor\neg p}\]
and 
\[\fm{q\lor\neg q}\]
is derivable from each other without using the law of excluded middle.

In modern treatments of first order classical logic, the law of excluded middle is usually expressed by 
the {\it schema}
\[f\lor\neg f\]
in which $\lor$ and $\neg$ have the same meaning as before, but $f$ is a {\it metavariable} or schematic 
variable (not to be confused with the propositional variable $\fm f$), which is absent from the formal language 
of classical logic, but is understood to denote a formula.
Formulas like $\fm{p\lor\neg p}$, $\fm{q\lor\neg q}$ and $\fm{(p\lor q)\lor\neg(p\lor q)}$, obtained by
replacing each meta-variable with some formula are {\it instances} 
of this schema, just like e.g.\ $3^2-2^2=(3+2)(3-2)$ is an instance of the identity $x^2-y^2=(x+y)(x-y)$.

The schema itself is sometimes identified with the set of its instances.
But in any case it does not belong to the formal language --- because infinite sets of formulas, like metavariables, 
exist only in meta-language and not within the formal language.
(A finite set of formulas can often be identified with their conjunction, which is a single formula, but 
infinite conjunctions do not exist in traditional classical logic.)
In particular, when the ingredients of the formal language are interpreted in some way, we get interpretations
of formulas, but no interpretations of schemata.

Thus, even though the law of excluded middle is constantly used in classical logic, it is absent as an entity 
from both syntax and semantics in the standard modern formalism (like all other laws and inference rules
that define classical logic).
One could argue that ``only specific instances of the law of excluded middle are really used'', but this is only 
a part of the story, since to recognize some formula as an instance of the schema one does refer to the actual 
schema.

This recognition step can actually be bypassed.
There is a one-to-one correspondence between formulas and schemata given by replacing predicate variables by 
metavariables and conversely (at least if they are drawn from the same alphabet, meaning either a finite human
alphabet, for practical purposes, or a countably infinite mathematical alphabet, for theoretical ones).
Then instead of instances of a schema one can speak of {\it substitution instances} of the corresponding formula
(which are the same set of formulas).

Moreover, classical logic (and all other logics that we will consider) satisfies the substitution theorem:
if some formula is derivable in classical logic, then also all its substitution instances are derivable.
Because of this, it suffices for practical purposes to speak only of derivability of formulas, and so often
there is no practical need for a separate notion of ``derivability of schemata''; for example, all instances 
of the schema $\Big(\big((f\to g)\to f\big)\to f\Big)$ are derivable if and only if the formula
$\fm{\Big(\big((f\to g)\to f\big)\to f\Big)}$ is derivable.
(They are indeed derivable in classical logic, but this does not concern us here.)
But as one goes deeper into the modern formalism of first-order logic there appear more and more of such 
surrogates, i.e.\ writing one thing while actually thinking about another one.%
\footnote{Suffice it to say that in intuitionistic logic, the formula $\fm{\phi\lor\neg\phi}$ is not
a consequence of the formula $\fm{\neg\neg\phi\to\phi}$; but nevertheless people say and write, and for 
a good reason, that ``the principle of excluded middle'', which they express by the schema $\phi\lor\neg\phi$, is 
a consequence of ``the principle of double negation'', which express by the schema $\neg\neg\phi\to\phi$ 
(see Example \ref{ibp} and Remark \ref{ibp2} below for the details).
And people obviously do think in these terms of ``principles'', which are made illegitimate by the standard 
formalism.
Worse yet, the modern tradition of dealing with schemata, according to which ``principles'' are expressed
by schemata, is in itself a source of confusion.
The relation between the meta-language and the language is akin to the relation between syntax and 
semantics. 
Just like the arithmetical formula $\tr x+\tr y=\tr y+\tr x$ denotes any of its interpretations $0+1=1+0$, 
$2+3=3+2$, etc. (see also \S\ref{meta-quantifier} concerning the semantics of non-closed formulas), 
the schema $f\lor\neg f$ denotes any of its instances $\fm{p\lor\neg p}$, $\fm{(p\lor q)\lor\neg(p\lor q)}$, etc.
But just like the formula $\forall\tr x\,\forall \tr y\ \tr x+\tr y=\tr y+\tr x$ denotes effectively the infinite
conjunction $0+1=1+1\land 2+3=3+2\land\dots$, it would be more accurate to say that the ``principle of 
excluded middle'' is expressed not by the schema $\phi\lor\neg\phi$, but rather by the infinite 
conjunction of all its instances, which might be written symbolically as $\forall\phi\ \phi\lor\neg\phi$
(and similarly for the ``principle of double negation'').}
So it can be argued that to sacrifice the notion of ``derivability of schemata'' is to step on a slippery slope 
from a conceptual viewpoint.

\begin{remark}
Let us pause here to note that the substitution theorem does not say that each substitution instance $F'$ of 
a formula $F$ is derivable from $F$ in classical logic (and it is generally not derivable!). 
It only says that the {\it judgement} of derivability of $F$ in classical logic, denoted $\turnstile F$, implies 
the judgement $\turnstile F'$.
The difference is that both these judgements belong to the meta-theory, and so the implication between them 
is ``contentual'' rather than formal (in other words, one is derived from the other not in formal classical logic,
but in textbooks on formal classical logic).
\end{remark}

It not hard to formally include a device that amounts to the use schemata into the formal language.
Let us add a new symbol ``$\prin$'' to the formal language, and write it in front of 
a formula whenever we want to ``read it as a schema''; such expressions of the extended formal language,
consisting of a formula prefixed by ``$\prin$'', will be called {\it principles}.
(Thus the notion of a formula is in effect extended to include principles; however, to avoid confusion, we will 
keep the old meaning for the word ``formula'', using it to refer to formulas of the original language, 
without ``$\prin$''.)
That is, instead of the schema 
\[f\lor\neg f\]
(which does not belong to the formal language) or rather the infinite conjunction of its instances, 
which could be written symbolically as 
\[\forall f\ f\lor\neg f\]
(but anyway does not belong to the formal language) we will write the principle
\[\prin\fm{f\lor\neg f}\]
(which now belongs to the formal language).
Of course, the principle
\[\prin\fm{p\lor\neg p}\]
will then serve the same purpose --- but this is not a big deal, as the formula
\[\forall\tr x\,\fm r(\tr x)\]
already serves the same purpose as%
\footnote{We will see in \S\ref{formal} that in both cases this is a matter of replacing an expression of 
$\lambda$-calculus with an $\alpha$-equivalent expression, because ``$\prin$'' is really just an abbreviation
for the ``meta-quantifier closure'' (whatever it means).}
\[\forall\tr y\,\fm r(\tr y).\]
Here we encounter some further ingredients of the formal language of classical logic: the {\it quantifier} 
$\forall$, the unary {\it predicate variable} $\fm r$ and the {\it individual variables} $\tr x$, $\tr y$.
It should be noted in this connection that instances of the schema $p\lor\neg p$ also include formulas like
$\forall\tr x\,\fm r(\tr x)\lor\neg\forall\tr x\,\fm r(\tr x)$ and $\fm r(\tr x)\lor\neg\fm r(\tr x)$;
and instantiation and substitution become a bit more complicated to define precisely in the presence of 
individual variables (this will be discussed in \S\ref{formal}).

But there are more substantial questions to worry about:
\begin{enumerate}
\item Can one derive arbitrary instances of a schema from the corresponding principle, without 
resorting to meta-language?
\item Can one reasonably formalize derivability of principles?   
\item Does ``$\prin$'' have any natural semantics?
\end{enumerate}
The answers are affirmative, and will be briefly discussed here and in more detail in \S\ref{formal}.

Our way to answer the first two questions is along the lines of the well-known proof assistant {\tt Isabelle},
whose approach is in many ways closer to the early tradition of first-order logic, as e.g.\ in the textbooks by 
Hilbert--Ackermann \cite{HA} and Hilbert--Bernays \cite{HB} than to the modern formalism.

For our present purposes, we can identify the judgement of derivability of a principle $\prin F$, denoted
$\turnstile\prin F$, with the judgement $\turnstile F$.
A more reasonable definition is in \S\ref{formal}.

Now let us turn to the third question.

\subsubsection{Formal semantics}

As discussed in \S\ref{formalism-universalism} above, the elements of the formal language of 
classical logic (or any other formal language) are themselves devoid of any meaning but can be given 
meaning in an interpretation.
For example, one can interpret
\begin{itemize}
\item each propositional variable by a subset of the set $\R$ of real numbers 
(for example, $|\fm p|$ could consist of all integers, and $|\fm q|$ of all positive reals);
\item each individual variable by an element of the set $\N$ of natural numbers 
(for example, $|\tr x|=3$, $|\tr y|=5$); 
\item each $k$-ary predicate variable by an $\N^k$-indexed family of subsets of $\R$ 
(for example, $|\fm r|(n)=(-\infty,n]$); and 
\item each connective or quantifier by an appropriate set-theoretic operation; in particular, $\lor$ by taking
the union of the two given sets, $\neg$ by taking the complement of the given set in $\R$, and $\forall$ by 
taking the intersection of all members of the given $\N$-indexed family of sets.
\end{itemize}
Then each formula also gets an interpretation, defined by a straightforward recursion on its subformulas.
In our example, $\fm{|p\lor\neg q|}$ consists of all integers and all nonnegative reals, whereas 
$\fm{|q\lor\neg p|}$ consists of all positive reals and all non-integer reals; incidentally, these two sets
would have to be equal if the formulas $\fm{p\lor\neg q}$ and $\fm{q\lor\neg p}$ were interderivable.

The interpretation of individual variables, which is called an {\it individual assignment}, is needed
to interpret formulas with {\it free occurrences} of individual variables, such as 
$\fm r(\tr x)\lor\forall\tr x\,\big(\fm r(\tr x)\lor\fm r(\tr y)\big)$.
Here the first occurrence of $\tr x$ and the only occurrence of $\tr y$ are free, but the second occurrence 
of $\tr x$ is not free, because it is ``bound'' by the quantifier.
But we obviously do not need an individual assignment to interpret {\it closed} formulas, that is, those in which
no individual variable occurs freely.
For example, $|\forall\tr x\,\fm r(\tr x)|=\bigcap_{n\in\N}(-\infty,n]=(-\infty,0]$.

It is also not hard to see that all instances of the law of excluded middle, including
$\fm{p\lor\neg p}$, $\fm{q\lor\neg q}$, $\forall\tr x\,\fm r(\tr x)\lor\neg\forall\tr x\,\fm r(\tr x)$ and 
$\fm r(\tr x)\lor\neg\fm r(\tr x)$ have the same interpretation, $\R$.
Moreover, this remains true regardless of exactly how propositional, predicate and individual variables 
are interpreted.

The latter fact suggests that the principle $\fm{\prin p\lor\neg p}$ could also be assigned a definite 
interpretation: ``(semantic) truth'', denoted $\Top$.
In general, 
\begin{itemize}
\item every principle $\prin F$ is assigned an interpretation $|{\prin F}|\in\{\Top,\Bot\}$, which is 
set to be $\Top$ if for every interpretation of propositional and predicate variables that occur in $F$
and individual variables that occur freely in $F$, the interpretation of $F$ is the entire set $\R$; 
and otherwise, $\Bot$.
\end{itemize}
Thus $|\fm{\prin p\lor\neg q}|=\Bot$ (which is called ``(semantic) falsity'') since e.g.\ 
$|\fm{q\lor\neg p}|\ne\R$.

The interpretation of propositional and predicate variables, called a {\it predicate valuation}, is
needed to interpret formulas, but is not needed to interpret principles.
By an {\it interpretation of the language} of classical logic we will mean only an interpretation of connectives
and quantifiers (which includes the choices of $\N$ and $\R$).
This suffices to interpret principles.

If $|{\prin F}|=\Top$, we also say that the principle $\prin F$ is {\it valid} in our interpretation of 
the language of classical logic.
The judgement that $\prin F$ is valid in the interpretation being considered is denoted by $\Turnstile\prin F$.
It is well-known that $\Turnstile\prin F$ is equivalent to $\turnstile\prin F$ in the present example 
(see \S\ref{Euler}, where this example is discussed in more detail).
Validity of formulas with respect to a fixed predicate valuation and/or a fixed individual assignment will 
be discussed in \S\ref{formal}, but it is not needed for our present purposes.

\begin{remark} An interpretation of the language of a theory $T$ over classical logic (such as Peano Arithmetic, 
Tarski's elementary geometry or Zermalo--Fraenkel set theory) extends an interpretation of the language of 
classical logic by interpreting additional ingredients in the language of $T$: function symbols (such as those 
for the binary operations of addition and multiplication and for numeric constants) and predicate constants
(such as those for the binary predicates of equality and membership and the ternary predicate of betweenness 
for points on a line).

Connectives, quantifiers, function symbols and predicate constants are constants; whereas individual variables 
and predicate variables are variables (in the sense of $\lambda$-calculus, see \S\ref{expressions} below).
This distinction is mathematically significant, as predicate constants and predicate variables do behave 
differently even in classical logic (see examples in \S\ref{two-valued} and \S\ref{Euler}), and was clear 
in classic texts such as those of Hilbert--Ackermann \cite{HA}, Hilbert--Bernays \cite{HB}, Church \cite{Ch} 
and Novikov \cite{N}, \cite{N3}.

Nevertheless the modern tradition of first-order logic effectively conflates predicate variables and predicate 
constants under the guise of ``predicate symbols'' (or ``predicate letters'').%
\footnote{They are usually introduced on a par with function symbols, and are indeed treated as constants in 
the context of theories --- in which case the substitution theorem does not apply (unless all axioms are
given by schemata).
But when dealing with a bare logic and its models, ``predicate symbols'' can often be substituted with
formulas, which one would not normally do to constants; and one often assumes a countable list of 
``predicate symbols'' of each arity, none of them standing for anything specific (as e.g.\ in the texts 
of Kleene \cite{Kl} and Troelstra--van Dalen \cite{TV}), which is also a strange way of dealing with constants.
But it should be noted that in the case of propositional logic, ``propositional symbols'' are clearly 
distinguished from the (syntactic) propositional constants $\top$ and $\bot$; and in logic with equality, 
the equality is clearly distinguished from ``predicate symbols''.
The difference is that $\top$, $\bot$ and $=$ are not supposed to be substitutable by formulas.}
Consequently, the distinction between the language of a logic and the language of a theory is also blurred,
and hence an interpretation of such language is assumed to include an interpretation of ``predicate symbols'' in 
modern treatments of first-order logic --- even though modern treatments of propositional (=zero-order) logic
usually clearly separate valuation of propositional variables from interpretation of connectives.  
\end{remark}

\subsubsection{Informal semantics} \label{contentual}

Let us now look at another way of interpreting the ingredients of the language of classical logic:
\begin{itemize}
\item each propositional variable is interpreted by a contentful (e.g.\ mathematical) proposition; 
\item each individual variable by an element of a set $\D$, called the {\it domain of discourse};
\item each $k$-ary predicate variable by a $k$-ary contentful predicate on $\D$;
\item each formal connective or quantifier is interpreted according to the usual truth tables 
(see \S\ref{Tarski truth}), or in other words by the corresponding contentual%
\footnote{``Contentual'' (or rather its more widely used German equivalent {\it inhaltlich}) is Hilbert's 
term, the opposite of ``formal'', which he used to explain the Formalist program.}
connective or quantifier ($\lor$ by ``or'', $\neg$ by ``not'', $\forall$ by ``for all'', etc.);
\item a principle $\prin F$ is interpreted by the {\it meta-proposition} (that is, a judgement about 
propositions, itself treated as a higher-level proposition) asserting that for every interpretation 
of propositional and predicate variables that occur in $F$ and individual variables that occur freely in $F$, 
the contentful proposition that interprets $F$ is true;%
\footnote{In more detail, let $\vec x=(x_1,\dots,x_n)$ be the tuple of all individual variables that occur 
freely in $F$, let $\vec p=(p_1,\dots,p_m)$ be the tuple of all propositional and predicate variables that 
occur in $F$, and let $\vec r=(r_1,\dots,r_m)$ be the tuple of their arities.
Then $\prin F$ is interpreted by the meta-proposition
$\forall\vec P\,\forall\vec X\,(\vec p,\vec x\mapsto\Phi)(\vec P,\vec X)$
asserting the truth of all the contentful propositions 
$(\vec p,\vec x\mapsto F)(\vec P,\vec X)$ obtained from $F$ by substituting $\vec x$ by
an $n$-tuple $\vec X$ of elements of $\D$ and $\vec p$ by an $m$-tuple $\vec P$ of contentful predicates 
of arities $r_1,\dots,r_m$ on $\D$, and then applying the usual truth tables to interpret the connectives 
and quantifiers.}
\item $\Turnstile\prin F$ asserts that the meta-proposition that interprets $\prin F$ is true.
\end{itemize}
This is not really a definite interpretation but rather an informal sketch of what could possibly be made
into a definite interpretation.
To be sure, this can be done; for instance:
\begin{itemize}
\item each propositional variable is interpreted by a closed formula in the language of Peano Arithmetic (PA);
\item each individual variable by a closed term of PA (such as $(1+1)*(1+1)+1$);
\item each $k$-ary predicate variable by a formula $F(\tr x_1,\dots,\tr x_k)$ in the language of PA such that 
$\forall\tr x_1\dots\forall\tr x_k\, F(\tr x_1,\dots,\tr x_k)$ is a closed formula;
\item each formal connective or quantifier by the namesake connective or quantifier in the language 
of PA, or rather of its underlying (classical) logic;
\item a principle $\prin F$ is interpreted by the formula of the language of PA which expresses, using
the G\"odel numbering, the assertion  that for every interpretation of propositional and predicate variables 
that occur in $F$ and individual variables that occur freely in $F$, the formula that interprets $F$ 
is derivable in PA;
\item $\Turnstile\prin F$ asserts that the formula that interprets $\prin F$ is true in the standard model $\N$
of PA.
\end{itemize}

In this example, if $|\fm p|$ is a formula expressing that there exist infinitely many twin primes and 
$|\fm q|$ is G\"odel's formula, ``asserting its own non-derivability'', then $\fm{|p\lor\neg p|}$ and 
$\fm{|q\lor\neg q|}$ are different formulas (in contrast to the previous example), though both are 
derivable in PA.
Incidentally, that neither $|\fm q|$ nor $|\neg\fm q|$ is derivable in PA is nothing special, because
already in the previous example neither $|\fm q|$ nor $|\neg\fm q|$ was the entire $\R$.
Also, $|\fm{\prin p\lor\neg p}|$ and $|\fm{\prin \neg\neg p\to p}|$ are different formulas (in contrast
to the previous example), though both are true in $\N$ (and derivable in PA, though this is less obvious 
and does not matter for the purposes of our interpretation).

Admittedly, this example is silly enough in that it can be said to interpret formal classical logic by means of
formal classical logic --- except that the new ingredient ``$\prin$'' is interpreted in a more meaningful way.
Indeed, derivability of formulas in PA is no longer the business of PA and its underlying logic, but rather
of the meta-theory of PA and its underlying (meta-)logic --- which has been formalized by G\"odel within PA
and its underlying logic.
Thus the interpretation of $\prin$ takes us from ``propositions'' to ``meta-propositions'' (which are realized 
here as formalized judgements about formalized propositions).
But this is not all.
Validity of arithmetical formulas in the standard model of PA is also no longer the business of PA.
Thus the the definition of validity takes us further from meta-propositions to judgements about 
meta-propositions (which are this time left at an informal level).

Returning to the informal sketch above, let us emphasize that an informal interpretation like above is
in general not necessarily formalizable in a reasonable way; but if it happens to be formalizable, there 
ought to some flexibility in how it can be formalized.
(In our example, we could look at a non-standard model of PA, or at Tarski's plane geometry along with 
its various models, or at the set theory ZFC along with its various models.)
Also, the informal sketch above seems to be adding nonzero value to its particular formalization above, 
just like reading an informal sketch/idea of proof often turns out to be more helpful than going through 
the details of a carefully written, but unmotivated formal proof. 
Because of this, in what follows we will occasionally still resort to informal ``contentual'' interpretations 
on a par with the sketch above; but it should always be clear what is supposed to be rigorous and what is not.

Let us note that according to the informal sketch above, the principle of excluded middle,
\[\fm{\prin p\lor\neg p},\]
is interpreted by the meta-proposition asserting that {\it for each contentful proposition $p$, the proposition
$p\lor\neg p$ is true}.
The judgement 
\[\Turnstile\fm{\prin p\lor\neg p}\] 
asserts that this meta-proposition is true.
If we ignore the subtleties of different levels of reflection, which are anyway far from clear in such 
vague formulations, we can roughly express both the informal interpretation of the principle of excluded
middle and the judgement of its validity as
\[\forall P\ P\lor\neg P,\]
where $P$ is understood to run over contentful (e.g.\ mathematical) propositions.

\subsection{Clarified BHK interpretation}\label{clarified}

Let us now try to do the same with intuitionistic logic and with the principle of decidability,
\[\prin\fm{\gamma\lor\neg\gamma}.\]
Here $\fm\gamma$ is a propositional variable in the formal language of intuitionistic logic, which we denote 
by a Greek letter in order to avoid confusion with the language of classical logic.
In fact, in view of our previous discussions, it makes sense to
call $\fm\gamma$ a {\it problem variable}, because intuitionistic logic is supposed to be about problems 
rather than propositions (at least, certainly not about propositions in the usual sense of classical 
mathematics).

At this point, we are completely ignorant about the laws and inference rules of intuitionistic logic, and 
know little about its semantics.
But at least we could hope that an informal interpretation of the principle of decidability parallel to 
the above informal interpretation of the principle of excluded middle can be roughly expressed as 
\[\forall\Gamma\ \Gamma\lor\neg\Gamma,\]
where $\Gamma$ is understood to run over contentful (e.g.\ mathematical) problems.
But what does this expression mean? 
If we read it by analogy with the $\forall$ clause in the BHK interpretation, then it expresses 
{\it the (meta-)problem of finding a general method of solving the problem $\Gamma\lor\neg\Gamma$ for 
every contentful problem $\Gamma$.}
And this is precisely Kolmogorov's original interpretation of the principle of decidability \cite{Kol},
except that his use of the symbol ``$\turnstile$'' where we use the symbol ``$\prin$'' raises some issues 
(which we will discuss in a moment).

If we now turn to the judgement 
\[\Turnstile\prin\fm{\gamma\lor\neg\gamma}\]
that the principle of decidability is valid in our informal interpretation, the natural way to express it
is that {\it there exists} a solution of the said meta-problem; that is, {\it there exists a general method of 
solving the problem $\Gamma\lor\neg\Gamma$ for every contentful problem $\bGamma$.}
Using the notation of \S\ref{about-bhk}, this can be roughly written as
\[\wn\forall\Gamma\ \Gamma\lor\neg\Gamma.\]
And this is reminiscent of Brouwer's {\it ``Third insight.} The identification of the principle of
excluded middle with the principle of decidability of every mathematical problem.''%
\footnote{Brouwer's four ``insights'' intended to summarize his work in intuitionism; the third
is the shortest, and essentially reiterates after 20 years the identification discussed in
\S\ref{conjectures} --- except that this ``Third insight'' taken alone can be seen as a {\it definition}
of the principle of excluded middle.
The ``principle of decidability of every mathematical problem'', originally formulated by Hilbert 
(see \S\ref{Kolmogorov}), hardly intended to assert a {\it general method} for settlement of all 
mathematical problems.
However, it is likely that Brouwer, with his philosophy that all mathematical statements should be
read constructively, did actually mean this principle to assert a {\it general method} --- at least
in the later paper \cite{Br28}.
See, for instance, footnote 6 in \cite{Br28}, which guarantees this sort of reading for another
principle.}
\cite{Br28}*{\S1}.

\begin{remark} Let us now discuss Kolmogorov's original approach \cite{Kol}.
It is conceivable that he did not really mean his semantic definition of $\turnstile$ (which coincides with
our semantic interpretation of $\prin$) to serve as an interpretation of Heyting's syntactic definition 
of $\turnstile$ (``To indicate that a formula is included in the list of `correct formulas' the sign
$\turnstile$ will be put in front of it'' \cite{He4}).
This reading is supported by the fact Kolmogorov separately lists ``{\it rules} of our calculus of problems'',
which are themselves not problems.
In this case our interpretation above can be seen simply as a slight extension of Kolmogorov's.

However, Kolmogorov's words ``This explanation of the meaning of the sign $\turnstile$ is quite different 
from that of Heyting, even though it leads to the same rules of the calculus'' \cite{Kol} suggest that
he could have actually meant one to interpret the other.
In this case our interpretation above is rather a ``correction'' of Kolmogorov's, since in our approach
the judgement $\turnstile\prin\fm{\gamma\lor\neg\gamma}$ is naturally interpreted by the judgement
$\Turnstile\prin\fm{\gamma\lor\neg\gamma}$, and not by the meta-problem that interprets $\prin\fm{\gamma\lor\neg\gamma}$.
It should be noted here that Heyting, in his review of Kolmogorov's problem interpretation \cite{He2}, 
already chose to interpret a formula prefixed by the $\turnstile$ symbol as a judgement and not a problem 
(though without any mention of a general method).

The ``correction'', if it indeed occurs, may have some philosophical implications.
Kolmogorov has claimed that in his calculus of problems, ``there is no need for any special, e.g.\ 
intuitionistic, epistemological presuppositions'' (cf.\ his quote in \S\ref{Kolmogorov}).
But he arguably did not quite achieve this stated goal, because he appears to implicitly refer to 
the same epistemological presuppositions as Heyting.%
\footnote{According to Kolmogorov \cite{Kol}, ``the formula [$\turnstile a\lor\neg a$]
reads as follows: to give a general method that for every problem $a$
either finds a solution of $a$ or deduces a contradiction on the basis of
such a solution!
In particular, when the problem $a$ consists in proving a proposition,
one must possess a general method to either prove or reduce to
a contradiction any proposition.
If the reader does not consider himself to be omniscient, he will probably
determine that [$\turnstile a\lor\neg a$] cannot be on the list of
problems solved by him.''
Let us note that this effectively appeals to an assumption regarding the reader's knowledge.
Elsewhere in \cite{Kol} Kolmogorov introduces a list of axioms of intuitinistic logic
by stating that
``we must assume that the solutions of some elementary problems are already known.
We take as postulates that we already have solutions to the following groups A and B
of problems. The subsequent presentation is directed only to the reader who has
solved all these problems''.
Here Kolmogorov again clearly assumes something regarding the reader's knowledge.
In fact, his assumption that solutions of certain problems beginning with $\turnstile$
{\it are already known} is not so different from Heyting's epistemological definition of
$\turnstile$ \cite{He0}:
``To satisfy the intuitionistic demands, the assertion must be
the observation of an empirical fact, that is, of the realization of
the expectation expressed by the proposition $p$.
Here, then, is the {\it Brouwerian assertion} of $p$: {\it
It is known how to prove $p$.} We will denote this by $\turnstile p$. The words `to prove' must be taken
in the sense of `to prove by construction'.''}
It is also arguable that our choice of the judgement of existence instead of Kolmogorov's implicit or
Heyting's explicit judgements of knowledge does provide independence from ``epistemological presuppositions'', 
even if gained by the price of what Heyting called ``faith in transcendental existence''.

On a more practical level, the switch from knowledge to existence corresponds to the use of classical, rather
constructive meta-logic (or meta-meta-logic in the setting of \S\ref{formal}).
\end{remark}

Let us now extend the above to the following informal interpretation of the ingredients of the language of 
intuitionistic logic:
\begin{itemize}
\item each individual variable is interpreted by an element of a set $\D$;
\item each $k$-ary problem variable is interpreted by a contentful (e.g.\ mathematical) problem
with $k$ parameters, each running over $\D$; 
\item each formal connective or quantifier is interpreted according to the usual BHK interpretation 
(see \S\ref{BHK});
\item a principle $\prin\Phi$ is interpreted by the meta-problem of finding a general method of solving all 
the contentful problems that result from all possible interpretations of all problem variables that occur 
in $\Phi$ and all individual variables that occur freely in $\Phi$;%
\footnote{In more detail, let $\vec x=(x_1,\dots,x_n)$ be the tuple of all individual variables that occur 
freely in $\Phi$, let $\vec\gamma=(\gamma_1,\dots,\gamma_m)$ be the tuple of all problem variables that 
occur in $\Phi$, and let $\vec r=(r_1,\dots,r_m)$ be the tuple of their arities.
Then $\prin\Phi$ is interpreted by the meta-problem
$\forall\vec\Gamma\,\forall\vec X\,(\vec\gamma,\vec x\mapsto\Phi)(\vec\Gamma,\vec X)$
of finding a general method of solving all the contentful problems 
$(\vec\gamma,\vec x\mapsto\Phi)(\vec\Gamma,\vec X)$ obtained from $\Phi$ by substituting $\vec x$ by
an $n$-tuple $\vec X$ of elements of $\D$ and $\vec\gamma$ by an $m$-tuple $\vec\Gamma=(\Gamma_1,\dots,\Gamma_m)$ 
of contentful problems, where each $\Gamma_i$ has $r_i$ parameters running over $\D$, and then applying 
the usual BHK interpretation to interpret the connectives and quantifiers.}
\item $\Turnstile\prin\Phi$ asserts that the meta-problem that interprets $\prin\Phi$ has a solution.
\end{itemize}

To fix a name, we will call this the {\it clarified BHK} (or briefly cBHK) interpretation of 
intuitionistic logic.

\subsection{The principle of decidability and its relatives}\label{PEM}

So is the principle of decidability valid on the clarified BHK interpretation?
That is, does there exist a general method%
\footnote{One can also wonder about Hilbert's conviction, that $\bGamma\lor\neg\bGamma$ admits
a (possibly {\it ad hoc}) solution for every contentful problem $\bGamma$.
This judgement, which can be expressed as $\forall\Gamma\,\wn(\Gamma\lor\neg\Gamma)$ in the above notation, 
falls outside of the scope of intuitionistic logic on the clarified BHK interpretation,.
A certain extension QHC of both intuitionistic and classical logics, which includes additional connectives $\wn$ 
and $\oc$ (and no further connectives) is studied in the author's papers \cite{M1}, \cite{M2}, \cite{M3}.
Hilbert's conviction is naturally seen as an interpretation of the principle 
$\prin\wn(\fm\gamma\lor\neg\fm\gamma)$ in the language of QHC, which turns out to be an independent principle 
with respect to QHC.}
of solving the problem $\bGamma\lor\neg\bGamma$ for every contentful problem $\bGamma$?

If $G_p$ is a group given by a finite presentation $p$ (i.e.\ a finite list of generators and
relations), let $\bGamma_p$ be the problem, {\it Find an isomorphism between $G_p$ and
the trivial group $1$}.
Then from any solution of $\bGamma_p$ we can extract a proof that $G_p\simeq 1$ (by Kreisel's
clarification), whereas from any solution of $\neg\bGamma_p$ we can extract a proof that
$G_p\not\simeq 1$ (by our solution of $\neg\bGamma_p\,\to\,\oc\neg\wn\bGamma_p$).
Hence any solution of $\bGamma_p\lor\neg\bGamma_p$ would tell us,
in particular, whether $G_p$ is isomorphic to $1$ or not (since any
solution of $\bGamma\lor\bDelta$ involves, in the first place,
an explicit choice between $\bGamma$ and $\bDelta$).

Now if we have a general method of solving all problems of the form $\bGamma\lor\neg\bGamma$,
then in particular we have a general method of solving the problem $\bGamma_p\lor\neg\bGamma_p$
for all values of $p$.
If the latter general method is interpreted as an {\it algorithm} (in the sense of Turing machines)
with input $p$ (which is perfectly consistent with Kolmogorov's explanation of a general method),
then such a general method would yield an algorithm deciding whether $G_p$ is isomorphic to $1$.
But it is well-known that there exists no such algorithm.%
\footnote{The geometrically-minded reader might prefer the modification of this example based of
S. P. Novikov's theorem
(improving on an earlier result of A. A. Markov, Jr.), that for a certain sequence of finite
simplicial complexes $K_1,K_2,\dots$ there exists no algorithm to decide, for any positive integer
input $n$, whether $K_n$ is piecewise-linearly homeomorphic to the $5$-dimensional sphere
(see \cite{CL}).}
This shows that the principle of decidability is not valid under the clarified BHK interpretation:
$$\not\Turnstile\prin\fm\gamma\lor\neg\fm\gamma.$$
In the next section (\S\ref{Medvedev-Skvortsov}) we will review a much more elementary example, which 
shows incidentally that it is not really necessary to interpret general methods constructively, 
as algorithms.

Let us pause to note that we have {\it not} established validity, on the clarified
BHK interpretation, of the negated principle of decidability, $\prin\neg\fm\gamma\lor\neg\fm\gamma$.
For that we would need a solution of $\forall\Gamma\,\oc\neg\wn(\Gamma\lor\neg\Gamma)$, that is, a 
general proof (=a general method to prove) that each contentful problem of the form
$\bGamma\lor\neg\bGamma$ has no solutions.
But there exists no such general proof, since, in fact, any problem of the form
$(\bDelta\to\bDelta)\lor\neg(\bDelta\to\bDelta)$ does have a solution.
Thus we see, quite trivially, that
$$\not\Turnstile\prin\neg(\fm\gamma\lor\neg\fm\gamma)$$ on the clarified BHK interpretation.
In fact, a stronger assertion turns out to hold:
$$\Turnstile\prin\neg\neg(\fm\gamma\lor\neg\fm\gamma);$$
thus on the clarified BHK interpretation, there exists a general proof (we will describe it explicitly
in \S\ref{tautologies}, (\ref{not-not-LEM})) that for each problem $\bGamma$ there exists no proof that
$\bGamma\lor\neg\bGamma$ has no solutions.
Yet on the other hand, we have just seen a proof that the problem
$\forall p\,(\bGamma_p\lor\neg\bGamma_p)$ has no solutions.
In particular, our former argument works also to establish
$$\not\Turnstile\prin\neg\neg\forall\tr x\,\big(\fm\gamma(\tr x)\lor\neg\fm\gamma(\tr x)\big)$$
on the clarified BHK interpretation; this was originally observed by Brouwer, who presented
``counterexamples to the freedom from contradiction of the Multiple Principle of Excluded Middle of
the second kind'' \cite{Br28}*{\S2}, and now is better known as independence of the Double Negation
Shift principle (see \S\ref{shift}).

\subsection{Medvedev--Skvortsov problems}\label{Medvedev-Skvortsov}
Let us fix a set $X$, and consider the class of problems $\bGamma_f$ of the form:
{\it Solve the equation $f(x)=0$}, where $f\:X\to\{0,1\}$ is an arbitrary (set-theoretic) function.
Thus a solution of $\bGamma_f$ is any $x\in X$ such that $f(x)=0$.
Such problems will be called {\it Medvedev--Skvortsov problems} with domain $X$.
Of course, $f$ is determined by the pair of sets $\big(X,f^{-1}(0)\big)$.
Thus we may write a pair of sets, $(X,Y)$, to encode the problem $\bGamma_f$, where $f\:X\to\{0,1\}$ is 
such that $f^{-1}(0)=Y$.
It is this language of pairs that was used by Medvedev \cite{Me1}, \cite{Me2}, Skvortsov \cite{Skv}
and L\"auchli \cite{Lae} (see also \cite{ShSk}).
If $\D$ is a fixed set (the domain of discourse) and $k\in\N$, we may also consider $k$-parameter families of
Medvedev--Skvortsov problems with a fixed domain $X$.
Every such family is encoded by a $k$-parameter family of pairs of sets $\big(X,Y(x_1,\dots,x_k)\big)$, 
where $x_1,\dots,x_k$ run over $\D$.
Here $Y$ is a function from $\D^k$ to the set $2^X$ of all subsets of $X$.

Let us consider the following interpretation of the ingredients of the language of intuitionistic logic.

\begin{itemize}
\item each nullary problem variable is interpreted by a pair of sets $(X,Y)$;
\item each individual variable is interpreted by an element of $\D$;
\item each $k$-ary problem variable is interpreted by a $k$-parameter family of pairs sets of the form
$\big(X,Y(t_1,\dots,t_k)\big)$, where each $t_i$ runs over $\D$;
\item each formal connective or quantifier is interpreted as follows:
\begin{itemize}
\item $(X,Y)\,|{\land}|\,(X',Y')=(X\x X',\,Y\x Y')$;
\item $(X,Y)\,|{\lor}|\,(X',Y')=(X\sqcup X',\,Y\sqcup Y')$;
\item $(X,Y)\,|{\to}|\,(X',Y')=\big(\Hom(X,X'),\,\{f\:X\to X'\mid f(Y)\subset Y'\}\big)$;
\item $|{\ab}|=(\{\emptyset\},\emptyset)$;
\item $|{\exists}| t\, \big(X,Y(t)\big)=\big(\D\x X,\,\{(d,x)\in\D\x X\mid x\in Y(d)\}\big)$;
\item $|{\forall}| t\, \big(X,Y(t)\big)=\big(\Hom(\D,X),\,\{f\:\D\to X\mid \forall d\in\D\ f(d)\in Y(d)\}\big)$.
\end{itemize}
\item a principle $\prin\Phi$ that contains occurrences of precisely $m$ problem variables is interpreted
by a function $|{\prin\Phi}|$ that assigns to every $m$-tuple of sets $X_1,\dots,X_m$ a pair of sets 
$(S,S')$, defined as follows.
Let $x_1,\dots,x_n$ be all individual variables that occur freely in $\Phi$, let $\gamma_1,\dots,\gamma_m$ 
be all problem variables that occur in $\Phi$, and let $r_1,\dots,r_m$ be their arities.
Upon substituting each $x_i$ by some $d_i\in\D$, and each $\gamma_i$ by an $r_i$-parameter family
of pairs sets of the form $\big(X_i,Y_i(t_1,\dots,t_{r_i})\big)$, and applying the above interpretation of 
the connectives and quantifiers, we obtain an interpretation of $\Phi$ by a pair or sets $(S,T)$.
Here $S$ depends only on $\vec X=(X_1,\dots,X_m)$ and $T=T(\vec d,\vec Y)$ is a subset of $S$ that depends 
additionally on $\vec d=(d_1,\dots,d_n)$ and $\vec Y=(Y_1,\dots,Y_m)$.
The desired pair $|{\prin\Phi}|(\vec X)$ is $(S,S')$, where $S'$ is the intersection of the sets
$T(\vec d,\vec Y)$ over all $(\vec d,\vec Y)\in\D^n\x\Hom(\D^{r_1},2^{X_1})\x\dots\x\Hom(\D^{r_m},2^{X_m})$.
\item $\Turnstile\prin\Phi$ asserts that the function $|{\prin\Phi}|$ assigns to every tuple of sets
a pair $(S,S')$ with nonempty $S'$.
\end{itemize}

This is very much in the spirit of the clarified BHK interpretation.
Indeed, let us recall that a pair of sets $(X,Y)$ is actually supposed to encode a problem whose set of 
solutions is $Y$.
Thus let us write $\KK(X,Y)\KK=Y$.
If we denote pairs of sets by Greek letters, then we obtain, just like in \S\ref{confusion}: 
\begin{itemize}
\item $\KK\bGamma\,|{\land}|\,\bDelta\KK=\KK\bGamma\KK\x\KK\bDelta\KK$;
\item $\KK\bGamma\,|{\lor}|\,\bDelta\KK=\KK\bGamma\KK\sqcup \KK\bDelta\KK$;
\item there is a map
$\script F\:\KK\bGamma\,|{\to}|\,\bDelta\KK\to\Hom(\KK\bGamma\KK,\KK\bDelta\KK)$
(actually, a surjection);
\item $\KK|{\ab}|\KK=\emptyset$;
\item $\KK|{\exists}| t\,\bGamma(t)\KK=\bigsqcup_{d\in\D} \KK\bGamma(d)\KK$;
\item there is a map
$\script G\:\KK|{\forall}| t\,\bGamma(t)\KK\to\prod_{d\in\D} \KK\bGamma(d)\KK$
(actually, a bijection).
\end{itemize}

Moreover, a ``general method'' of solving the problems encoded by $\big(S,T(\vec d,\vec Y)\big)$ for all 
$(\vec d,\vec Y)$ is naturally interpreted as a common solution to all these problems, i.e., an element 
in the intersection of all the sets $T(\vec d,\vec Y)$.
And then of course such a general method exists if and only if the intersection is nonempty.
Thus the Medvedev--Skvortsov interpretation of intuitionistic logic is arguably a formalization of 
the clarified BHK interpretation.

Let us now consider the interpretation of the principle of decidability, $|{\prin\fm\gamma\lor\neg\fm\gamma}|$.
It is a function assigning to a set $X$ the pair of sets $(S,\bigcap_{Y\subset X} T(Y))$, where
\[\big(S,T(Y)\big)=(X,Y)\,|{\lor}|\,|{\neg}|(X,Y).\]
We have $S=X\sqcup\Hom(X,\{\emptyset\})$ and $T(Y)=Y\sqcup\{f\:X\to\{\emptyset\}\mid f(Y)\subset\emptyset\}$,
that is, 
\[T(Y)=
\begin{cases}
Y\sqcup\emptyset,&\text{if }Y\ne\emptyset;\\
\emptyset\sqcup\Hom(X,\{\emptyset\}),&\text{if }Y=\emptyset.
\end{cases}\]
Thus $\bigcap_{Y\subset X} T(Y)=\emptyset$.
So the principle of decidability is not valid under the Medvedev--Skvortsov interpretation.

\subsection{Some intuitionistic validities}\label{tautologies}

The point of the clarified BHK interpretation is that it provides a reasonable explanation of intuitionistic
validities, without relying on any formal system of axioms and inference rules.
Although this explanation is highly informal, it works --- and is often more helpful than
any formal calculus if one needs to quickly verify whether a given formula is an intuitionistic
validity.%
\footnote{Why do these informal BHK-arguments work?
One possible answer is that they do just because they are in fact sketches of proofs that the principles
in question are satisfied in the sheaf-valued models of \S\ref{sheaves}.
Intuitionistic logic is shown to be complete with respect to this class of models in \S\ref{sheaves},
so by formalizing these sketches one would indeed
establish that the principles in question are intuitionistic validities.
Alternatively, one can interpret the informal BHK-arguments as textual representations of $\lambda$-terms
(see \cite{Ho}, \cite{SU}*{\S4}, \cite{TV}*{\S10.8}) or, equivalently, as sketches of proofs that
the principles in question are satisfied in L\"auchli's models \cite{Lae}.
In fact, there seems to be no significant difference between the two alternatives (see \cite{Aw00}).}

The order of precedence of connectives, quantifiers and the symbol $\prin$ is (in groups of equal priority, 
starting with higher precedence/stronger binding): 1) $\neg$, $\exists$ and $\forall$;
2) $\land$ and $\lor$; 3) $\to$ and $\tofrom$; 4) $\prin$.

Another convention (perhaps obvious): we do not omit parameters of problem variables, e.g.\ in the formula
$\fm\forall\tr x\,\big(\fm\alpha\lor\fm\beta(\tr y)\lor\fm\gamma(\tr x,\tr y)\big)$,
$\fm\alpha$ is a nullary problem variable, $\fm\beta$ is a unary one and $\fm\gamma$ is a binary one.
(This agrees with the notation of \S\ref{formal}.)

\subsubsection{Basic validities}

\begin{enumerate}
\item\label{double negation} $\Turnstile\prin\fm{\gamma\to\neg\neg \gamma}$
\end{enumerate}
Given a problem $\bGamma$ and a solution $s$ of $\bGamma$,
we need to produce a solution of $\neg\neg\bGamma$ by a general method.
Indeed, given a solution of $\neg\bGamma$, that is, a method turning
solutions of $\bGamma$ into solutions of $\ab$, we simply apply this
method to $s$ and get a contradiction.

\begin{enumerate}[resume]
\item\label{contrapositive} (contrapositive)
$\Turnstile\prin\fm{ (\gamma\to \delta)\To(\neg \delta\to\neg \gamma)}$
\end{enumerate}
Indeed, from $\bGamma\to\bDelta$ and $\bDelta\to\ab$ we infer
$\bGamma\to\ab$.

\begin{enumerate}[resume]
\item\label{triple negation} $\Turnstile\prin\fm{\neg \gamma\Tofrom\neg\neg\neg \gamma}$
\end{enumerate}
This follows from (\ref{double negation}): the ``$\to$'' implication by substitution,
and the ``$\from$'' implication via (\ref{contrapositive}).

\begin{enumerate}[resume]
\item $\Turnstile\prin\fm{\neg\neg\ab\Tofrom\ab}$
\end{enumerate}

Here the ``$\from$'' implication is a special case of (\ref{double negation}), and the ``$\to$'' implication
can be rewritten as $\Turnstile\prin\fm{\neg\neg\neg\ab}$, which by (\ref{triple negation}) is equivalent to
$\Turnstile\prin\fm{\neg\ab}$.
The latter is a special case of $\Turnstile\prin\fm{\gamma\to\gamma}$ (where the problem $\bGamma\to\bGamma$
has an obvious solution).

\begin{enumerate}[resume]
\item\label{explosion0} (explosion)
$\Turnstile\prin\fm{\ab \to \gamma}$
\end{enumerate}
If $s$ is a solution of $\ab$, then $s$ does not exist; in particular, $s$ is itself also
a solution of any given problem $\bGamma$.

\begin{enumerate}[resume]
\item\label{implication0}
$\Turnstile\prin\fm{\neg\delta\lor\gamma\To(\delta\to\gamma)}$
\end{enumerate}
Given a solution of $\neg\bDelta\lor\bGamma$ and a solution of $\bDelta$,
we get a solution of $\ab\lor\bGamma$, hence a solution of $\bGamma\lor\bGamma$.
This yields a solution of $\bGamma$ by considering two cases.

\begin{enumerate}[resume]
\item\label{implication0'}
$\Turnstile\prin\fm{\beta\lor\gamma\To(\neg\beta\to\gamma)}$
\end{enumerate}
This follows from (\ref{implication0}) with $\fm\delta$ substituted by $\neg\fm\beta$, 
using (\ref{double negation}).

\begin{enumerate}[resume]
\item\label{decidable-stable}(decidability implies stability)
$\Turnstile\prin\fm{\gamma\lor\neg \gamma\To(\neg\neg\gamma\to \gamma)}$
\end{enumerate}
This follows from (\ref{implication0}) with $\fm\delta$ substituted by $\neg\neg\fm\gamma$,
using (\ref{triple negation}).

\subsubsection{Quantifiers}

\begin{enumerate}[resume]
\item\label{quantifier interchange}
$\Turnstile\prin\fm{\exists \tr x\,\forall \tr y\,\gamma(\tr x,\tr y)\To
\forall \tr y\,\exists \tr x\,\gamma(\tr x,\tr y)}$
\end{enumerate}
Indeed, if we found an $x_0$ and a method to turn every $y$ into
a solution of $\bGamma(x_0,y)$, then we have a method to produce for
every $y$ an $x$ and a solution of $\bGamma(x,y)$.

Here is a good place to note that, like in classical logic, $\exists$
and $\forall$ can be thought of as generalizations of $\lor$ and $\land$.
Thus similarly to (\ref{quantifier interchange}) we get
\begin{enumerate}[resume]
\item $\Turnstile\prin\fm{\exists \tr x\, \big(\gamma(\tr x)\land\delta(\tr x)\big)\To
\exists \tr x\,\gamma(\tr x)\land\exists \tr x\,\delta(\tr x)}$
\item\label{q1} $\Turnstile\prin\fm{\forall \tr y\, \gamma(\tr y)\lor\forall \tr y\,\delta(\tr y)\To
\forall \tr y\,\big(\gamma(\tr y)\lor\delta(\tr y)\big)}$
\end{enumerate}
Intuitionistic logic features an additional connection: $\exists$ and
$\forall$ behave as if they were generalizations of $\land$, and $\to$,
respectively.%
\footnote{This connection is made precise in dependent type theory, where no distinction is made between 
the domain of a variable, like $\D$ in the BHK clauses for $\exists$ and $\forall$, and the set of
solutions of a problem, like that of $\bGamma$ in the BHK clauses for $\land$ and $\to$.}
Thus similarly to (\ref{quantifier interchange}) we also get
\begin{enumerate}[resume]
\item\label{q2}
$\Turnstile\prin\fm{\exists \tr x\,\big(\theta\to\gamma(\tr x)\big)\To
\big(\theta\to\exists \tr x\,\gamma(\tr x)\big)}$%
\footnote{The converse implication is known as the Principle of Independence
of Premise; see \S\ref{Harrop1}.}
\end{enumerate}
Moreover, similarly to the obvious validities:
$\Turnstile\prin\fm{\forall \tr x\,\forall \tr y\,\gamma(\tr x,\tr y)\Tofrom
\forall \tr y\,\forall \tr x\,\gamma(\tr x,\tr y)}$ and
$\Turnstile\prin\fm{\exists \tr y\,\exists \tr x\,\gamma(\tr x,\tr y)\Tofrom
\exists \tr x\,\exists \tr y\,\gamma(\tr x,\tr y)}$
we get something otherwise not so obvious:
\begin{enumerate}[resume]
\item\label{q3}
$\Turnstile\prin\fm{\forall \tr x\,\big(\theta\to\gamma(\tr x)\big)\Tofrom
\big(\theta\to\forall \tr x\,\gamma(\tr x)\big)}$
\item\label{q4}
$\Turnstile\prin\fm{\theta\land\exists \tr x\,\gamma(\tr x)\Tofrom
\exists \tr x\,\big(\theta\land\gamma(\tr x)\big)}$
\end{enumerate}
In addition, by specializing (\ref{q1}) we get
\begin{enumerate}[resume]
\item\label{q5}
$\Turnstile\prin\fm{\theta\lor\forall \tr x\,\gamma(\tr x)\To\forall \tr x\,\big(\theta\lor\gamma(\tr x)\big)}$%
\footnote{The converse implication is known as the Constant Domain Principle; see 
\S\ref{constant domain} below.}
\end{enumerate}

There are two more validities of the above sort.
Firstly,
\begin{enumerate}[resume]
\item\label{gq1}
$\Turnstile\prin\fm{\forall \tr x\,\big(\gamma(\tr x)\to\theta\big)\Tofrom
\big(\exists \tr x\, \gamma(\tr x)\to\theta\big)}$
\end{enumerate}
Indeed, a solution of $\forall x\,\big(\bGamma(x)\to\bTheta\big)$
turns each $x$ into a solution of $\bGamma(x)\to\bTheta$.
A solution of $\exists x\, \bGamma(x)\to\bTheta$ turns
any given $x_0$ and solution of $\bGamma(x_0)$ into a solution of $\bTheta$.
These are obviously reducible to each other.

Secondly,
\begin{enumerate}[resume]
\item\label{gq2}
$\Turnstile\prin\fm{\exists \tr x\,\big(\gamma(\tr x)\to\theta\big)\To
\big(\forall \tr x\, \gamma(\tr x)\to\theta\big)}$%
\footnote{The converse implication is an independent principle; see \S\ref{Harrop1}.}
\end{enumerate}
Indeed, given an $x_0$ and a solution of $\bGamma(x_0)\to\bTheta$,
and assuming a method of solving $\bGamma(x)$ for every $x$,
we apply this method to $x=x_0$ to get a solution of
$\bGamma(x_0)$ and consequently a solution of $\bTheta$.

\begin{enumerate}[resume]
\item\label{quantifiers1}
$\Turnstile\prin\fm{\neg\exists \tr x\,\gamma(\tr x)\Tofrom \forall \tr x\,\neg \gamma(\tr x)}$
\item\label{quantifiers2}
$\Turnstile\prin\fm{\exists \tr x\,\neg \gamma(\tr x)\To\neg\forall \tr x\,\gamma(\tr x)}$%
\footnote{The converse implication is known as the Generalized Markov Principle;
see \S\ref{Markov}.}
\end{enumerate}
These are special cases of (\ref{gq1}) and (\ref{gq2}).

\subsubsection{Distributivity}

\begin{enumerate}[resume]
\item\label{morgan1} $\Turnstile\prin\fm{\theta\land(\gamma\lor\delta)\Tofrom
(\theta\land\gamma)\lor(\theta\land\delta)}$
\item\label{morgan2} $\Turnstile\prin\fm{\theta\lor(\gamma\land\delta)\Tofrom
(\theta\lor\gamma)\land(\theta\lor\delta)}$
\end{enumerate}
Here (\ref{morgan1}) and the ``$\to$'' implication in (\ref{morgan2}) follow
similarly to (\ref{q4}) and (\ref{q5}).

The remaining implication says that if we have (i) either a solution
of $\bTheta$, or a solution of $\bGamma$, and (ii) either a solution of
$\bTheta$, or a solution of $\bDelta$, then we can produce, by a general
method, (iii) either a solution of $\bTheta$, or solutions of $\bGamma$
and $\bDelta$.
Clearly, there is such a general method which proceeds by a finite
analysis of cases; in fact, there are two such distinct methods $m_i$,
$i=1,2$, which in the case that we have two solutions of $\bTheta$
select the $i$th one.

\begin{enumerate}[resume]
\item\label{anti-dm1} $\Turnstile\prin\fm{ (\theta\to\gamma)\land(\theta\to\delta)\Tofrom
\big(\theta\to(\gamma\land\delta)\big)}$
\item\label{anti-dm2} $\Turnstile\prin\fm{ (\theta\to\gamma)\lor(\theta\to\delta)\To
\big(\theta\to(\gamma\lor\delta)\big)}$%
\footnote{The converse implication is equivalent to the G\"odel--Dummett principle
$\fm{\prin(\gamma\to\delta)\lor(\delta\to\gamma)}$; see \S\ref{Harrop1}.}
\end{enumerate}
These follow similarly to (\ref{q3}) and (\ref{q2}).

\begin{enumerate}[resume]
\item\label{dm1} $\Turnstile\prin\fm{ (\gamma\to\theta)\land(\delta\to\theta)\Tofrom
\big((\gamma\lor\delta)\to\theta\big)}$
\item\label{dm2} $\Turnstile\prin\fm{ (\gamma\to\theta)\lor(\delta\to\theta)\To
\big((\gamma\land\delta)\to\theta\big)}$%
\footnote{The converse implication, Skolem's principle, is equivalent to
the G\"odel--Dummett principle; see \S\ref{Harrop1}.}
\end{enumerate}
These follow similarly to (\ref{gq1}) and (\ref{gq2}).

\begin{enumerate}[resume]
\item\label{exponential} (exponential law)
$\Turnstile\prin\fm{ (\alpha\land \beta\to \gamma)\Tofrom\big(\alpha\to (\beta\to \gamma)\big)}$
\item\label{co-exponential}
$\Turnstile\prin\fm{\alpha\land (\beta\to \gamma)\To\big((\alpha\to \beta)\to \gamma\big)}$
\end{enumerate}
These are again similar to (\ref{gq1}) and (\ref{gq2}).

\begin{enumerate}[resume]
\item\label{deMorgan1} (de Morgan law)
$\Turnstile\prin\fm{\neg \gamma\land\neg \delta\Tofrom\neg (\gamma\lor \delta)}$
\item\label{deMorgan2} (de Morgan law)
$\Turnstile\prin\fm{\neg \gamma\lor\neg \delta\To\neg (\gamma\land \delta)}$%
\footnote{The converse implication is equivalent to Jankov's principle $\neg\alpha\lor\neg\neg\alpha$,
see Proposition \ref{Jankov's logic} below.}
\end{enumerate}
These are special cases of (\ref{dm1}) and (\ref{dm2}).

\begin{enumerate}[resume]
\item\label{implication-exp}
$\Turnstile\prin\fm{\neg(\gamma\land \delta)\Tofrom (\gamma\to\neg \delta)}$
\item\label{implication-coexp}
$\Turnstile\prin\fm{\gamma\land\neg \delta\To\neg(\gamma\to \delta)}$
\end{enumerate}
These follow from (\ref{exponential}) and (\ref{co-exponential}).

\subsubsection{Consequences and refinements}

\begin{enumerate}[resume]
\item\label{not-not-LEM}
$\Turnstile\prin\fm{\neg\neg(\gamma\lor\neg\gamma)}$
\end{enumerate}
Indeed, a solution of $\neg(\bGamma\lor\neg \bGamma)$ yields by
(\ref{deMorgan1}) a solution of $\neg \bGamma\land\neg\neg \bGamma$,
that is, a solution of $\neg \bGamma$ together with a method to turn
such a solution into a contradiction.

\begin{enumerate}[resume]
\item\label{implication-shift}
$\Turnstile\prin\fm{ (\gamma\to\neg \delta)\Tofrom (\delta\to\neg \gamma)}$
\end{enumerate}
This follows from (\ref{implication-exp}).

\begin{enumerate}[resume]
\item\label{double contrapositive}
$\Turnstile\prin\fm{ (\neg\neg \gamma\to\neg\neg \delta)\Tofrom (\neg \delta\to\neg \gamma)}$
\end{enumerate}
This ``idempotence of contrapositive'' follows from
(\ref{triple negation}) and (\ref{implication-shift}).

\begin{enumerate}[resume]
\item\label{neg-neg-drop}
$\Turnstile\prin\fm{ (\neg\neg \gamma\to\neg\delta)\Tofrom (\gamma\to\neg\delta)}$
\end{enumerate}
This follows from (\ref{implication-shift}) and (\ref{triple negation}),
or alternatively from (\ref{double negation}),
(\ref{contrapositive}) and (\ref{triple negation}).

\begin{enumerate}[resume]
\item\label{neg-neg-drop2}
$\Turnstile\prin\fm{\neg(\gamma\land\neg\neg \delta)\Tofrom \neg(\gamma\land\delta)}$
\item\label{neg-neg-drop3}
$\Turnstile\prin\fm{\neg(\gamma\lor\neg\neg \delta)\Tofrom \neg(\gamma\lor\delta)}$
\end{enumerate}
These follow from (\ref{implication-exp}) and (\ref{deMorgan1}) respectively,
using (\ref{triple negation}).

Surprisingly, (\ref{implication-coexp}) is a consequence of a reversible implication:
\begin{enumerate}[resume]
\item\label{implication1}
$\Turnstile\prin\fm{\neg\neg \gamma\land\neg \delta\Tofrom\neg(\gamma\to \delta)}$
\end{enumerate}
Here the ``$\from$'' implication follows from (\ref{deMorgan1}) and
the contrapositive of (\ref{implication0}).

Conversely, suppose we are given a solution of
$\neg\neg \bGamma\land\neg \bDelta$
and a solution of $\bGamma\to \bDelta$.
Then we have solutions of $\neg \bDelta$, of $\neg \bGamma\to\ab$, and
of $\neg \bDelta\to\neg \bGamma$ (the contrapositive).
Applying the latter to the solution of $\neg \bDelta$, we get a solution of
$\neg \bGamma$, and hence a contradiction.

\begin{enumerate}[resume]
\item\label{implication2}
$\Turnstile\prin\fm{\neg(\neg\delta\lor\gamma)\Tofrom\neg(\delta\to \gamma)}$
\end{enumerate}
This improvement on the contrapositive of (\ref{implication0})
follows from (\ref{implication1}) and (\ref{deMorgan1}).

\begin{enumerate}[resume]
\item\label{implication0''}
$\Turnstile\prin\fm{\neg(\beta\lor\gamma)\Tofrom\neg(\neg\beta\to\gamma)}$
\end{enumerate}
This improvement on the contrapositive of (\ref{implication0'})
follows from (\ref{implication2}) and (\ref{neg-neg-drop3}).

\begin{enumerate}[resume]
\item\label{deMorgan3}
$\Turnstile\prin\fm{\neg(\neg \gamma\lor\neg \delta)\Tofrom\neg\neg(\gamma\land \delta)}$
\end{enumerate}
This improvement on the contrapositive of (\ref{deMorgan2})
follows from (\ref{implication2}) and (\ref{implication-exp}).

\begin{enumerate}[resume]
\item\label{implication-coexp'}
$\Turnstile\prin\fm{\neg(\gamma\land\neg\delta)\Tofrom\neg\neg(\gamma\to \delta)}$
\end{enumerate}
This improvement on the contrapositive of (\ref{implication-coexp})
follows from (\ref{implication1}) and (\ref{neg-neg-drop2}).

\begin{enumerate}[resume]
\item\label{neg-neg-and}
$\Turnstile\prin\fm{\neg\neg\gamma\land\neg\neg\delta\Tofrom\neg\neg(\gamma\land \delta)}$
\end{enumerate}
This follows from (\ref{deMorgan3}) and (\ref{deMorgan1}), or alternatively from
(\ref{implication1}) and (\ref{implication-exp}).

\begin{enumerate}[resume]
\item\label{neg-neg-imp}
$\Turnstile\prin\fm{ (\neg\neg\gamma\to\neg\neg\delta)\Tofrom\neg\neg(\gamma\to\delta)}$
\end{enumerate}
This also follows from (\ref{implication1}) and (\ref{implication-exp}).

\begin{enumerate}[resume]
\item\label{neg-neg-or}
$\Turnstile\prin\fm{\neg\neg\gamma\lor\neg\neg\delta\To\neg\neg(\gamma\lor\delta)}$%
\footnote{The converse implication is equivalent to Jankov's principle $\neg\alpha\lor\neg\neg\alpha$,
see Proposition \ref{Jankov's logic} below.}
\end{enumerate}
This follows from (\ref{deMorgan2}) using (\ref{deMorgan1}).

\begin{enumerate}[resume]
\item\label{DNS-converse}
$\Turnstile\prin\fm{\neg\neg\forall \tr x\,\gamma(\tr x)\To\forall \tr x\,\neg\neg \gamma(\tr x)}$%
\footnote{The converse implication is known as the principle of Double Negation Shift;
see \S\ref{shift}.}
\item\label{SMP-converse}
$\Turnstile\prin\fm{\exists \tr x\,\neg\neg\gamma(\tr x)\To\neg\neg\exists \tr x\,\gamma(\tr x)}$%
\footnote{The converse implication is known as the Strong Markov Principle; see \S\ref{Markov}.}
\end{enumerate}
Each of these follows from (\ref{quantifiers2}) using (\ref{quantifiers1}).

\section{What is a logic, formally?}\label{formal}

This chapter provides a rather unconventional introduction to basic first-order logic.

On the one hand, it aims at being relatively readable by including just enough detail to make it
possible for the interested reader to reconstruct the rest.
(For instance, we use parentheses without discussing their official treatment in
the formal language.)

On the other hand, we are much more careful about meta-level reasoning than is usually done.
Some reasons for doing this are discussed in the following Introduction.

\subsection{Introduction}\label{formal-intro}

This Introduction is not formally used in the sequel; it is addressed to the reader who has some acquaintance
with logic textbooks and is wondering what is the point of yet another treatment of basic first-order logic.

\subsubsection{Two conceptions of a logic}
We identify a logic with the collection of its derivable rules (including derivable principles,
i.e.\ derivable rules with no premisses).
This is equivalent to identifying a logic with its consequence relation.
In the terminology of W\'ojcicki \cite{Wo}*{\S1.6} and Blok--Pigozzi \cite{BP} this is the ``inferential''
conception of logic, as opposed to the ``formulaic'' one, in which a logic is identified with the collection
of its derivable principles (or, equivalently, derivable formulas); these authors as well as Avron \cite{Av}
argue in favor of the inferential conception.

In particular, as noted by W\'ojcicki \cite{Wo}*{\S1.6.0}, if the purpose of a logic is to elucidate
schemes of reasoning, then logical schemes of inferences found in a mathematical practice that is considered
to be in agreement with a logic $L$ are captured by the derivable rules of $L$; in general, these cannot be
reconstructed from the derivable principles of $L$ alone, which capture only direct use of logical validities
in the said mathematical practice.

The two conceptions of a logic ascribe somewhat different meanings to the notion of an inference rule, which
is related to the difference between derivable and admissible rules.
(A rule is called admissible if adding it as a new inference rule to the derivation system does not affect
the collection of derivable principles.
Thus, the notion of admissibility is ``formulaic'' in that it only depends on the collection of derivable
principles.)
In classical logic, the difference between derivable and admissible rules exists but is rather insignificant
(see Examples \ref{cbot} and \ref{omitting}, Remark \ref{Dzik} and Proposition \ref{der-adm} below).
So the entire issue of two different conceptions of a logic only really manifests itself in intuitionistic
and other non-classical logics.

\subsubsection{What is a rule?}
Another variety of phenomena that one cannot fully appreciate by only looking at classical logic is
independent principles, as well as consequences between them --- and more generally, consequences between
rules (see, however, Examples \ref{principle implication} and \ref{cbot2} and
Proposition \ref{rule implication} concerning the situation with these in classical logic).
Analyzing such consequences is going to be quite helpful in the next chapter to get some feel of how
intuitionistic logic works.

Now, even to give a definition of derivable rules or consequences between principles, some precise definition
of a rule --- and, in particular, of a principle --- is needed.
Indeed, it is not so clear exactly what they are, since the formal language provides no syntax for their
usual side conditions such as ``provided that $x$ is not free in $\alpha$'' or ``provided that $t$ is free
for $x$ in $\alpha(x)$'', and {\it a fortiori} no formal rules for dealing with these side conditions.
Often principles and rules are treated as arbitrary infinitary collections of formulas (or of tuples of
formulas).
But this is clearly at odds with the fact that in essence they are finite objects (written with finitely many
symbols in logic textbooks).
The well-known workaround \cite{PP} (see also \cite{Dz}): a rule is called structural if it is preserved by
all substitutions without anonymous variables (in the sense of Kleene \cite{Kl}) --- is clearly still far 
from being a satisfactory solution.

The usual way to understand (and represent in writing) principles and rules as finite objects is by
means of schemata, which belong to the meta-language.
While it is not difficult to make this meta-language just as formal as the language itself 
(for example, by regarding the set of terms and the sets of $\alpha$-equivalence classes of $n$-formulas
of the language as semantic objects, and creating an obvious syntax to describe them), this approach 
does not seem to be technically advantageous (see, however, \cite{SSKI}, \cite{Ne}, \cite{GM} concerning 
formal calculi of meta-variables, and \cite{Ry}*{\S3.7}, \cite{Ie3} concerning schematic rules).
And, as it turns out, there is a good reason for not taking this path: the extra level of linguistic
abstraction (where syntax is regarded as an abstraction of semantics) is actually an overkill, because
the desired effect can already be achieved by means of the usual $\lambda$-abstraction (where a function 
is regarded as an abstraction of the dependence of a prescribed expression on prescribed variables).

\subsubsection{Meta-logic}
To get rid of side conditions of the form ``provided that $x$ is not free in $\alpha$'', we will use
a {\it meta-quantifier}.
In the author's initial approach, the desired effect was achieved by means of a device that is like
a usual quantifier in that it binds an (otherwise free) metavariable in a schema, but is invisible
on the level of formulas, since the binding is only used to specify which formulas are considered to be
instances of the schema.
Thus it was natural to call this device a ``meta-quantifier''.

However, by googling for the word {\it meta-quantifier}, the author discovered that a similar, but much
more convenient device has been used for the same purpose (of getting rid of the side conditions) in
the {\tt Isabelle} theorem prover \cite{NPW}*{\S5.9}, \cite{Pau1}.
It has also been called the ``meta-quantifier'' (e.g.\ in the index of the old manual \cite{Pau2}) but
for a somewhat different reason \cite{Pau1}*{\S2.1}: it is the (universal) quantifier of {\tt Isabelle}'s
meta-logic, which serves the purpose of a common ground for a number of different logics supported by
{\tt Isabelle}.

This view of {\tt Isabelle} and its creator Paulson that the meta-quantifier is really a quantifier of
the meta-logic turns out to be very beneficial.
The side conditions do not disappear, they are only relegated from rules to meta-rules, where they can be seen
to belong naturally.
No schemata are needed whatsoever, because the meta-quantifier operates on formulas.
In this respect Paulson's approach builds on the early tradition of first-order logic, found in
the textbooks by Hilbert--Ackermann \cite{HA}, Hilbert--Bernays \cite{HB}, Church \cite{Ch} 
(alongside the modern schematic approach) and P. S. Novikov \cite{N}, \cite{N3}.
In this tradition, one does not speak of any schemata, but only of formulas; instead, the derivation system
includes a substitution rule (see also Church \cite{Ch}
and Feferman \cite{Fe} for a comparison and the history of the two approaches).
However, inference rules were anyway stated in schematic form; and the substitution rule may be seen
as problematic because it is not structural.
In Paulson's approach, inference rules involve no meta-variables, and the substitution rule effectively
becomes a meta-rule.

Another principal ingredient of {\tt Isabelle}'s metalogic is the meta-implication, which is closely related,
in particular, to the entailment (the horizontal bar in rules) and to the consequence ($\turnstile $) --- not
to be conflated with the implication ($\to$) --- in object logics.
These relations will be clarified below; the former is described implicitly in Paulson's example
\cite{Pau1}*{\S4.4, subsection ``Hypothetical rules''}, whereas of the latter Paulson only notes
that $\turnstile $ is not the same as the meta-implication \cite{Pau1}*{Remark at the end of \S3}.

To summarize, meta-logic enables one to ``explain'' many standard notions in logic that one just gets
used to rather than understands in the traditional approach, as well as to avoid some traditional ambiguities,
which can easily lead to errors through mechanical application of the official definitions.

\subsubsection{Two conceptions of consequence}\label{meta-quantifier}
In mathematics, there are two ways to interpret formulas with parameters.
The following example is given in Kleene's textbook \cite{Kl}*{\S32}: the arithmetical formula
$(x+y)^2=x^2+2xy+y^2$ begs to be understood as an identity (valid for all natural numbers $x$),
whereas the arithmetical formula $x^2+2=3x$ begs to be understood as an equation (i.e., as a condition
on $x$).
There is no difference in syntax to reflect this obvious distinction in meaning.
But the latter is not illusory: it is reflected in use.
For, as noted by Avron \cite{Av3}, when ``dealing with identities [...] the substitution rule is available,
and one may infer $\sin x=2\sin\frac x2\cos\frac x2$ from the identity $\sin 2x=2\sin x\cos x$.
In contrast, [...] substituting $\frac x2$ for $x$ everywhere in an equation is an error''.

In first-order logic, similar phenomena in the use of substitution are normally associated with
the distinction between free and bound variables: it is only bound variables that can be
harmlessly renamed.%
\footnote{In the words of Paulson, ``The purpose of the proviso `$x$ not free in ...' is to ensure that
the premise may not make assumptions about the value of $x$, and therefore holds for all $x$.''
\cite{Pau2}*{\S1.2.2}.}
However, in all of the above examples, $x$ is intended (by Kleene and Avron) to signify a free variable
(in the sense of first-order logic).
Logicians often speak of ``fresh free variables'', which can be seen to behave like variables in
identities rather than equations.
While this idea suffices for practical applications (cf.\ \cite{Pau1}*{\S7}), it does not seem to be
appropriate for theoretical/foundational purposes.

Instead, the way that traditional first-order logic accounts for the semantic distinction between
``identities'' and ``equations'' is by distinguishing two notions of syntactic consequence and two
corresponding notions of semantic consequence.
Thus Kleene \cite{Kl}*{\S22, \S32} speaks of variables being {\it varied} or {\it held constant} in
a deduction (effectively distinguishing two variants of syntactic consequence); whereas Avron
\cite{Av}, \cite{Av2}, \cite{Av3} speaks of {\it truth} and {\it validity} semantic consequence relations
(both are mentioned also e.g.\ in the Kolmogorov--Dragalin textbook \cite{KD}).
The same sort of distinction is also thought to exist in propositional modal logic \cite{Av},
\cite{Av2} (see also \cite{HN} and references there).

However, the view that first-order logic (classical or intuitionistic) must include two distinct consequence
relations at the same time has never gained wide acceptance, presumably due to its technical inconvenience and
philosophical awkwardness.
Most authors appear to believe that there is a unique, ``true'' consequence relation (either syntactic
or semantic), but then they disagree with each other on which one it is.
Thus, the ``truth''/``fixed variables'' approach, corresponding to the equational interpretation
of free variables, is chosen e.g.\ by Church \cite{Ch}, Enderton \cite{End}, Troelstra \cite{Tr3} and 
van Dalen \cite{vDx}; whereas the ``validity''/``varied variables'' approach, corresponding to 
the identity interpretation of free variables, is chosen e.g.\ by Shoenfield \cite{Sho} and, in 
the syntactic part, by Mendelson \cite{Men}.

Each of the two approaches has its advantages.
In particular, with the ``validity''/``varied variables'' approach, the unrestricted Hilbert-style
generalization rule holds (and so the notion of a {\it rule} does not have to be very complicated).
With the ``truth''/``fixed variables'' approach, the unrestricted deduction theorem holds (and so
proof-theoretic formalisms such as sequent calculus and natural deduction apply in their usual form).
The two approaches are discussed and compared --- with the opposite conclusions reached ---
by Blok--Pigozzi \cite{BP} and Hakli--Negri \cite{HN}.

The meta-quantifier offers a somewhat different outlook on these issues, which finally clarifies
the picture.
There is only one consequence relation (coming from the meta-logic), from which both traditional ones 
can be easily recovered.
An appropriate form of the generalization rule holds without restrictions (and so the notion of a rule
does not have to be complicated; in fact, as discussed above, it is simpler than in either of 
the traditional approaches).
The deduction theorem holds without restrictions --- or rather under the automatic restriction that
the meta-quantifier cannot be used inside a formula (because logical connectives and quantifiers cannot
operate on expressions that involve meta-logical ones).

\subsection{Typed expressions} \label{expressions}

This section is mostly standard material (cf.\ \cite{SU}, \cite{Gi}, \cite{LS}), apart from some deviations
in notation and terminology, which opt for a more mathematical (rather than computer science) style,
and some additional conventions in \S\ref{simultaneous}, \S\ref{free substitution}.

\subsubsection{Simply typed $\lambda$-calculus} \label{lambda}

One fixes a collection of {\it base types} (we will encounter only three base types: of terms,
of formulas, and of meta-formulas).
Arbitrary {\it types} (or in more detail {\it types of $\lambda$-expressions})%
\footnote{These are also called ``arities'' following Martin-L\"of (see \cite{NPS}*{\S3.6}), but we will use
the word ``arity'' in its traditional sense.}
are defined as follows:
\begin{enumerate}
\item base types are types;
\item if $\Gamma$ and $\Delta$ are types, then so is $\Gamma\too\Delta$;
\item if $\Gamma$ and $\Delta$ are types, then so is $\Gamma\x\Delta$.
\end{enumerate}

For each type $\Delta$, one may fix a collection of {\it constants} of type $\Delta$.
For each type $\Delta$, one always has a countable ordered collection of {\it variables} of type $\Delta$,
denoted $\var_i^\Delta$, $i=1,2,\dots$.
(Thus, by saying ``$x$ is a variable of type $\Delta$'' we mean that $x=\var_i^\Delta$ for some $i$.)
Arbitrary {\it expressions}%
\footnote{In $\lambda$-calculus, these are usually called ``terms'', but we reserve this word for
expressions of one specific type, in accordance with the terminology of first-order logic.}
{\it of type} $\Delta$ are defined as follows, where $T:\Delta$ denotes the relation ``$T$ is an expression of
type $\Delta$'':
\begin{enumerate}
\item a constant of type $\Delta$ is an expression of type $\Delta$;
\item a variable of type $\Delta$ is an expression of type $\Delta$;
\item ($\lambda$-abstraction) if $T:\Delta$ on the assumption that $x$ is a variable of type $\Gamma$, then
$x\mapsto T:\Gamma\too\Delta$;%
\footnote{In $\lambda$-calculus, $x\mapsto T$ is usually written as $\lambda x.T$, but we will use the 
traditional mathematical notation.}
\item (function application) if $F:\Delta\too\Gamma$, and $T:\Delta$, then $F(T):\Gamma$;%
\footnote{In $\lambda$-calculus, $F(T)$ is usually written as $FT$, but we will normally use the more customary
mathematical notation. 
In some cases we do resort to $FT$, regarding it as an abbreviation for $F(T)$.}
\item (tuples) if $S:\Gamma$ and $T:\Delta$, then $(S,T):\Gamma\x\Delta$.
\item (projections) if $\Gamma$, $\Delta$ are types, then $\pr_1:\Gamma\x\Delta\too\Gamma$ and
$\pr_2:\Gamma\x\Delta\too\Delta$.
\end{enumerate}
We will abbreviate $F\big((S,T)\big)$ by $F(S,T)$, and usually also $\big(F(T)\big)(S)$ by $F(T)(S)$
and $\pr_i(T)$ by $\pr_i T$.

\begin{example} Let $\R$ be a type with one constant for each real number.
Then addition of real numbers $(x,y)\mapsto x+y$ is of type $\R\x\R\too\R$;
integration of functions $(f,(a,b))\mapsto\int_a^b f(x)\,dx$ is of type $(\R\too\R)\x(\R\x\R)\too\R$;
and the differential $f\mapsto df$ is of type $(\R\too\R)\too(\R\too(\R\too\R))$.
(This example ignores some complications such as non-differentiable functions, etc.)
\end{example}

\begin{example} \label{composition}
Given $F:\Theta\too\Gamma$ and $G:\Gamma\too\Delta$, we can define their {\it composition}
$G\circ F:\Theta\too\Delta$ by $r\mapsto G(F(r))$.
Thus $\circ:(\Theta\too\Gamma)\x(\Gamma\too\Delta)\too(\Theta\too\Delta)$ is the following expression:
$q\mapsto\Big(r\mapsto\pr_1(q)\big(\pr_2(q)(r)\big)\Big)$.
\end{example}

\begin{example} \label{quantifier example} 
In first-order logic as treated e.g.\ in {\tt Isabelle}, we have a type of terms, 
which will be denoted below by $\0$, and a type of formulas, which will be denoted below by $\1$.
Then a {\it quantifier} is a constant of type $(\0\too\1)\too\1$.
If $\frak q$ is a quantifier, the expression $\frak q(x\mapsto F)$ is customarily denoted by 
$\frak q x\, F$. 
(So e.g.\ for the universal quantifier $\forall:(\0\too\1)\too\1$ this takes the usual form $\forall x\,F$.)
\end{example}

\begin{remark}
Throughout this section we will continue to denote variables by lowercase Roman letters, constants by
lowercase Fraktur letters, unknown terms by uppercase Roman letters, compound terms that are specific
(at least up to $\alpha$-equivalence, which is defined below) by uppercase Fraktur letters; and unknown types
by uppercase Greek letters.
These letters all represent {\it metavariables} in that they do not themselves belong to $\lambda$-calculus,
as opposed to the variables $\var^\Delta_i$, the constants $\pr_i$, specific base types and their specific
constants (these will introduced in the next section).
It should also be noted that the indices of the variables $\var^\Delta_i$ are plain numbers (i.e., sequences
of bars, such as $|||||$, abbreviated as usual by means of decimal notation) and not numeric expressions; 
thus $\var_{10^{10^{10}}}^\Delta$ is not an expression of type $\Delta$, but only an informal meta-expression 
that may be taken as a euphemism referring to an actual expression of type $\Delta$ (which is unprintable).
\end{remark}

Abstraction {\it binds} variables; variables that are not bound by an abstraction are called {\it free}.
More precisely, whether a variable $x$ of some type {\it occurs freely} in an expression $T$ of some type
is determined by the following recursive definition (recursion over the number of symbols in $T$).
Let $y$ be a variable distinct from $x$ (i.e.\ $x=\var_i^\Gamma$ and $y=\var_j^\Delta$ where either
$\Gamma\ne\Delta$ or $i\ne j$), let $\frak c$ be a constant of some type and $F$, $R$ and $S$
be expressions of some types.
Then:
\begin{enumerate}
\item $x$ occurs freely in $x$;
\item $x$ does not occur freely in $y$;
\item $x$ does not occur freely in $\frak c$;
\item $x$ occurs freely in $(R,S)$ if it occurs freely in $R$ or in $S$;
\item $x$ occurs freely in $F(R)$ if it occurs freely in $F$ or in $R$;
\item $x$ does not occur freely in $x\mapsto R$;
\item $x$ occurs freely in $y\mapsto R$ if it occurs freely in $R$.
\end{enumerate}
For example, $x$ does not occur freely in $x\mapsto x$; but $y$ occurs freely in $(x\mapsto x)(y)$.
Also, $f$ and $g$ occur freely in $(f\mapsto f)\Big(x\mapsto f\big(g(x)\big)\Big)$.

An expression of some type is {\it closed} if no variable of any type occurs freely in it.

\begin{example} \label{meta-map1} A closed expression
$\frak E:\big(\Lambda\too(\Delta\too\Gamma)\big)\too\big((\Lambda\too\Delta)\too(\Lambda\too\Gamma)\big)$
can be defined by $f\mapsto\bigg(g\mapsto\Big(l\mapsto f(l)\big(g(l)\big)\Big)\bigg)$.
\end{example}

\begin{example} \label{meta-map4} A closed expression $\frak D:\big((\Delta\too\Gamma)\too\Theta\big)\too
\Big(\big(\Delta\too(\Lambda\too\Gamma)\big)\too(\Lambda\too\Theta)\Big)$
can be defined by
$f\mapsto\bigg(g\mapsto\Big(l\mapsto f\big(t\mapsto g(t)(l)\big)\Big)\bigg)$.
\end{example}

\subsubsection{Substitution}
Let $T:\Delta$, and let $x$ be a variable of type $\Delta$.
The {\it (capture-avoiding, strict) substitution} $S|_{x:=T}$, where $S:\Gamma$, is, when defined,
an expression of type $\Gamma$, which is determined recursively as follows.
Let $y$ be a variable distinct from $x$, let $\frak c$ be a constant of some type and $F$, $R$ and $S$
be expressions of some types.
Then:
\begin{enumerate}
\item $x|_{x:=T}=T$;
\item $y|_{x:=T}=y$;
\item $\frak c|_{x:=T}=\frak c$;
\item $(R,S)|_{x:=T}=(R|_{x:=T},\,S|_{x:=T})$;
\item $F(R)|_{x:=T}=(F|_{x:=T})(R|_{x:=T})$;
\item $x\mapsto R|_{x:=T}=x\mapsto R$;
\item $y\mapsto R|_{x:=T}=\begin{cases}
\text{undefined,} & \text{if $x$ occurs freely in $R$ and $y$ occurs freely in $T$;}\\
y\mapsto(R|_{x:=T})& \text{otherwise.}
\end{cases}$
\end{enumerate}
Of course, the recursion halts once it reaches the ``undefined'' case, and in this event $S|_{x:=T}$ is set
to be undefined.

\begin{example} In the notation of Example \ref{quantifier example}, suppose that $>$ is a constant
of type $\0\x\0\too\1$ and $+$ is a constant of type $\0\x\0\to\0$, and let us write $x>y$ to mean
${>}(x,y)$ and $x+y$ to mean ${+}(x,y)$.
Then 
\begin{itemize}
\item $\forall x\ x>y|_{y:=y+z}=\forall x\ x>y+z$;
\item $\forall x\ x>y|_{y:=x+z}$ is undefined;
\item $\forall x\ x>y|_{x:=y+z}=\forall x\ x>y$.
\end{itemize}
\end{example}

The following auxiliary definition will be needed on several occasions.
A relation $\succ$ on expressions is called {\it compatible} if it satisfies the following conditions,
where $x$ is a variable of some type, and $F,G,R,S,T$ are expressions of some types.
\begin{enumerate}
\item if $S\succ T$, then $x\mapsto S\succ x\mapsto T$;
\item if $S\succ T$, then $F(S)\succ F(T)$;
\item if $F\succ G$, then $F(T)\succ G(T)$;
\item if $S\succ T$, then $(R,S)\succ (R,T)$ and $(S,R)\succ (T,R)$;
\item if $S\succ T$, then $\pr_1 S\succ\pr_1 T$ and $\pr_2 S\succ\pr_2 T$.
\end{enumerate}

Two expressions are considered to be the same for all practical purposes if one can be obtained
from another by renaming of bound variables.
In more detail, the {\it $\alpha$-equivalence} relation is the least compatible equivalence relation on
expressions of arbitrary types such that whenever $x,y$ are variables of some type and $T$ is an expression
of some type,
\begin{itemize}
\item if $y$ does not occur freely in $T$ and $T|_{x:=y}$ is defined, then
$x\mapsto T\overset\alpha=y\mapsto (T|_{x:=y})$.
\end{itemize}
Clearly, $\alpha$-equivalent expressions are always of the same type.
Two $\alpha$-equivalent expressions are also said to be obtained from one another by {\it $\alpha$-conversion}.

\begin{example} $x\mapsto\big(y\mapsto F(x,y)\big)$ is $\alpha$-equivalent to
$a\mapsto\big(b\mapsto F(a,b)\big)$ and to $y\mapsto\big(x\mapsto F(y,x)\big)$.
However, $x\mapsto (y\mapsto x)$ is not $\alpha$-equivalent to $y\mapsto (x\mapsto y)$ (because of
the free variable).

Also, $x\mapsto(x\mapsto x)$ is $\alpha$-equivalent to $x\mapsto (y\mapsto y)$ and to
$y\mapsto (x\mapsto x)$ but is not $\alpha$-equivalent to $y\mapsto (x\mapsto y)$.
Informally speaking, since bound variables can be renamed, they are essentially only nameless placeholders.
In particular, once a variable gets bound (like in $x\mapsto x$), it loses its name to an outside observer;
this name may then be recycled (like in $x\mapsto (x\mapsto x)$), should one need to do so.
\end{example}

The point of $\alpha$-equivalence that if a substitution $S|_{x:=T}$ is undefined, then we can
replace $S$ by an $\alpha$-equivalent term $S'$ so that $S'|_{x:=T}$ is defined.
(For example, $x\mapsto f(y)|_{y:=g(x)}$ is undefined, but becomes defined upon renaming $x$ in $x\mapsto f(y)$
into $z$.)
Although $S'$ can be chosen in many ways, the result will be well-defined up to $\alpha$-equivalence.

Let us note that the $\alpha$-equivalence class $S|^\alpha_{x:=T}$ includes the results (when they
are defined) of all substitutions of the form $S'|_{x:=T'}$, where $S'$ is $\alpha$-equivalent
to $S$ and $T'$ is $\alpha$-equivalent to $T$.
This is useful, for instance, since one can always make a double substitution of the form 
$(S|_{x:=T})|_{y:=U}$ defined by replacing $S$ and $T$ by $\alpha$-equivalent terms (see \cite{SU}).

\subsubsection{$\beta\eta$-equivalence}

The relation of {\it $\beta$-reduction} is the least compatible relation such that
\begin{enumerate}
\item every expression of the form $(x\mapsto S)(T)$ $\beta$-reduces to $S|_{x:=T}$;
\item every expression of the form $\pr_i(T_1,T_2)$ $\beta$-reduces to $T_i$ for $i=1,2$;
\item $\alpha$-equivalent expressions $\beta$-reduce to each other.
\end{enumerate}
For example, $(x\mapsto x)(T)$ $\beta$-reduces to $T$; and $(x\mapsto T)(x)$ $\beta$-reduces to $T$.

\begin{example} \label{meta-map5} In Example \ref{meta-map1},
$\frak E(F)(G)(L)$ $\beta$-reduces $F(L)\big(G(L)\big)$ in three steps.
In Example \ref{meta-map4},
$\frak D(F)(G)(L)$ $\beta$-reduces to $F(t\mapsto G(t)(L)$ in three steps.
\end{example}

\begin{example} The expression $\big(f\mapsto f(z)\big)\big(x\mapsto(z\mapsto x)\big)$ $\beta$-reduces to
$f(z)|_{f:=x\mapsto(z\mapsto x)}$.
This substitution is defined, with the result $\big(x\mapsto (z\mapsto x)\big)(z)$.
But the latter $\beta$-reduces to $z\mapsto x|_{x:=z}$, which is not defined.
Up to $\alpha$-equivalence, this is the same as $y\mapsto x|_{x:=z}$, which evaluates to $y\mapsto z$.
\end{example}

The relation of {\it $\eta$-reduction} is the least compatible relation such that
\begin{enumerate}
\item every expression of the form $x\mapsto F(x)$, where $x$ does not occur freely in $F$,
$\eta$-reduces to $F$;
\item every expression of the form $\big(\pr_1T,\pr_2T\big)$ $\eta$-reduces to $T$;
\item $\alpha$-equivalent expressions $\eta$-reduce to each other.
\end{enumerate}
Let us note that $\eta$-reduction converts between functions that return the same output on
the same input (possibly by employing different algorithms).

The inverse of a $\beta$- or $\eta$-reduction is called a $\beta$- or $\eta$-{\it expansion}.

The {\it $\beta\eta$-equivalence} is the least compatible equivalence relation on expressions
of arbitrary types generated by $\beta$-reduction and $\eta$-reduction.
Clearly, $\beta\eta$-equivalent expressions are always of the same type.
We will denote $\beta\eta$-equivalence by the symbol $=$.

\begin{example} \label{exp-lambda} Let us construct closed expressions
$\frak F:(\Theta\x\Gamma\too\Delta)\too\big(\Theta\too(\Gamma\too\Delta)\big)$
and $\frak G:\big(\Theta\too(\Gamma\too\Delta)\big)\too(\Theta\x\Gamma\too\Delta)$ such that
$\frak F\big(\frak G(G)\big)=G$ and $\frak G\big(\frak F(F)\big)=F$.

We define $\frak F$ by $f\mapsto\Big(r\mapsto\big(s\mapsto f(r,s)\big)\Big)$ and $\frak G$ by
$g\mapsto\Big(q\mapsto g\big(\pr_1q\big)\big(\pr_2q\big)\Big)$.

\begin{multline*}
\frak F\big(\frak G(G)\big)\overset\beta=\frak F\Big(q\mapsto G\big(\pr_1q\big)\big(\pr_2q\big)\Big)\overset\beta=
r\mapsto\big(s\mapsto f(r,s)\big)\Big|_{f:=q\mapsto G(\pr_1q)(\pr_2q)}\\
=r\mapsto\bigg(s\mapsto\Big(q\mapsto G\big(\pr_1q\big)\big(\pr_2q\big)\Big)(r,s)\bigg)
\overset\beta= r\mapsto\bigg(s\mapsto \Big(G\big(\pr_1q\big)\big(\pr_2q\big)\big|_{q:=(r,s)}\Big)\bigg)\\
=r\mapsto\Big(s\mapsto G\big(\pr_1(r,s)\big)\big(\pr_2(r,s)\big)\Big)
\overset\beta= r\mapsto\Big(s\mapsto G(r)(s)\Big)\overset\eta=r\mapsto G(r)\overset\eta=G.
\end{multline*}
Also,
\begin{multline*}
\frak G\big(\frak F(F)\big)\overset\beta=\frak G\Big(r\mapsto\big(s\mapsto F(r,s)\big)\Big)
\overset\beta=q\mapsto g\big(\pr_1q\big)\big(\pr_2q\big)\Big|_{g:=r\mapsto\normal(s\mapsto F(r,s)\normal)}\\
=q\mapsto\Big(r\mapsto\big(s\mapsto F(r,s)\big)\Big)\big(\pr_1q\big)\big(\pr_2q\big)
\overset\beta=q\mapsto\big(s\mapsto F(r,s)\big|_{r:=\pr_1q}\big)(\pr_2q)\\
=q\mapsto\Big(s\mapsto F\big(\pr_1q,s\big)\Big)\big(\pr_2q\big)
\overset\beta=q\mapsto\Big(F\big(\pr_1q,s\big)|_{s:=\pr_2q}\big)
=q\mapsto F\big(\pr_1q,\pr_2q\big)\\
\overset\eta=q\mapsto F(q)\overset\eta=F.
\end{multline*}
\end{example}

\begin{remark}\label{exp-lambda2}
Here is an alternative argument, which uses (in addition to $\beta$- and
$\eta$-reductions) also $\eta$-expansions (but is perhaps conceptually clearer).

We have $\frak F(F)\overset\beta=r\mapsto\big(s\mapsto F(r,s)\big)$, and consequently
$\frak F(F)(R)\overset\beta=s\mapsto F(R,s)$, and hence also $\frak F(F)(R)(S)\overset\beta=F(R,S)$.

Also, $\frak G(G)\overset\beta=q\mapsto G\big(\pr_1q\big)\big(\pr_2q\big)$, and
therefore $\frak G(G)(Q)\overset\beta=G\big(\pr_1Q\big)\big(\pr_2Q\big)$, and hence also
$\frak G(G)(R,S)\overset\beta=G(R)(S)$.

Thus $\frak F\big(\frak G(G)\big)(R)(S)=\frak G(G)(R,S)=G(R)(S)$.

Also, $\frak G\big(\frak F(F)\big)(R,S)=\frak F(F)(R)(S)=F(R,S)$.

Finally,
\begin{multline*}
\frak G\big(\frak F(F)\big)\overset\eta=q\mapsto\frak G\big(\frak F(F)\big)(q)\overset\eta=
q\mapsto\frak G\big(\frak F(F)\big)(\pr_1q,\pr_2q)=q\mapsto F(\pr_1q,\pr_2q)\\
\overset\eta=q\mapsto F(q)\overset\eta=F.
\end{multline*}
Also,
\begin{multline*}
\frak F\big(\frak G(G)\big)\overset\eta=r\mapsto\frak F\big(\frak G(G)\big)(r)\overset\eta=
r\mapsto\Big(s\mapsto\frak F\big(\frak G(G)\big)(r)(s)\Big)=r\mapsto\Big(s\mapsto G(r)(s)\Big)\\
\overset\eta=r\mapsto G(r)\overset\eta=G.
\end{multline*}
\end{remark}

\begin{example} \label{retraction}
Let us construct closed expressions $\frak A:\Gamma\x\Delta\too\Gamma\x(\Gamma\too\Delta)$
and $\frak B:\Gamma\x(\Gamma\too\Delta)\too\Gamma\x\Delta$ such that $\frak B\big(\frak A(Q)\big)=Q$, but in general
$\frak A\big(\frak B(Q)\big)\ne Q$.

We set $\frak A=q\mapsto\big(\pr_1q,\,s\mapsto\pr_2q\big)$ and
$\frak B=q\mapsto\Big(\pr_1q,\,\pr_2(q)\big(\pr_1q\big)\Big)$.

Then $\frak A(S,T)\overset\beta=(S,s\mapsto T)$ and $\frak B(S,F)\overset\beta=\big(S,F(S)\big)$.

Hence $\frak B\big(\frak A(S,T)\big)=\frak B(S,s\mapsto T)\overset\beta=(S,T)$.
It follows that $\frak B\big(C(Q)\big)=Q$.

However, $\frak A\big(\frak B(S,F)\big)=\frak A\big(S,F(S)\big)=\big(S,s\mapsto F(S)\big)\neq\big(S,s\mapsto F(s)\big)
\overset\eta=(S,F)$.
\end{example}

\subsubsection{Simultaneous substitution} \label{simultaneous}

\begin{convention} \label{iterated products}
For $n\ge 2$ let us write $\Delta_1\x\dots\x\Delta_{n+1}$ to mean $(\Delta_1\x\dots\x\Delta_n)\x\Delta_{n+1}$
and $(T_1,\dots,T_{n+1})$ to mean $\big((T_1,\dots,T_n),T_{n+1}\big)$.
We also write $\pr_i:\Delta_1\x\dots\x\Delta_{n+1}\to\Delta_i$ to mean
$\pr_i\circ\pr_1$ for $i\le n$ and to mean $\pr_2$ for $i=n+1$.

We abbreviate $F\big((T_1,\dots,T_n)\big)$ by $F(T_1,\dots,T_n)$.
We also write $\Delta^n$ for the $n$-tuple product $\Delta\x\dots\x\Delta$.

We will further identify $\Delta_1\x\Delta_2\x\Delta_3$ with $\Delta_1\x(\Delta_2\x\Delta_3)$
via the ``function'' of type $\Delta_1\x\Delta_2\x\Delta_3\too\Delta_1\x(\Delta_2\x\Delta_3)$, defined by
$q\mapsto\Big(\pr_1q,\big(\pr_2q,\pr_3q\big)\Big)$, and the ``function'' of type
$\Delta_1\x(\Delta_2\x\Delta_3)\too\Delta_1\x\Delta_2\x\Delta_3$, defined by
$q\mapsto\Big(\pr_1q,\pr_1\big(\pr_2q\big),\pr_2\big(\pr_2q\big)\Big)$, which are easily seen
to be mutually inverse.

By an iterated use of this identification we also identify $\Delta_1\x\dots\x\Delta_n$ with any iterated
product of $\Delta_1,\dots,\Delta_n$.
\end{convention}

For each $i=1,\dots,n$ let $T_i:\Delta_i$, and let $x_i$ be a variable of type $\Delta_i$.
Let us write $\vec x:=(x_1,\dots,x_n)$ and $\vec T:=(T_1,\dots,T_n)$, and also
$\vec x|\hat k$ and $\vec T|\hat k$ for the same tuples with $x_k$ and $T_k$ omitted.
The {\it (capture-avoiding, strict) simultaneous substitution} $S|_{\vec x:=\vec T}$, where $S:\Gamma$,
is, when defined, an expression of type $\Gamma$, which is determined recursively as follows.
Let $y$ be a variable distinct from each $x_i$, let $\frak c$ be a constant of some type and $F$, $R$ and $S$
be expressions of some types.
Then:
\begin{enumerate}
\item $x_k|_{\vec x:=\vec T}=T_k$;
\item $y|_{\vec x:=\vec T}=y$;
\item $\frak c|_{\vec x:=\vec T}=\frak c$;
\item $(R,S)|_{\vec x:=\vec T}=(R|_{\vec x:=\vec T},\,S|_{\vec x:=\vec T})$;
\item $F(R)|_{\vec x:=\vec T}=(F|_{\vec x:=\vec T})(R|_{\vec x:=\vec T})$;
\item $x_k\mapsto R|_{\vec x:=\vec T}=x_k\mapsto (R|_{\vec x|\hat k:=\vec T|\hat k})$;
\item $y\mapsto R|_{\vec x:=\vec T}=\begin{cases}
\text{undefined,} & \text{if for some $i$,} \\
&\quad\text{$x_i$ occurs freely in $R$ and $y$ occurs freely in $T_i$}\\
y\mapsto(R|_{\vec x:=\vec T})& \text{otherwise.}
\end{cases}$
\end{enumerate}

\begin{example}  $\big((x,y)|_{x:=y}\big)\big|_{y:=x}=(y,y)$ and $\big((x,y)|_{y:=x}\big)\big|_{x:=y}=(x,x)$.
However, $(x,y)|_{x:=y,\, y:=x}=(y,x)$.
\end{example}

\begin{example} Somewhat unexpectedly,
$\frak G\big(x\mapsto(y\mapsto T)\big)=\frak G\big(y\mapsto(x\mapsto T)\big)$.
Indeed, $\frak G\big(x\mapsto(y\mapsto T)\big)(R)(S)=(y\mapsto T|_{x:=R})(S)$,
where the substitution $|_{x:=R}$ is undefined if $y$ occurs freely in $R$.
If $z$ is a variable that does not occur freely in $R$ and $T$, then up to $\alpha$-equivalence we have
\[(y\mapsto T|_{x:=R})(S)=\big(z\mapsto (T|_{y:=z})|_{x:=R})(S)=\big((T|_{y:=z})|_{x:=R}\big)\big|_{z:=S}=
T|_{x:=R,\,y:=S}.\]
Similarly, $\frak G\big(y\mapsto(x\mapsto T)\big)(R)(S)=T|_{x:=R,\,y:=S}$, and the assertion follows.
\end{example}

\begin{convention} \label{prod-convention}
If $x_0,\dots,x_n$ are variables of types $\Delta_0,\dots,\Delta_n$, and $T:\Delta$, an expression
\[x_0,\dots,x_n\mapsto T\] of type $\Delta_0\x\dots\x\Delta_n\too\Delta$ is defined recursively as
$\frak G\big(x_0\mapsto(x_1,\dots,x_n\mapsto T)\big)$ (see Convention \ref{iterated products}).
Thus, for instance, $\big(x,y\mapsto (x,y)\big)(y,x)=(y,x)$.

Up to $\alpha$-equivalence, $x_1,\dots,x_n\mapsto T$ is the same as
$q\mapsto T|_{x_1:=\pr_1q,\,\dots,\,x_n:=\pr_nq}$, where $q$ is
a variable of type $\Delta_1\x\dots\x\Delta_n$ that does not occur freely in $T$.
The new expression is defined without recursion, but is well-defined only up to $\alpha$-equivalence
(because of the variable $q$).
Let us note that the substitutions $|_{x_i:=\pr_iq}$ commute, since no $x_j$ occurs in
the expression $\pr_iq$.
So it does not matter if we make them simultaneously or in some order.
\end{convention}

\begin{remark}\label{confluence}
If $\vec x$ denotes an $n$-tuple of variables of some types with $n>0$, one may use the notation
$\vec x\mapsto T$ as if it were an expression of the appropriate type --- and not just an abbreviation
for such an expression.
In particular, we have a version of the $\beta$-reduction: $(\vec x\mapsto S)(\vec T)=S|_{\vec x:=\vec T}$,
and a version of the $\eta$-reduction: $\vec x\mapsto F(\vec x)=F$.

One may also use the notation $\vec T$ as if it were an expression of the appropriate type --- and not just
an abbreviation for such an expression.
We have a version of the $\beta$-reduction: $\pr_i(T_1,\dots,T_n)=T_i$, and a version of
the $\eta$-reduction: $(\pr_1 T,\dots,\pr_n T)=T$.
However, one must be careful not to assume that this all works also for $n=0$, since it does not.
Something much more involved does \cite{CdC} (see also \cite{dCK}).
\end{remark}

\begin{convention} We will regard $\Delta^0\x\Gamma$, $\Gamma\x\Delta^0$ and $\Delta^0\to\Gamma$ as abbreviations
for $\Gamma$ (without attempting to define $\Delta^0$).
\end{convention}

\subsubsection{Free substitution} \label{free substitution}

It is well-known (see e.g.\ \cite{SU}, \cite{LS}) that for every expression $T$ of any type $\Delta$ there
exists an expression $T_0:\Delta$ such that $T$ $\beta\eta$-reduces to $T_0$, and if $T$ $\beta\eta$-reduces
to $T_1$, then $T_1$ $\beta\eta$-reduces to $T_0$; moreover, every $T_0'$ satisfying the same property
is $\alpha$-equivalent to $T_0$.
Such a $T_0$ is called a {\it normal form} of $T$; and if $T$ is its own normal form,
it is said to be {\it in normal form}.
(For our purposes the normal form could be replaced in what follows by the $\beta$-normal form, which is defined
similarly using $\beta$-reduction.)

If $T_0$ is a normal form of $T$, and some $\beta\eta$-reduction of $T$ to $T_0$
involves no $\alpha$-conversions, then $T_0$ will be called a {\it strict normal form} of $T$, and $T$
will be said to {\it have a strict normal form}.
It is not hard to see that when a strict normal form of a $\lambda$-expression $T$ exists, it is unique (as
a $\lambda$-expression, and not just up to $\alpha$-equivalence).
Clearly, $T$ is its own strict normal form if and only if it is in normal form.

If $S$ is in normal form, and the expression $(\vec x\mapsto S)(\vec T)$ has a strict normal form,
where $\vec x=(x_1,\dots,x_n)$, $\vec T=(T_1,\dots,T_n)$, each $x_i$ is a variable of some type $\Delta_i$,
and each $T_i:\Delta_i$, then this strict normal form of $(\vec x\mapsto S)(\vec T)$ will be denoted by
$S[\vec x/\vec T]$, or in more detail by $S[x_1/T_1,\dots,x_n/T_n]$, and $\vec T$ will be called
{\it free for (substitution for) $\vec x$ in $S$,} or in more detail $T_1,\dots,T_n$ will be called
{\it free for (simultaneous substitution for) $x_1,\dots,x_n$ in $S$.}

\subsection{Languages and structures}\label{metavariables}

\subsubsection{Terms and $n$-terms}\label{terms}
A {\it first-order language} $\L$ provides, in particular, formal means to speak about {\it individuals},
which are the elements of a certain nonempty set $\D$, called the {\it domain of discourse}.
For instance, if $\L$ is the standard language of arithmetic, a good choice for $\D$ would be the set of
natural numbers.
Now $\D$ and its elements are regarded as ``semantic'' objects, whereas $\L$ itself is about ``syntax''
only.
This means, in particular, that $\L$ does not refer on its own to the elements of any specific set $\D$,
but is capable of doing it in multiple ways.
One particular way is specified by an {\it $\L$-structure} $\I$, also known as an {\it interpretation}
of $\L$, which includes a choice of a set $\D$ along with a number of other choices.
The formal definitions of $\L$ and $\I$ will be summarized at the end of \S\ref{formulas}, once we will
have introduced all of their ingredients.

The ``individual'' fragment of $\L$ is can be formulated in terms of a base type, which we denote $\0$.
The variables $\var_1^\0,\var_2^\0,\dots$ of this type are the {\it individual variables} of the language
$\L$.
For reasons of readability we will also use the alternative spelling $\tr{a,b,c,\dots,x,y,z}$ for the first
26 ones, reserving an upright sans-serif font for this purpose.
These are abbreviations, and not metavariables, as they are determined by the specific identifications
$\tr a=\var_1^\0$, $\tr b=\var_2^\0$, etc.
Metavariables are used, for instance, in a phrase like ``let $a$ and $b$ be individual variables'',
which amounts to saying ``let $a=\var_i^\0$ and $b=\var_j^\0$ for some unspecified $i$ and $j$
(possibly $i=j$)''.

$\L$ may also contain {\it function symbols} (such as $+$ and $*$), each standing for an operation on $\D$
(such as addition and multiplication of natural numbers) that inputs an $n$-tuple of individuals (for some
specified $n\ge 0$) and outputs one individual.
Thus an $n$-ary function symbol of $\L$ is a constant $\frak f$ of type $\0^n\too\0$, and its interpretation
$\frak f_\I$ is an $n$-ary operation $\D^n\to\D$.
Nullary function symbols, which are constants of type $\0$ (such as $\tr 0$ and $\tr 1$), are called
{\it individual constants} of $\L$ and stand for specific individuals (such as the natural numbers $0$ and $1$).

{\it Terms} of $\L$ are defined inductively, as built out of individual variables using the function
symbols.
A sample term in the case of the language of arithmetic is $\tr{(x+1)*y+x}$.
Thus terms of $\L$ are those expressions of type $\0$ that involve only variables of type $\0$, tuples,
function application, and a prescribed collection of constants of types $\0^n\too\0$.
(In particular, terms of $\L$ do not involve $\lambda$-abstraction.)

If $T$ is a {\it closed term} (i.e.\ a term that is a closed expression), its interpretation $|T|_\I$
is a specific element of $\D$.
To interpret an arbitrary term $T$ as an element of $\D$, one additionally needs a {\it variable assignment}
$\iass$, which assigns an individual $\iass(x)$ to each individual variable $x$.
Indeed, $\iass$ extends in the obvious way to a function $|\cdot|_\I^\iass\:\T\to\D$, where $\T=\T(\L)$ is
the set of terms of $\L$.
Thus for every term $T$ we have $|T|_\I^\iass\in\D$.

It is often convenient to consider expressions of the form $x_1,\dots,x_n\mapsto T$, where $T$ is a term
and the $x_i$ are pairwise distinct individual variables (in other words, each $x_i=\var_{i_n}^\0$, where
$i_1,\dots,i_n$ are pairwise distinct).
We will call these {\it $n$-terms}; of course, they are of type $\0^n\too\0$.
Let us note that the arithmetical 1-terms $\tr{x\mapsto x+y}$, $\tr{y\mapsto x+y}$ and $\tr{z\mapsto x+y}$
should not be conflated, since application works differently for them: $\tr{(x\mapsto x+y)(t)=t+y}$,
$\tr{(y\mapsto x+y)(t)=x+t}$ and $\tr{(z\mapsto x+y)(t)=x+y}$.
One can define $n$-terms inductively, as built out of {\it atomic $k$-terms}, which are individual variables
and expressions of the form $x\mapsto x$, where $x$ is an individual variable, by using appropriate lifts
of the function symbols.
For instance, using the lifted sum, from the arithmetical $2$-terms $\tr{x,y}\mapsto\tr{x*y}$ and
$\tr{y,z}\mapsto\tr{3*y}$ one obtains the arithmetical $3$-term $\tr{x,y,z}\mapsto\tr{x*y+3*y}$.
(However, one does not obtain anything in this fashion from the arithmetical $1$-terms
$\tr x\mapsto\tr{x*y}$ and $\tr y\mapsto\tr y$.)

In more detail, every $k$-term can be lifted to a $(k+l)$-term by introducing $l$ dummy
variables via $\frak F_l:(\0^k\too\0)\too(\0^k\x\0^l\too\0)$, defined by
$f\mapsto\Big(q\mapsto f\big(\pr_1q\big)\Big)$; and every function symbol, $\frak f:\0^n\too\0$,
can be lifted to $\frak f_m:(\0^m\too\0)^n\too(\0^m\too\0)$
via $\frak G_m:(\0^n\too\0)\too\big((\0^m\too\0)^n\too(\0^m\too\0)\big)$,
$c\mapsto\bigg(r\mapsto\Big(q\mapsto c\big(\pr_1(r)(q),\dots,\pr_n(r)(q)\big)\Big)\bigg)$.
Thus $\frak f_m(F_1,\dots,F_n)=\frak f\circ (F_1\x\dots\x F_n)$, where
$F_1\x\dots\x F_n:\0^m\too\0^n$ is defined by $q\mapsto\big(F_1(q),\dots,F_n(q)\big)$.
Let us note that an interpretation of a function symbol induces, via the analogue of $\frak G_m$,
an interpretation of its lifted version, which is by a function $\Hom(\D^m,\D)^n\to\Hom(\D^m,\D)$,
where $\Hom(X,Y)$ denotes the set of all maps $X\to Y$.

Using the inductive construction of $n$-terms, every {\it closed $n$-term} $F$ (i.e.\ an $n$-term that is
a closed expression) is interpreted by a specific function $|F|_\I\:\D^n\to\D$.
Also, given a variable assignment $\iass$, any $n$-term $F$ is interpreted by a function
$|F|_\I^\iass\:\D^n\to\D$.

Since $\lambda$-abstraction is traditionally not included in first-order languages, $n$-terms
are, strictly speaking, not included in the language for $n>0$.
This is not a serious issue, since an $n$-term uniquely corresponds to a pair consisting
of a term and an $n$-tuple of pairwise distinct positive integers.
However, there is no such simple connection between terms and $\alpha$-equivalence classes of $n$-terms.

\subsubsection{Formulas} \label{formulas}
In fact, the language $\L$ is not supposed to be a means to speak directly of individuals, but rather of
functions of individuals such as propositions or problems about individuals or tuples of individuals.
The case of $0$-tuples amounts to propositions/problems without parameters, which are really
the subject of zero-order logic --- but are also included in first-order logic.

Instead of dealing with actual human-language sentences expressing propositions or problems,
one fixes a set $\O$ of mathematical objects, which can be thought of as Platonic ``ideas'' (or rather
Fregean ``senses'') of propositions or problems (thus an element of $\O$ is supposed to encode all
propositions or problems without parameters that express the same ``idea'' in different words).
A function $\D^m\to\O$ can then be thought of as encoding a proposition/problem with $m$ parameters.

Of course, on assuming that a proposition has no ``sense'' beyond its ``reference'' (that is, beyond
its truth value), one must have $\O=\{0,1\}$.
However, even with classical logic, there are more interesting options, discovered by Leibniz and Euler and
reviewed in \S\ref{Euler} below.

Now these all are ``semantic'' objects, whereas $\L$ itself is about ``syntax'' only.
Thus, a choice of $\O$ is included not in the language $\L$,
but in its interpretation $\I$, which will still include a number of further choices.

Now, just like elements of $\D$ are represented by terms and other expressions of type $\0$, elements of
$\O$ will be represented by expressions of a new base type, $\1$.
$\L$ may contain constants of type $\0^n\too\1$, which are called {\it $n$-ary predicate constants}.
Thus an $n$-ary predicate constant $\frak p$ is interpreted by a predicate $|\frak p|_\I\:\D^n\to\O$.
Examples include the arithmetical binary predicates $=$ and $<$ and the geometric ternary predicate
of betweenness (for points on a line).

The variables $\var_1^{\0^n\too\1},\var_2^{\0^n\too\1},\dots$ of type $\0^n\too\1$ for each $n=0,1,2\dots$
are called the {\it predicate variables} of $\L$.
In dealing with classical logic (or modal logics based on classical logic), we will also call nullary
predicate variables {\it propositional variables}, and use the nicknames $\fm {a,b,c,\dots,x,y,z}$ for
the first 26 predicate variables of each arity, reserving a fancy (Euler) upright serif font for this
purpose.
To indicate the arity of a predicate variable $f$ (here $f$ is a metavaraible, in contrast to
the abbreviations $\fm{a,b,\dots,f,\dots}$), we may write the appropriate $\eta$-expansion of $f$
such as $\tr x\mapsto f(\tr x)$ or $\tr{x,y}\mapsto f(\tr{x,y})$ instead of $f$.
(See Convention \ref{prod-convention}.)
In dealing with intuitionistic logic (and its non-classical extensions), the predicate variables can be
called {\it problem variables}; in this case, we will also write
$\fm {\alpha,\beta,\gamma,\dots,\chi,\psi,\omega}$ for the first 24 ones of each arity, reserving also
the Greek part of the font.

The language $\L$ may further contain constants of type $\1^n\too\1$, which stand for logical operations,
and are called {\it $n$-ary connectives}.
Thus for every $n$-ary connective $\frak f$, its interpretation $|\frak f|_\I$ is a logical operation
$\O^n\to\O$.

It should be noted that nullary connectives are the same kind of $\lambda$-expressions as
nullary predicate constants (all of them are constants of type $\1$).
It is nevertheless convenient to distinguish them, since connectives belong to the language of a logic,
whereas predicate constants only belong to the language of a theory over a logic.

$\L$ may also contain constants of type $(\0\too\1)\too\1$, which are called {\it quantifiers}.
A quantifier $\frak q$ is interpreted by a function $|\frak q|_\I\:\Hom(\D,\O)\to\O$.
If $\frak q$ is a quantifier, $F$ is an expression of type $\1$, and $x$ is an individual variable,
then $\frak qx\, F$ abbreviates the expression $\frak q(x\mapsto F)$.
Here the individual variable $x$ is bound by the $\lambda$-abstraction $\mapsto$; but traditionally
it is also said to be {\it bound} by the quantifier.
Also, $F$ is called the {\it scope} of (the occurrence of) the quantifier.
Moreover, $\frak qx_0,\dots,x_n\ F$ abbreviates $\frak qx_0\,(\frak qx_1,\dots,x_n\ F)$.

\begin{example}
The language of intuitionistic logic contains a nullary connective $\ab$ (``absurdity''), which stands for
a problem that has no solutions, binary connectives $\land$ (``conjunction''), $\lor$ (``disjunction'') and
$\to$ (``implication''; not to be confused with the type constructor $\too$), and quantifiers $\exists$
(``existential'') and $\forall$ (``universal'').
There is no redundancy between these ingredients of the language (see \S\ref{independence} below).

It is also convenient to add, as is often done, some ``syntactic sugar'' to the language of intuitionistic
logic: a logical constant $\triv$ (``triviality'') is defined as $\ab\to\ab$; a unary connective $\neg$ 
(``negation'') is defined as $\alpha\mapsto\alpha\to\ab$ (thus $\triv=\neg\ab$), and a binary connective 
$\tofrom$ (``equivalence'') is defined as $\alpha,\beta\mapsto(\alpha\to\beta)\land(\beta\to\alpha)$.
\end{example}

\begin{example}
The language of classical logic can be taken to be the same as that of intuitionistic logic, except for
the logical constants, which we will write as $\cltop$ (``truth'') and $\clbot$ (``falsity'').
(This complements our convention of denoting predicate
variables by Greek letters in the intuitionistic case and by Roman letters in the classical case).
There is additional redundancy here: the classical $\land$ can be defined as
$p,q\mapsto\neg(p\to\neg q)$, the classical $\lor$ as $p,q\mapsto\neg p\to q$, and the classical $\exists$
as $f\mapsto\neg\forall x\,\neg f(x)$.
\end{example}

\begin{example} The language of a theory over classical or intuitionistic logic (for instance, the ``empty theory'',
which has no axioms) may additionally contain finitely many function symbols and/or predicate constants.
\end{example}

An {\it atomic formula} of $\L$ is an expression of type $\1$ obtained by applying either an $n$-ary predicate
constant or an $n$-ary predicate variable to an $n$-tuple of terms.
A {\it formula} of $\L$ is an expression built out of atomic formulas using the connectives and
the quantifiers.
In other words, formulas are expressions of type $\1$ involving only variables of types $\0$ and $\0^n\too\1$,
tuples, function application, a prescribed collection of constants of types $\0^n\too\0$, $\0^n\too\1$ and
$\1^n\too\1$, and a prescribed collection of constants of type $(\0\too\1)\too\1$ used in conjunction with
certain specific $\lambda$-abstractions.
(No other uses of $\lambda$-abstraction are allowed.)

A formula is called {\it closed} (or ``closed as a formula'') if individual variables do not occur freely
in it, and {\it $\lambda$-closed} (or ``closed as a $\lambda$-expression'') if no variables occur freely in it.
The arithmetical formula
$\fm{\forall \tr{x,y}\ \big(\alpha(\tr{x,y})\land\alpha(\tr{y,x})\to \tr x=\tr y}\big)$ is closed, but not
$\lambda$-closed.
The arithmetical formula $\fm{\forall \tr{x,y,z}\ \big(\tr{x+z}=\tr{y+z}\to\tr x=\tr y}\big)$
is $\lambda$-closed.

This completes our definition of a first-order language $\L$.
Namely, it consists of the following sets of typed $\lambda$-expressions (variables and constants only),
where $n$ ranges over $\N=\{0,1,2,\dots\}$:
\begin{itemize}
\item[(1$_n$)] of $n$-ary function symbols (of type $\0^n\too\0$);
\item[(2$_n$)] of $n$-ary predicate constants (of type $\0^n\too\1$);
\item[(3$_n$)] of $n$-ary connectives (of type $\1^n\too\1$);
\item[(4)] of quantifiers (of type $(\0\too\1)\too\1$);
\item[(5)] of individual variables (of type $\0$);
\item[(6$_n$)] of $n$-ary predicate variables (of type $\0^n\too\1$);
\end{itemize}
Each of the sets (5), (6$_n$) is countably infinite, and each of the remaining sets is required to be finite.
The sets (3$_n$)--(6$_n$) are called the {\it purely logical part} of the first-order language $\L$.
The set of terms $\T(\L)$ and the set of $\alpha$-equivalence classes of formulas $\Fm_0(\L)$ can be
reconstructed from the sets (1)--(6), so they do not have to be included in $\L$.

The definition of an $\L$-structure $\I$ is also complete: it consists of the sets $\D$ and $\O$ and of
the following functions, where $n$ ranges over $\N$:
\begin{itemize}
\item[(1$_n'$)] from the set of $n$-ary function symbols to $\Hom(\D^n,\D)$;
\item[(2$_n'$)] from the set of $n$-ary predicate constants to $\Hom(\D^n,\O)$;
\item[(3$_n'$)] from the set of $n$-ary connectives to $\Hom(\O^n,\O)$;
\item[(4$'$)] from the set of quantifiers to $\Hom(\Hom(\D,\O),\O)$.
\end{itemize}

We have not yet defined interpretation of formulas in an $\L$-structure; this will be done next, with
the aid of $n$-formulas.

\subsubsection{$n$-Formulas and $(n,\vec r)$-formulas} \label{n-formulas}

By an {\it $n$-formula} we mean an expression of the form
$x_1,\dots,x_n\mapsto F$, where the $x_i$ are pairwise distinct individual variables and $F$ is a formula.
Thus $n$-formulas are of type $\0^n\too\1$; Kleene \cite{Kl}*{\S34} called them ``name forms''.
An $n$-formula is {\it closed} if individual variables do not occur freely in it, and {\it $\lambda$-closed}
if no variables occur freely in it.
Let us note that quantifiers do not operate on formulas, but rather transform a 1-formula
into a formula; thus in some sense $n$-formulas are inherent in first-order languages --- even though
traditionally these do not include $n$-formulas.

It should be noted that an arbitrary formula can be viewed as a closed $n$-formula applied
to an $n$-tuple of individual variables; thus closed $n$-formulas can be viewed as more fundamental
than formulas.
Moreover, an arbitrary $k$-formula can be viewed as a closed $(n+k)$-formula pre-composed with
an expression of the form $x_1,\dots,x_n\mapsto (x_1,\dots,x_n,y_1,\dots,y_k)$, where the $x_i$ and
the $y_j$ are pairwise distinct individual variables.
Thus closed $n$-formulas can be viewed as more fundamental even than arbitrary $k$-formulas.

Using $k$-formulas, one can give an alternative recursive description of formulas (and $n$-formulas), which
is particularly suitable for closed formulas (and closed $n$-formulas).
An {\it atomic (closed) $n$-formula} is an $(n+k)$-ary predicate variable or constant
pre-composed with a (closed) expression of the form $x_1,\dots,x_n\mapsto (T_1,\dots,T_{n+k})$, where the
$x_i$ are pairwise distinct individual variables and each $T_j$ is a term.
In particular, each $x_1,\dots,x_n\mapsto T_j$ is a (closed) $n$-term.
A {\it (closed) $n$-formula} is an expression built out of atomic (closed) $k$-formulas using
appropriate lifts of the connectives and the quantifiers.
For instance, using the lifted conjunction, from closed $2$-formulas $\tr{x,y}\mapsto\fm\phi(\tr{x,y})$ and
$\tr{y,z}\mapsto\fm\psi(\tr{y,z})$ one obtains the closed $3$-formula
$\tr{x,y,z}\mapsto\fm\phi(\tr{x,y})\land\fm\psi(\tr{y,z})$; and using the lifted existential
quantifier, from the latter one obtains the closed $2$-formula
$\tr{x,z}\mapsto\exists\tr y\,\fm\phi(\tr{x,y})\land\fm\psi(\tr{y,z})$.

In more detail, every $k$-formula can be made into a $(k+l)$-formula by introducing $l$ dummy
variables via $\frak F_l:(\0^k\too\1)\too(\0^k\x\0^l\too\1)$, defined by
$f\mapsto\Big(q\mapsto f\big(\pr_1 q\big)\Big)$; and every connective on formulas, $\frak c:\1^n\too\1$,
can be made into a connective on $m$-formulas, $\frak c_m:(\0^m\too\1)^n\too(\0^m\too\1)$,
via $\frak G_m:(\1^n\too\1)\too\big((\0^m\too\1)^n\too(\0^m\too\1)\big)$,
defined by $c\mapsto\bigg(r\mapsto\Big(q\mapsto c\big(\pr_1(r)(q),\dots,\pr_n(r)(q)\big)\Big)\bigg)$.
Thus $\frak c_m(F_1,\dots,F_n)=\frak c\circ (F_1\x\dots\x F_n)$, where $F_1\x\dots\x F_n:\0^m\too\1^n$
is defined by $q\mapsto\big(F_1(q),\dots, F_n(q)\big)$.
Also, every predicate variable or predicate constant, $\phi:\0^n\too\1$, can be lifted to an expression
$\phi_m:(\0^m\too\0)^n\too(\0^m\too\1)$, so as to produce $m$-formulas out of $m$-terms,
via $\frak H_m:(\0^n\too\1)\too\big((\0^m\too\0)^n\too(\0^m\too\1)\big)$, which is defined similarly to
$\frak G_m$.
Finally, every formula quantifier, $\frak q:(\0\too\1)\too\1$, can be made into an $m$-formula quantifier
$\frak q_m:(\0^{m+1}\too\1)\too(\0^m\too\1)$ via
$\frak K_m:\big((\0\too\1)\too\1\big)\too\big((\0^m\x\0\too\1)\too(\0^m\too\1)\big)$, defined by
$q\mapsto\bigg(f\mapsto\Big(r\mapsto q\big(\frak F(f)(r)\big)\Big)\bigg)$,
where $\frak F$ is as in Example \ref{exp-lambda}.
Thus $\frak q_m(y_1,\dots,y_m,y_{m+1}\mapsto F)=y_1,\dots,y_m\mapsto \frak q(y_{m+1}\mapsto F)$.
Let us note that interpretations of formula connectives and quantifiers induce, by using the analogues
of $\frak G_m$, $\frak H_m$ and $\frak K_m$, interpretations of $m$-formula connectives, predicate
constants/variables and quantifiers, which are, respectively, by functions $\Hom(\D^m,\O)^n\to\Hom(\D^m,\O)$,
$\Hom(\D^m,\D)^n\to\Hom(\D^m,\O)$ and $\Hom(\D^{m+1},\O)\to\Hom(\D^m,\O)$.

A {\it predicate valuation} $\pval$ assigns a function $\pval(\phi)\:\D^m\to\O$ to each $m$-ary predicate
variable $\phi$, for $m=0,1,2,\dots$.
(This will also be called simply ``valuation'', especially in the context of intuitionistic logic, where
the ``predicate variables'' are really problem variables.)
Traditionally valuations are found only in zero-order logic, since modern treatments of first-order logic
do not include predicate variables in the language.

Given a predicate valuation $\pval$, for every closed formula $F$ we get a specific element
$|F|^\pval_\I\in\O$; and, more generally, for every closed $n$-formula $F$ a specific function
$|F|^\pval_\I\:\D^n\to\O$.
This function is defined straightforwardly using the previous recursive description of closed $n$-formulas,
and it is clear from this definition that it depends only on the $\alpha$-equivalence class of $F$.
Let us note that if $F$ is $\lambda$-closed, then $|F|_\I:=|F|^\pval_\I$ does not depend on $\pval$.

Given a variable assignment $\iass$, every predicate valuation $\pval$ also yields, for each $n=0,1,\dots$,
a function $|\cdot|^{\iass\pval}_\I\:\Fm_n\to\Hom(\D^n,\O)$, where $\Fm_n=\Fm_n(\L)$ is the set of
$\alpha$-equivalence classes of arbitrary $n$-formulas of $\L$.
Thus for every formula $F$ we get a specific element $|F|^{\iass\pval}_\I\in\O$, well-defined up
to $\alpha$-equivalence of $F$.

It is often convenient to consider expressions of the form $\phi_1,\dots,\phi_l\mapsto F$, where $F$
is an $n$-formula and the $\phi_i$ are pairwise distinct predicate variables of (not necessarily
distinct) arities $r_1,\dots,r_m$.
We will call such an expression an {\it $(n;\,r_1,\dots,r_m)$-formula}, or an {\it $(n,\vec r)$-formula},
where $\vec r=(r_1,\dots,r_m)$; it is of type $\prod_{i=1}^m(\0^{r_i}\too\1)\too(\0^n\too\1)$.
An $(n,\vec r)$-formula is called {\it closed} if no individual variables occur freely in it, and
{\it $\lambda$-closed} if no variables occur freely in it.
Similarly to the above, one can define lifted connectives, predicate variables/constants and quantifiers
for $(n,\vec r)$-formulas and use them to give an inductive construction of $(n,\vec r)$-formulas.
Given a predicate valuation $\pval$ and a variable assignment $\iass$, an arbitrary
$(n;\,r_1,\dots,r_m)$-formula $G$ is interpreted, using this inductive construction, by a function
$|G|^{\iass\pval}_\I\:\prod_{i=1}^m\Hom(\D^{r_i},\O)\to\Hom(\D^n,\O)$, which depends
only on the $\alpha$-equivalence class of $G$.
If $G$ is closed, then $|G|^\pval_\I:=|G|^{\iass\pval}_\I$ does not depend on $\iass$; and if $G$ is
$\lambda$-closed, then $|G|_\I:=|G|^\pval_\I$ does not depend on $\pval$ either.

\begin{remark} Let us note that $\lambda$-closed $(n,\vec r)$-formulas are similar to traditional schemata.
Indeed, term variables and $r_i$-ary formula variables that occur in a schema can be considered as ``blanks''
that may be filled in by terms and by $r_i$-formulas; to be precise, each blank must be highlighted by
a marker so that blanks of the same color are to be filled in by the same term or $r_i$-formula.
Let us note, however, that these ``blanks'' themselves do not correspond to any {\it variables} in the sense of
$\lambda$-calculus, but rather to expressions of the form $v\mapsto v$, where $v$ is an individual variable or
a predicate variable.
\end{remark}

\subsubsection{Meta-formulas}\label{meta-formulas}

We will now define the extension of the language $\L$ by the {\it language of meta-logic} --- not to be
confused with a meta-language of $\L$ (which we do not need to treat formally since what is usually
expressed by meta-variables in schemata will be achieved by means of the language of meta-logic).
The additional ingredients of the extension of $\L$ are no longer specific to one's choice of a logic
(and for this reason we need no separate notation for the extension); instead, they will be used
to express meta-problems (or rather their Platonic ``ideas''/Fregean ``senses'').
Just like propositions and problems without parameters are interpreted by elements of the set $\O$,
meta-problems without parameters will be interpreted by elements of another set $\Qm$.
We will further assume that there is a ``reflection'' function $\ocf:\O\to\Qm$ converting a
proposition into a meta-problem asking one to prove it (or a problem into a meta-problem asking one
to solve this problem); and a ``truth value'' (or ``meta-reflection'') function
$\wnf\:\Qm\to\{\Top,\Bot\}$ converting a meta-problem into a judgement (=meta-meta-proposition)
asserting the solubility of this meta-problem.
The choice of $\Qm$, $\ocf$ and $\wnf$ is included, along with a choice of an $\L$-structure $\I$
and some further data to be discussed in a moment, in a {\it meta-$\L$-structure} $\J$, also called
a {\it meta-interpretation} of $\L$.

We will see in \S\ref{models} that the usual, Tarski-style model theory corresponds to the straightforward
{\it two-valued meta-interpretation} $\J=\I_+$, which sets $\Qm=\{\Top,\Bot\}$ and $\wnf=\id$ no matter what
$\O$ is (i.e., meta-problems are postulated to have no observable ``sense'' beyond their ``reference'').
However, we will also see in \S\ref{meta-semantics} that, in fact, a more interesting option is contained
in Kolmogorov's implicit problem interpretation of the {\it meta-logic} of intuitionistic logic \cite{Kol}.
Thus we leave $\Qm$ unspecified at this point.

Now this all is semantics (or rather meta-semantics); to represent it syntactically, we need yet another
base type, to be denoted $\m$, along with a constant $\Rt:\1\to\m$.
Thus $\ocf$ is supposed to be an interpretation of $\Rt$.
Now any formula can be regarded, via $\Rt$, as an expression of type $\m$.
In fact, this will be essentially our only source of ``elementary'' expressions of type $\m$, since we are not
going to use any variables of type $\m$.
Thus, we can already define an {\it atomic meta-formula} as an expression of the form $\Rt(F)$,
where $F$ is an arbitrary formula.

Arbitrary {\it meta-formulas} are built out of atomic meta-formulas using only the following constants:
two {\it meta-connectives} and two varieties of a {\it meta-quantifier} (the universal one).
The meta-connectives are the {\it meta-conjunction} $\&:\m\x\m\too\m$, and the {\it meta-implication}
$\impord:\m\x\m\too\m$.%
\footnote{The meta-absurdity, the meta-disjunction and the existential meta-quantifier are not really
needed, but will eventually be discussed as a side remark (see \S\ref{meta-disjunction}).
All of these, as well as the meta-conjunction, would be definable (see \S\ref{meta-disjunction}) in terms
of quantification over variables of type $\m$ (which we do not include in our meta-logic, in contrast to Paulson).}
Their interpretations $|\&|_\J\:\Qm\x\Qm\to\Qm$ and $|\impord|_\J\:\Qm\x\Qm\to\Qm$ are
supposed to express conjunction and implication between judgements.
Under the two-valued meta-interpretation these are given by the usual truth tables, i.e.,
$|\&|_{\I_+}(q,r)=\Top$ if and only if $q=r=\Top$, and $|\impord|_{\I_+}(q,r)=\Bot$
if and only if $q=\Top$ and $r=\Bot$.
Then there is the {\it first-order meta-quantifier} $\q:(\0\too\m)\too\m$ and
the {\it $n$-ary second-order meta-quantifier} $\q_n:((\0^n\too\1)\too\m)\too\m$ for each
$n=0,1,2,\dots$.
The first-order meta-quantifier $\q$ is interpreted by a function $|\q|_\J:\Hom(\D,\Qm)\to\Qm$;
and the $n$-ary second-order meta-quantifier $\q_n$ by a function
$|\q_n|_\J\:\Hom(\Hom(\D^n,\O),\Qm\big)\to\Qm$.
Under the two-valued meta-interpretation these behave like the classical universal quantifier, i.e.,
$|\q|_{\I_+}(f)=\Top$ if and only $f(x)=\Top$ for all $x\in\D$, and $|\q_n|_{\I_+}(f)=\Top$
if and only $f(\phi)=\Top$ for all $\phi\in\Hom(\D^n,\O)$.

In practice, meta-quantifiers will be written like the old-style (early 20th century) universal quantifiers,%
\footnote{Peano's notation for universal quantification, superseded by Gentzen's analogue
$\forall$ of Peano's $\exists$.} but with fancy parentheses $\mq{\cdot}$ so as to avoid visual confusion
with the ordinary parentheses $(\cdot)$.
Namely, if $\frak q:(\Delta\too\m)\too\m$ is a meta-quantifier (either of them), $\F$ is an expression of
type $\m$, and $x$ is a variable of type $\Delta$, then $\mq{x}\,\F$ abbreviates the expression
$\frak q(x\mapsto \F)$.
Here $x$ is said to be {\it bound} by the meta-quantifier, and $\F$ is called the {\it scope} of
(the occurrence of) the meta-quantifier.
Moreover, $\mq{x_0,\dots,x_n}\,\F$ abbreviates $\mq{x_0}(\mq{x_1,\dots,x_n}\,\F)$, where the $x_i$ may be
of different types.

This completes our definition of the meta-logical extension of $\L$: it consists of the constants
$\Rt$, $\mand$, $\imp$, $\q$ and $\q_0,\q_1,\dots$.
Since these are fixed forever (in the present paper), the set of meta-formulas can be reconstructed
from $\L$ alone.

The definition of a meta-$\L$-structure $\J$ is also complete: it consists of an $\L$-structure $\I$,
the set $\Qm$, the maps $\ocf$ and $\wnf$, and two elements $|\!\mand\!|_\J$, $|\!\!\imp\!\!|_\J$
of $\Hom(\Qm\x\Qm,\Qm)$, an element $|\q|_\J$ of $\Hom(\Hom(\D,\Qm),\Qm)$ and an element
$|\q_n|_\J$ of $\Hom(\Hom(\Hom(\D^n,\O),\Qm),\Qm)$ for each $n\in\N$.

When no confusion arises, we will omit the symbol $\Rt$ in writing out expressions of type $\m$.
Accordingly, formulas will be regarded as special (namely, atomic) meta-formulas.
However, one should keep in mind that meta-connectives and meta-quantifiers cannot be used {\it inside}
of a formula.

As usual, $\F\iff\G$ abbreviates $(\F\imp\G)\mand(\G\imp\F)$; ``$\iff$''
is called {\it meta-equivalence}.
We will stick to the following order of precedence of logical and meta-logical symbols (in groups of equal
priority, starting with higher precedence/stronger binding):
\begin{enumerate}
\item $\neg$, $\exists$ and $\forall$;
\item $\land$ and $\lor$;
\item $\to$ and $\tofrom$;
\item $\mq{\cdot}$;
\item $\&$;
\item $\imp$ and $\iff$.
\end{enumerate}

By an {\it $n$-meta-formula} we mean an expression of the form $x_1,\dots,x_n\mapsto\F$, where
the $x_i$ are pairwise distinct individual variables and $\F$ is a meta-formula.
By an {\it $(n;\,r_1,\dots,r_m)$-meta-formula}, or an {\it $(n,\vec r)$-meta-formula}, where
$\vec r=(r_1,\dots,r_m)$, we mean an expression of the form $\phi_1,\dots,\phi_m\mapsto\G$, where
the $\phi_i$ are pairwise distinct predicate variables of (not necessarily distinct) arities $n_1,\dots,n_m$,
and $\G$ is an $n$-meta-formula.
Thus an $(n,\vec 0)$-meta-formula (where $\vec 0$ is the unique $0$-tuple) is an $n$-meta-formula, and
a $0$-meta-formula is a meta-formula.
Let us call an $(n,\vec r)$-meta-formula {\it closed} if individual variables do not
occur freely in it, and {\it $\lambda$-closed} in no variables occur freely in it.

Like before, one can lift the meta-connectives and the varieties of the meta-quantifier to the level
of $(n,\vec r)$-meta-formulas, and use the lifts to give an inductive construction of
$(n,\vec r)$-meta-formulas.
(Let us note that for a meta-formula $\F$, such a recursive description will involve
$(n;\,r_1,\dots,r_m)$-meta-formulas with $n>0$ if $\F$ contains either a meta-level quantifier or
a first-order meta-quantifier, and with $m>0$ if $\F$ contains a second-order meta-quantifier.)

Using this inductive construction, every $(n;\,r_1,\dots,r_m)$-meta-formula $\F$ along with
a variable assignment $\iass$ and a predicate valuation $\pval$ are easily seen to yield a function
$|\F|^{\iass\pval}_\J\:\prod_{i=1}^m\Hom(\D^{r_i},\Qm)\to\Hom(\D^n,\Qm)$.
If $\F$ is closed, then this function, written $|\F|^\pval_\J$, does not depend on $\iass$;
and if $\F$ is $\lambda$-closed, then the function, written $|\F|_\J$, also does not depend on $\pval$.

In particular, a $\lambda$-closed meta-formula $\F$ is interpreted by a specific element $|\F|_\J\in\Qm$.
A closed meta-formula $\F$ along with a predicate valuation $\pval$ yield an element $|\F|^\pval_\J\in\Qm$.
Given a predicate valuation $\pval$ and a variable assignment $\iass$, an arbitrary meta-formula $\F$ is
interpreted by an element $|\F|^{\iass\pval}_\J\in\Qm$.

\formulas

\begin{example} Let us parse the following $\mm\lambda$-closed meta-formula:
\[\mq{\gamma}\ {\gamma\lor\neg\gamma}.\]
We have omitted the reflection function, and here it is shown:
\[\mq{\gamma}\ \Rt({\gamma\lor\neg\gamma}).\]
Let us also expand the meta-quantifier, so as to make the $\mm\lambda$-abstraction explicit:
\[\q_0\big(\gamma\mapsto\Rt({\gamma\lor\neg\gamma})\big).\]

Now let us assume that some meta-interpretation $\J$ is given.
Then the $\mm\lambda$-closed $(0;0)$-formula $\gamma\mapsto{\gamma\lor\neg\gamma}:\1\to\1$
is interpreted by a function $\O\to\O$.
Therefore $\gamma\mapsto\Rt({\gamma\lor\neg\gamma}):\1\to\m$ is interpreted
by a function $\O\to\Qm$.
Hence $\q_0\big(\gamma\mapsto\Rt({\gamma\lor\neg\gamma})\big):\m$ is interpreted by
a specific element of $\Qm$.
\end{example}

\begin{example} Let us parse the following $\mm\lambda$-closed meta-formula:
\[\mq{\gamma}\big(\mq{\tr x}\ {\gamma(\tr x)\imp\forall \tr x\,\gamma(\tr x)}\big).\]
We have omitted the reflection functions, and here they are shown:
\[\mq{\gamma}\Big(\mq{\tr x}\ \Rt\big(\gamma(\tr x)\big)\imp
\Rt\big(\forall\tr x\,\gamma(\tr x)\big)\Big).\]
Let us also expand the meta-quantifiers, so as to make the $\mm\lambda$-abstractions explicit:
\[\q_1\Bigg(\gamma\mapsto\bigg(\q\Big(\tr x\mapsto\Rt\big(\gamma(\tr x)\big)\Big)\imp
\Rt\big(\forall\tr x\,\gamma(\tr x)\big)\bigg)\Bigg).\]

Now let us assume that some meta-interpretation $\J$ is given.
Then the $\mm\lambda$-closed $(0;\,1)$-formula
$\gamma\mapsto\forall\tr x\,\gamma(\tr x):(\0\too\1)\too\1$ is interpreted by
a function $\Hom(\D,\O)\to\O$ (namely, the interpretation of $\forall$), and the $\mm\lambda$-closed
$(1;\,1)$-formula
$\gamma\mapsto\big(\tr x\mapsto\gamma(\tr x)\big):(\0\too\1)\too(\0\too\1)$
is interpreted by a function $\Hom(\D,\O)\to\Hom(\D,\O)$ (namely, the identity function).
Hence
$\gamma\mapsto\Rt\big(\forall\tr x\,\gamma(\tr x)\big):(\0\too\1)\too\m$
is interpreted by a function $\Hom(\D,\O)\to\Qm$, and
$\gamma\mapsto\Big(\tr x\mapsto\Rt\big(\gamma(\tr x)\big)\Big):(\0\too\1)\too
(\0\too\m)$
is interpreted by a function $\Hom(\D,\O)\to\Hom(\D,\Qm)$.
Next,
$\gamma\mapsto\q\Big(\tr x\mapsto\Rt\big(\gamma(\tr x)\big)\Big):(\0\too\1)\too\m$
is interpreted by a function $\Hom(\D,\O)\to\Qm$, and consequently
\[\gamma\mapsto\bigg(\q\Big(\tr x\mapsto\Rt\big(\gamma(\tr x)\big)\Big)\imp
\Rt\big(\forall\tr x\,\gamma(\tr x)\big)\bigg):(\0\too\1)\too\m\]
is also interpreted by a function $\Hom(\D,\O)\to\Qm$.
Finally, the meta-formula in question is interpreted by the interpretation of $\q_1$ applied
to the latter function; this is a specific element of $\Qm$.
\end{example}

\metameta

By a {\it principle} we mean a $\lambda$-closed meta-formula involving no meta-connectives.
In other words, a principle is a formula meta-quantified over all its free individual variables and
over all its predicate variables.
A meta-formula will be called {\it purely logical} if it involves no function symbols and no predicate constants.

\begin{example}
Fermat's theorem is a formula, like most other mathematical theorems that are expressible in
a first-order language, whereas the principle of excluded middle is a purely logical principle, like most other
logical principles that are expressible in a first-order language.
There are important first-order theories (such as Peano arithmetic, Tarski's elementary geometry and
Zermelo--Fraenkel set theory) where most axioms and theorems are formulas, but some are principles,
though none are purely logical (e.g.\ the principle of mathematical induction, Tarski's continuity principle
and the set-theoretic principles of separation and replacement).
\end{example}

A meta-formula $\F$ is said to be {\it valid} in a meta-$\L$-structure $\J$ if
$\wnf(|\F|_\J^{\iass\pval})=\Top$ for arbitrary variable assignment $\iass$ and predicate valuation $\pval$.
When $\F$ is closed, this is equivalent to saying that
$\wnf(|\F|_\J^\pval)=\Top$ for an arbitrary predicate valuation $\pval$; and when $\F$ is
$\lambda$-closed, this is equivalent to saying that $\wnf(|\F|_\J)=\Top$.

A meta-formula $\F$ is said to be {\it valid} in an $\L$-structure $\I$ if it is valid in
the two-valued meta-interpretation $\I_+$.
We also call $\F$ {\it valid} in the pair $(\I,\pval)$ if $\wnf(|\F|_{\I_+}^{\iass\pval})=\Top$
for arbitrary variable assignment $\iass$.
Dually, $\F$ {\it satisfiable} in the pair $(\I,\pval)$ if $\wnf(|\F|_{\I_+}^{\iass\pval})=\Top$
for some variable assignment $\iass$.

We denote by $\Turnstile\F$, or in more detail $\Turnstile_\J\F$, the judgement that
the meta-formula $\F$ is valid in $\J$.
The judgement $\Turnstile_\J(\F_1\mand\dots\mand\F_m\imp\G)$ is also abbreviated by
\[\F_1,\dots,\F_m\Turnstile_\J\G.\]
When this judgement is true, we also say that $\G$ is a {\it semantic consequence} of
the $\F_i$ in the meta-$\L$-structure $\J$.
When $\J=\I_+$, we abbreviate $\Turnstile_{\I_+}$ by $\Turnstile_\I$, and speak of
{\it semantic consequence} in the $\L$-structure $\I$.%
\footnote{When the $\F_i$ and $\G$ are formulas, this is Tarski's standard definition of semantic
consequence \cite{Ta1}.}

\subsection{Meta-logic}

Our meta-logic is in essence a second-order fragment of {\tt Isabelle}'s meta-logic \cite{Pau1}, which is in turn
essentially a fragment of higher-order intuitionistic logic.%
\footnote{Our meta-logic only deals with meta-formulas, whereas {\tt Isabelle}'s with arbitrary expressions
of types generated from the base types $\0$, $\1$, $\m$ using the type constructor $\too$.
{\tt Isabelle}'s meta-logic additionally includes quantification over variables of arbitrary types
(of which only two are needed for dealing with first-order logics, so we treat them separately),
equality, $\lambda$-abstraction and function application (we prefer to leave these at the informal
meta-meta-level, since their explicit treatment does not seem to add much conceptual clarity in our case).
Due to our omission of {\tt Isabelle}'s meta-rules for equality, our meta-logic is not capable of dealing
with first-order logics with equality --- although some theories with equality can be handled.
(The author is unconvinced that this is a deficiency since modern type theories provide an
arguably more adequate framework for dealing with equality than first-order logic.)
Due to our omission of unrestricted quantification, Russell's definitions of connectives
(see \S\ref{meta-disjunction}) no longer apply, and we find it convenient to add the meta-conjunction to
our meta-logic (though it could have been eliminated in principle by using the exponential law).}

One need not panic about the fact that the meta-logic is intuitionistic and not classical
(just like with $\lambda$-calculus, which is closely related to a fragment of intuitionistic logic).
Firstly, when arguing about meta-logic, we have to do it in some meta-meta-logic (along with some
meta-meta-theory over it), and this one will be classical (and informal) in our approach.
Secondly, the semantics of our meta-logic according to the standard (Tarski's) setup of model theory
turns out to be classical.
On the other hand, it is not unexpected that our meta-logic is intuitionistic: this simply reflects
the fact that derivation of formulas is a constructive activity; thus the primitive notion is that of
the {\it problem} (or task) of deriving a given formula, and the more familiar notion of derivability
of the formula is secondary: it merely asserts the existence of a solution of the said problem.
One can discuss semantics of the metalogic without degenerating into classical logic (we will do it in 
\S\ref{meta-semantics}), but one does not have to do it before
discussing semantics of an object logic, be it intuitionistic, classical or modal.
Thus by defining a logic in terms of meta-logic we will not complicate our task of understanding
intuitionistic logic; on the contrary, this will simplify matters, as it is always easier to understand
something that has a formal definition.

\subsubsection{Meta-rules of inference} \label{meta-rules}
There are just two meta-rules of inference for each meta-connective: the introduction rule and
the elimination rule.%
\footnote{These meta-rules themselves belong to meta-meta-logic, which we acknowledge by drawing bold
horizontal lines.}
In the meta-rules below, $\F$ and $\G$ are meta-formulas, $x$ is an individual variable, $T$ is a term,
$\gamma$ is an $n$-ary predicate variable, and $\Sigma$ is an $n$-formula, for some $n=0,1,2,\dots$.%
\footnote{It should be noted that this sentence sneaks in meta-meta-quantification over $\F$, $\G$,
$x$, $T$, $\gamma$ and $\Sigma$.
One does not need to worry about it too much, because all substantial issues appear to be already addressed
on the previous two levels.
In particular, the meta-rules for $\mq{\cdot}$ take care of the entire business of capture-avoiding
substitutions.
This provides an opportunity to focus on more ``substantial'' issues at the object level (in a traditional
sense that is ignorant of the interpretation of substitution by pullback).}
The meta-rules are stated in the natural deduction style, which means that they operate on deductions
from assumptions.
In particular, the $\imp$-introduction meta-rule below is understood thus: starting from a deduction
of $\G$ from a list of assumptions that includes $\F$, we pass to a deduction of $\F\imp\G$ from
the same list of assumptions but with $\F$ removed.
(If some other entries of the list are identical with $\F$, they are kept.)
See \cite{Gi}, \cite{Pr1} for further details on natural deduction (but it should be clear how it works
from the examples below).

\begin{enumerate}[label=]
\item ($\alpha$-conversion) \quad
$\Dfrac{\begin{matrix}\vdots\\ \F\end{matrix}}{\G}$, if $\F$ is $\alpha$-equivalent to $\G$
\bigskip

\item ($\&$-introduction) \quad
$\Dfrac{\begin{matrix}\vdots\\ \F\end{matrix}\qquad
\begin{matrix}\vdots\\ \G\end{matrix}}{\F\mand\G}$
\bigskip

\item ($\&$-elimination) \quad
$\Dfrac{\begin{matrix}\vdots\\ \F\mand\G\end{matrix}}{\F}\qquad\text{and}\qquad
\Dfrac{\begin{matrix}\vdots\\ \F\mand\G\end{matrix}}{\G}$
\bigskip

\item ($\imp$-elimination) \quad
$\Dfrac{\begin{matrix}\vdots\\ \F\end{matrix}\qquad
\begin{matrix}\vdots\\ \F\imp\G\end{matrix}}{\G}$
\bigskip

\item ($\imp$-introduction) \quad
$\Dfrac{\begin{matrix}[\F]\\ \vdots\\ \G\end{matrix}}{\F\imp\G}$
\bigskip

\item (meta-generalization)

$\Dfrac{\begin{matrix}\vdots\\ \F\end{matrix}}{\mq{x}\F}$,
provided that $x$ does not occur freely in any of the assumptions;
\bigskip

$\Dfrac{\begin{matrix}\vdots\\ \F\end{matrix}}{\mq{\gamma}\F}$,
provided that $\gamma$ does not occur freely in any of the assumptions.
\bigskip

\item (meta-specialization) \quad
$\Dfrac{\begin{matrix}\vdots\\ \mq{x}\F\end{matrix}}{\F[x/T]}$,
provided that $T$ is free for $x$ in $\F$;
\bigskip

$\Dfrac{\begin{matrix}\vdots\\ \mq{\gamma}\F\end{matrix}}{\F[\gamma/\Sigma]}$,
provided that $\Sigma$ is free for $\gamma$ in $\F$.
\end{enumerate}
\bigskip

Let us note that $\F[x/T]$ has the same meaning as $\F|_{x:=T}$ in the first-order meta-specialization
rule, and if $n=0$, then also $\F[\gamma/\Sigma]$ has the the same meaning as $\F|_{\gamma:=\Sigma}$ in
the second-order meta-specialization rule; but not so for $n>0$, since in that case the $\lambda$-expression
$\F|_{\gamma:=\Sigma}$ is not itself a meta-formula, and only $\beta$-reduces to the meta-formula
$\F[\gamma/\Sigma]$.

One could in fact drop the ``free for'' hypotheses and understand the meta-specialization rules
as enabling one to pass from a given deduction of $\mq{x}\F$ or $\mq{\gamma}\F$ to the deduction
of any meta-formula in the $\alpha$-equivalence class to which $(x\mapsto\F)(T)$ or
$(\gamma\mapsto\F)(\Sigma)$ $\beta$-reduces.
This seems reasonable, since we anyway have the $\alpha$-conversion meta-rule.
Nevertheless, the ``free for'' conditions are mentioned above in order to facilitate comparison with
traditional treatments of first-order logics.

The ``free for'' conditions are defined in \S\ref{free substitution}.
Let us write them out explicitly in our situation to see that they are similar to the usual ones used in
first-order logic (cf.\ e.g.\ Kleene \cite{Kl}*{\S18, \S34}, Church \cite{Ch}*{\S35}): only the clause (3)
below is unusual, but it is analogous to (0).

Namely, that $T$ is {\it free for $x$ in $\F$} means that
\begin{enumerate}[start=0]
\item no free occurrence of $x$ in $\F$ is in the scope of a quantifier or a meta-quantifier
over any individual variable $y$ that occurs in $T$.
\end{enumerate}
And that the $n$-formula $\Sigma=x_1,\dots,x_n\mapsto\Theta$ %
\footnote{Let us note that we cannot just write $\Sigma(x_1,\dots,x_n)$ instead of $\Theta$, because
that does not exclude the unwanted situation that some $x_i$ occurs freely in $\Sigma$.} 
is {\it free for $\gamma$ in $\F$} means that
\begin{enumerate}[resume]
\item every free occurrence of $\gamma$ in $\F$ is in an atomic formula of the form
$\gamma(T_1,\dots,T_n)$ such that each $T_i$ is free for $x_i$ in $\Theta$;
\item no free occurrence of $\gamma$ in $\F$ is in the scope of a quantifier or a meta-quantifier over
any individual variable $y$ that occurs freely in $\Sigma$;
\item no free occurrence of $\gamma$ in $\F$ is in the scope of a meta-quantifier over
any predicate variable $\delta$ that occurs in $\Sigma$.
\end{enumerate}

From the viewpoint of $\lambda$-calculus, conditions (2) and (3) are checked upon
the substitution of $\Sigma$ for $\gamma$; whereas (1) is checked upon the evaluation of
the resulting expression, which involves an auxiliary substitution.

The point of the $\alpha$-conversion rule is that object-level inference rules and laws
such as $\fm{\mq{\alpha,\tr t}\ \alpha(\tr t)\to\exists \tr x\,\alpha(\tr x)}$ are now meta-formulas
(and not just some schemata involving meta-variables) and so do not automatically yield their copies
with different names of bound variables.
Variables bound by meta-quantifiers can be easily renamed using the specialization and generalization
meta-rules; but no such procedure exists for variables bound by usual quantifiers.

It is not hard to see that it suffices to assume the $\alpha$-conversion rule only for atomic
meta-formulas (i.e.\ the $\Rt$-images of formulas).
Indeed, an arbitrary meta-formula is built out of atomic ones using meta-connectives and meta-quantifiers;
using the meta-rules of inference one can get rid of the meta-connectives and the meta-quantifiers, at
the cost of multiplying the number of deductions, adding new assumptions, and remembering that some
variables do not occur freely in the assumptions.

\subsubsection{Examples of deductions} \label{deductions}

The only deductions that are provided from start are the {\it trivial} deductions, in which a
meta-formula is deduced from itself.
A meta-formula $\F$ is called {\it deducible} if using the meta-rules one can obtain (from
the trivial deductions) a deduction of $\F$ from no assumptions.
We will also abbreviate the phrase ``to obtain (from the trivial deductions, using the meta-rules)
a deduction of $\F$ from the assumption $\G$'' by ``to deduce $\F$ from $\G$''.

\begin{example} \label{trivial} Starting from the trivial deduction of $\F$ from itself, we can apply the
$\imp$-introduction meta-rule to obtain a deduction of $\F\imp\F$ from no assumptions.
Thus the meta-formula $\F\imp\F$ is deducible.
\end{example}

\begin{example} \label{weakening}
Starting from the trivial deduction of $\G$ from itself, we may view it as a deduction
of $\G$ from the assumptions $\F$ and $\G$.
Then the $\imp$-introduction yields a deduction of $\F\imp\G$ from the assumption $\G$.
Applying the $\imp$-elimination once again, we get that $\G\imp(\F\imp\G)$ is deducible.
Also, in the case where $\G$ is of the form $\H\imp\H$, we can combine the deduction of $\F\imp\G$ from
the assumption $\G$ with Example \ref{trivial} to get that $\F\imp(\H\imp\H)$ is deducible.
\end{example}

\begin{example} \label{mand-intro}
Let us show that $(\H\imp\F)\mand(\H\imp\G)\Imp(\H\imp\F\mand\G)$ is deducible.
For this, it suffices to deduce $\H\imp\F\mand\G$ from $(\H\imp\F)\mand(\H\imp\G)$.
For this, it in turn suffices to deduce $\F\mand\G$ from the hypotheses $\H$ and
$(\H\imp\F)\mand(\H\imp\G)$.
This is done as follows: we first deduce $\H\imp\F$ and (separately) $\H\imp\G$ from
$(\H\imp\F)\mand(\H\imp\G)$; and use the $\imp$-elimination meta-rule to deduce $\F$ from
$\H$ and $\H\imp\F$, and $\G$ from $\H$ and $\H\imp\G$.
This gives us deductions of $\F$ and $\G$ from the hypotheses $\H$ and
$(\H\imp\F)\mand(\H\imp\G)$; thus we can use the $\&$-introduction meta-rule to obtain the
desired deduction.
\end{example}

\begin{example} \label{1-2-meta} Similarly to Example \ref{mand-intro}, 
 $\mq{x}\big(\H\imp\F\big)\Imp\big(\H\imp\mq{x}\F\big)$ is deducible, where
$x$ is an individual or predicate variable that does not occur freely in $\H$.
\end{example}

\begin{example}\label{exp-example}
Let us deduce $(\F\mand\G\imp\H)\Iff\big(\F\imp(\G\imp\H)\big)$,
the exponential meta-law.
By the $\&$-elimination, it suffices to deduce both implications: ``$\imp$'' and ``$\when$''.
For this it suffices to deduce $\F\imp(\G\imp\H)$ from $\F\mand\G\imp\H$ and vice versa.

In one direction, it suffices, by the $\imp$-introduction, to deduce $\G\imp\H$ from the hypotheses
$\F$ and $\F\mand\G\imp\H$.
For this it suffices, by the $\imp$-introduction, to deduce $\H$ from $\G$, $\G$ and
$\F\mand\G\imp\H$.
Now from the hypotheses $\F$ and $\G$ we get, by the $\&$-introduction, $\F\mand\G$.
From the latter and $\F\mand\G\imp\H$ we get $\H$ by the $\imp$-elimination.

Conversely, it suffices, by the $\imp$-introduction, to deduce $\H$ from the assumptions $\F\mand\G$ and
$\F\imp(\G\imp\H)$.
Now $\F\mand\G$ yields, by the $\&$-elimination, $\F$ and $\G$.
From $\F$ and $\F\imp(\G\imp\H)$ we get $\G\imp\H$ by the $\imp$-elimination.
Similarly, from $\G$ and $\G\imp\H$ we get $\H$, as desired.
\end{example}

\begin{example}\label{mq-commutes}
It is also easy to check that $\mq{x}\F\mand\mq{x}\G\Iff\mq{x}(\F\mand\G)$
is deducible and $\mq{x}(\F\imp\G)\Imp(\mq{x}\F\imp\mq{x}\G)$ is deducible, where
$x$ is either an individual variable or a predicate variable.
\end{example}

\begin{example} \label{composition metaformula}
Given a deduction of $\H$ from $\G$ and a deduction of $\G$ from $\F$, by combining them we obtain
a deduction of $\H$ from $\F$.
Using this observation, it is easy to show that $(\F\imp\G)\mand(\G\imp\H)\imp(\F\imp\H)$
is deducible.
\end{example}

\begin{remark}\label{meta deduction theorem}
If one can deduce $\G$ from $\F$, then by the $\imp$-introduction,
$\F\imp\G$ is deducible, and hence by the $\imp$-elimination, the deducibility of $\F$ implies
that of $\G$.

However, the converse is not true: if the deducibility of $\F$ implies that of $\G$,
it does not follow that $\G$ can be deduced from $\F$.
Indeed, it can be shown that the meta-absurdity $\fm{\mq{\gamma}\gamma}$ and the Peirce meta-principle
$\fm{\mq{\alpha,\beta}\Big(\big((\alpha\imp\beta)\imp\alpha\big)\imp\alpha\Big)}$ are both not deducible;
but still the former cannot be deduced from the latter.
\end{remark}

\subsubsection{Hilbert-style formulation}\label{hilbert-style meta-logic}

For purposes of models (see \S\ref{models}) it is convenient to record a ``Hilbert-style'' 
formulation of the meta-logic, consisting mostly of {\it meta-laws}, i.e.\ inference meta-rules with
no hypotheses.
To simplify notation, we omit the horizontal bar in meta-laws and the vertical dots illustrating 
the deduction.
Like before, $\F$, $\G$ and $\H$ stand for arbitrary meta-formulas, $x$ is an individual variable, 
$T$ is a term, $\gamma$ is an $n$-ary predicate variable, and $\Phi$ is an $n$-formula, for some 
$n=0,1,2,\dots$.

\begin{enumerate}
\item \label{alpha-eq} $\F\imp\G$, if $\F$ is $\alpha$-equivalent to $\G$;
\smallskip

\item\label{meta-law:identity} $\F\imp\F$;
\medskip

\item\label{meta-rule:composition} $(\F\imp\G)\mand(\G\imp\H)\Imp(\F\imp\H)$;
\medskip

\item\label{meta-rule:exponential} 
$\big((\F\mand\G)\imp\H\big)\Iff\big(\F\imp(\G\imp\H)\big)$;
\medskip

\item\label{meta-law:conjunction} $\F\mand\G\imp\F$ \ and \ $\F\mand\G\imp\G$;
\smallskip

\item\label{meta-rule:conjunction} $(\H\imp\F)\mand(\H\imp\G)\Imp(\H\imp\F\mand\G)$;
\medskip

\item\label{meta-law:forall1} $\mq{x}\F\imp\F[x/T]$,
provided that $T$ is free for $x$ in $\F$;
\medskip

\item\label{meta-law:forall2} $\mq{\gamma}\F\imp\F[\gamma/\Phi]$,
provided that $\Phi$ is free for $\gamma$ in $\F$;
\medskip

\item\label{meta-law:forall3} $\mq{x}(\H\imp\F)\Imp(\H\imp\mq{x}\F)$,
provided that $x$ does not occur freely in $\H$;
\medskip

\item\label{meta-law:forall4} $\mq{\gamma}(\H\imp\F)\Imp(\H\imp\mq{\gamma}\F)$,
provided that $\gamma$ does not occur freely in $\H$;
\bigskip

\item\label{meta-rule:forall1} $\Dfrac{\F}{\mq{x}\F}$,
provided that $x$ does not occur freely in the assumptions;
\bigskip

\item\label{meta-rule:forall2} $\Dfrac{\F}{\mq{\gamma}\F}$,
provided that $\gamma$ does not occur freely in the assumptions;
\bigskip

\item\label{meta-rule:modus ponens} $\Dfrac{\F,\,\F\imp\G}{\G}$.
\end{enumerate}
\bigskip

The three meta-rules with hypotheses are same as before, whereas the meta-laws have already been shown
to be deducible above, except for (\ref{alpha-eq}), (\ref{meta-law:conjunction}), 
(\ref{meta-law:forall1}) and 
(\ref{meta-law:forall2}), which follow immediately from the corresponding meta-rules using 
the $\imp$-introduction.

Conversely, we immediately get the $\alpha$-conversion, $\mand$-elimination and specialization meta-rules
from the corresponding meta-laws, using the $\imp$-elimination meta-rule.
Also, from the meta-laws (\ref{meta-law:identity}) and (\ref{meta-rule:exponential}) 
we get $\F\imp(\G\imp\F\mand\G)$, which yields the $\mand$-introduction meta-rule by a double application 
of $\imp$-elimination.

It remains to recover the $\imp$-introduction meta-rule.
(This is similar to the usual proof of the deduction theorem.)
We induct on the length of the given deduction of $\G$ from $\F$. 
If this deduction is trivial, i.e.\ $\G=\F$, then the desired conclusion $\G\imp\F$ is yielded by
the meta-law (\ref{meta-law:identity}).
Now given a deduction of $\G$ from $\F$, there are two possibilities for the last meta-rule used in 
this deduction.

Case 1. The last meta-rule is the $\imp$-elimination with hypotheses of the form $\H$ and $\H\imp\F$.
We may assume that $\G\imp\H$ and $\G\imp(\H\imp\F)$ are deducible.
Then by a double application of the meta-law (\ref{meta-rule:exponential}), $\H\imp(\G\imp\F)$ is 
also deducible.
Also, the meta-laws (\ref{meta-rule:composition}) and (\ref{meta-rule:exponential}) yield that 
$(\G\imp\H)\imp\Big(\big(\H\imp(\G\imp\F)\big)\imp(\G\imp\F)\Big)$ is deducible.
Applying $\imp$-elimination twice, we conclude that $\G\imp\F$ is also deducible.

Case 2. The last meta-rule is the generalization with hypothesis of the form $\F'$, where 
$\F=\mq{x}\F'$ and $x$ is either an individual or a predicate variable that does not occur freely in $\G$.
We may assume that $\G\imp\F'$ is deducible.
Then by the generalization meta-rule, $\mq{x}(\G\imp\F')$ is deducible.
Hence by (\ref{meta-law:forall3}) or (\ref{meta-law:forall4}), $\G\imp\F$ is also deducible.

\subsubsection{Enderton-style formulation} \label{enderton}

For purposes of models (see \S\ref{models}) we also need an ``Enderton-style'' formulation of the meta-logic,
whose only meta-rule is $\imp$-elimination.%
\footnote{Hilbert-style deductive systems for first-order logics usually include some form of 
the generalization rule.
In particular, so do the original systems of Hilbert--Bernays \cite{HB} and Hilbert--Ackermann \cite{HA}.
Enderton's deductive system \cite{End} includes only one inference rule, the {\it modus ponens}.
The generalization rule has the status of a meta-theorem in his system, just like the Deduction Theorem.}
The notation is the same as in the Hilbert-style formulation, except that we now additionally need 
a tuple $\vec z$ of individual variables and a tuple $\vec\beta$ of predicate variables.

\begin{enumerate}[label=(\arabic*$'$)]
\item \label{alpha-eq+} $\mq{\vec z}\mq{\vec\beta}\big(\F\imp\G\big)$, if $\F$ is $\alpha$-equivalent to $\G$;
\smallskip

\item\label{meta-law:identity+} $\mq{\vec z}\mq{\vec\beta}\big(\F\imp\F\big)$;
\medskip

\item\label{meta-rule:composition+} 
$\mq{\vec z}\mq{\vec\beta}\big((\F\imp\G)\mand(\G\imp\H)\Imp(\F\imp\H)\big)$;
\medskip

\item\label{meta-rule:exponential+} 
$\mq{\vec z}\mq{\vec\beta}\Big(\big((\F\mand\G)\imp\H\big)\Iff\big(\F\imp(\G\imp\H)\big)\Big)$;
\medskip

\item\label{meta-law:conjunction+} 
$\mq{\vec z}\mq{\vec\beta}\big(\F\mand\G\imp\F\big)$ \ and \ 
$\mq{\vec z}\mq{\vec\beta}\big(\F\mand\G\imp\G\big)$;
\smallskip

\item\label{meta-rule:conjunction+} 
$\mq{\vec z}\mq{\vec\beta}\big((\H\imp\F)\mand(\H\imp\G)\Imp(\H\imp\F\mand\G)\big)$;
\medskip

\item\label{meta-law:forall1+} $\mq{\vec z}\mq{\vec\beta}\big(\mq{x}\F\imp\F[x/T]\big)$,
provided that $T$ is free for $x$ in $\F$;
\medskip

\item\label{meta-law:forall2+} $\mq{\vec z}\mq{\vec\beta}\big(\mq{\gamma}\F\imp\F[\gamma/\Phi]\big)$,
provided that $\Phi$ is free for $\gamma$ in $\F$;
\medskip

\item\label{meta-law:forall3+} $\mq{\vec z}\mq{\vec\beta}\big(\mq{x}(\F\imp\G)\Imp(\mq{x}\F\imp\mq{x}\G)\big)$;
\medskip

\item\label{meta-law:forall4+} 
$\mq{\vec z}\mq{\vec\beta}\big(\mq{\gamma}(\F\imp\G)\Imp(\mq{\gamma}\F\imp\mq{\gamma}\G)\big)$;
\medskip

\item\label{meta-law:forall5} $\mq{\vec z}\mq{\vec\beta}\big(\F\imp\mq{x}\F\big)$,
provided that $x$ does not occur freely in $\F$;
\medskip

\item\label{meta-law:forall6} $\mq{\vec z}\mq{\vec\beta}\big(\F\imp\mq{\gamma}\F\big)$,
provided that $\gamma$ does not occur freely in $\F$;
\medskip

\item $\Dfrac{\F,\,\F\imp\G}{\G}$.
\end{enumerate}
\bigskip

All the meta-laws of the Enderton-style system are easily deducible in the Hilbert-style system.
Also, all the meta-laws of the Hilbert-style system are easily deducible in the Enderton-style system
(in fact, all of them except for (\ref{meta-law:forall3}) and (\ref{meta-law:forall4}) are already present in 
the Enderton-style system, by considering empty $\vec z$ and $\vec\beta$).

It remains to recover the generalization meta-rules in the Enderton-style system.
Let us first observe that if $\F$ and $\G$ are meta-formulas and $v$ is either individual or predicate 
variable, $\mq{v}\G$ is deducible from $\mq{v}\F$ and $\mq{v}(\F\imp\G)$ in the Enderton-style system.
Indeed, $\mq{v}(\F\imp\G)\imp(\mq{v}\F\imp\mq{v}\G)$ is an instance of \ref{meta-law:forall3+} or
\ref{meta-law:forall4+}, and the assertion follows by a double application of the $\imp$-elimination.
Also, if $\F$ is a meta-formula and $v$ is either individual or predicate variable that does not occur
freely in $\F$, then $\mq{v}\F$ is easily deducible from $\F$ in the Enderton-style system.

Now suppose that we are given a deduction from assumptions in the system combining the Enderton-style system 
and the two generalization meta-rules.
Let us consider any application of these meta-rules, i.e.\ the meta-generalization of a meta-formula $\F$ 
with respect to an individual or predicate variable, call it $v$, that is not free in any of the assumptions.
The only way that $v$ can first arise as a free variable in the deduction of $\F$ from the assumptions 
is by occurring as a free variable in some instance $\G$ of some meta-law used in this deduction.
Arguing by induction, we may assume that generalization meta-rules are not used within the deduction of
$\F$ from the assumptions (regarded as a subtree of the deduction tree under consideration).
But then we amend this deduction as follows: (i) $\G$ is replaced by another instance $\mq{v}\G$ of 
the same meta-law; (ii) every application of the $\imp$-elimination, inferring $\G'$ from $\F'$ and 
$\F'\imp\G'$, is replaced as follows: if $v$ occurs freely neither in $\F'$ nor in $\G'$, nothing is changed; 
if $v$ occurs freely in $\F'$, the $\imp$-implication is replaced by the deduction (just mentioned above) of 
$\mq{v}\G'$ from $\mq{v}\F'$ and $\mq{v}\F'\imp\G'$; and if $v$ occurs freely in $\G'$ but not in $\F'$, then
we first deduce $\mq{v}\F'$ from $\F'$ and then $\mq{v}\G'$ from $\mq{v}\F'$ and $\mq{v}\F'\imp\G'$;
(iii) the application of meta-generalization to $\F$ is dropped.
In the end of the inductive procedure, all applications of the generalization meta-rules will have been 
eliminated.

\subsubsection{Deducing from assumptions} \label{deductions2}
The following examples deal with specific formulas (regarded as atomic meta-formulas).

\formulas

\begin{example}\label{mp-example} Let us deduce $\beta$ from the assumptions $\alpha$,
$\alpha\to\beta$ and the following {\it modus ponens} (object-level) rule:
\begin{enumerate}[label=(\roman*)]
\item $\mq{\phi,\psi}\ \big(\phi\to\psi\Imp(\phi\imp\psi)\big)$.
\end{enumerate}
Using the second-order meta-specialization, from (i) we infer
$\alpha\to\beta\Imp(\alpha\imp\beta)$.
From this and our assumption $\alpha\to\beta$ we infer, using the $\imp$-elimination,
$\alpha\imp\beta$.
Applying it again, we infer $\beta$.
\end{example}

\begin{example} \label{ibp} Assuming (i) and the following principle (which holds in
intuitionistic logic, cf.\ \S\ref{tautologies}, (\ref{not-not-LEM})):
\begin{enumerate}[start=2,label=(\roman*)]
\item $\mq{\gamma}\ \neg\neg(\gamma\lor\neg\gamma)$,
\end{enumerate}
let us deduce the following implication between principles:
\[\mq{\gamma}\ \neg\neg\gamma\to\gamma\Imp\mq{\gamma}\ \gamma\lor\neg\gamma.\]

Taking into account the $\imp$-introduction, it suffices to deduce
$\mq{\gamma}\ \gamma\lor\neg\gamma$ from (i), (ii) and $\mq{\gamma}\ \neg\neg\gamma\to\gamma$.
Now from (ii) we get $\neg\neg(\beta\lor\neg\beta)$ by the second-order meta-specialization;
and from $\mq{\gamma}\ \neg\neg\gamma\to\gamma$ we similarly get
$\neg\neg(\beta\lor\neg\beta)\to(\beta\lor\neg\beta)$.
Finally, similarly to Example \ref{mp-example}, these yield $\beta\lor\neg\beta$; and then the second-order
meta-generalization yields $\mq{\beta}\ \beta\lor\neg\beta$.
This is the same as $\mq{\gamma}\ \gamma\lor\neg\gamma$ up to $\gamma$-equivalence.
\end{example}

\begin{remark} \label{ibp2} By contrast, the rule
\[\mq{\gamma}\ (\neg\neg\gamma\to\gamma\imp\gamma\lor\neg\gamma)\]
cannot be deduced from (i) and (ii), because it meta-specializes (by setting $\gamma=\neg\beta$) to
$\neg\neg\neg\beta\to\neg\beta\imp\neg\beta\lor\neg\neg\beta$, where the premise
holds in intuitionistic logic (see \S\ref{tautologies}, (\ref{triple negation})), but the
conclusion does not (see \S\ref{Jankov} below).
(In other words, the said rule is not even admissible for intuitionistic logic.)
\end{remark}

\begin{example}\label{subst-example}
Let us deduce $\exists \tr x\,\big(\gamma(\tr x)\to\gamma(\tr y)\big)$ from (i) and the following
assumptions (which will be among the laws of intuitionistic logic):
\begin{enumerate}[start=3,label=(\roman*)]
\item $\mq{\gamma}\ \gamma\to\gamma$;
\item $\mq{\beta,\tr t}\ \beta(\tr t)\to\exists \tr x\,\beta(\tr x)$.
\end{enumerate}
Using the first-order meta-specialization, from (iii) we infer $\gamma(\tr y)\to\gamma(\tr y)$,
and from (iv) we infer $\mq{\beta}\ \beta(\tr y)\to\exists \tr x\,\beta(\tr x)$.
From the latter we infer, using the second-order meta-specialization,
$\big(\gamma(\tr y)\to\gamma(\tr y)\big)\to\exists \tr x\,\big(\gamma(\tr x)\to\gamma(\tr y)\big)$.
(Here we use that $\tr x\mapsto\gamma(\tr x)\to\gamma(\tr y)$ is free for $\beta$ in
$\beta(\tr y)\to\exists \tr x\,\beta(\tr x)$.
For this, it is essential that $\tr y$ is free in $\beta(\tr y)\to\exists \tr x\,\beta(\tr x)$.
Thus we would not be able to make this step directly from (iv).)
The remainder of the argument is similar to that of Example \ref{mp-example}.
\end{example}

\begin{example} \label{wrong} Let us deduce $\forall \tr x\, \big(\gamma(\tr x)\to\gamma(\tr y)\big)$
from (i), (iii) and the following rule
\begin{enumerate}[start=5,label=(\roman*)]
\item $\mq{\beta,\tr t}\ \big(\beta(\tr t)\imp\forall \tr x\,\beta(\tr x)\big)$.
\end{enumerate}
Using the first-order meta-specialization, from (v) we can still infer
$\mq{\beta}\ \big(\beta(\tr y)\imp\forall \tr x\, \beta(\tr x)\big)$.
From the latter we can still infer, using the second-order meta-specialization, the meta-formula
$\big(\gamma(\tr y)\to\gamma(\tr y)\big)\imp\forall \tr x\, \big(\gamma(\tr x)\to\gamma(\tr y)\big)$.
Since we still have $\gamma(\tr y)\to\gamma(\tr y)$ from (iii), by the $\imp$-elimination
we conclude $\forall \tr x\, \big(\gamma(\tr x)\to\gamma(\tr y)\big)$.
\end{example}

\begin{remark} \label{wrong2}
Since the formula deduced in Example \ref{wrong} is contradictory (either classically or intuitionistically),
we conclude that (v) is an incorrect interpretation of the generalization (object-level) rule.
Moreover, the trouble is clearly in that $\tr x\mapsto\gamma(\tr x)\to\gamma(\tr y)$ is free for $\beta$ in
$\beta(\tr y)\imp\forall \tr x\,\beta(\tr x)$ --- but it shouldn't be, if this were to be
the generalization rule.
There is no such trouble with the true generalization (object-level) rule:
\begin{enumerate}[start=5,label=(\roman*$'$)]
\item $\mq{\beta}\ \big(\mq{\tr t}\ \beta(\tr t)\imp\forall \tr x\,\beta(\tr x)\big)$.
\end{enumerate}
This does not meta-specialize by substituting $\tr x\mapsto\gamma(\tr x)\to\gamma(\tr y)$ for $\beta$,
because it is
not free for $\beta$ in $\mq{\tr t}\ \beta(\tr t)\imp\forall \tr x\,\beta(\tr x)$.
The first-order meta-specialization does not apply either, since the scope of the meta-quantifier is not
the entire meta-formula in (v$'$).
\end{remark}

\begin{example} \label{gen} Assuming (i), let us deduce
$\mq{\tr x}\ \phi(\tr x)\to\psi(\tr x)\imp\big(\mq{\tr x}\ \phi(\tr x)\imp\mq{\tr x}\ \psi(\tr x)\big)$.

By the $\imp$-introduction, it suffices to deduce $\mq{\tr x}\ \phi(\tr x)\imp \mq{\tr x}\ \psi(\tr x)$,
assuming (i) and $\mq{\tr x}\ \phi(\tr x)\to\psi(\tr x)$.
For this it suffices, by the same token, to deduce $\mq{\tr x}\ \psi(\tr x)$, assuming (i),
$\mq{\tr x}\ \phi(\tr x)\to\psi(\tr x)$ and $\mq{\tr x}\ \phi(\tr x)$.
The latter two assumptions yield $\phi(\tr x)\to\psi(\tr x)$ and $\phi(\tr x)$ by the meta-specialization.
From these we get $\psi(\tr x)$ similarly to Example \ref{mp-example}.
Finally, the meta-generalization yields $\mq{\tr x}\ \psi(\tr x)$.

Let us note that the last step is only possible since $\tr x$ is not free in our assumptions.
\end{example}

\begin{example} \label{subst-add} The meta-formula in the conclusion of Example \ref{subst-example} can be
restated as
$\forall \tr y\, \exists \tr x\, \big(\gamma(\tr x)\to\gamma(\tr y)\big)$, if in addition to
the original hypotheses (i), (iii), (iv) we assume the generalization rule (v$'$).

Indeed, since $\tr y$ is not free in any of the assumptions, using meta-generalization we get
$\mq{\tr y}\ \exists \tr x\,\big(\gamma(\tr x)\to\gamma(\tr y)\big)$ from the conclusion of \ref{subst-example}.
Now the generalization rule (v$'$) can be applied, using the second-order meta-specialization
and the $\imp$-elimination, to obtain the desired closed formula
$\forall \tr y\,\exists \tr x\, \big(\gamma(\tr x)\to\gamma(\tr y)\big)$.

Similarly, the conclusion of Example \ref{wrong} follows in the form
$\forall \tr y\,\forall \tr x\, \big(\gamma(\tr x)\to\gamma(\tr y)\big)$ from
the assumptions (i), (iii), (v$'$) and the faulty assumption (v).
\end{example}

\metameta

\subsubsection{Specialization} \label{specialization}

A {\it specialization} of a meta-formula $\F$ is any meta-formula that is a result of $\beta$-reduction
(possibly involving $\alpha$-conversions) of a $\lambda$-expression of the form
$\big(\vec\phi\mapsto (\vec x\mapsto\F)\big)(\vec\Sigma)(\vec T)$, where $\vec x$ is an $n$-tuple of
pairwise distinct individual variables, $\vec\phi$ is an $m$-tuple of pairwise distinct predicate variables
of arities $r_i$, $\vec T$ is an $n$-tuple of terms, and $\vec\Sigma$ is a tuple of $r_i$-formulas.

In other words, a meta-formula $\F'$ is a specialization of a meta-formula $\F$ if and only if
$\F'=\F[\vec x/\vec T,\,\vec\phi/\vec\Sigma]$, where $\vec T$ is free for $\vec x$ in $\F$ and
$\vec\Sigma$ is free for $\vec\phi$ in $\F$.

Clearly, specialization can always be obtained by first using the generalization meta-rule, then
the specialization meta-rule, and finally the $\alpha$-conversion meta-rule.

Thus, if a meta-formula $\F$ is deducible from assumptions that contain no free variables, then any its
specialization is deducible from the same assumptions.

\begin{example}
Let us consider the formula $\F=\fm\phi(\tr x)$ (regarded as an atomic meta-formula),
the term $T=\tr z$ and the $1$-formula $F=\tr y\mapsto\fm{\exists\tr z\ \psi(\tr y)}$.
Then
\begin{multline*}
\big(\fm\phi\mapsto(\tr x\mapsto\F)\big)(F)(T)
=\Big(\fm\phi\mapsto\big(\tr x\mapsto\fm\phi(\tr x)\big)\Big)
\big(\tr y\mapsto\fm{\exists\tr z\ \psi(\tr y)}\big)(\tr z)\\
\overset\beta=\Big(\tr x\mapsto\big(\tr y\mapsto\fm{\exists\tr z\ \psi(\tr y)}\big)(\tr x)\Big)(\tr z)
\overset\beta=\big(\tr y\mapsto\fm{\exists\tr z\ \psi(\tr y)}\big)(\tr z)
\overset\beta=\tr y\mapsto\fm{\exists\tr z\,\psi(\tr y)}|_{\tr y:=\tr z}
\overset\alpha=\fm{\exists\tr x\ \psi(\tr z)}.
\end{multline*}
Of course, the final $\alpha$-conversion can be avoided by the cost of replacing $F$ with
the $\alpha$-equivalent $1$-formula $F'=\tr y\mapsto\fm{\exists\tr x\ \psi(\tr y)}$.
The result depends on the choice of $F'$, but any other choice will yield a formula that differs
only in the name of the bound variable.
\end{example}

\formulas

\begin{example} \label{metaq-example1} The formula $\alpha\land\alpha\to\alpha$ is a specialization
of $\alpha\land\beta\to\alpha$.

Similarly, the meta-formula
$\big(\alpha(\tr x)\to\alpha(\tr x)\big)\Imp
\big(\alpha(\tr x)\to\forall\tr x\,\alpha(\tr x)\big)$ is
a specialization of
$\big(\beta\to\alpha(\tr x)\big)\Imp\big(\beta\to\forall\tr x\,\alpha(\tr x)\big)$.

On the other hand, the meta-formula
$\mq{\tr x}\big(\alpha(\tr x)\to\alpha(\tr x)\big)\Imp
\big(\alpha(\tr x)\to\forall\tr x\,\alpha(\tr x)\big)$ is
not a specialization of
$\mq{\tr x}\big(\beta\to\alpha(\tr x)\big)\Imp\big(\beta\to\forall\tr x\,\alpha(\tr x)\big)$.
\end{example}

\begin{example} \label{metaq-example2} The formula
$\big(\beta(\tr y)\to\beta(\tr y)\big)\To\exists\tr x\,\big(\beta(\tr x)\to\beta(\tr y)\big)$ is
a specialization of
$\alpha(\tr y)\to\exists\tr x\,\alpha(\tr x)$ (see Example \ref{subst-example}).

Similarly, the meta-formula
$\big(\beta(\tr y)\to\beta(\tr y)\big)\Imp\forall\tr x\,\big(\beta(\tr x)\to\beta(\tr y)\big)$ is
a specialization of
$\alpha(\tr y)\imp\forall\tr x\,\alpha(\tr x)$ (see Example \ref{wrong}).

On the other hand, the meta-formula
$\mq{\tr y}\big(\beta(\tr y)\to\beta(\tr y)\big)\Imp\forall\tr x\,\big(\beta(\tr x)\to\beta(\tr y)\big)$ is
not a specialization of
$\mq{\tr y}\ \alpha(\tr y)\imp\forall\tr x\,\alpha(\tr x)$ (see Remark \ref{wrong2}).
\end{example}

\metameta

\subsection{Logics}

The purpose of this section is to introduce and discuss some ``syntactic meta-sugar''.

\subsubsection{Rules}

The {\it first-order meta-closure} $\mc{1}\F$ of the meta-formula $\F$ is $\mq{\vec x}\F$,
where $\vec x$ is the tuple of all individual variables occurring freely in $\F$.
The {\it second-order meta-closure} $\mc{2}\F$ is $\mq{\vec\gamma}\F$,
where $\vec\gamma$ is the tuple of all predicate variables occurring freely in $\F$.
Their combination $\mc{2}\mc{1}\F$ will be called the {\it full meta-closure} of $\F$.
If $\Delta$ is a formula (and only in this case) we abbreviate $\mc{2}\mc{1}\Delta$ by $\prin\Delta$.

Thus every principle can be written in the form $\prin\Delta$, and every meta-formula of the form
$\prin\Delta$ is a principle, where by our convention $\Delta$ must be a mere formula.

A (structural) {\it rule}, written $\Gamma_1,\dots,\Gamma_m/\Delta$, or, in more detail,
\[\frac{\Gamma_1,\dots,\Gamma_m}{\Delta},\]
where $\Gamma_1,\dots,\Gamma_m$ and $\Delta$ are formulas, is an abbreviation for the meta-formula
\[\mc{2}\Big(\mc{1}\Gamma_1\,\mand\dots\mand\,\mc{1}\Gamma_m\Imp \mc{1}\Delta\Big).\]
The formulas $\Gamma_1,\dots,\Gamma_m$ are called the {\it premisses} of the rule, and $\Delta$ its
{\it conclusion}.

What happens when $m=0$, i.e.\ when the list of premisses is empty?
If the empty meta-conjunction is defined as an abbreviation of some specific deducible meta-formula $\Th$
(for example, $\mq\gamma\gamma\imp\mq\gamma\gamma$), then
a rule with no premisses, $/\,\Delta$, denotes the meta-formula $\mc{2}\big(\Th\imp\mc{1}\Delta\big)$.
But this is the same as the meta-formula $\mc{2}\mc{1}\Delta$, since it is easy to deduce
$\big(\Th\imp\F\big)\Iff\F$.
Thus rules with no premisses can be identified with principles.

Let us spell out some informal justification for the definition of a rule.
The point of the second-order meta-quantification (``$\mc{2}\!$'') is clear at once from Example \ref{ibp} and
Remark \ref{ibp2}.
The point of the first-order meta-quantification (``$\mc{1}\!$'') is that we want to use principles and rules
as basic assumptions in our deductions; but having free variables in the assumptions would ruin many correct
deductions, such as the ones in Examples \ref{1-2-meta}, \ref{gen} and \ref{subst-add}.
Finally, the reason why the first-order meta-closure is applied separately to each premiss of the rule
(in contrast to the second meta-closure) is clear from Example \ref{wrong} and Remark \ref{wrong2}.
Let us note that
$\mc{1}\Gamma_1\mand\dots\mand\mc{1}\Gamma_m\Iff\mc{1}(\Gamma_1\mand\dots\mand\Gamma_m)$
is deducible (see Example \ref{mq-commutes}).

A byproduct of the fact that principles and rules are $\lambda$-closed is that they have no nontrivial
specializations.
Let us recall that a specialization of a meta-formula is obtained by first applying
meta-generalization, then meta-specialization, and finally $\alpha$-equivalence.
By a {\it special case} of a rule we mean any rule that can be obtained from it by {\it first} applying
meta-specialization, {\it then} meta-generalization, and finally $\alpha$-equivalence.

This turns out to work out differently for principles and for rules with premisses.
Clearly, a principle $\prin\Delta'$ is a special case of a principle $\prin\Delta$ if and only if
the formula $\Delta'$ is a specialization of the formula $\Delta$.
(That is, if and only if $\Delta'=\Delta[\vec x/\vec T,\,\vec\phi/\vec\Sigma]$, where $\vec T$ is free for
$\vec x$ in $\Delta$ and $\vec\Sigma$ is free for $\vec\phi$ in $\Delta$.)
This is not equivalent to saying that $\mc{1}\Delta'$ is a specialization of $\mc{1}\Delta$.
For instance, $\fm{\mc{1}\alpha(\tr x)}$ is not a specialization of $\fm{\mc{1}\beta}$, but
$\fm{\alpha(\tr x)}$ is a specialization of $\fm\beta$.

But for rules with premisses, we cannot use first-order meta-specialization.
Thus a rule $\Gamma_1',\dots,\Gamma_m'\,/\,\Delta'$ is a special case of a rule
$\Gamma_1,\dots,\Gamma_m\,/\,\Delta$ if and only if the meta-formula
$\mc{1}\Gamma_1'\mand\dots\mand\mc{1}\Gamma_m'\imp\mc{1}\Delta'$ is a specialization of
$\mc{1}\Gamma_1\mand\dots\mand\mc{1}\Gamma_m\imp\mc{1}\Delta$.
(That is, if and only if for some tuple $\vec\Sigma$ of $r_i$-formulas that is free for $\vec\phi$ in
each $\Gamma_j$ and in $\Delta$, each $\Gamma_j'=\Gamma_j(\vec\phi/\vec\Sigma)$ and
$\Delta'=\Delta(\vec\phi/\vec\Sigma)$.)

\formulas

\begin{example} \label{metaq-example1'} (Cf.\ Example \ref{metaq-example1}.)
The principle $\prin\alpha\land\alpha\to\alpha$ is
a special case of the principle $\prin\alpha\land\beta\to\alpha$.

Similarly, the principle
$\prin\big(\alpha(\tr x)\to\alpha(\tr x)\big)\To
\big(\alpha(\tr x)\to\forall\tr x\,\alpha(\tr x)\big)$ is
a special case of
$\prin\big(\beta\to\alpha(\tr x)\big)\To\big(\beta\to\forall\tr x\,\alpha(\tr x)\big)$.

\smallskip
However, the rule
$\dfrac{\alpha(\tr x)\to\alpha(\tr x)}{\alpha(\tr x)\to\forall\tr x\,\alpha(\tr x)}$ is
not a special case of
$\dfrac{\beta\to\alpha(\tr x)}{\beta\to\forall\tr x\,\alpha(\tr x)}$.
\end{example}

\begin{example} \label{metaq-example2'} (Cf.\ \ref{metaq-example2}.) The principle
$\prin\big(\beta(\tr y)\to\beta(\tr y)\big)\To\exists\tr x\,\big(\beta(\tr x)\to\beta(\tr y)\big)$ is
a special case of
$\prin\alpha(\tr y)\to\exists\tr x\,\alpha(\tr x)$.

Similarly, the principle
$\prin\big(\beta(\tr y)\to\beta(\tr y)\big)\To\forall\tr x\,\big(\beta(\tr x)\to\beta(\tr y)\big)$ is
a special case of
$\prin\alpha(\tr y)\to\forall\tr x\,\alpha(\tr x)$.

\smallskip
However, the rule
$\dfrac{\beta(\tr y)\to\beta(\tr y)}{\forall\tr x\,\big(\beta(\tr x)\to\beta(\tr y)\big)}$ is
not a special case of
$\dfrac{\alpha(\tr y)}{\forall\tr x\,\alpha(\tr x)}$.
\end{example}

\metameta

\subsubsection{Logics and theories}

A {\it derivation system}%
\footnote{Also called a ``deductive system'' in the literature.
For our purposes it is convenient to distinguish {\it deductions} in meta-logic from {\it derivations}
within a specific object logic.}
$\Ds$ in a first-order language $\L$ is a meta-formula of the form \[\H_1\mand\dots\mand\H_k,\]
where each $\H_i$ is a purely logical rule (possibly with no premisses) in $\L$.
In other words, $\Ds$ is the meta-conjunction of finitely many purely logical principles and other
purely logical rules (with at least one premise), which are called, respectively, the {\it laws}%
\footnote{These correspond to what is usually called  ``axiom schemata''.
Apart from the conflation of schemata and principles, this standard terminology is objectionable in
the case of intuitionistic logic, as it fits well with the Brouwer--Heyting approach, but not with
Kolmogorov's approach.
On Kolmogorov's approach, the laws of intuitionistic logic are not propositions accepted
without proof, but rather a toolkit of elementary constructions that can be used as black boxes.}
and the {\it inference rules} of $\Ds$.

A {\it logic} is a meta-equivalence class of derivation systems in some first-order language $\L$.
In other words, derivation systems $\Ds$ and $\Ds'$ are said to {\it determine the same logic} if
the meta-formula $\Ds\iff\Ds'$ is deducible.
The purely logical part of $\L$ is called the {\it language of the logic} $L$ and is denoted $\L(L)$.
A logic $L^+$ is called an {\it extension} of a logic $L$ if there exist derivation systems $\Ds$ and
$\Ds'$ such that $\Ds$ determines $L$ and $\Ds\mand\Ds'$ determines $L^+$.

A meta-formula $\F$ is called {\it derivable} in the logic $L$ determined by a derivation system $\Ds$
if the meta-formula $\Ds\imp\G$ is deducible (in the meta-logic) --- or equivalently, if $\F$ is
deducible from the hypothesis $\Ds$ (in the meta-logic).
Clearly, adding a derivable principle or rule to a derivation system $\Ds$
does not affect derivability of principles and rules in the logic determined by $\Ds$.

A principle $\prin\Delta$ is derivable in the logic if and only if either $\prin\Delta$ is a special case
of a law, or there exist derivable principles $\prin\Gamma_1,\dots,\prin\Gamma_m$ such that
the rule $\Gamma_1,\dots,\Gamma_m\,/\,\Delta$ is a special case of an inference rule.
More generally, a rule $\Gamma_1,\dots,\Gamma_m\,/\,\Delta$ is derivable in the logic if and only if
either the principle $\prin\Delta$ is one of the $\prin\Gamma_i$,%
\footnote{But not just a special case of one of the $\prin\Gamma_i$.
Thus the formulas $\Gamma_i$ are {\it not} treated just as if the principles $\prin\Gamma_i$ were
laws.}
or it is a special case of a law, or there exist finitely many formulas
$\Delta_1,\dots,\Delta_k$ such that each rule $\Gamma_1,\dots,\Gamma_m\,/\,\Delta_i$ is
derivable, $i=1,\dots,k$, and $\Delta_1,\dots,\Delta_k\,/\,\Delta$ is a special case of
an inference rule.
(The ``if'' assertion is trivial, and the converse is not hard.)

It is also not hard to see that every special case of a derivable rule is derivable, and
that every special case of a derivable principle is a derivable principle.
In fact, every specialization of a derivable meta-formula is a derivable meta-formula; in particular,
every specialization of a derivable formula is a derivable formula.

A first-order {\it theory} $\Th$ over a logic $L$ is a finite meta-conjunction of principles
in a first-order language $\L$ whose purely logical part is the language of $L$.%
\footnote{This corresponds to the traditional notion of a theory given by finitely many axioms and
finitely many axiom schemata.}
The language $\L$ is called {\it the language of the theory} $\Th$ and is denoted $\L(\Th)$, 
and these principles are called the {\it axioms}%
\footnote{
In the context of intuitionistic logic (and its non-classical extensions) these may be called {\it postulates}
following Euclid (and his ancient commentators Geminus and Proclus, who have actually made the distinction
between axioms and postulate quite clear, see \cite{M3}).
It should be noted, however, that the modern meaning of the word {\it postulate} is not very
different from that of {\it axiom}, as opposed to the ancient Greek word which meant {\it request},
{\it demand} (see \cite{M3}).}
of $\Th$.
In contrast to the laws of a derivation system, the axioms of a theory may involve function symbols
and predicate constants.
Because of this, a typical theory (such as the theory of groups or the theory of an order relation) would
involve no predicate variables, and hence also no meta-quantifiers (as long as all its axioms are
closed formulas).
Meta-quantifiers would be needed, however, to express principles that are not purely logical (such as
the principle of mathematical induction).
An {\it empty theory} over $L$ has no axioms (but its language can be different from the language of $L$).

A formula $\G$ of $\L$ is a {\it theorem}%
\footnote{Or a {\it soluble problem} in intuitionistic logic (and its non-classical extensions).}
of a theory $\Th$ over a logic $L$ if the meta-formula $\Th\imp\G$ is derivable in $L$,
in other words, if $\G$ is deducible from $\Ds\mand\Th$ (in the meta-logic), where $\Ds$ is
a derivation system for $L$.
This is equivalent to saying that either the principle $\prin\G$ is a special case of
a law of $\Ds$ or of an axiom of $\Th$, or there exist theorems $\F_1,\dots,\F_m$
such that the rule $\F_1,\dots,\F_m\,/\,\G$ is a special case of an inference rule of $\Ds$.
(Let us note that an axiom that involves no meta-quantifiers has no special cases other than itself.)
A theory $\Th$ is called {\it consistent} if its theorems do not include all formulas of $\L$.
It is easy to see that $\Th$ is inconsistent (i.e.\ not consistent) if and only if $\fm{\Th\imp\mq\gamma\gamma}$
is derivable in $L$.

Of course, assertions called ``theorems'', ``lemmas'' or ``propositions'' in the present text are not supposed
to be theorems of any first-order theory.
They are meta-meta-theorems.
But some assertions of the present text (such as those starting with ``the following formula is
derivable in such and such logic'') do signify theorems of the empty theories over classical,
intuitionistic or QS4 logics.
Also, if logics are regarded as meta-theories over the meta-logic, then for instance \S\ref{deductions}
is devoted to some meta-theorems of the empty meta-theory over the meta-logic, and \S\ref{deductions2}
to those of some non-empty meta-theories.

\subsubsection{Syntactic consequence}\label{syntactic consequence}

If $L$ is the logic determined by a derivation system $\Ds$, we denote by $\turnstile\F$,
or in more detail $\turnstile _L\F$, the judgement that the meta-formula $\F$ is derivable in the logic.
The judgement $\turnstile (\F_1\mand\dots\mand\F_m\imp\G)$ is also abbreviated by
\[\F_1,\dots,\F_m\turnstile\G.\]
When this judgement is true, we also say that $\G$ is a {\it (syntactic) consequence} of
the $\F_i$.%
\footnote{Thus consequence belongs to meta-meta-logic, and not meta-logic.
It is clear from Example \ref{fake-admissible} below (see also Remark \ref{meta deduction theorem})
that it would not be a good idea to
explain the judgement $\turnstile\F$ by identifying it with the meta-formula
$\Ds\imp\F$ itself.}
Here the $\F_i$ and $\G$ can be arbitrary meta-formulas; but a few cases are of particular interest.
Namely, $\F_1,\dots,\F_m\turnstile\G$ might be of the following form:
\begin{enumerate}
\item[(1)] $\turnstile\mc{2}\mc{1}\Delta$, where $\Delta$ is a formula.
This is saying that $\prin\Delta$ is a derivable principle.

\item[(1$'$)] $\mc{2}\mc{1}\Gamma_1,\dots,\mc{2}\mc{1}\Gamma_m\turnstile\mc{2}\mc{1}\Delta$, where
the $\Gamma_i$ and $\Delta$ are formulas.
This is saying that the principle $\prin\Delta$ is a consequence of the principles $\prin\Gamma_i$.

\item[(1$''$)] $\mc{2}\Big(\mc{1}\Theta_{11}\mand\dots\mand\mc{1}\Theta_{1n_1}\imp\mc{1}\Gamma_1\Big),\dots,
\mc{2}\Big(\mc{1}\Theta_{m1}\mand\dots\mand\mc{1}\Theta_{mn_m}\imp\mc{1}\Gamma_m\Big)\turnstile
\mc{2}\Big(\mc{1}\Xi_1\mand\dots\mand\mc{1}\Xi_n\imp\mc{1}\Delta\Big)$, where
the $\Theta_{ij}$, $\Xi_j$, $\Gamma_i$ and $\Delta$ are formulas.
This is saying that the rule $\Xi_1,\dots,\Xi_n\,/\,\Delta$ is a consequence of the rules
$\Theta_{i1},\dots,\Theta_{in_i}/\,\Gamma_i$.

\item[(2)] $\turnstile \mc{1}\Delta$, where $\Delta$ is a formula.
This is equivalent to $\turnstile\mc{2}\mc{1}\Delta$
(since there are no free predicate variables in $\Ds$).

\item[(2$'$)] $\mc{1}\Gamma_1,\dots,\mc{1}\Gamma_m\turnstile \mc{1}\Delta$, where the $\Gamma_i$
and $\Delta$ are formulas.
Similarly, this is the case if and only if the rule $\Gamma_1,\dots,\Gamma_m\,/\,\Delta$ is derivable.%
\footnote{Thus $\mc{1}\Gamma_1,\dots,\mc{1}\Gamma_m\turnstile \mc{1}\Delta$ is the traditional
``varied variables'' version of syntactic consequence for formulas, as in the textbooks by Schoenfield
and Mendelson.}

\item[(3)] $\turnstile\Delta$, where $\Delta$ is a formula.
This is saying that the formula $\Delta$ is derivable, and is the case if and only if
$\turnstile \mc{1}\Delta$ (since there are no free individual variables in $\Ds$).

\item[(3$'$)] $\Gamma_1,\dots,\Gamma_m\turnstile\Delta$, where the $\Gamma_i$ and $\Delta$ are formulas.
This is saying that the formula $\Delta$ is a syntactic consequence of the formulas $\Gamma_i$.%
\footnote{Thus $\Gamma_1,\dots,\Gamma_m\turnstile\Delta$ is the more usual ``fixed variables'' version of
syntactic consequence for formulas, as in the textbooks by Church, Enderton, Troelstra and van Dalen.}
As long as the $\Gamma_i$ contain no free individual variables, we similarly have that
$\Gamma_1,\dots,\Gamma_m\turnstile\Delta$ if and only if
$\mc{1}\Gamma_1,\dots,\mc{1}\Gamma_m\turnstile \mc{1}\Delta$.
In general, for logics in which the deduction theorem and the exponential law hold,
$\Gamma_1,\dots,\Gamma_m\turnstile\Delta$ if and only if
$\turnstile\Gamma_1\land\dots\land\Gamma_m\to\Delta$.
\end{enumerate}

What is this {\it deduction theorem}?
In logics where implication is present, a usual rule of inference is the {\it modus ponens} rule:
$$\fm{\frac{\phi,\,\phi\to\psi}{\psi}}.$$
Using this rule, we get that $\turnstile\Gamma\to\Delta$ implies $\Gamma\turnstile\Delta$
(see Examples \ref{mp-example} and \ref{exp-example});
and more generally, that $\Gamma_1,\dots,\Gamma_n\turnstile\Gamma\to\Delta$ implies
$\Gamma_1,\dots,\Gamma_n,\Gamma\turnstile\Delta$ (where $\Gamma$ and $\Delta$ are formulas).
In some logics the converse also holds, in which case it is called the {\it deduction theorem}.
In particular, the deduction theorem holds for classical and intuitionistic logics, where it is proved 
similarly to the argument in \S\ref{hilbert-style meta-logic} recovering the $\imp$-introduction meta-rule 
from the Hilbert-style formulation of the meta-logic.
By combining the deduction theorem with the exponential law
$\fm{\prin\big((\alpha\land\beta)\to\gamma\big)\Tofrom\big(\alpha\to(\beta\to\gamma)\big)}$,
we easily get the last assertion in (3$'$) above.

The deduction theorem is not as banal as it might seem.
Indeed, the {\it modus ponens} rule yields, in fact, slightly more: the rule $\Gamma\,/\,\Delta$ is
a consequence of the principle $\prin\Gamma\to\Delta$.
(If the principle is purely logical, this is equivalent to saying that if we add
$\prin\Gamma\to\Delta$ as a new law to the derivation system, then $\Gamma\,/\,\Delta$
becomes derivable.)
However, the deduction theorem, whenever it holds, does {\it not} imply that the principle
$\prin\Gamma\to\Delta$ is a consequence of the rule $\Gamma\,/\,\Delta$.
(Which in the purely logical case is equivalent to saying that $\prin\Gamma\to\Delta$ becomes derivable
once $\Gamma\,/\,\Delta$ is set as a new inference rule.)
This is simply not true in general --- already in classical logic (see
Examples \ref{meta-implication1} and \ref{deduction example}).
It follows, incidentally, that for closed formulas $\Gamma$ and $\Delta$, the meta-formula
$(\Gamma\imp\Delta)\Imp(\Gamma\to\Delta)$ need not be derivable, already in classical logic
(see also Examples \ref{meta-implication0} and \ref{meta-implication3}).

Consequence between principles and rules will often be referred to using the words such as ``implies'' and
``follows'', when this cannot lead to confusion; thus ``$\prin\Gamma$ implies $\prin\Delta$'' signifies a
meta-formula different form that signified by ``the principle that $\Gamma$ implies $\Delta$''
(see Example \ref{ibp} and Remark \ref{ibp2}).
A set of rules $\Ru_1,\dots,\Ru_k$ is said to be {\it interderivable} (or simply {\it equivalent},
when this cannot lead to confusion) with a set of rules $\Su_1,\dots,\Su_l$ if each $\Ru_i$ is
a consequence of $\Su_1,\dots,\Su_l$, and each $\Su_i$ is a consequence of $\Ru_1,\dots,\Ru_k$.
The notion of consequence between principles and rules is going to be indispensable for our purposes;
its explicit form seems to occur only rarely in the literature (see \cite{Um}, \cite{Ci}, \cite{Ry}*{1.4.8},
\cite{Je}), but indirectly it
is used more often (in particular, through the equivalent notions of the order relation on extensions
of a given logic and of the basis for a set of rules, such as the set of all admissible rules).

Of the judgements listed above, (3$'$) implies (2$'$), which in turn implies (1$'$)
(see Example \ref{mq-commutes}), and these implications are not reversible (see Example \ref{wrong}
and Remark \ref{wrong2} and, respectively, Example \ref{ibp} and Remark \ref{ibp2}).
To these judgements we may add also:
\begin{enumerate}
\item[(0$'$)] $\turnstile\Gamma_1,\dots,\turnstile\Gamma_m$ imply $\turnstile\Delta$;
\item[(0$''$)] $\turnstile\Theta_{11},\dots,\Theta_{1n_1}/\,\Gamma_1,\dots,\turnstile
\Theta_{m1},\dots,\Theta_{mn_m}/\,\Gamma_m$ imply $\turnstile\H_1,\dots,\H_n\,/\,\Delta$.
\end{enumerate}
Then (1$''$) implies (0$''$) by the $\imp$-elimination meta-rule, and in particular (1$'$) implies (0$'$).
But not conversely, since non-derivable principles need not be consequences of each other
(even in classical logic, see Example \ref{principle implication} below; also, in zero-order intuitionistic
logic, $\prin\neg\neg\gamma\lor\neg\gamma$ does not imply $\prin\gamma\lor\neg\gamma$, which in turn
does not imply $\prin\ab$, see \S\ref{Jankov} below).
In fact, it is easy to see that principles $\prin\Gamma_1,\dots,\prin\Gamma_m$ imply
$\prin\Delta$ if and only if the judgements $\G\turnstile\Gamma_1,\dots,\G\turnstile\Gamma_m$
imply $\G\turnstile\Delta$ for each $\lambda$-closed meta-formula $\G$ (by considering
$\G=\prin\Gamma_1\mand\dots\mand\prin\Gamma_m$).
The same argument also establishes the same assertion with a different choice of $\G$'s, namely:
$\prin\Gamma_1,\dots,\prin\Gamma_m$ imply $\prin\Delta$ over a logic $L$ if and only if
$\turnstile _{L'}\Gamma_1,\dots,\turnstile _{L'}\Gamma_m$ imply $\turnstile _{L'}\Delta$ for any extension
$L'$ of $L$ by additional laws (or by additional laws and inference rules).
Similarly, (1$''$) over a logic $L$ is equivalent to (0$''$) over all extensions $L'$ of $L$ by additional
laws and inference rules.

\begin{remark}
Kleene's sequent ``$\Gamma_1,\dots,\Gamma_m\turnstile ^{x_1,\dots,x_k}\Delta$'', where
$\vec x=(x_1,\dots,x_k)$ is the tuple of individual variables that ``may be varied'' in the deduction,
can be identified with our judgement
$\mq{\vec x}\Gamma_1,\dots,\mq{\vec x}\Gamma_m\turnstile\mq{\vec x}\Delta$.
In fact, according to Kleene \cite{Kl}*{\S24}, his sequent notation ``is not fully explicit, as it does
not show for which of the assumption formulas a given superscript variable may be varied. [...]
When there is occasion to be more explicit, the facts may be stated verbally, e.g.\ as in Lemma 8a below.''
Our notation includes a formalization of such verbal comments, by simply meta-quantifying the assumption
formulas over distinct collections of individual variables.
(The meta-quantifiers in the conclusion are redundant, as long as they repeat those that are common to
all of the assumptions.)
\end{remark}

\subsubsection{Admissible rules}\label{logic}

A rule $\Ru$ is called {\it admissible} for the logic determined by a derivation system $\Ds$ if
every derivable principle of the logic determined by the extended derivation system $\Ds\mand\Ru$ is also 
derivable in the original logic.
(That is, if $\turnstile\Ru\imp\Phi$ implies $\turnstile\Phi$ for each formula $\Phi$.)
Thus adding an admissible rule to a derivation system does not change the set of derivable principles, but 
unless this rule is derivable, it changes the logic (i.e.\ the syntactic consequence relation).
This can happen even with classical logic, as we will see in \S\ref{models}.
Clearly, a purely logical rule $\Gamma_1,\dots,\Gamma_m\,/\,\Delta$ is admissible
if and only if for every its special case
$\Gamma_1[\vec\phi/\vec\Phi],\dots,\Gamma_m[\vec\phi/\vec\Phi]\,/\,\Delta[\vec\phi/\vec\Phi]$
(where $\vec\Phi$ is free for $\vec\phi$ in each $\Gamma_i$ and in $\Delta$)
such that each $\Gamma_i[\vec\phi/\vec\Phi]$ is a derivable formula, $\Delta[\vec\phi/\vec\Phi]$ is
also a derivable formula.

\begin{remark}
For this reason, admissible rules of the form $\Gamma_1,\dots,\Gamma_m\,/\,\Delta$ are sometimes stated in
the literature in the form
$$\Dfrac{\turnstile\Gamma_1,\dots,\turnstile\Gamma_m}{\turnstile\Delta}$$
or verbally, ``if $\turnstile\Gamma_1,\dots,\turnstile\Gamma_m$,
then $\turnstile\Delta$'', where the predicate variables $\phi_1,\dots,\phi_k$ occurring in
the $\Gamma_i$ and in $\Delta$ are tacitly understood to range over all
$r_j$-formulas $\Phi_1,\dots,\Phi_k$ that are free for $\phi_1,\dots,\phi_k$ in each $\Gamma_i$
and in $\Delta$ (that is, with an implicit meta-meta-quantifier $\forall\vec\phi$ meant at the beginning
of the sentence).
The verbal version is particularly confusing, since its two alternative readings: (i) without assuming
any implicit meta-meta-quantification as in the judgement (0$'$) above, (ii) assuming another implicit
meta-meta-quntification: ``if $\forall\vec\phi\turnstile\Gamma_1,\dots,\forall\vec\phi\turnstile\Gamma_m$,
then $\forall\vec\phi\turnstile\Delta$'' --- are entirely different judgements, which are equivalent to
each other, but not to the admissibility of the rule.
In older literature, however, one often finds {\it inference rules} stated in this ambiguous language
(which is even more confusing, and certainly indicates the author's acceptance of the ``formulaic''
conception of logic).
\end{remark}

Clearly, every derivable rule is admissible.
Moreover, a purely logical rule $\Gamma_1,\dots,\Gamma_m\,/\,\Delta$ is derivable if and only if for every its 
special case $\Gamma_1[\vec\phi/\vec\Phi],\dots,\Gamma_m[\vec\phi/\vec\Phi]\,/\,\Delta[\vec\phi/\vec\Phi]$
(where $\vec\Phi$ is free for $\vec\phi$ in each $\Gamma_i$ and in $\Delta$) and for every formulas
$\Psi_1,\dots,\Psi_l$ such that $\mc{1}\Psi_1,\dots,\mc{1}\Psi_l\turnstile\Gamma_i[\vec\phi/\vec\Phi]$
for each $i$, we also have $\mc{1}\Psi_1,\dots,\mc{1}\Psi_l\turnstile\Delta[\vec\phi/\vec\Phi]$.
(This follows by considering the case $l=k$, $\Psi_i=\Gamma_i[\vec\phi/\vec\Phi]$.)

Let us note that if a new law or inference rule is added to a derivation system, previously
derivable rules do not cease to be derivable, but previously admissible rules may cease to be admissible.

\begin{example} \label{fake-admissible}
One might be suspecting that a rule $\Gamma\,/\,\Delta$ is admissible if and only if
the meta-formula
$\H:=\mc{2}\Big(\big(\Ds\imp \mc{1}\Gamma\big)\Imp\big(\Ds\imp \mc{1}\Delta\big)\Big)$
is derivable.
This is not so, since the derivability of $\H$ is in fact equivalent to
the derivability, and not admissibility of the rule $\Gamma\,/\,\Delta$.

Indeed, the derivability of $\H$ amounts to the derivability of $\Ds\imp \mc{1}\Delta$ from
the assumption $\Ds\imp \mc{1}\Gamma$.
This in turn amounts to the derivability of $\mc{1}\Delta$ from the assumptions
$\Ds$ and $\Ds\imp \mc{1}\Gamma$.
But derivability from these assumptions is equivalent to derivability from the assumptions
$\Ds$ and $\mc{1}\Gamma$.
In turn, the derivability of $\mc{1}\Delta$ from the assumptions $\Ds$ and $\mc{1}\Gamma$ amounts
to the derivability of $\mc{1}\Gamma\imp \mc{1}\Delta$ from the assumption $\Ds$.
This is the same as the derivability of the rule $\Gamma\,/\,\Delta$.
\end{example}

Clearly, derivability and admissibility of a rule are preserved when the rule is replaced by one that is
interderivable with it.
In particular, an admissible rule that is interderivable with a principle must be derivable.
Je\v r\'abek \cite{Je} proved that in a number of logics, including zero-order intuitionistic
logic and S4, every rule is interderivable with a principle plus a finite set of admissible rules.

Lorentzen's book, where the term ``admissible rule'' was first used, also dealt with the following notion:
a rule $\Ru$ is {\it admissible relative to} a set of rules $\Su_1,\dots,\Su_k$ if $\Ru$ becomes
an admissible rule upon adjoining $\Su_1,\dots,\Su_k$ as new inference rules to the logic
\cite{Lor}*{pp.\ 24, 40} (see also \cite{ScH}).
When $\Ru$ is a principle, this coincides with the notion of consequence; and when $k=0$, the two notions
(i.e., consequence and relative admissibility) reduce, respectively, to derivability of a rule and
admissibility of a rule.

We will call a rule $\Ru$ {\it stably admissible} over a given logic if it is admissible over any extension
of this logic, obtained by adjoining any number of new laws and inference rules.
Thus derivable rules are stably admissible; and in contrast to admissible rules, if a new law
or inference rule is added to the derivation system, previously stably admissible rules do not cease to be
stably admissible.
(However, a stably admissible rule can cease to be stably admissible upon a mere extension of the language of
the logic, see Examples \ref{st-adm} and \ref{equality-st-adm}.)

Clearly, a purely logical rule $\Gamma_1,\dots,\Gamma_m\,/\,\Delta$ is stably admissible
if and only if for every its special case
$\Gamma_1[\vec\phi/\vec\Phi],\dots,\Gamma_m[\vec\phi/\vec\Phi]\,/\,\Delta[\vec\phi/\vec\Phi]$
(where $\vec\Phi$ is free for $\vec\phi$ in each $\Gamma_i$ and in $\Delta$)
the principle $\prin\Delta[\vec\phi/\vec\Phi]$ is a consequence of the principles
$\prin\Gamma_1[\vec\phi/\vec\Phi],\dots,\prin\Gamma_m[\vec\phi/\vec\Phi]$.
This uses the characterization of consequence between principles given in \S\ref{syntactic consequence}.
Since that characterization involved some freedom, we obtain as a byproduct that a purely logical rule $\Ru$
is stably admissible over a logic if and only if it is admissible over any extension of this logic,
obtained by adjoining any number of new laws (but not inference rules).
By using the same freedom, we also get that $\Ru$ is stably admissible if and only if $\G\turnstile\Ru\imp\Phi$ 
implies $\G\turnstile\Phi$ for every formula $\Phi$ and every $\lambda$-closed meta-formula $\G$.
This is equivalent to saying that $\turnstile (\Ru\imp\Phi)\imp\Phi$ for every formula $\Phi$
(by considering $\G=\Ru\imp\Phi$).

On the other hand, it is easy to see that a rule $\Ru$ is derivable if and only if 
$\turnstile\Ru\imp\F$ implies $\turnstile\F$ for every rule $\F$ (by considering $\F=\Ru$).
Similarly, a rule $\Ru$ is derivable if and only if $\G\turnstile\Ru\imp\F$ implies $\G\turnstile\F$ for every 
meta-formula $\F$ and every $\lambda$-closed meta-formula $\G$.
Thus, similarly to the above, a rule $\Ru$ is derivable if and only if $\turnstile (\Ru\imp\F)\imp\F$ for every 
rule $\F$ (or for every $\lambda$-closed meta-formula $\F$).

Admissibility, stable admissibility and derivability are all distinct notions, already in
classical logic (see Examples \ref{omitting}, \ref{st-adm} and \ref{equality-st-adm}).

\subsection{Intuitionistic logic (syntax)}\label{logics}

\subsubsection{Derivation system}\label{intlogic}

\formulas

A formalization of intuitionistic logic QH in terms of a derivation system was found by 
Heyting \cite{He4} (1928 and 1931), extending partial formalizations by Kolmogorov \cite{Kol0} (1925;
the $\to,\neg,\exists,\forall$ fragment%
\footnote{Kolmogorov's point of departure was Hilbert's derivation system for classical logic, which did
not use other connectives.
Presumably Kolmogorov was also aware of Russell's definition of $\land$, $\lor$ and $\exists$ in terms of
$\to$ and the second-order $\forall$, which is valid intuitionistically, see \cite{Ac} and \cite{Pr1}*{p.\ 67}.}
with the omission of the explosion principle $\prin\ab\to\alpha$ and with inadvertent omission of
the universal specialization principle $\prin\forall \tr x\,\alpha(\tr x)\to\alpha(\tr y)$; see \cite{Pl})
and Glivenko (1928 and 1929; the zero-order fragment).%
\footnote{It should be mentioned that an algebraic formulation of the zero-order fragment
is found already in Skolem's 1919 paper (see \cite{vP}).}
Here is an equivalent formulation due to Spector (as in \cite{Tr3}; see also \cite{Tr2}), where the usual
side conditions are eliminated as explained below.
\smallskip

\begin{enumerate}[label=(\Roman*)]
\item\label{rule:modus ponens} (modus ponens rule) $\dfrac{\alpha,\,\alpha\to\beta}{\beta}$
\smallskip

\item\label{axiom:identity} $\prin\alpha\to\alpha$
\medskip

\item\label{rule:composition} $\dfrac{\alpha\to\beta,\,\beta\to\gamma}{\alpha\to\gamma}$
\medskip

\item\label{rule:exponential} (exponential law)
$\dfrac{(\alpha\land\beta)\to\gamma}{\alpha\to(\beta\to\gamma)}
\quad\text{and}\quad\dfrac{\alpha\to(\beta\to\gamma)}{(\alpha\land\beta)\to\gamma}$
\medskip

\item\label{axiom:explosion} (explosion principle) $\prin\ab\to\alpha$
\medskip

\item\label{axiom:conjunction} $\prin\alpha\land\beta\to\alpha$ \ and \ $\prin\alpha\land\beta\to\beta$
\smallskip

\item\label{axiom:disjunction} $\prin\alpha\to\alpha\lor\beta$ \ and \ $\prin\beta\to\alpha\lor\beta$
\medskip

\item\label{rule:conjunction} $\dfrac{\gamma\to\alpha,\,\gamma\to\beta}{\gamma\to\alpha\land\beta}$
\medskip

\item\label{rule:disjunction} $\dfrac{\alpha\to\gamma,\,\beta\to\gamma}{\alpha\lor\beta\to\gamma}$
\medskip

\item\label{axiom:forall} $\prin\forall \tr x\,\alpha(\tr x)\to\alpha(\tr t)$
\medskip

\item\label{axiom:exists} $\prin\alpha(\tr t)\to\exists \tr x\,\alpha(\tr x)$
\medskip

\item\label{rule:forall-open} $\dfrac{\gamma\to\alpha(\tr x)}{\gamma\to\forall \tr x\,\alpha(\tr x)}$
\medskip

\item\label{rule:exists-open} $\dfrac{\alpha(\tr x)\to\gamma}{\exists \tr x\,\alpha(\tr x)\to\gamma}$
\end{enumerate}
\bigskip

\noindent
Classical predicate logic QC is obtained by adding just one more law:
\smallskip

\begin{enumerate}[resume,label=(\Roman*)]
\item\label{axiom:excluded middle} (law of excluded middle) $\prin p\lor\neg p$
\end{enumerate}
\bigskip

\metameta

Let us note that the traditional schematic versions of \ref{axiom:forall} and \ref{axiom:exists} with side
conditions follow immediately from \ref{axiom:forall} and \ref{axiom:exists} respectively by applying
the two specialization meta-rules (see \S\ref{meta-rules}), in either order.
We have a notational choice of specializing the unary predicate variable $\fm\alpha$ either to a $1$-formula
of the form $x\mapsto\Theta$, in which case we get

\[\turnstile\forall \tr x\,\Theta\to \Theta[\tr x/T]
\text{, provided that $T$ is free for $\tr x$ in $\Theta$;}\]
\[\turnstile\Theta[\tr x/T]\to\exists \tr x\,\Theta
\text{, provided that $T$ is free for $\tr x$ in $\Theta$,}\]
\smallskip

\noindent
or to an arbitrary $1$-formula $\Sigma$, in which case we get

\[\turnstile\forall \tr x\,\Sigma(\tr x)\to\Sigma(T)
\text{, provided that $T$ is free for $\tr x$ in $\Sigma(\tr x)$
and $\tr x$ does not occur freely in $\Sigma$;}\]
\[\turnstile\Sigma(T)\to\exists \tr x\,\Sigma(\tr x)
\text{, provided that $T$ is free for $\tr x$ in $\Sigma(\tr x)$
and $\tr x$ does not occur freely in $\Sigma$,}\]
\smallskip

\noindent
where the $\lambda$-expressions $\Sigma(\tr x)$, $\Sigma(T)$ stand for their normal forms (which are
formulas).

Let us note that the second side condition, ``provided that $\tr x$ does not occur freely in $\Sigma$'',
cannot be dropped.
For instance, if $\Sigma$ is the arithmetical $1$-formula $\tr y\mapsto\tr x=\tr y$, and $T$ is the arithmetical
term $\tr x+1$, then $\forall \tr x\,\Sigma(\tr x)\to\Sigma(T)$ is not a valid formula.

Similarly, the traditional schematic versions of \ref{rule:forall-open} and \ref{rule:exists-open} with side
conditions follow immediately from \ref{rule:forall-open} and \ref{rule:exists-open} respectively by applying
the second-order meta-specialization and (for one of the traditions) the first-order meta-generalization
(see \S\ref{meta-rules}).
Namely, by specializing $\fm\gamma$ to a formula $\Gamma$, and $\fm\alpha$ to a $1$-formula of the form
$\tr x\mapsto\Theta$, we get

\[\mq{\tr x}\Gamma\to\Theta\ \turnstile\ \Gamma\to\forall \tr x\,\Theta\text{, provided that $\tr x$ does not
occur freely in $\Gamma$};\]
\[\mq{\tr x}\Theta\to\Gamma\ \turnstile\ \exists \tr x\,\Theta\to\Gamma\text{, provided that $\tr x$ does not
occur freely in $\Gamma$},\]
\smallskip

\noindent
which corresponds to the tradition of ``varied variables'' (in Kleene's terminology), found in the textbooks
by Schoenfield and Mendelson.
Now by combining these with the meta-generalization with respect to $\tr x$, we get

\[\Gamma\to\Theta\ \turnstile\ \Gamma\to\forall \tr x\,\Theta\text{, provided that $\tr x$ does not
occur freely in $\Gamma$ and in the assumptions};\]
\[\Theta\to\Gamma\ \turnstile\ \exists \tr x\,\Theta\to\Gamma\text{, provided that $\tr x$ does not
occur freely in $\Gamma$ and in the assumptions},\]
\smallskip

\noindent
which corresponds to the tradition of ``fixed variables'', found in the textbooks by Church, Troelstra and
van Dalen.
Like before, we can replace $\Theta$ with $\Sigma(\tr x)$, at the cost of the additional side condition
``provided that $\tr x$ does not occur freely in $\Sigma$''.

In particular, the special case of \ref{rule:forall-open} with $\fm\gamma=\triv$, when rewritten as
\smallskip

\begin{enumerate}[start=12,label=(\Roman*$_\triv$)]
\item\label{rule:generalization} (generalization rule)
$\fm{\dfrac{\alpha(\tr x)}{\forall \tr x\,\alpha(\tr x)}}$
\end{enumerate}
\smallskip

\noindent (see Example \ref{top-example} below), corresponds to the following traditional schematic forms
of the generalization rule:

\[\mq{\tr x}\Sigma(\tr x)\ \turnstile\ \forall \tr x\,\Sigma(\tr x)\text{, provided that $\tr x$ does not
occur freely in $\Sigma$};\]
\[\Sigma(\tr x)\ \turnstile\ \forall \tr x\,\Sigma(\tr x)\text{, provided that $\tr x$ does not
occur freely in $\Sigma$ and in the assumptions},\]
\smallskip

\noindent
where the $\lambda$-expression $\Sigma(\tr x)$ again stands for its normal form (which is a formula).
The side conditions cannot be dropped here as well (see Example \ref{wrong} and Remark \ref{wrong2}).

\begin{example}\label{top-example} Let us show that
formulas $\Phi$ and $\triv\to\Phi$ are interderivable in intuitionistic logic.
Indeed, by \ref{axiom:conjunction} $\turnstile\Phi\land\triv\to\Phi$, hence by the exponential law,
$\turnstile\Phi\to(\triv\to\Phi)$, and thus by {\it modus ponens}, $\Phi\turnstile\triv\to\Phi$.
Conversely, since $\triv=\ab\to\ab$, by \ref{axiom:identity} $\turnstile\triv$, and therefore by
{\it modus ponens}, $\triv\to\Phi\turnstile\Phi$.
\end{example}

\subsubsection{Modifications and dualizations}\label{intlogic-modified}

Since intuitionistic logic satisfies the deduction theorem (see \cite{Tr3}, where it is proved in
Spector's derivation system) and the exponential law, $\Phi,\Psi\turnstile\Xi$
is equivalent to $\turnstile\Phi\land\Psi\to\Xi$ (see \S\ref{syntactic consequence}).

Thus, should we wish to have fewer inference rules, we may well replace the inference rules in
\ref{rule:composition}, \ref{rule:exponential}, \ref{rule:conjunction} and \ref{rule:disjunction}
by the corresponding laws (in which the horizontal bar is replaced by $\to$, and
the comma by $\land$).

As for the inference rules \ref{rule:forall-open} and \ref{rule:exists-open}, for many purposes it is
convenient to replace them by the generalization rule \ref{rule:generalization} along with the following ones:
\smallskip

\formulas
\begin{enumerate}[start=12,label=(\Roman*$'$)]
\item\label{rule:forall} $\dfrac{\forall \tr x\,(\gamma\to\alpha(\tr x))}
{\gamma\to\forall \tr x\,\alpha(\tr x)}$
\smallskip

\item\label{rule:exists} $\dfrac{\forall \tr x\,(\alpha(\tr x)\to\gamma)}
{\exists \tr x\,\alpha(\tr x)\to\gamma}$
\end{enumerate}
\smallskip

\noindent
Indeed, using the generalization rule \ref{rule:generalization} and its converse, which follows from
\ref{axiom:forall}, we can clearly exchange \ref{rule:forall} for \ref{rule:forall-open} and
\ref{rule:exists} for \ref{rule:forall-open}.

Now by the deduction theorem, the inference rules \ref{rule:forall} and \ref{rule:exists} can be replaced
by the corresponding laws (in which the horizontal bar is replaced by $\to$).

The generalization rule is, however, strictly weaker than the corresponding principle,
$\prin\fm{\alpha(\tr x)\to\forall \tr x\,\alpha(\tr x)}$ (see \S\ref{models}).
Informally, this can be explained as follows.
If, for instance, $\neg\,\tr x=\tr x+1$ has been proved, we should be able to conclude
$\forall \tr x\ \neg\,\tr x=\tr x+1$; yet from $\exists \tr x\ \,\tr x=1$ we cannot conclude
$\exists \tr x\,\forall \tr x\ \,\tr x=1$ (which is equivalent to $\exists \tr y\,\forall \tr x\ \,\tr x=1$);
also, from $\neg\forall \tr x\ \,\tr x=1$ we cannot conclude $\neg\,\tr x=1$
(which, if proved, would imply $\forall \tr x\ \neg\,\tr x=1$).

Let us note dual forms of the rules \ref{rule:forall} and \ref{rule:exists}:
\smallskip

\begin{enumerate}[start=12,label=(\Roman*$'^*$)]
\item\label{rule:forall-dual}
$\dfrac{\exists \tr x\,(\gamma\to\alpha(\tr x))}{\gamma\to\exists \tr x\,\alpha(\tr x)}$
\smallskip

\item\label{rule:exists-dual}
$\dfrac{\exists \tr x\,(\alpha(\tr x)\to\gamma)}{\forall \tr x\,\alpha(\tr x)\to\gamma}$
\end{enumerate}
\smallskip

\noindent
These, or rather their corresponding principles, follow from the laws \ref{axiom:exists} and
\ref{axiom:forall} respectively, using the rule \ref{rule:exists-open} and the generalization rule.
Similarly, there are also dual forms of the rules \ref{rule:conjunction} and \ref{rule:disjunction}.

The laws \ref{axiom:forall} and \ref{axiom:exists} also imply the converses of the rules
\ref{rule:forall-open} and \ref{rule:exists-open}, and hence (using the generalization rule) also
the converses of \ref{rule:forall} and \ref{rule:exists}.
However, the converses of the rules \ref{rule:forall-dual} and \ref{rule:exists-dual} are not
derivable in intuitionistic logic (see \ref{Harrop1}), and therefore not even admissible in it, since
they are easily seen to be interderivable with the principles
$\prin\exists \tr t\,\big(\exists \tr x\,\alpha(\tr x)\to\alpha(\tr t)\big)$ and
$\prin\exists \tr t\,\big(\alpha(\tr t)\to\forall \tr x\,\alpha(\tr x)\big)$, respectively.
In classical logic, these principles are easily seen to be derivable.

Dual to the generalization rule is the following instantiation rule
\smallskip

\begin{enumerate}[start=12,label=(\Roman*$_\triv^*$)]
\item\label{rule:generalization-dual} $\dfrac{\exists \tr x\,\gamma}{\gamma}$,
\end{enumerate}
\smallskip

\noindent
whose corresponding principle follows from \ref{rule:exists-open} and \ref{axiom:identity}.
The more general rule $\exists \tr x\,\alpha(\tr x)\,/\,\alpha(\tr x)$, which is the ``contrapositive'' of
the generalization rule,
is not even admissible in classical logic (by considering any of the two classically derivable formulas
just mentioned above, $\exists \tr x\,\big(\exists \tr y\,\beta(\tr y)\to\beta(\tr x)\big)$ or
$\exists \tr x\,\big(\beta(\tr x)\to\forall \tr y\,\beta(\tr y)\big)$).

The rule $\exists \tr x\,\alpha(\tr x)\,/\,\alpha(\tr x)$ is also not admissible in intuitionistic logic,
by considering the intuitionistically derivable formula
$\exists \tr x\,\big(\beta(\tr x)\to\beta(\tr y)\big)$ (see Example \ref{subst-example}).
But the second variable is essential here: as long as the language contains no function symbols, and
$\Phi$ is a closed $1$-formula, $\turnstile\exists \tr x\,\Phi(\tr x)$ implies
$\turnstile\Phi(\tr x)$ by the ``existence property'' of intuitionistic logic (Gentzen; see \cite{Kl2}).
The related ``disjunction property'' will be discussed in \S\ref{Tarski completeness} below.

\metameta

\subsubsection{Additional meta-connectives} \label{meta-disjunction}

In studying models, it seems worthwhile to be armed with what was not needed previously:
the {\it meta-absurdity} $\metabot$, the {\it meta-disjunction} $\F\parallel\G$ and
the {\it existential first-order meta-quantifier} $\eq{x}\F$.
These are assumed to satisfy the following meta-rules, where $\F$, $\G$ and $\H$ are meta-formulas,
$x$ is an individual variable and $T$ is a term.

\begin{enumerate}[label=]
\item ($\metabot$-elimination) \quad
$\Dfrac{\begin{matrix}\vdots\\ \metabot\end{matrix}}{\F}$
\bigskip

\item ($\parallel$-introduction) \quad
$\Dfrac{\begin{matrix}\vdots\\ \F\end{matrix}}{\F\parallel\G}\qquad\text{and}\qquad
\Dfrac{\begin{matrix}\vdots\\ \G\end{matrix}}{\F\parallel\G}$
\bigskip

\item ($\parallel$-elimination) \quad
$\Dfrac{\begin{matrix}\\ \vdots\\ \F\parallel\G\end{matrix}\qquad
\begin{matrix}[\F]\\ \vdots\\ \H\end{matrix}\qquad
\begin{matrix}[\G]\\ \vdots\\ \H\end{matrix}}{\H}$
\bigskip

\item ($\eq{\cdot}$-introduction) \quad
$\Dfrac{\begin{matrix}\vdots\\ \F[x/T]\end{matrix}}{\eq{x}\F}$,
provided that $T$ is free for $x$ in $\F$
\bigskip

\item ($\eq{\cdot}$-elimination) \quad
$\Dfrac{\begin{matrix}\\ \vdots\\ \eq{x}\F\end{matrix}\qquad
\begin{matrix}[\F]\\ \vdots\\ \H\end{matrix}}{\H}$,
provided that $x$ does not occur freely in $\H$ and in the assumptions.
\end{enumerate}

On the other hand, Russell's well-known second-order definitions of $\ab$, $\lor$, $\exists$ and $\land$
(see \cite{Ac} and \cite{Gi}*{\S11.3.2}) suggest the following versions of $\metabot$, $\parallel$,
$\eq{\cdot}$ and $\mand$:
\begin{itemize}
\item $\varmetabot:=\fm{\mq{\alpha}\alpha}$;
\item $\F\varmor\G:=\mq{\gamma}\big((\F\imp\gamma)\mand(\G\imp\gamma)\imp\gamma\big)$;%
\footnote{Let us note that $\mand$ can be eliminated here by a mere application of the exponential meta-law.}
\item $\vareq{x}\F:=\mq{\gamma}\big(\mq{x}(\F\imp\gamma)\imp\gamma\big)$;
\item $\F\varmand\G:=\mq{\gamma}\Big(\big(\F\imp(\G\imp\gamma)\big)\imp\gamma\Big)$,
\end{itemize}
where $\gamma$ is a nullary predicate variable that does not occur freely in the meta-formulas
$\F$, $\G$.

When $\F$ and $\G$ are formulas (regarded as atomic meta-formulas), we can identify $\mand$ with
$\varmand$; $\parallel$ with $\varmor$; and $\eq{\cdot}$ with $\vareq{\cdot}$.
We only include full details for the case of meta-conjunction:

\begin{proposition} \label{redundancy}
(a) $\Phi\varmand\Psi\Iff\Phi\mand\Psi$ is deducible for all formulas $\Phi$, $\Psi$;

(b) each of $\Phi$, $\Psi$ can be deduced from $\Phi\varmand\Psi$, and $\Phi\varmand\Psi$ can be deduced
from $\Phi$ and $\Psi$ without using the $\&$-introduction and the $\&$-elimination meta-rules;

(c) Let $\H$ be a meta-formula where all occurrences of $\&$ are applied to formulas.
Then $\H$ is deducible if and only if the meta-formula obtained from $\H$ by replacing
every occurrence of $\&$ with a $\varmand$ is deducible without using the $\&$-introduction and
the $\&$-elimination meta-rules.
\end{proposition}

\begin{proof}[Proof. (b)]
To deduce $\Phi$ from $\Phi\varmand\Psi$ we use the meta-specialization with $\gamma=\Phi$
along with the fact that $\Phi\imp(\Psi\imp\Phi)$ is deducible (see Example \ref{weakening}).
To deduce $\Psi$ from $\Phi\varmand\Psi$ we use the meta-specialization with $\gamma=\Psi$
along with the fact that $\Phi\imp(\Psi\imp\Psi)$ is deducible (see Example \ref{weakening}).

To deduce $\Phi\varmand\Psi$ from $\Phi$ and $\Psi$, let us first note that $\gamma$ can be deduced
from $\Phi$, $\Psi$ and $\Phi\imp(\Psi\imp\gamma)$ by a double application of the $\imp$-elimination.
Now by the $\imp$-introduction, we get a deduction of $\big(\Phi\imp(\Psi\imp\gamma)\big)\imp\gamma$
from $\Phi$ and $\Psi$, and it remains to apply the meta-generalization.
\end{proof}

\begin{proof}[(a)] The $\&$-elimination meta-rule along with the $\varmand$-introducing deduction
provided by (b) yield a deduction of $\Phi\varmand\Psi$ from $\Phi\mand\Psi$.
Similarly one gets a deduction of $\Phi\mand\Psi$ from $\Phi\varmand\Psi$.
The assertion now follows by the $\imp$-introduction.
\end{proof}

\begin{proof}[(c)] The ``if'' assertion follows from (a), and the ``only if'' assertion from (b).
\end{proof}

Let us also discuss the special case of the $\eq{\cdot}$-rules where $x$ does not occur freely in $\Phi$.

\begin{example} \label{RP-translation}
Let us show that if $\Phi$ is a formula, $\Phi\Iff\mq{\gamma}\big((\Phi\imp\gamma)\imp\gamma\big)$ is
deducible as long as $\gamma$ does not occur freely in $\Phi$.

The $\when$ implication follows by meta-specializing $\gamma$ to $\Phi$.

To get the converse implication, by the meta-generalization, it suffices to deduce
$(\Phi\imp\gamma)\imp\gamma$ from $\Phi$.
But $\Phi\imp\big((\Phi\imp\gamma)\imp\gamma\big)$ is easily deducible by a double application of
the exponential meta-law.
\end{example}

\formulas

\begin{proposition} \label{RP-translation2} In intuitionistic logic,

(a) $\turnstile\phi\Iff\mq{\gamma}(\phi\to\gamma)\to\gamma$;

(b) $\turnstile\alpha\land\beta\Iff\mq{\gamma}\big(\alpha\to (\beta\to\gamma)\big)\to\gamma$;

(c) $\turnstile\alpha\lor\beta\Iff\mq{\gamma}(\alpha\to\gamma)\to\big((\beta\to\gamma)\to\gamma\big)$;

(d) $\turnstile\exists\tr x\,\alpha(\tr x)\Iff\mq{\gamma}\forall\tr x\,\big(\alpha(\tr x)\to\gamma\big)\to\gamma$;

(e) $\turnstile\ab\Iff\mq{\gamma}\gamma$.
\end{proposition}

Parts (b), (c), (d),(e) yield meta-logical ``definitions'', up to meta-equivalence, of $\ab$;
of $\land$ and $\lor$ in terms of $\to$; and of $\exists$ in terms of $\to$ and $\forall$.
Of course, these ``definitions'' also apply in classical logic.
Let us note that part (e) says that $\turnstile\ab\iff\tilde\metabot$.

\begin{proof}[Proof. (a)] This is similar to Example \ref{RP-translation}.
The $\when$ follows by meta-specializing $\gamma$ to $\phi$.

To get the converse, by the meta-generalization, it suffices to derive
$(\phi\to\gamma)\to\gamma$ from $\phi$.
By the modus ponens rule, it suffices to derive $\phi\to\big((\phi\to\gamma)\to\gamma\big)$.
But it is easily derivable by a double application of the exponential law.
\end{proof}

\begin{proof}[(b),(c),(d),(e)]
These follow from (a) by appropriately meta-specializing $\phi$.
Namely, (b) follows by the exponential law; (c) by (\ref{dm1}) and the exponential law;
(d) by (\ref{gq1}); and (e) is straightforward.
\end{proof}

\metameta

\begin{proposition} \label{connectives vs meta-connectives}
In intuitionistic logic,

(a) $\turnstile\Phi\land\Psi\iff\Phi\mand\Psi$ for any formulas $\Phi$, $\Psi$;

(b) $\turnstile\forall x\,\Phi\iff\mq{x}\Phi$ for any formula $\Phi$;

(c) $\turnstile\metabot\imp\ab$;

(d) $\turnstile\Phi\parallel\Psi\imp\Phi\lor\Psi$ for any formulas $\Phi$, $\Psi$;

(e) $\turnstile \eq{x}\Phi\imp\exists x\,\Phi$ for any formula $\Phi$;

(f) $\turnstile\Phi\to\Psi\imp(\Phi\imp\Psi)$ for any formulas $\Phi$, $\Psi$.
\end{proposition}

This clearly transfers to any extension of intuitionistic logic, including classical logic.

The converses to parts (d), (e), (f) do not hold already in zero-order classical logic
(and hence also in zero-order intuitionistic logic), see Examples \ref{meta-implication0} and
\ref{meta-disjunction0}.

\begin{proof}[Proof. (a)] From \ref{rule:conjunction} with $\fm\gamma=\triv$ and the $\mand$-elimination
meta-rule we get $\turnstile\Phi\mand\Psi\imp\Phi\land\Psi$.
Conversely, from \ref{axiom:conjunction} and the {\it modus ponens} rule we have
$\turnstile\Phi\land\Psi\imp\Phi$ and $\turnstile\Phi\land\Psi\imp\Psi$.
Hence by the $\&$-introduction, $\turnstile\Phi\land\Psi\imp\Phi\mand\Psi$.
\end{proof}

\begin{proof}[(b)] Here $\when$ follows from the generalization rule, and $\imp$ from
\ref{axiom:forall} and the {\it modus ponens} rule, along with the generalization meta-rule.
\end{proof}

\begin{proof}[(c)] This follows from the $\metabot$-elimination rule.
\end{proof}

\begin{proof}[(d)] From \ref{axiom:disjunction} and the {\it modus ponens} rule we have
$\turnstile\Phi\imp\Phi\lor\Psi$ and $\turnstile\Psi\imp\Phi\lor\Psi$.
Hence by the $\parallel$-elimination, $\turnstile\Phi\parallel\Psi\imp\Phi\lor\Psi$.
\end{proof}

\begin{proof}[(e)] This is similar to (d).
\end{proof}

\begin{proof}[(f)] This follows from the {\it modus ponens} rule.
\end{proof}

\begin{remark} If $\F$ is a meta-formula with no occurrences of $\metabot$, then
$\varmetabot\imp\F$ is deducible.
Indeed, from $\varmetabot$ we can deduce any formula (regarded as an atomic meta-formula)
by the meta-specialization.
The assertion now follows by induction, using $\&$- $\imp$-, $\parallel$-, $\eq\cdot$-introduction
and the meta-generalization.
\end{remark}

\subsection{Models} \label{models0}

\subsubsection{Meta-models}\label{models}

Let us call a meta-rule {\it absolute} if it involves no conditions on the assumptions of the deduction.
In particular, the $\imp$-elimination meta-rule is absolute; and so is every meta-law.
Thus the Enderton-style system of \S\ref{enderton} consists entirely of absolute meta-rules.

Let us fix a language $\L$.
An absolute meta-rule is said to be {\it satisfied} in a meta-$\L$-structure $\J$ if for every
its instance 
\[\Dfrac{\F_1,\dots,\F_n}{\G}\]
and every individual assignment $\iass$ and every predicate valuation $\pval$, if
$\wnf(|\F_i|^{\iass\pval}_\J)=\Top$ for each $i$, then $\wnf(|\G|^{\iass\pval}_\J)=\Top$.

In particular, a meta-law is satisfied in $\J$ if and only $\wnf(|\G|^{\iass\pval}_\J)=\Top$
for every its instance $\G$ and every individual assignment $\iass$ and every predicate valuation $\pval$.
Let us note also that the $\alpha$-conversion rule is trivially satisfied in every meta-$\L$-structure.

A meta-$\L$-structure $\J$ will be called a {\it model of the meta-logic} if it satisfies 
all meta-rules of the Enderton-style formulation of the meta-logic in \S\ref{enderton}.

It is easy to see that if an absolute meta-rule is derivable in the Enderton-style system, then it is
satisfied in any model of the meta-logic.

This applies, in particular, to the $\mand$-introduction and $\mand$-elimination meta-rules.
Thus we obtain

\begin{proposition} \label{mand-redundant}
Let $\J$ be a model of the meta-logic, $\F$ and $\G$ meta-formulas, $\iass$ a variable assignment and
$\pval$ a predicate valuation.

Then $|\F\mand\G|_\J^{\iass\pval}=\Top$ if and only if
$|\F|_\J^{\iass\pval}=\Top$ and $|\G|_\J^{\iass\pval}=\Top$.
\end{proposition}

The same goes for the specialization meta-rules.
However, the generalization meta-rules are not absolute.

Let us call a meta-$\L$-structure $\J$ a {\it semi-standard model of the meta-logic} if it satisfies 
all absolute meta-rules of the Hilbert-style formulation of the meta-logic in \S\ref{enderton} and also
the following two conditions (corresponding to the generalization meta-rules):
\begin{enumerate}[label=(\Alph*)]
\item 
If $\wnf(|\F|_\J^{\iass'\pval})=\Top$ for every variable assignment $\iass'$ that agrees with
$\iass$ on all individual variables other than $x$, then $\wnf(|\mq{x}\F|_\J^{\iass\pval})=\Top$;
\smallskip

\item if $\wnf(|\F|_\J^{\iass\pval'})=\Top$ for every predicate valuation $\pval'$ that agrees with
$\pval$ on all predicate variables other than $\gamma$, then $\wnf(|\mq{\gamma}\F|_\J^{\iass\pval})=\Top$.
\medskip
\end{enumerate}

It is easy to see, following the straightforward derivation of the Enderton-style system from 
the Hilbert-style system, that every semi-standard model of the meta-logic is a model of the meta-logic.

\begin{proposition} \label{meta-redundant}
Let $\J$ be a semi-standard model of the meta-logic, $\F$ and $\G$ meta-formulas, $\iass$ a variable 
assignment and $\pval$ a predicate valuation.

(a) $\wnf(|\mq{x}\F|_\J^{\iass\pval})=\Top$ if and only if $\wnf(|\F|_\J^{\iass'\pval})=\Top$ for every
variable assignment $\iass'$ that agrees with $\iass$ on all individual variables other than $x$.

(b) $\wnf(|\mq{\gamma}\F|_\J^{\iass\pval})=\Top$ if and only if $\wnf(|\F|_\J^{\iass\pval'})=\Top$
for every predicate valuation $\pval'$ that agrees with $\pval$ on all predicate variables other than $\gamma$.
\end{proposition}

\begin{proof} Let us prove (a).
The ``if'' assertion is precisely the condition corresponding to the generalization meta-rule.
So it remains to prove the ``only if'' assertion.
The condition corresponding to the specialization meta-rule is: for any $\F$, $\iass$ and $\pval$,
if $\wnf(|\mq{x}\F|_\J^{\iass\pval})=\Top$, then $\wnf(|\F[x/T]|_\J^{\iass\pval})=\Top$,
as long as $T$ is free for $x$ in $\F$.

Let $y$ be a variable that does not occur freely in $\F$.
Given an assignment $\iass'$ as above, let $\iass''$ be the assignment with $\iass''(y)=\iass'(x)$
and $\iass''(z)=\iass(z)$ for every variable $z$ other than $y$.
Since $y$ does not occur freely in $\F$, we have
$|\mq{x}\F|_\J^{\iass\pval}=|\mq{x}\F|_\J^{\iass''\pval}$.
By the condition corresponding to the meta-specialization rule,
if $\wnf(|\mq{x}\F|_\J^{\iass''\pval})=\Top$, then $\wnf(|\F[x/y]|_\J^{\iass''\pval})=\Top$.
Since $y$ does not occur freely in $\F$, we also have $|\F[x/y]|_\J^{\iass''\pval}=|\F|_\J^{\iass'\pval}$.
Thus if $\wnf(|\mq{x}\F|_\J^{\iass\pval})=\Top$, then $\wnf(|\F|_\J^{\iass'\pval})=\Top$.

The proof of (b) is similar.
\end{proof}

Let us note that while the $\imp$-elimination meta-rule is absolute, the $\imp$-introduction meta-rule is not.

Let us call a meta-$\L$-structure $\J$ a {\it standard model of the meta-logic} if it satisfies all 
absolute meta-rules of the original natural deduction formulation of the meta-logic in \S\ref{meta-rules},
the conditions (A) and (B) above (corresponding to the generalization meta-rules), and also the following
condition (corresponding to the $\imp$-introduction meta-rule):
\begin{enumerate}[label=(\Alph*), start=3]
\item If either $\wnf(|\F|_\J^{\iass\pval})=\Bot$ or
$\wnf(|\G|_\J^{\iass\pval})=\Top$, then $\wnf(|\F\imp\G|_\J^{\iass\pval})=\Top$.
\end{enumerate}

It is easy to see that any standard model of the meta-logic is a semi-standard model of the meta-logic.

Condition (C) is converse to the condition of satisfying the $\imp$-elimination meta-rule.
Thus we obtain

\begin{proposition} \label{classical meta-implication}
Let $\J$ be a standard model of the meta-logic, $\F$ and $\G$ meta-formulas, $\iass$ a variable assignment and
$\pval$ a predicate valuation.

$\wnf(|\F\imp\G|_\J^{\iass\pval})=\Top$ if and only if
$\wnf(|\F|_\J^{\iass\pval})=\Top$ implies $\wnf(|\G|_\J^{\iass\pval})=\Top$.
\end{proposition}

It is easy to see that a two-valued meta-interpretation $\J=\I_+$ is always a standard model of the meta-logic.

In fact, the converse also holds: any standard model of the meta-logic reduces to a two-valued one.

\begin{theorem} Let $\J$ be a standard model of the meta-logic.
Then there exists a two-valued model $\I_+$ of the meta-logic such that any given meta-formula $\F$ is valid
in $\J$ if and only if it is valid in $\I_+$.
\end{theorem}

\begin{proof}
The given model $\J$ includes an $\L$-structure $\I$, which in turn determines a two-valued model 
$\I_+$ of the meta-logic, with $\ocf_{\I_+}\:\O\to\{\Top,\Bot\}$ defined as the composition 
$\O\xr{\ocf_\J}\Qm\xr{\wnf_\J}\{\Top,\Bot\}$.
For any atomic meta-formula $\F$ and any variable assignment $\iass$ and any predicate valuation $\pval$
we have $\wnf_\J(|\F|_\J^{\iass\pval})=\Top$ if and only if $|\F|_{\I_+}^{\iass\pval}=\Top$.
Now from \ref{mand-redundant}, \ref{meta-redundant} and \ref{classical meta-implication} we get that 
the same holds for an arbitrary meta-formula $\F$.
In particular, a meta-formula $\F$ is valid in $\J$ if and only if it is valid in $\I_+$.
\end{proof}

\subsubsection{Two-valued meta-models}

Let us now record the conditions corresponding to all meta-rules (also for the additional
meta-connectives) in two-valued models of the meta-logic.
\medskip

\begin{itemize}
\item $|\F\mand\G|_{\I_+}^{\iass\pval}=\Top$ if and only if
$|\F|_{\I_+}^{\iass\pval}=\Top$ and $|\G|_{\I_+}^{\iass\pval}=\Top$.
\medskip
\item $|\F\parallel\G|_{\I_+}^{\iass\pval}=\Top$ if and only if
$|\F|_{\I_+}^{\iass\pval}=\Top$ or $|\G|_{\I_+}^{\iass\pval}=\Top$.
\medskip
\item $|\F\imp\G|_{\I_+}^{\iass\pval}=\Top$ if and only if
$|\F|_{\I_+}^{\iass\pval}=\Top$ implies $|\G|_{\I_+}^{\iass\pval}=\Top$.
\medskip
\item $|\mq{x}\F|_{\I_+}^{\iass\pval}=\Top$ if and only if $|\F|_{\I_+}^{\iass'\pval}=\Top$
for every variable assignment $\iass'$ that agrees with $\iass$ on all individual variables other than $x$.
\medskip
\item $|\mq{\gamma}\F|_{\I_+}^{\iass\pval}=\Top$ if and only if $|\F|_{\I_+}^{\iass\pval'}=\Top$
for every predicate valuation $\pval'$ that agrees with $\pval$ on all predicate variables other than $\gamma$.
\medskip
\item $|\eq{x}\F|_{\I_+}^{\iass\pval}=\Top$ if and only if $|\F|_{\I_+}^{\iass'\pval}=\Top$
for some variable assignment $\iass'$ that agrees with $\iass$ on all individual variables other than $x$.
\medskip
\item $|\metabot|_{\I_+}=\Top$ if and only if $\ocf(\O)=\{\Top\}$.
\end{itemize}
\medskip

Here are some special cases:

\begin{proposition}
Let $\I$ be an $\L$-structure and $\F$ a meta-formula.

(a) Let $\pval$ be a predicate valuation.
Then $|\mc{1}\F|_{\I_+}^{\pval}=\Top$ if and only if $|\F|_{\I_+}^{\iass\pval}=\Top$ for every
variable assignment $\iass$.

(b) Suppose that $\F$ is closed.
Then $|\mc{2}\F|_{\I_+}=\Top$ if and only if
$|\F|_{\I_+}^\pval=\Top$ for every predicate valuation $\pval$;

(c) Let $\pval$ be a predicate valuation.
Then $|\ec{1}\F|_{\I_+}^{\pval}=\Top$ if and only if $|\F|_{\I_+}^{\iass\pval}=\Top$ for some
variable assignment $\iass$.
\end{proposition}

Let us note that the proposition can be rephrased as follows:
\begin{enumerate}[label=(\alph*)]
\item a meta-formula $\F$ is valid in $(\I,\pval)$ if and only if $\mc{1}\F$ is valid in $(\I,\pval)$;
\item a closed meta-formula $\F$ is valid in $\I$ if and only if $\mc{2}\F$ is valid in $\I$;
\item a meta-formula $\F$ is satisfiable in $(\I,\pval)$ if and only if $\ec{1}\F$ is valid
in $(\I,\pval)$.
\end{enumerate}

\begin{corollary} \label{two-valued meta-interpretation}
Let $\I$ be an $\L$-structure.

(a) A principle $\prin\Phi$ is valid in $\I$ if and only if the formula $\Phi$ is valid in $\I$
(that is, $\ocf(|\Phi|_\I^{\iass\pval})=\Top$ for every variable assignment $\iass$ and every
predicate valuation $\pval$).

(b) A rule $\Phi_1,\dots,\Phi_n\,/\,\Psi$ is valid in $\I$ if and only if for every predicate valuation
$\pval$, if the formulas $\Phi_1,\dots,\Phi_n$ are valid in $(\I,\pval)$, then $\Psi$ is also valid in
$(\I,\pval)$ (in other words, if $\ocf(|\Phi_j|_\I^{\iass\pval})=\Top$ for each $j$ and every variable
assignment $\iass$, then $\ocf(|\Psi|_\I^{\iass\pval})=\Top$ for every variable assignment $\iass$).
\end{corollary}

\subsubsection{Models}\label{models2}
Let us fix a logic $L$ based on a language $\L$ and given by a derivation system $\Ds$.

An $\L$-structure $M$ is called a {\it model} of $L$ if the meta-formula $\Ds$ is valid in $M$
(in other words, if all the laws and inference rules of $\Ds$ are valid in $M_+$).%
\footnote{Let us note that models as defined here include neither a predicate valuation nor 
a variable assignment.}
If an $\L$-structure $M$ is a model of $L$, then clearly all derivable principles of $L$ are valid in $M$, and more
generally, it is easy to see that
\[\F_1,\dots,\F_k\turnstile _L\G\ \text{ implies }\
\F_1,\dots,\F_k\Turnstile_M\G\]
for arbitrary meta-formulas $\F_i$ and $\G$.
In particular,
\[\text{if a rule}\ \Phi_1,\dots,\Phi_k\,/\,\Psi\ \text{ is derivable in $L$, then it is valid in every
model of $L$.}\]
For many logics one also has the converse implication (see e.g.\ \cite{Ry}*{1.4.11} for
the zero-order case).
In fact, a logic $L$ is said to be {\it complete} with respect to a family of its models $M_t$
(possibly consisting of just one model), if every purely logical principle that is valid in each $M_t$
is derivable in $L$, and {\it strongly complete} if every purely logical rule that is valid in
each $M_t$ is derivable in $L$.
Thus the family $M_t$ is not strongly complete if and only if there exists a proper extension $L^+$ of $L$
such that each $M_t$ is a model of $L^+$.

A {\it model} of a theory $\Th$ over $L$ is an $\L(\Th)$-structure $M$ that is a model of $L$ such that 
the meta-formula $\Th$ is valid in $M$ (in other words, such that all axioms of $\Th$ are valid in $M$).
Clearly, every theorem of $\Th$ is valid in all models of $\Th$.
The theory $\Th$ is called {\it complete} with respect to a family $M_t$ of its models if every
formula that is valid in each $M_t$ is a theorem of $\Th$.
Thus $\Th$ is complete with respect to the family $M_t$ if and only if every principle (possibly not purely
logical) that is valid in each $M_t$ is derivable from $\Th$ in $L$.
We may also call $\Th$ {\it strongly complete} with respect to the family $M_t$ if every rule (possibly not
purely logical) that is valid in each $M_t$ is derivable from $\Th$ in $L$.

Given a first-order language $\L$, let $\L_+$ denote the language obtained by extending $\L$ by countably 
many predicate constants of each arity.
Thus if $\L$ is the language of a logic $L$, we have an arity-preserving bijection $\phi_L$ between 
the predicate variables of $\L$ and the predicate constants of $\L_+$.
It extends to a bijection $\phi_L$ between the formulas of $\L$ and those formulas of $\L_+$ that involve
no predicate variables.
If $M$ is an $\L^+$-structure, let $M^-$ denote the $\L$-structure obtained from $M$ by discarding the 
interpretation of the predicate constants that are not in $\L$. 

\begin{proposition} \label{strong completeness}
If every consistent theory $\Th$ over a logic $L$ such that $\L(\Th)=\L(L)_+$ and $\Th$ involves no predicate 
variables is complete with respect to some family $M_{\Th,t}$ of its models, then $L$ is strongly complete with 
respect to the two-parameter family $M_{\Th,t}^-$ of its models.
\end{proposition}

The proof also works if $L$ is replaced by an empty theory over $L$.

\begin{proof}
If a rule $\Phi_1,\dots,\Phi_k\,/\,\Psi$ of the language $\L(L)$ is not derivable in $L$, let 
$\Th=\mc{1}\phi_L(\Phi_1)\mand\dots\mand\mc{1}\phi_L(\Phi_k)$.
Then $\phi_L(\Psi)$ is not a theorem of $\Th$.
In particular, $\Th$ is consistent, and hence $\phi_L(\Psi)$ is not valid in $M_{\Th,t}$ for some $t$.
But each $\mc{1}\phi_L(\Phi_i)$ is valid in $M_{\Th, t}$.
Thus $\mc{1}\phi_L(\Phi_1)\mand\dots\mand\mc{1}\phi_L(\Phi_k)\imp\mc{1}\phi_L(\Psi)$ is not valid in 
$M_{\Th, t}$, and hence the rule $\Phi_1,\dots,\Phi_k\,/\,\Psi$ is not valid in $M_{\Th, t}$ or,
equivalently, in $M_{\Th,t}^-$.
\end{proof}

\begin{remark}\label{Dragalin}
Strong completeness is closely related to Dragalin's composite theories \cite{Dr}.
We call a pair $\Th=(\Th_+,\Th_-)$ a {\it composite theory} if $\Th_+$ is a theory, i.e., 
a finite meta-conjunction of principles, called the {\it axioms} of $\Th$, and $\Th_-$ is 
a finite meta-disjunction (see \ref{meta-disjunction}) of principles, which might be called 
the {\it taboos} of $\Th$.
The empty meta-disjunction will be identified with $\tilde\metabot=\fm{\mq{\gamma}\gamma}$.
A composite theory $\Th$ is called {\it inconsistent} if $\Th_+\turnstile \Th_-$.

A {\it model} of a composite theory $\Th$ over a logic $L$ is a model $M$ of $L$ such that $\Th_+$ is valid in $M$ 
and $\Th_-$ is not valid in $M$.
Since meta-disjunction, like all other meta-connectives, is treated classically in the two-valued 
meta-interpretation, to say that $\Th_-$ is not valid in $M$ is equivalent to saying that
none of the taboos of $\Th$ is valid in $M$ --- as long as there is at least one taboo.
Else we have $\Th_-=\fm{\mq{\gamma}\gamma}$, and to say $\Th_-$ is not valid in $M$ is equivalent to saying that
not all formulas are valid in $M$.
This is equivalent to the usual requirement that $\D$ be nonempty (which is thus ``explained'' by 
composite theories). 

If every consistent composite theory $\Th$ over $L$ such that $\L(\Th)=\L(L)_+$ and $\Th$ involves no
predicate variables has a model $M_{\Th}$, then $L$ is strongly complete with respect to the family $M_{\Th}^-$.
Indeed, if a rule $\Phi_1,\dots,\Phi_n\,/\,\Psi$ is not derivable, then the composite theory $\Th$ 
with axioms $\mc{1}\phi_L(\Phi_1),\dots,\mc{1}\phi_L(\Phi_n)$ and the only taboo $\mc{1}\phi_L(\Psi)$ 
is consistent.
Then $\mc{1}\phi_L(\Phi_1),\dots,\mc{1}\phi_L(\Phi_n)$ are valid in $M_\Th$, whereas $\mc{1}\phi_L(\Psi)$ is not.
Hence $\mc{1}\phi_L(\Phi_1)\mand\dots\mand\mc{1}\phi_L(\Phi_n)\imp\mc{1}\phi_L(\Psi)$ is not valid in $M_\Th$.
Therefore the rule $\Phi_1,\dots,\Phi_n\,/\,\Psi$ is not valid in $M_\Th$ or, equivalently, in $M_\Th^-$.
\end{remark}

\subsubsection{Valuation fields} \label{valuation fields}

Given an $\L$-structure $\I$ given by $\D$, $\O$ and interpretations of function symbols, predicate
constants, connectives and quantifiers, we call a family $\H$ of subsets $\H_n\subset\Hom(\D^n,\O)$, 
$n=0,1,2,\dots$, a {\it valuation field} with respect to $\I$ if it is closed with respect to 
the interpretations of function symbols, predicate constants, connectives and quantifiers provided by $\I$.
In other words, $\H=(\H_n)$ is a valuation field if and only if each valuation $\pval$ such that
$\pval(\phi)\in\H_n$ for every $n$-ary predicate variable $\phi$, $n=0,1,2,\dots$, also satisfies 
$|F|_\I^\pval\in\H_n$ for each $n$-formula $F$ of the language, $n=0,1,2,\dots$.

A valuation field $\H$ {\it contains} a valuation $\pval$ if $\pval(\phi)\in\H_n$ for every $n$-ary predicate 
variable $\phi$, $n=0,1,2,\dots$.
The valuation field {\it generated by} a valuation $\pval$, denoted $\left<\pval\right>$, is the smallest 
valuation field that contains $\pval$.
Clearly, $\left<\pval\right>=(\H_n)$ where each $\H_n$ consists of $|F|_\I^\pval$ where $F$ runs over
all $n$-formulas of the language.

Given a valuation field $\H=(\H_n)$ with respect to an $\L$-structure $\I$, a {\it meta-$L$-structure $\J$ 
restricted over} $\H$ is defined like a usual meta-$\L$-structure, except that 
the second-order meta-quantifiers are now interpreted by functions $|\q_n|_\J\:\Hom(\H_n,\Qm\big)\to\Qm$.
In particular, the {\it two-valued meta-$L$-structure restricted over} $\H$ is the modification $\I_+(\H)$
of the usual two-valued meta-$\L$-structure $\I_+$ obtained by re-interpreting the second-order meta-quantifiers 
by functions $|\q_n|_{\I_+(\H)}\:\Hom(\H_n,\Qm\big)\to\Qm$ defined in the same way as before, i.e.\ by
$|\q_n|_{\I_+(\H)}(f)=\Top$ if and only $f(\phi)=\Top$ for all $\phi\in\H_n$.

It is not hard to see that for any valuation field $\H$, $\I_+(\H)$ is a standard model of the meta-logic 
in the sense of \S\ref{models}, except that all valuations mentioned there are now required to be contained 
in $\H$.
(It is here that we use that $\H$ is a valuation field and not just an arbitrary family of subsets of
the function sets $\Hom(\D^n,\O)$.)

Given a valuation field $\H$, with respect to an $\L$-structure $M$, the latter is called
a {\it model} of $\L$ {\it with respect to $\H$} if the meta-formula $\Ds$ is valid in $M_+(\H)$.

\begin{remark} Valuation fields are parallel to (and, in a sense, generalize) algebras of admissible sets, 
which distinguish the so-called ``general frames'' from usual frames.
\end{remark}

\subsection{Classical logic (semantics)} \label{classical models}

\subsubsection{Two-valued models}\label{two-valued}

In a {\it two-valued} model of classical logic, the set $\O$ is taken to be
$\{\Top,\Bot\}$, with $\ocf=\id$, and the connectives and the quantifiers are interpreted according
to the usual truth tables (see \S\ref{Tarski truth}).
(Thus two-valued models of classical logic differ only in the choice of the domain $\D$.)
It is not hard to check that, indeed, the laws and the inference rules of classical logic
(see \S\ref{logics}) hold under this interpretation.

A model of a theory over classical logic is {\it two-valued} if it extends a two-valued model of classical logic
(by additionally interpreting the predicate constants and the function symbols in the language of the theory).
By a strong form of G\"odel's completeness theorem \cite{RS}*{VIII.5.2}, every consistent theory $\Th$ over 
classical logic is complete with respect to the family of its two-valued models with domain $\N$.%
\footnote{Let us recall that our theories correspond to the traditional notion of a theory given by finitely
many axioms and finitely many axiom schemata (which due to our finiteness restrictions on the language
consist of countably many axioms).
This is included in the countably axiomatized traditional theory of \cite{RS}*{VIII.5.2} 
(in fact, the finitely axiomatized case would already suffice for our present purposes).}
In fact, the proof of this assertion in \cite{RS} actually shows that $\Th$ is complete with respect to
a family $M_{\Th,t}$ of its two-valued models with domain $\N$ that differ only in the interpretation of
the predicate constants, as they all have the same interpretation of the function symbols.
(Namely, if $\frak f$ is an $n$-ary function symbol, $|\frak f|\:\N^n\to\N$ is defined by
$|\frak f|(\overline{t_1},\dots,\overline{t_n})=\overline{\frak f(t_1,\dots,t_n)}$, where
$\overline{\cdot}\:\T(\L)\to\N$ is some bijection between all terms of the language and natural numbers.)
Thus from \ref{strong completeness} we get

\begin{theorem} \label{strong completeness thm}
Classical logic is strongly complete with respect to its unique two-valued model with domain $\N$.

Every empty theory over classical logic is strongly complete with respect to a family of its two-valued models 
with domain $\N$.
This family consists of just one model if there are no predicate constants in the language.
\end{theorem}

The condition that there be no predicate constants in the language cannot be dropped
by considering a propositional (=nullary predicate) constant $\frak c$.
In any two-valued model $M$, either $|\frak c|_M=\Top$ or $|\frak c|_M=\Bot$; hence one of
the non-derivable principles $\prin \frak c$, $\prin\neg \frak c$ is valid in $M$.

We will next use two-valued models to differentiate derivable, admissible and stably admissible rules,
as well as principles.

\begin{example}\label{gen rule}
Although the generalization rule $\fm p(\tr x)\,/\,\forall \tr x\,\fm p(\tr x)$ is derivable in
classical logic (see \ref{rule:generalization}), the corresponding
formula $\fm p(\tr t)\to\forall \tr x\,\fm p(\tr x)$ is not.

Indeed, let us consider a two-valued model $M$ of classical logic with $\D_M=\{0,1\}$ and
a predicate valuation $\pval$ such that $|\fm p|^\pval_M(0)=\Bot$ and $|\fm p|^\pval_M(1)=\Top$.
Then $|\fm p(\tr x)\to\forall \tr x\,\fm p(\tr x)|^{\iass\pval}_M$
is $\Top$ if $\iass(\tr x)=0$ and $\Bot$ if $\iass(\tr x)=1$.
Hence $\fm p(\tr x)\to\forall \tr x\,\fm p(\tr x)$ is not valid in $(M,\pval)$, and therefore
also in $M$.
Thus it is not derivable in classical logic.

Let us note that the meta-formula $\mq{\tr x}\fm p(\tr x)\imp\forall \tr x\,\fm p(\tr x)$ must be
valid in $(M,\pval)$, since it is an immediate consequence of the generalization rule.
This is easy to check directly: it is interpreted by the implication ``if $\fm p(\tr x)$ is valid
in $(M,\pval)$, then $\forall \tr x\,\fm p(\tr x)$ is valid in $(M,\pval)$'', in which both
the hypothesis and the conclusion are false.
\end{example}

\begin{example} \label{cbot}
In zero-order classical logic with a propositional constant $\frak c$ added to the language,
the rule $\frak c\,/\,\clbot$ is admissible but not derivable (and not stably admissible).

Indeed, a two-valued model $M$ such that $|\frak c|_M=\Top$ shows that the rule $\frak c\,/\,\clbot$ is not
derivable.
Moreover it shows that the principle $\prin\frak c$ does not imply the principle $\prin\clbot$, and
hence that the rule $\frak c\,/\,\clbot$ is not stably admissible.
On the other hand, a two-valued model $M$ such that $|\frak c|_M=\Bot$ shows that the formula $\frak c$
is not derivable.
Hence the rule $\frak c\,/\,\clbot$, which has no special cases apart from itself, is admissible.
\end{example}

\begin{example}\label{omitting}
In classical logic, the rule \[\frac{\exists \tr x\,\fm p(\tr x),\,\exists \tr x\,\neg\fm p(\tr x)}{\clbot}\]
is admissible but not derivable.

To see the latter, we can use the same $M$ and $\pval$ as in the previous example (\ref{gen rule}).
Then $\exists\tr x\,\fm p(\tr x)$ and $\exists\tr x\,\neg\fm p(\tr x)$ are both valid in $(M,\pval)$,
yet $\clbot$ is not.
Hence the rule in question is not valid in $M$, and therefore it is not derivable in classical logic.

On the other hand, suppose that for some 1-formula $P$
we have both $\turnstile\exists \tr x\,P(\tr x)$ and $\turnstile\exists \tr x\,\neg P(\tr x)$.
Let $N$ be a two-valued model of classical logic with domain consisting of a single element,
$\D_N=\{*\}$.
Then $\exists \tr x\,P(\tr x)$ and $\exists \tr x\,\neg P(\tr x)$ must both be valid in $N$, that is,
$|\exists \tr x\,P(\tr x)|_N^{\iass\pval}=|\exists \tr x\,\neg P(\tr x)|_N^{\iass\pval}=\Top$
for every individual assignment $\iass$ and every predicate valuation $\pval$.
Since $*$ is the only element in $\D_N$, this means that
$|P|_N^{\iass\pval}(*)=|\neg P|_N^{\iass\pval}(*)=\Top$, which is a contradiction.
Hence for any special case of the rule in question, the derivability of the premisses is impossible,
and in particular implies the derivability of the conclusion.
Thus the rule is admissible.
\end{example}

\begin{remark}\label{Dzik}
In fact, these examples are characteristic of non-derivable admissible rules in classical logic.
They are all such that any specialization of the conjunction of their premisses is non-derivable,
and even not satisfiable in any two-valued model with one-element domain \cite{Dz}.
\end{remark}

\begin{example} \label{st-adm}
The rule of the previous example is, in fact, stably admissible, not only for classical but,
more generally, for intuitionistic logic (and also for empty theories over them, with 
no predicate constants of arity $>0$ in the language).
Let us note that this rule is not derivable in intuitionistic logic, since it is not derivable in classical
logic.

To prove the stable admissibility, we need to show that for any 1-formula $\Sigma$, the principles
$\prin\exists \tr x\,\Sigma(\tr x)$ and $\prin\exists \tr x\,\neg\Sigma(\tr x)$ imply $\prin\ab$.
Indeed, let $\vec\gamma$ be the tuple of all predicate variables that occur in $\Sigma$, and let
$\vec r=(r_1,\dots,r_k)$ be the tuple of their arities.
Next let $\ab_{\vec r}=(\ab_{r_1},\dots,\ab_{r_n})$, where $\ab_r$ denotes the $r$-formula
$\var_1,\dots,\var_r\mapsto\ab$.
(Alternatively, we could have used $\triv$ in place of $\ab$, or both of them.)
Then the two principles in question meta-specialize to the formulas
$\exists \tr x\,\Sigma(\tr x)[\vec\gamma/\ab_{\vec r}]$ and
$\exists \tr x\,\neg\Sigma(\tr x)[\vec\gamma/\ab_{\vec r}]$ respectively.
Since the language contains no predicate constants of arity $>0$, the formula
$\Sigma(\tr x)[\vec\gamma/\ab_{\vec r}]$ contains no occurrences of individual variables,
including $\tr x$.
But then our formulas $\exists \tr x\,\Sigma(\tr x)[\vec\gamma/\ab_{\vec r}]$ and
$\exists \tr x\,\neg\Sigma(\tr x)[\vec\gamma/\ab_{\vec r}]$ are equivalent (over intuitionistic logic) to
$\Sigma(\tr x)[\vec\gamma/\ab_{\vec r}]$ and $\neg\Sigma(\tr x)[\vec\gamma/\ab_{\vec r}]$
respectively.
Thus they indeed have $\ab$ as a joint consequence.
\end{example}

\begin{example}\label{equality-st-adm}
In classical logic with language extended by one binary predicate constant (denoted $=$),
the rule $\exists \tr x\,\fm p(\tr x),\,\exists \tr x\,\neg\fm p(\tr x)\,/\,\clbot$
is not stably admissible.

Indeed, let us show that the principles $\prin\exists \tr x\,\ \tr x=\tr y$ and
$\prin\exists \tr x\,\ \neg \tr x=\tr y$ do not imply $\prin\clbot$; or in other words that
if they are added as laws to classical logic with equality, the resulting logic is still consistent.
Indeed, $\mq{\tr y}\exists \tr x\,\ \tr x=\tr y$ or, equivalently, $\forall \tr y\,\exists \tr x\,\ \tr x=\tr y$
is derivable from $\forall\tr y\,\ \tr{y=y}$, and therefore is valid in any model of classical logic with equality;
whereas $\mq{\tr y}\exists \tr x\,\ \neg \tr x=\tr y$ or, equivalently,
$\forall \tr y\,\exists \tr x\,\ \neg \tr x=\tr y$ is valid in any model of classical logic with equality
with non-singleton domain.
(See e.g.\ \cite{Men} concerning classical logic with equality.)
Thus classical logic with $=$ added to the language (and no additional laws or inference rules)
has two-valued models where both $\exists \tr x\,\ \tr x=\tr y$ and $\exists \tr x\,\ \neg \tr x=\tr y$
are valid; whereas $\clbot$ is not valid in any model.
\end{example}

\begin{proposition}\label{der-adm}
In zero-order classical logic (with no propositional constants added to the language) 
admissible rules are derivable.
\end{proposition}

\begin{proof}
If a rule $P_1,\dots,P_k\,/\,Q$ is not derivable, then by the strong completeness theorem
\ref{strong completeness thm}, this rule is not satisfied in the two-valued model $M$ of zero-order
classical logic.
Then there is a special case $P_1',\dots,P_k'\,/\,Q'$ of the rule in question and a 
valuation $\pval$ such that each $|P_i'|_M^\pval=\Top$ and $|Q'|_M^\pval=\Bot$
(see e.g.\ Proposition \ref{two-valued meta-interpretation}(b)).
Let $\vec p=(p_1,\dots,p_n)$ be the tuple of all propositional variables that occur freely in
any of the formulas $P_1',\dots,P_k',Q'$, and let $\vec{\frak c}=(\frak c_1,\dots,\frak c_n)$,
where $\frak c_i=\cltop$ if $\pval(p_i)=\Top$ and $\frak c_i=\clbot$ if $\pval(p_i)=\Bot$.
Then $(P_1',\dots,P_k'\,/\,Q')[\vec p/\vec{\frak c}]$ is a new rule, which can be written
$P_1'',\dots,P_k''\,/\,Q''$,
and which is another special case of the original rule $P_1,\dots,P_k\,/\,Q$.
By design, each $|P_i''|_M^\pval=\Top$ and $|Q''|_M^\pval=\Bot$.
But since $P_1'',\dots,P_k''$ and $Q''$ contain no propositional variables, their
interpretations do not depend on $\pval$, and we may also write $|P_i''|_M=\Top$ and $|Q''|_M=\Bot$,
and consider $P_i''$ and $Q''$ as principles.
But zero-order classical logic is complete with respect to $M$ (see \ref{strong completeness thm} again),
so each $P_i''$ is derivable, and $Q''$ is not derivable (in fact, $\neg Q''$ is derivable due to
$|\neg Q''|_M=\Top$.)
Thus $P_1,\dots,P_k\,/\,Q$ is not admissible.
\end{proof}

Two-valued models can also to be used to compare implication between principles or rules with that
between the judgements of their derivability.

\begin{proposition}\label{rule implication}
In zero-order classical logic (with no propositional constants added to the language) rules $\Ru_1,\dots,\Ru_k$ 
imply a rule $\Su$ if and only if the judgements $\turnstile \Ru_1,\dots,\turnstile \Ru_k$ imply $\turnstile \Su$.
\end{proposition}

If propositional constants are added to the language, the assertion is no longer true
(see Example \ref{cbot2} below).

\begin{proof}
The ``only if'' assertion holds in any logic.
Conversely, let us assume the implication between the judgements of derivability, and let us
consider two cases.
If $\Ru_1,\dots,\Ru_k$ all happen to be derivable, then so must be $\Su$, and so it is trivially
a consequence of $\Ru_1,\dots,\Ru_k$.
If some $\Ru_i$ is not derivable, then by the proof of \ref{der-adm} it has a special case
$P_1,\dots,P_n\,/\,Q$ (involving only $\cltop$, $\clbot$ and other connectives, but no propositional
variables) such that each $P_j$ is derivable, and $Q\to\clbot$ is also derivable.
Thus by adjoining $\Ru_i$ to the logic, we will be able to derive $\clbot$.
Then everything becomes derivable, including $\Su$.
Thus $\Su$ is a consequence of $\Ru_i$ alone; in particular, it is a joint consequence of $\Ru_1,\dots,\Ru_k$.
\end{proof}

Proposition \ref{rule implication} does not generalize to first-order classical logic:

\begin{example} \label{principle implication}
In classical logic, the principle $\prin\exists \tr x\,\fm p(\tr x)\to\forall \tr x\,\fm p(\tr x)$ does not
imply $\prin\clbot$, even though the judgement
$\turnstile\exists \tr x\,\fm p(\tr x)\to\forall \tr x\,\fm p(\tr x)$ implies $\turnstile\clbot$.

Indeed, $\exists \tr x\,\fm p(\tr x)\to\forall \tr x\,\fm p(\tr x)$ is valid in all models with a singleton
domain, whereas $\clbot$ is not valid in any model; thus there is no implication between the principles.
On the other hand, $\exists \tr x\,\fm p(\tr x)\to\forall \tr x\,\fm p(\tr x)$ is not valid in some models;
thus the judgement $\turnstile\exists \tr x\,\fm p(\tr x)\to\forall \tr x\,\fm p(\tr x)$ is false, and
so implies any other judgement.
\end{example}

\subsubsection{Leibniz--Euler models}\label{Euler}

It was known already to Leibniz and became well-known from the work of Euler that classical propositional 
logic can be interpreted by set-theoretic operations (of course, at a level of knowledge that is informal 
by modern standards; see \cite{Ba}).
The predicate case (but of course not the modern formulations) essentially appeared in the work of Pierce 
and Schr\"oder in the 19th century.

To describe a {\it Leibniz--Euler} model $V=V(X,\D)$ of classical logic,%
\footnote{In the literature, these seem to be best known as a special case of Boolean-valued models.
However, for the purposes of the present treatise there is no benefit in dealing with general 
Boolean algebras (and consequently, also general Heyting algebras) so there is no point in using
that terminology here, as it would only obscure matters.}
one starts with arbitrary sets $X$ and $\D$.
The set $\O$ will be the set of arbitrary subsets of $X$, and the function $\ocf\:\O\to\{\Top,\Bot\}$
sends the entire $X$ to $\Top$ and every proper subset $S\subsetneq X$ to $\Bot$.

Thus, to specify a valuation, each predicate variable of arity $n$ must be interpreted by 
a $\D^n$-indexed family of subsets of $X$.
Upon a choice of a valuation $\pval$, a closed formula will be interpreted by a subset of $X$, 
and more generally a closed $n$-formula $F$ will be interpreted by a $\D^n$-indexed family 
$|F|^\pval_V\:\D^n\to\O$ of subsets of $X$.
Upon a further choice of a variable assignment $\iass$, an arbitrary formula will be interpreted by 
a subset of $X$, and more generally an arbitrary $n$-formula $F$ will be interpreted by a $\D^n$-indexed 
family $|F|^{\iass\pval}_V\:\D^n\to\O$ of subsets of $X$.
Validity of an $n$-formula $F$ in the model, $\Turnstile F$, means that
$|F|^{\iass\pval}_V(t_1,\dots,t_n)=X$ for each tuple $(t_1,\dots,t_n)\in\D^n$ and for an arbitrary valuation 
$\pval$ and assignment $\iass$.

Finally, disjunction of formulas is interpreted by union of sets, negation by complement (within $X$), and
universal quantification by $\D$-indexed union.
In symbols, if $F$ and $G$ are formulas and $H$ is a 1-formula,
\begin{itemize}
\item $|F\lor G|^{\iass\pval}_V=|F|^{\iass\pval}_V\cup|G|^{\iass\pval}_V$;
\item $|\neg F|^{\iass\pval}_V=X\but |F|^{\iass\pval}_V$;
\item $|\exists \tr x\,H(\tr x)|^{\iass\pval}_V=\bigcup_{d\in D}|H|^{\iass\pval}_V(d)$.
\end{itemize}

Consequently, conjunction is interpreted by intersection, and existential quantification by $\D$-indexed
intersection.
The interpretation of implication is also determined by those disjunction and negation.
The interpretation of $\clbot$, which is classically equivalent to $\fm{p\land\neg p}$, is determined by
those of conjunction and negation; and the interpretation of $\cltop$ is determined by those of $\clbot$
and negation.
Thus $\clbot$ is interpreted by the empty union (that is, $\emptyset$) and $\cltop$ by the empty intersection
(that is, $X$).

We call this interpretation a {\it Leibniz--Euler model} of classical logic; indeed, it is not hard to check
that the laws and the inference rules of classical logic (see \S\ref{logics}) hold under
this interpretation (cf.\ \cite{RS}).
For example, let us consider the generalization rule, $\fm{q(\tr x)\,/\,\forall\tr x\, q(\tr x)}$.
To check that it is valid in a Leibniz--Euler model $V=V(X,\D)$, we pick some predicate valuation $\pval$
(cf.\ \ref{two-valued meta-interpretation}).
Then $|\fm q|_V^\pval$ is some function from $\D$ to the set of all subsets of $X$;
$|\forall\tr x\,\fm q(\tr x)|_V^\pval$ is the intersection of all its values; and
$|\fm q(\tr x)|_V^{\iass\pval}$ is its value at $\iass(x)$, where the individual assignment $\iass$
associates to every individual variable, including $\tr x$, an arbitrary element of $\D$.
Thus if $|\fm q(\tr x)|_V^{\iass\pval}=X$ for every individual assignment $\iass$, then
$|\forall\tr x\,\fm q(\tr x)|_V^{\pval}=X$, as desired.

In the case where $X$ is a singleton, $\{*\}$, Leibniz--Euler models reduce to the two-valued models
discussed previously.
As long as no predicate constants are added to the language, two-valued models suffice to discern classical truth
(see Theorem \ref{strong completeness thm}).
Moreover, with an arbitrary language, a formula $F$ that is not valid in some Leibniz--Euler model
$V=V(X,\D)$ is also not valid in some two-valued model $M$ with the same domain $\D$.
Namely, we can get $M$ from $V$ by keeping the same interpretations of the function symbols,
picking an $x\in X$ such that $x\notin |F|_V^{\iass\pval}$ for some individual assignment $\iass$ and
predicate valuation $\pval$, and setting the interpretation $|\frak p|_M$ of a predicate constant $\frak p$
to be the function sending a tuple $\vec d\in\D^n$ to $\{*\}$ if $x\in |\frak p|_V(\vec d)$ and to
$\emptyset$ otherwise.

On the other hand, classical logic (or the empty theory over it, with any language)
is complete with respect to some Leibniz--Euler model with domain $\N$ and with a fixed predicate 
valuation \cite{RS}*{VIII.3.3, II.8.1}, but of course not with respect to any particular two-valued model 
with a fixed valuation --- not even in the propositional case.
Also, a $\lambda$-closed formula $F$ is not valid in a two-valued model $M$ if and only if
$\neg F$ is valid in $M$; of course, this strange property does not hold in Euler models.

Nevertheless, an Euler model such that $X=\D^k$ for some $k$ can be derived from a two-valued model,
by introducing $k$ implicit parameters and changing the arities of predicate constants.
Thus we fix a domain $\D$, and to interpret an $n$-ary predicate constant $\frak p$
in an Euler model $V$ with $X=\D^k$ we look at the interpretation of an $(n+k)$-ary predicate constant
$\frak p'$ in a two-valued model $M$ with the same domain $\D$,
and set $$|\frak p|_V(x_1,\dots,x_n)=\{(x_{n+1},\dots,x_{n+k})\in\D^k\mid
|\frak p'|_M(x_1,\dots,x_{n+k})=\Top\}.$$

\begin{example} \label{meta-disjunction0}
In classical logic, $F\parallel G\turnstile F\lor G$ for any formulas
$F$, $G$ by \ref{connectives vs meta-connectives}; but the converse,
$F\lor G\turnstile F\parallel G$, does not hold already in the zero-order case.

Indeed, let $V$ be a Leibniz--Euler model of
classical logic with $X=\{0,1\}$, and let $\pval$ be a valuation such that
$\pval(\fm p)=\{0\}$ and $\pval(\fm q)=\{1\}$.
Then $|\fm p\lor\fm q|_V^\pval=\{0,1\}$, so $|\fm p\lor\fm q|_{V_+}^\pval=\Top$; whereas
$|\fm p|_V^\pval=\{0\}$ and $|\fm q|_V^\pval=\{1\}$, so $|\fm p|_{V_+}^\pval=|\fm q|_{V_+}^\pval=\Bot$,
and consequently $|\fm p\parallel\fm q|_{V_+}^\pval=\Bot$.
Thus $\fm p\lor\fm q\imp\fm p\parallel\fm q$ is not valid in $V$, and consequently is not derivable
in classical logic.

Let us note that $F\lor G\imp F\parallel G$ is clearly valid for any formulas $F$, $G$
in every two-valued model of classical logic.
\end{example}

\begin{example} \label{meta-disjunction1}
The previous example (\ref{meta-disjunction0}) can be improved:
$\mc{2}F\lor G\turnstile\mc2(F\parallel G)$ does not hold already in zero-order classical logic.

Indeed, let $V$ be a Leibniz--Euler model of
classical logic with $X=\{0,1\}$, and let $\pval$ be a valuation such that $\pval(\fm p)=\{0\}$.
Then $|\neg\fm p|_V^\pval=\{1\}$, so $|\fm p|_{V_+}^\pval=|\neg\fm p|_{V_+}^\pval=\Bot$,
and consequently $|\fm p\parallel\neg\fm p|_{V_+}^\pval=\Bot$.
Thus $\mc2(\fm p\parallel\neg\fm p)$ is not valid in $V$, and consequently is not derivable
in classical logic.
On the other hand, $\mc2\fm p\lor\neg\fm p$ is the principle of excluded middle, which is certainly
derivable in classical logic.
\end{example}

\begin{example} \label{meta-implication0}
In classical logic, $F\to G\turnstile F\imp G$ for any formulas
$F$, $G$ by the {\it modus ponens} rule; but the converse,
$F\imp G\turnstile F\to G$, does not hold already in the zero-order case.

Indeed, let $V$ be a Leibniz--Euler model of
classical logic with $X=\{0,1\}$, and let $\pval$ be a valuation such that
$\pval(\fm p)=\{0\}$.
Then $|\fm p\to\clbot|_V^\pval=\{1\}$, so $|\fm p\to\clbot|_{V_+}^\pval=\Bot$; whereas
$|\fm p|_V^\pval=\{0\}$ and $|\clbot|_V^\pval=\{\emptyset\}$, so
$|\fm p|_{V_+}^\pval=|\clbot|_{V_+}^\pval=\Bot$, and consequently $|\fm p\imp\clbot|_{V_+}^\pval=\Top$.
Thus $(\fm p\imp\clbot)\imp\fm p\to\clbot$ is not valid in $V$, and consequently is not derivable
in classical logic.

Let us note that $(F\imp G)\imp F\to G$ is clearly valid for any formulas $F$, $G$
in every two-valued model of classical logic.
\end{example}

\begin{example} \label{meta-implication1}
A rule $F\,/\,G$ need not imply the principle $\prin F\to G$ in zero-order classical
logic with language extended by a propositional constant $\frak c$.

Indeed, similarly to the previous example (\ref{meta-implication0}), $\Turnstile_V \frak c\imp\bot$
does not imply $\Turnstile_V \frak c\to\bot$
for a Leibniz--Euler model $V$ with $X_V=\{0,1\}$ and $|\frak c|_V=\{0\}$.
\end{example}

\begin{remark} \label{meta-implication2}
A rule $F\,/\,G$ implies the principle $\prin F\to G$ in zero-order classical logic (with no
propositional constants added to the language).
Indeed, $\turnstile F\,/\,G$ implies $\turnstile\prin F\to G$ by the deduction theorem,
and therefore $(F\,/\,G)$ implies $\prin F\to G$ by \ref{rule implication}.
\end{remark}

\begin{example} \label{meta-implication3}
$\turnstile\mc{2}(F\imp G)\imp\mc{2}F\to G$ need not imply
$\turnstile\mc{2}\big((F\imp G)\imp F\to G)$ already in zero-order classical logic (with no propositional 
constants added to the language).

Indeed, let $F=\fm p$ and $G=\clbot$.
Then the former meta-formula is derivable by Remark \ref{meta-implication1}; whereas the latter one is
not derivable by Example \ref{meta-implication0}.
\end{example}

\begin{example} \label{cbot2}
If $\Ru$ and $\Su$ are rules, and the judgement $\turnstile \Ru$ implies $\turnstile \Su$, then
the rule $\Ru$ need not imply $\Su$ in zero-order classical logic with language extended by
a propositional constant $\frak c$.

Indeed, let $\Ru=\frak c\,/\,\clbot$ and $\Su=\prin\neg\frak c$ (or, equivalently,
$\Su=\cltop\,/\,\neg\frak c$).
Then $\Ru$ is not derivable (see Example \ref{cbot}), and consequently $\turnstile\Ru$ implies everything,
including $\turnstile\Su$.
On the other hand, $\Ru$ does not imply $\Su$ (see Example \ref{meta-implication1}).
\end{example}

\begin{example}\label{deduction example}
A rule $F\,/\,G$ need not imply the principle $\prin F\to G$ in classical logic, 
even if the formulas $F$ and $G$ are closed.

Indeed, let $F=\exists \tr x\,\fm p(\tr x)\land\exists \tr x\,\neg\fm p(\tr x)$ and $G=\bot$.
We will construct an Euler model by combining the two-valued models of Examples \ref{gen rule} and
\ref{omitting}.

Thus let $M$ and $N$ be the two-valued models of classical logic with $\D_M=\{0,1\}$ and $\D_N=\{*\}$.
Let $V$ be an Euler model with $X=\{\mu,\nu\}$ and $\D=\{0,1\}$.
Given a predicate valuation $\pval_M$ for $M$ and a predicate valuation $\pval_N$ for $N$, let 
$\pval=\pval(\pval_M,\pval_N)$ be the predicate valuation for $V$ defined on each $r$-ary predicate variable $q$ 
by $\mu\in\pval(q)(x_1,\dots,x_r)$ if and only if $\pval_M(q)(x_1,\dots,x_r)=\Top$ and by
$\nu\in\pval(q)(x_1,\dots,x_r)$ if and only if $\pval_N(q)(*,\dots,*)=\Top$.
It follows that for every closed formula $P$,
$\mu\in |P|_V^\pval$ if and only if $P$ is valid in $(M,\pval_M)$, and
$\nu\in |P|_V^\pval$ if and only if $P$ is valid in $(N,\pval_N)$.

It is easy to see that valuations of the form $\pval(\pval_M,\pval_N)$ form a valuation field $K$.
Let us note that these are precisely those valuations $\pval$ for $V$ that satisfy for each $r=1,2,\dots$ and 
each $r$-ary predicate variable $q$ the following alternative: either $\nu\in\pval(q)(x_1,\dots,x_r)$ for all 
$x_1,\dots,x_r\in\{0,1\}$ or $\nu\notin\pval(q)(x_1,\dots,x_r)$ for all $x_1,\dots,x_r\in\{0,1\}$.
 
Let $\pval_M$ be a predicate valuation for $M$ such that $\pval_M(\fm p)(0)=\Bot$ and
$\pval_M(\fm p)(1)=\Top$, and let $\pval_N$ be some predicate valuation for $N$.
Let $\pval=\pval(\pval_M,\pval_N)$.

Clearly, $F$ is valid in $(M,\pval_M)$, and consequently $|F|_V^\pval$ contains $\mu$.
Hence $|F\to\bot|_V^\pval$ does not contain $\mu$, and therefore $|F\to\bot|_{V_+}^\pval=\Bot$.
Thus $\prin F\to\clbot$ is not valid in $V$ with respect to the valuation field $K$.

On the other hand, $F$ is clearly not valid in $(N,\pval_N)$ (regardless of the choice of $\pval_N$),
and consequently $|F|_V^\pval$ does not contain $\nu$.
Hence $|F\imp\bot|_{V_+}^\pval=\Top$.
This works regardless of the choice of $\pval_M$, so we get that $F\,/\,\clbot$ is valid in $V$ with
respect to the valuation field $K$.
\end{example}

\subsubsection{Platonist interpretation I (naive)}\label{Frege+}

We now turn to contentual interpretations of the meta-logical notions introduced above, in the case of 
classical object logic.
 
As discussed in \S\ref{universalism}, a natural understanding of classical logic, closely related to 
the Platonist philosophy, is that closed formulas signify propositions --- which are either true or false.
This view is compatible with the usual Tarski-style model theory, as presented in \S\ref{meta-formulas} 
and \S\ref{models}, and extends to give an interpretation of the meta-logic of classical logic.

Namely, a domain $\D$ is fixed, and $\O$ is taken to be a class of propositions, consisting of 
a prescribed class of contentful (e.g.\ mathematical) primitive propositions and of composite propositions, 
which are obtained inductively from primitive ones by using contentual classical connectives and 
quantifiers over arbitrary $\D$-indexed families of propositions.
For instance, primitive propositions could be of the form ``$n=m$'', where $n$ and $m$ are numerals, 
e.g.\ ``$0=1$'' and ``$1=1$''.
In this case composite propositions would include ``$0=1$ or not $0=1$'' and, if $\D$ is the set $\N$ 
of numerals, also ``there exists an $n$ such that not $n=0$''.
Formal classical connectives and quantifiers ($\lor:\1^2\too\1$, $\exists:(\0\too\1)\too\1$, etc.) are 
interpreted straightforwardly by the contentual ones (``or''$\:\O^2\to\O$, 
``exists''$\:\Hom(\D,\O)\to\O$, etc.).

The truth function $\ocf\:\O\to\Qm=\{\Top,\Bot\}$ is defined explicitly on primitive propositions, and 
is extended inductively to composite ones by the usual truth tables (see \S\ref{Tarski truth}).
This determines a contentual interpretation of classical logic and its meta-logic, which we will call 
the {\it Platonist interpretation}.
 
Since we are assuming that each proposition is either true or false, the Platonist interpretation
is incompatible with the specific interpretation of \S\ref{contentual}, although it is compatible with the
preceding sketch there (if ``provability'' in that sketch is taken to be in a complete theory, such as Tarski's
elementary geometry).
 
\begin{example}\label{platonist example}
Let us record the Platonist interpretations of the judgements of admissibility, 
stable admissibility and derivability of a purely logical rule $F\,/\,G$.
Let $\vec\phi=(\phi_1,\dots,\phi_m)$ be the tuple of all predicate variables that occur freely in $F$
or in $G$, and let $\vec r=(r_1,\dots,r_m)$ be the tuple of their arities.
Below $\vec\Phi$ is understood to run over the set $\Fm_{\vec r}:=\Fm_{r_1}\x\dots\x\Fm_{r_m}$ of all tuples 
$(\Phi_1,\dots,\Phi_m)$ such that each $\Phi_i$ is an $r_i$-formula.

\begin{itemize}
\item Derivability: $\forall\D,\O\ \forall\pval\,
(\forall\iass\,|F|^\pval_\iass\to\forall\iass\,|G|^\pval_\iass)$;

\item Stable admissibility: $\forall\vec\Phi\,\forall\D,\O\ 
(\forall\pval,\iass\,|F[\vec\phi/\vec\Phi]|^\pval_\iass
\To\forall\pval,\iass\,|F[\vec\phi/\vec\Phi]|^\pval_\iass)$;

\item Admissibility:
$\forall\vec\Phi\,(\forall\D,\O\ \forall\pval,\iass\,|F[\vec\phi/\vec\Phi]|^\pval_\iass
\To\forall\D,\O\ \forall\pval,\iass\,|F[\vec\phi/\vec\Phi]|^\pval_\iass)$.
\end{itemize}
\end{example}
 
If we identify with each other all true propositions and, separately, all false propositions, 
we will get a two-valued model of classical logic (see \S\ref{two-valued}), which clearly determines 
the validity of formulas and meta-formulas in the Platonist interpretation.
Thus the Platonist interpretation can be seen as a mere rewording of two-valued models.
In particular, it may be argued to be an adequate understanding of classical logic on the grounds that 
classical logic is strongly complete with respect to a two-valued model with countable domain 
(see Theorem \ref{strong completeness thm}).

However, every two-valued model $M$ of classical logic obviously has the property that if some rule 
$F\,/\,G$ is valid in $M$, where $F$ and $G$ are closed formulas, then also the principle $\prin F\to G$ 
is valid in $M$.
This property is not satisfied by Leibniz--Euler models of classical logic (see 
Examples \ref{meta-implication1} and \ref{deduction example}).
Thus $F\,/\,G$ generally does not imply $\prin F\to G$ in classical logic, but this is not detected
by the Platonist interpretation!

\subsubsection{Modified Platonist interpretation} \label{mPlatonist}

It is not hard to amend the Platonist interpretation so as to overcome the said deficiency.

In addition to a domain of discourse $\D$, we fix a ``hidden parameter'' domain $\E$.
Now $\O$ is taken to be a class of predicates on $\E$, consisting of a prescribed class of contentful 
(e.g.\ mathematical) primitive predicates and composite predicates, which are obtained inductively 
from the primitive ones by using contentual classical connectives and quantifiers over arbitrary 
$\D$-indexed families of predicates.
For instance, if $\E$ is the rational plane $\Q\x\Q$, primitive predicates could be of the form 
``$nq=mr$'', where $n$ and $m$ are numerals, e.g.\ ``$3q=2r$''.
In this case composite predicates would include ``$q=2r$ or $2q=r$'' and, if $\D$ is the set $\N$ of 
numerals, also ``there exists an $n$ such that $nq=r$''.
Formal classical connectives and quantifiers are interpreted straightforwardly by the contentual ones.

The value of $\ocf\:\O\to\Qm=\{\Top,\Bot\}$ on a predicate $P\in\O$ will be $\Top$ if and only if
$P(e)$ is true for all $e\in\E$, where ``true'' is defined explicitly for primitive $P\in\O$ and all 
$e\in\E$ and is extended inductively to composite $P\in\O$ and all $e\in E$ by the usual truth tables, 
separately for each $e\in\E$.
This determines a contentual interpretation of classical logic and its meta-logic, which we will call 
the {\it modified Platonist interpretation}.

If we again identify with each other all true propositions and, separately, all false propositions, 
we will get a Leibniz--Euler model of classical logic, which determines the validity of formulas and 
meta-formulas in the modified Platonist interpretation.
Thus the modified Platonist interpretation can be seen as a mere rewording of the Leibniz--Euler models.
In particular, it does detect that $F\,/\,G$ generally does not imply $\prin F\to G$ in classical logic.

\begin{example}\label{platonist example2}
In the notation of Example \ref{platonist example}, the modified Platonist interpretations of the same 
judgements are as follows, where $e$ is understood to run over $\E$:

\begin{itemize}
\item Derivability: $\forall\D,\E,\O\ \forall\pval\,
\big(\forall\iass\,\forall e\,|F|^\pval_\iass(e)\to\forall\iass\,\forall e\,|G|^\pval_\iass(e)\big)$;

\item Stable admissibility: 

$\forall\vec\Phi\,\forall\D,\E,\O\ 
\big(\forall\pval,\iass\,\forall e\,|F[\vec\phi/\vec\Phi]|^\pval_\iass(e)
\To\forall\pval,\iass\,\forall e\,|F[\vec\phi/\vec\Phi]|^\pval_\iass(e)\big)$;

\item Admissibility:

$\forall\vec\Phi\,\big(\forall\D,\E,\O\ \forall\pval,\iass\,\forall e\,|F[\vec\phi/\vec\Phi]|^\pval_\iass(e)
\To\forall\D,\E,\O\ \forall\pval,\iass\,\forall e\,|F[\vec\phi/\vec\Phi]|^\pval_\iass(e)\big)$.
\end{itemize}
\end{example}

\section{Topology, translations and principles}\label{topology}

Our next goal is to naturally arrive at Tarski's topological models of intuitionistic logic
by means of a symbolic analysis of the BHK interpretation.

\subsection{Deriving a model from BHK}\label{weak BHK}

Given a problem $\Gamma$, let $\wn\Gamma$ denote, as in \S\ref{about-bhk}, the proposition
{\it A solution of $\Gamma$ exists}.
In the notation of \S\ref{confusion}, $\wn\Gamma$ can be also expressed as $|\Gamma|\ne\emptyset$,
where $|\Gamma|$ denotes the set of solutions of $\Gamma$.
Validity of propositions of the form $\wn\Gamma$ is understood to be determined, at least to
some extent, by the BHK interpretation (in particular, it should be known from context
for primitive $\Gamma$).
Then the BHK entails, in particular, the following judgements, which involve both intuitionistic
and classical connectives:
\begin{enumerate}
\item $\wn(\Gamma\land\Delta)$ if and only if $\wn\Gamma\land\wn\Delta$;
\item $\wn(\Gamma\lor\Delta)$ if and only if $\wn\Gamma\lor\wn\Delta$;
\item $\wn(\Gamma\to\Delta)$ implies $\wn\Gamma\to\wn\Delta$;
\item $\neg\wn\ab$;
\item $\wn\exists x\,\Gamma(x)$ if and only if $\exists x\,\wn\Gamma(x)$;
\item $\wn\forall x\,\Gamma(x)$ implies $\forall x\wn\Gamma(x)$.
\item $\wn\triv$.
\end{enumerate}
Judgements (1)--(6) are perhaps best understood as immediate consequences of the six assertions in
\S\ref{confusion}, which were in turn obtained directly from the BHK interpretation.
For instance, from $|\Gamma\lor\Delta|=|\Gamma|\sqcup|\Delta|$ we immediately obtain (2), since
$S\sqcup T\ne\emptyset$ if and only if $S\ne\emptyset$ or $T\ne\emptyset$.
On the other hand, since $\triv=\ab\to\ab$ by definition, judgement (7) can be considered as
a special case of either $\wn(\Gamma\to\Gamma)$ or $\wn(\ab\to\Gamma)$.
Both are clearly true on the BHK interpretation: (i) given a solution of $\Gamma$, by doing nothing
to it we get a solution of $\Gamma$, which is a general method --- hence a solution of
$\Gamma\to\Gamma$; (ii) $\ab\to\Gamma$ was already discussed in \S\ref{tautologies}, (\ref{explosion0}).

Let us formulate a theory $T_{BHK}$ over classical logic attempting to capture the meaning of the above
informal judgements.
This will be only a countable theory in the traditional sense, i.e.\ a countable (and recursively enumerable)
set of axioms, not a finite meta-conjunction of formulas and principles.

For each $n$ and each $n$-formula $\Phi$ in the language of intuitionistic logic, let us introduce 
an $n$-ary predicate constant, denoted $\wn\Phi$. 
The following are the axioms of $T_{BHK}$:
\begin{enumerate}
\item $\wn(\Phi\land\Psi)\Tofrom \wn\Phi\land\wn\Psi$;
\item $\wn(\Phi\lor\Psi)\Tofrom \wn\Phi\lor\wn\Psi$;
\item $\wn(\Phi\to\Psi)\To (\wn\Phi\to\wn\Psi)$;
\item $\neg\wn\ab$;
\item $\wn\exists \tr x\, \Phi(\tr x)\Tofrom \exists \tr x\,\wn\Phi(\tr x)$;
\item $\wn\forall \tr x\, \Phi(\tr x)\To \forall \tr x\wn\Phi(\tr x)$;
\item $\wn\triv$.
\end{enumerate}

Suppose that we have a Leibniz--Euler model of $T_{BHK}$ in subsets of a set $X$ with some domain $\D$.
Let us call a subset of $X$ {\it intuitionistic} if it of the form $|\wn\Phi|_\iass$ for some
formula $\Phi$ in the language of intuitionistic logic, and some assignment $\iass$ in $\D$.
Thus $\emptyset$ is intuitionistic by (4), and $X$ is intuitionistic by (7).
The intersection of two intuitionistic sets is a set of the form $|\wn\Phi|_\iass\cap|\wn\Psi|_{\iass'}$.
Here there is no loss of generality in considering only such pairs of formulas $\Phi$, $\Psi$ that the free 
variables of $\Phi$ have no overlap with the free variables of $\Psi$; consequently, we may additionally 
assume that $\iass'=\iass$; we are also free to keep the latter assumption but drop the former one.
Now $|\wn\Phi|_\iass\cap|\wn\Psi|_\iass=|\wn\Phi\land\wn\Psi|_\iass$, which by (1) is the same as
$|\wn(\Phi\land\Psi)|_\iass$, an intuitionistic set.
Similarly by (2), the union of two intuitionistic sets in intuitionistic.
Also (5) implies that unions of some $\D$-indexed families of intuitionistic sets are intuitionistic.

Thus intuitionistic subsets of $X$ are very similar to open subsets in some topology on $X$, except that
they are known to be closed only under some infinite unions.

Then let us try again.
Suppose that we have a Leibniz--Euler model of $T_{BHK}$ in subsets of a set $X$ with some domain $\D$.
Let us call a subset of $X$ {\it open} if it a union of subsets of the form $|\wn\Phi|_\iass$, where 
$\Phi$ is a formula of intuitionistic logic, and $\iass$ is an assignment in $\D$.
Thus the union of any family of open sets is open.
Also, similarly to the above, the intersection of two open sets is open, and $\emptyset$ and $X$ are open.
Thus we get a topology on $X$.

Another apparent deficiency here is that the axiom lists (3) and (6), which provide a minimal formal extract 
from the intrinsically vague BHK clauses for $\to$ and $\forall$, seem to be too weak to yield any 
interesting conclusions as they provide only one-sided constraints on the intuitinistic $\to$ and $\forall$.

One way to strengthen (3) and (6) to something bidirectional is as follows:
\begin{itemize}
\item[(3$'$)] $\big(\wn\Xi\to\wn(\Phi\to\Psi)\big)\Iff \big(\wn\Xi\to(\wn\Phi\to\wn\Psi)\big)$;
\item[(6$'$)] $\big(\wn\Xi\to\wn\forall \tr x\, \Phi(\tr x)\big)\Iff 
\big(\wn\Xi\to\forall \tr x\,\wn\Phi(\tr x)\big)$.
\end{itemize}
Indeed, (3$'$) implies (3) by considering $\Xi=\Phi\to\Psi$, and (6$'$) implies (6) by considering
$\Xi=\forall \tr x\,\Phi(\tr x)$.
Conversely, (3) implies the $\imp$ meta-implication in (3$'$), and (6) implies the $\imp$ 
meta-implication in (6$'$), by using the modus ponens rule of classical logic.
As for the $\when$ meta-implications, they specialize by considering $\Xi=\triv$ and using (7) to
the following partial converses to (3) and (6):
\begin{itemize}
\item[(3$''$)] $\wn\Phi\to\wn\Psi\Imp\wn(\Phi\to\Psi)$;
\item[(6$''$)] $\forall\tr x\,\wn\Phi(\tr x)\Imp\wn\forall\tr x\,\Phi(\tr x)$.
\end{itemize}

\begin{remark}
Let us emphasize that there is nothing in the BHK interpretation that would suggest strengthening
(3) and (6) in this particular way --- although {\it some} strengthening of (3) and (6) is certainly
present in the BHK interpretation.
In the QHC calculus of \cite{M1}, which pretends to capture some formal aspects of the BHK interpretation
along the lines of the present section, the $\when$ meta-implications in (3) and (6) correspond to the rules
$\fm{\wn\gamma\to(\wn\alpha\to\wn\beta)\,/\,\wn\gamma\to\wn(\alpha\to\beta)}$ and
$\fm{\wn\gamma\to\forall\tr x\,\wn\alpha(x)\,/\,\wn\gamma\to\wn\forall\tr x\,\alpha(x)}$, which are
shown to be non-derivable, and even non-admissible in QHC \cite{M2}.
\end{remark}

Let us denote the extension of $T_{BHK}$ by the meta-axiom lists (3$'$) and (6$'$) by $T'_{BHK}$.
(Here (3) and (6) could of course be dropped from $T'_{BHK}$.) 
Then (3$''$) and (6$''$) yield:
\begin{itemize}
\item[(3$'''$)] $T'_{BHK}\turnstile\wn\Phi\to\wn\Psi$ implies $T'_{BHK}\turnstile\wn(\Phi\to\Psi)$;
\item[(6$'''$)] $T'_{BHK}\turnstile\forall\tr x\,\wn\Phi(\tr x)$ implies 
$T'_{BHK}\turnstile\wn\forall\tr x\,\Phi(\tr x)$.
\end{itemize}

Now let us assume that our Leibniz--Euler model of $T_{BHK}$ happens to satisfy also the meta-axioms 
of $T'_{BHK}$. (One way to achieve this is to consider the theory $T''_{BHK}$ consisting of all
formulas that are consequences of $T'_{BHK}$, and consider a Leibniz--Euler model of $T''_{BHK}$
along with a valuation with respect to which $T''_{BHK}$ is complete, cf.\ \cite{RS}*{VIII.3.3, II.8.1}.
This model with valuation will necessarily satisfy all admissible, and not only derivable, rules 
of $T'_{BHK}$.)
Then (3$'$) guarantees that $|\wn(\Phi\to\Psi)|_\iass$ is the largest open set contained in 
$|\wn\Phi\to\wn\Psi|_\iass$, and (6$'$) guarantees that $|\wn\forall \tr x\,\Phi(\tr x)|_\iass$ 
is the largest open set contained in $|\forall \tr x\,\wn\Phi(\tr x)|_\iass$.
Symbolically,
$|\wn(\Phi\to\Psi)|_\iass=\Int\big((X\but|\wn\Phi|_\iass)\cup|\wn\Psi|_\iass\big)$ and 
$|\wn\forall \tr x\, \Phi(\tr x)|_\iass=\Int\bigcap_{d\in D} |\wn\Phi(d)|_\iass$, 
where $\Int$ denotes topological interior.

Thus we can also go the other way round: given any topological space $X$ and a domain $\D$, we can 
interpret every predicate constant $\wn\alpha$, where $\alpha$ is a problem variable, by an arbitrary
open subset of $X$ (for any given assignment).
Then we can interpret all our predicate constants $\wn\Phi$ using inductively the above interpretations of 
$\wn(\Phi\land\Psi)$, $\wn(\Phi\lor\Psi)$, $\wn(\Phi\to\Psi)$, $\wn\ab$, $\wn\exists\tr x\,\Xi(\tr x)$, 
$\wn\forall\tr x\,\Xi(\tr x)$, $\wn\triv$ in terms of given interpretations of $\wn\Phi$, $\wn\Psi$, 
$\wn\Xi(\tr x)$.
(In particular, each $|\wn\Phi|_\iass$ will be an open subset of $X$.) 
This yields a model of $T'_{BHK}$.

Now, how does $T'_{BHK}$ fit with the official laws and inference rules of intuitionistic logic?
(3) implies $T'_{BHK}\turnstile\wn\Phi\,\mand\,\wn(\Phi\to\Psi)\imp\wn\Psi$, and from (3$'''$) we immediately 
obtain $T'_{BHK}\turnstile\wn(\Phi\to\Phi)$ and $T'_{BHK}\turnstile\wn(\ab\to\Phi)$.
If, in addition to (3$'''$), we take into account (1) and (2), we also get 
$T'_{BHK}\turnstile\wn(\Phi_1\land\Phi_2\to\Phi_i)$ and
$T'_{BHK}\turnstile\wn(\Phi_i\to\Phi_1\lor\Phi_2)$ for each $i=1,2$.
Using (5) and (6), we similarly obtain 
$T'_{BHK}\turnstile\wn\big(\Phi(\tr t)\to\exists\tr x\,\Phi(\tr x)\big)$ and
$T'_{BHK}\turnstile\wn\big(\forall\tr x\,\Phi(\tr x)\to\Phi(\tr t)\big)$.

To verify that 
$T'_{BHK}\turnstile\wn\big(\Xi\to\Phi(\tr x)\big)\imp
\wn\big(\Xi\to\forall\tr x\,\Phi(\tr x)\big)$,
by (3$''$) and (6$'$) it suffices to check that 
$T'_{BHK}\turnstile\wn\big(\Xi\to\Phi(\tr x)\big)\to\big(\wn\Xi\to\forall\tr x\,\wn\Phi(\tr x)\big)$,
which indeed follows from (3).
This takes care of \ref{rule:forall}, and similarly one can deal with \ref{rule:exists}, 
\ref{rule:conjunction} and \ref{rule:disjunction}.
The same approach also works for \ref{rule:composition} and \ref{rule:exponential}.

Thus if we regard each open set $|\wn\Phi|_\iass$ as an interpretation of the intuitionistic formula $\Phi$, 
and not just of the predicate constant $\wn\Phi$, we get a model of intuitionistic logic.

\subsection{Tarski models}\label{Tarski}

Let us summarize in closed terms the models of intuitionistic logic that we have just obtained.
In the zero-order case, they were discovered independently by Stone \cite{Sto}, Tang \cite{Tang}
(see also \S\ref{provability}) and Tarski \cite{Ta2} in the 1930s, with Tarski having also established
completeness of zero-order intuitionistic logic with respect to this class of models.
The models were extended to the first-order case by Mostowski, with completeness established by Rasiowa
and Sikorski (see \S\ref{Tarski completeness}).

We will consider only models of intuitionistic logic, and not of empty theories over it.
Thus there will be no problem constants and function symbols in the language.

We fix an arbitrary topological space $X$ and a domain (i.e., an arbitrary set) $\D$.
Then the set $\O$ consists of all open subsets of $X$, and the function $\ocf:\O\to\{\Top,\Bot\}$ sends 
the entire space $X$ to $\Top$, and each proper open subset of $X$ to $\Bot$.

Thus, to specify a valuation, each problem variable $\alpha$ of arity $n$ must be interpreted by 
a $\D^n$-indexed family of open subsets of $X$ (i.e., by a function $|\alpha|\:\D^n\to\O$).
Upon a choice of a valuation $\pval$, a closed formula will be interpreted by an open subset of $X$, 
and more generally a closed $n$-formula $\Phi$ will be interpreted by a $\D^n$-indexed family of open subsets 
of $X$, to be denoted $|\Phi|$ or in more detail $|\Phi|^{\pval}$, or in still more detail $|\Phi|^\pval_X$.
(The valuation will be suppressed in the notation when it is clear from context.)
Upon a further choice of a variable assignment $\iass$, an arbitrary formula will be interpreted by 
an open subset of $X$, and more generally an arbitrary $n$-formula $\Phi$ will be interpreted by 
a $\D^n$-indexed family of open subsets of $X$, to be denoted $|\Phi|$ or in more detail 
$|\Phi|_\iass$ or in still more detail $|\Phi|^\pval_\iass$.
(The assignment will be suppressed in the notation when it is clear from context.)
Validity of an $n$-formula $\Phi$ in the model, $\Turnstile\Phi$, means that
$|\Phi|_\iass^\pval(t_1,\dots,t_n)=X$ for each tuple $(t_1,\dots,t_n)\in\D^n$ and for an arbitrary 
valuation $\pval$ and assignment $\iass$.

Finally, intuitionistic connectives and quantifiers are interpreted as follows:
\begin{itemize}
\item $|\Phi\lor\Psi|=|\Phi|\cup|\Psi|$;
\item $|\Phi\land\Psi|=|\Phi|\cap|\Psi|$;
\item $|\Phi\to\Psi|=\Int\big((X\but |\Phi|)\cup |\Psi|\big)$;
\item $|\ab|=\emptyset$;
\item $|\exists \tr x\,\Xi(\tr x)|=\bigcup_{d\in D}|\Xi|(d)$;
\item $|\forall \tr x\,\Xi(\tr x)|=\Int\bigcap_{d\in D}|\Xi|(d)$,
\end{itemize}
where $\Phi$ and $\Psi$ are formulas and $\Xi$ is a 1-formula, and $\iass$ and $\pval$ are fixed.
Let us note that $|\Phi\to\Psi|$ is the union of all open sets $U$ such that $U\cap |\Phi|\subset U\cap |\Psi|$.

In particular, we have $|\neg\Phi|=\Int(X\but |\Phi|)=X\but\Cl |\Phi|$ and $|\neg\neg\Phi|=\Int(\Cl|\Phi|)$.
Thus {\it decidable} formulas (i.e.\ $\Phi$ such that $\turnstile\Phi\lor\neg\Phi$) are represented by clopen
(=closed and open) sets; and {\it stable} formulas (i.e.\ $\Phi$ such that $\turnstile\neg\neg\Phi\to\Phi$) 
are represented by regular open sets (i.e.\ sets equal to the interior of their closure).
Stable formulas coincide (by (\ref{triple negation}) and (\ref{double negation}) in \S\ref{tautologies}) 
with those that are equivalent to a negated formula.
Decidable formulas are stable by (\ref{decidable-stable}).
On the other hand, if $\Phi\lor\neg\Phi$ is stable, then $\Phi$ is decidable by (\ref{not-not-LEM}).

Tarski models yield simple and intuitive proofs that many classically valid principles are not derivable 
in intuitionistic logic.
Here are a few examples.

\subsubsection{Constant Domain Principle} \label{constant domain}

The converse to (\ref{q5}), known as the {\it Constant Domain Principle}
(for reasons related to Kripke models):
\FM{\prin\forall \tr x\,\big(\alpha\lor\pi(\tr x)\big)\To\alpha\lor\forall \tr x\,\pi(\tr x)}
is not derivable in intuitionistic logic, as witnessed by the Tarski model with $X=\R$ and $\D=\N$ and 
a valuation with $|\fm\pi|(n)=(-\frac1n,\frac1n)$ and $|\fm\alpha|=\R\but\{0\}$.
A simple modification of this model and valuation, which we purposely leave to
the reader, shows non-derivability of the following Negative Constant Domain Principle:
\FM{\prin\forall \tr x\,\big(\neg\alpha\lor\pi(\tr x)\big)\To\neg\alpha\lor\forall \tr x\,\pi(\tr x).}

In fact, the contrapositive of the Constant Domain Principle is derivable in intuitionistic logic:
\FM{\turnstile\neg\forall \tr x\,\big(\alpha\lor\pi(\tr x)\big)\Tofrom
\neg\big(\alpha\lor\forall \tr x\,\pi(\tr x)\big).}
Indeed, the right hand side is equivalent, by (\ref{implication0''}) and (\ref{q3}),
to $\fm{\neg\forall \tr x\,\big(\neg\alpha\to\pi(\tr x)\big)}$, and the ``$\from$'' implication follows from
(\ref{implication0'}).
(The ``$\to$'' implication is the contrapositive of (\ref{q5}).)

However, the following two-variable generalization of the Constant Domain Principle:
\FM{\prin\forall \tr y\,\forall \tr x\,\big(\rho(\tr y)\lor\pi(\tr x,\tr y)\big)\To
\forall \tr y\,\big(\rho(\tr y)\lor\forall \tr x\,\pi(\tr x,\tr y)\big)}
does not turn into an intuitionistically derivable principle upon taking the contrapositive:
\FM{\prin\neg\forall \tr y\,\big(\rho(\tr y)\lor\forall \tr x\,\pi(\tr x,\tr y)\big)\To
\neg\forall \tr y\,\forall \tr x\,\big(\rho(\tr y)\lor\pi(\tr x,\tr y)\big).}
Indeed, the latter principle is not valid in the Tarski model with $X=\D=\R$, by considering a valuation with
$|\fm\pi|(x,y)=(y-e^x,y+e^x)$ and $|\fm\rho|(y)=\R\but\{y\}$.
We will call the latter principle (i.e., the contrapositive) the {\it Parametric Distributivity Principle}.
It may be regarded as ``dual'' to the Constant Domain Principle in that they have a common generalization
but no common special cases.

Let us note that the Parametric Distributivity Principle can be equivalently reformulated as
\FM{\prin\neg\neg\Big(\forall \tr y\,\forall \tr x\,\big(\rho(\tr y)\lor\pi(\tr x,\tr y)\big)\To
\forall \tr y\,\big(\rho(\tr y)\lor\forall \tr x\,\pi(\tr x,\tr y)\big)\Big)}
using the intuitionistically derivable principles (\ref{double contrapositive}) and (\ref{neg-neg-imp}).
Hence it is a special case of a variant of Kleene's Principle:
\FM{\prin\neg\neg\forall \tr y\,\Big(\forall \tr x\,\big(\rho(\tr y)\lor\pi(\tr x,\tr y)\big)\To
\big(\rho(\tr y)\lor\forall \tr x\,\pi(\tr x,\tr y)\big)\Big)}
Kleene's principle \cite{Kl}*{\S80, Theorem 58(b)(iii)} has $\fm\pi(\tr x)$ instead of $\fm\pi(\tr x,\tr y)$.

\subsubsection{Independence of Premise} \label{Harrop1}

The converse to (\ref{q2}) is known as the principle of {\it Independence of Premise}:
\FM{\prin\big(\delta\to\exists \tr x\,\pi(\tr x)\big)\To\exists \tr x\,\big(\delta\to\pi(\tr x)\big).}
Its contrapositive
\FM{\prin\neg\big(\delta\to\exists \tr x\,\pi(\tr x)\big)\Tofrom
\neg\exists \tr x\,\big(\delta\to\pi(\tr x)\big)}
is derivable in intuitionistic logic, which follows from (\ref{implication1}) and (\ref{quantifiers1}).
By contrast, even if $\fm\delta$ is specialized to $\neg\fm\alpha$, and $\exists$ is ``specialized''
to $\lor$ in the principle of Independence of Premise, the resulting formula, known as
the {\it Kreisel--Putnam Principle}:
\FM{\prin(\neg\alpha\to\beta\lor\gamma)\To (\neg\alpha\to\beta)\lor(\neg\alpha\to\gamma)}
is not derivable in intuitionistic logic.
Indeed, let $X=\R^2$,
\begin{gather*}
|\fm\beta|=\{(x,y)\mid x>0\},\\
|\fm\gamma|=\{(x,y)\mid y>0\}
\end{gather*}
\smallskip
and $|\fm\alpha|=|\neg(\fm\beta\lor\fm\gamma)|$,
so that $|\neg\fm\alpha|=\{(x,y)\mid x>0\lor y>0\}$.
Then $|\neg\fm\alpha\to\fm\beta\lor\fm\gamma|=\R^2$, whereas
\begin{gather*}
|\neg\fm\alpha\to\fm\beta|=\{(x,y)\mid x>0\lor y<0\};\\
|\neg\fm\alpha\to\fm\gamma|=\{(x,y)\mid x<0\lor y>0\}.
\end{gather*}
\smallskip
Hence $|\neg\fm\alpha\to\fm\beta|\cup
|\neg\fm\alpha\to\fm\gamma|=\R^2\but\{(0,0)\}$.

This Tarski model shows also that {\it Harrop's Rule}:
\FM{\frac{\neg\alpha\to\beta\lor\gamma}
{(\neg\alpha\to\beta)\lor(\neg\alpha\to\gamma)}}
is not derivable in intuitionistic logic.
(Alternatively, if it were derivable, then the Kreisel--Putnam principle would also be derivable
by the deduction theorem.)
However, it is admissible, see Theorem \ref{Harrop-admissible} below (hence, in particular, this rule
does not imply the Kreisel--Putnam principle).
The stronger rule:
\FM{\frac{\delta\to\beta\lor\gamma}
{(\delta\to\beta)\lor(\delta\to\gamma)}}
is not even admissible, by considering its special case with $\fm\delta$ substituted by $\fm{\beta\lor\gamma}$.
In this case the meta-quantified premiss is derivable in intuitionistic logic, but the
meta-quantified conclusion, $\prin\fm{(\beta\lor\gamma\to\beta)\lor(\beta\lor\gamma\to\gamma)}$,
is clearly equivalent to the following {\it G\"odel--Dummett principle}:
\FM{\prin(\beta\to\gamma)\lor(\gamma\to\beta),}
which is certainly not derivable in intuitionistic logic (for instance, the previous model in $\R^2$ works to 
show this).
Let us note that, in fact, our specialization of $\fm\delta$ to $\fm{\beta\lor\gamma}$ led to no loss of generality,
since $\fm{\prin(\beta\lor\gamma\to\beta)\lor(\beta\lor\gamma\to\gamma)}$ clearly implies (using the exponential 
law) the quantifier-free independence of premise,
\FM{\prin(\delta\to\beta\lor\gamma)\to(\delta\to\beta)\lor(\delta\to\gamma).}
Thus the latter principle as well as its rule version considered above are each equivalent to the
G\"odel--Dummett principle.
These equivalent principles are strictly stronger than the Kreisel--Putnam principle (see Example
\ref{KP vs GD} below).

A similar argument shows that the following stable Harrop's rule:
\FM{\frac{\neg\neg(\beta\lor\gamma)\to(\beta\lor\gamma),\ \neg\alpha\to\beta\lor\gamma}
{(\neg\alpha\to\beta)\lor(\neg\alpha\to\gamma)}}
is equivalent to the following stable G\"odel--Dummett principle:
\FM{\frac{\neg\neg(\beta\lor\gamma)\to(\beta\lor\gamma)}{(\beta\to\gamma)\lor(\gamma\to\beta)}}
and to the following stable Kreisel--Putnam principle:
\FM{\frac{\neg\neg(\beta\lor\gamma)\to(\beta\lor\gamma)}
{(\neg\alpha\to\beta\lor\gamma)\To (\neg\alpha\to\beta)\lor(\neg\alpha\to\gamma)};}
and, moreover, that $\neg\fm\alpha$ can be replaced by $\fm\delta$ in these stable versions.
In fact, these all are equivalent forms of the original Harrop rule.
Indeed, assuming $\fm{\neg\alpha\to\beta\lor\gamma}$ and writing
$\Phi=\fm{\neg\alpha\land\beta}$ and $\Psi=\fm{\neg\alpha\land\gamma}$, we have
$\fm{\neg\alpha\tofrom\Phi\lor\Psi}$.
Then also $\fm{\neg\neg(\Phi\lor\Psi)\to(\beta\lor\gamma)}$, and hence the stable Harrop rule implies
$\fm{(\neg\alpha\to\Phi)\lor(\neg\alpha\to\Psi)}$, which in turn implies
$\fm{(\neg\alpha\to\beta)\lor(\neg\alpha\to\gamma)}$.

By another argument of the same type, the rule
\FM{\frac{\beta\land\gamma\to\delta}
{(\beta\to\delta)\lor(\gamma\to\delta)}}
implies the G\"odel--Dummett principle $\fm{\prin(\beta\to\gamma)\lor(\gamma\to\beta)}$, which in turn
implies Skolem's principle (see \cite{vP}), the converse of (\ref{dm2}):
\FM{\prin(\beta\land\gamma\to\delta)\to(\beta\to\delta)\lor(\gamma\to\delta);}
thus the three are equivalent.
Let us also note that by ``generalizing'' $\land$ and $\lor$ to $\forall$ and $\exists$,
Skolem's form of the G\"odel--Dummett principle transforms into the converse of (\ref{gq2}):
\FM{\prin(\forall \tr x\, \gamma(\tr x)\to\delta)\To\exists \tr x\,(\gamma(\tr x)\to\delta),}
which is therefore also an independent principle (by considering a two-element domain).

\begin{remark}
We will see in \S\ref{Jankov} that the principle
\FM{\prin(\beta\land\gamma\to\neg\alpha)\to(\beta\to\neg\alpha)\lor(\gamma\to\neg\alpha)}
is strictly stronger than the Kreisel--Putnam principle.
This might seem surprising, since the two principles are clearly equivalent to the rules
\FM{\frac{\neg\neg\delta\to\delta}{(\beta\land\gamma\to\delta)\to(\beta\to\delta)\lor(\gamma\to\delta)}
\qquad\text{and}\qquad
\frac{\neg\neg\delta\to\delta}{(\delta\to\beta\lor\gamma)\to(\delta\to\beta)\lor(\delta\to\gamma)}}
respectively --- even though these rules have the same premiss and equivalent meta-closures of the conclusions.
As we have seen, it is enough to meta-quantify over $\fm\delta$:
\FM{\turnstile\mq{\delta}\,(\beta\land\gamma\to\delta)\to(\beta\to\delta)\lor(\gamma\to\delta)
\Iff\mq{\delta}\,(\delta\to\beta\lor\gamma)\to(\delta\to\beta)\lor(\delta\to\gamma).}
\end{remark}

\subsection{Topological completeness}\label{Tarski completeness}

Rasiowa and Sikorski proved that every consistent first-order theory $\Th$ over intuitionistic logic
is complete with respect to the Tarski model in a certain subspace $X=X_\Th$ of
the Baire space%
\footnote{That is, $\N^\N$ with the product topology. This space is well-known to be homeomorphic
to the set of irrational reals.} 
with $\D=\N$ \cite{RS}*{X.3.2}.%
\footnote{Thus if $\Psi$ is not a theorem of $\Th$, then $\Psi$ is not valid in $X_\Th$.
It is additionally shown in \cite{RS}*{X.3.4 and remark after X.3.5} that if $\neg\Phi$ is not a theorem of 
$\Th$, then $\Phi$ is satisfiable in the Tarski model in a certain subspace $X=X'_\Th$ of 
the Baire space with $\D=\N$.}
In particular, intuitionistic logic is complete with respect to one such Tarski model (in fact, even with 
respect to a fixed valuation), and strongly complete with respect to the class of all such Tarski models 
(see \ref{strong completeness}).
The latter result was improved by Dragalin, who showed that intuitionisitic logic is strongly complete with
respect to the Tarski model with $\D=\N$ and with $X$ the Baire space itself \cite{Dr}%
\footnote{Dragalin's theorem \cite{Dr}*{3.5.2} says that every composite theory (see Remark \ref{Dragalin})
has a Tarski model with $\D=\N$ and with $X$ the Baire space; this trivially implies the assertion 
(see Remark \ref{Dragalin}).} 
(and, in fact, complete even with respect to a fixed valuation in such a model).
It is easy to see that intuitionistic logic is not strongly complete with respect to any Tarski model 
with connected $X$, including Euclidean spaces (see \S\ref{omniscience}).

Originally, Tarski \cite{Ta2} (see also \cite{McT3}; for a sketch, see \cite{SU}*{2.4.1}, and for
simplified proofs see \cite{Sl}, \cite{Krem1}, \cite{Krem2} and references there) proved that if $X$
is a separable metrizable space with no isolated points (for instance, $\R^n$ for any $n>0$), then
zero-order intuitionistic logic is complete with respect to the Tarski model in $X$.
For instance, the Kreisel--Putnam Principle (and, in fact, even Harrop's Rule)
is also disproved by the Tarski model with $X=\R$, by considering a valuation with $|\neg\fm\alpha|=(0,1)$,
$|\fm\beta|=\bigcup_{i=1}^\infty (\frac1{2i+2},\frac1{2i})$ and
$|\fm\gamma|=\bigcup_{i=1}^\infty (\frac1{2i+1},\frac1{2i-1})$.
In general, it should be noted that open subsets of
$\R$ can be quite complicated (see \cite{N}*{\S III.6.9}).

Tarski's result was only recently extended to first-order intuitionistic logic by Kremer \cite{Krem4},
who showed it to be complete with respect to the Tarski model with $\D=\N$ in any {\it zero-dimensional}
separable metrizable space $X$ with no isolated points; and strongly complete with respect to Tarski models 
with $\D=\N$ in open subspaces of $X$.%
\footnote{Kremer's theorem says, in particular, that if $\Gamma$ and $\Delta$ are closed formulas such that
$\Gamma\to\Delta$ is not a theorem of intuitionistic logic, then
$|\Gamma|^\pval\but|\Delta|^\pval\ne\emptyset$ for some valuation $\pval$ in the Tarski model in $X$ with $\D=\N$.
If $\Psi$ is not a theorem of intuitionistic logic, then its universal closure $\Delta$ is also not a theorem, 
and hence $|\triv|^\pval\but|\Delta|^\pval\ne\emptyset$ for some $\pval$, i.e., $|\Delta|^\pval\ne X$, and 
so $\Psi$ is not valid in $X$.
If a rule $\Phi_1,\dots,\Phi_n\,/\Psi$ is not derivable in intuitionistic logic, then $\Gamma\to\Delta$
is not a theorem of intuitionistic logic, where $\Gamma$ is the universal closure of 
$\Phi_1\land\dots\land\Phi_n$ and $\Delta$ is the universal closure of $\Psi$.
Hence $|\Gamma|^\pval\but|\Delta|^\pval\ne\emptyset$ for some valuation $\pval$ in the Tarski model in $X$
with $\D=\N$.
If we let $Y=|\Gamma|^\pval$ and $|\Xi|_\iass^{\pval_Y}=|\Xi|_\iass^{\pval}\cap Y$ for every formula $\Xi$ 
and every assignment $\iass$ in $\D=\N$, then it is easily checked by induction that $\pval_Y$ is 
a valuation in the Tarski model in $Y$ with $\D=\N$; clearly, $|\Gamma|^{\pval_Y}=Y$ and 
$|\Delta|^{\pval_Y}\ne Y$.
Hence $\Phi_1,\dots,\Phi_n$ are valid in $(Y,\pval_Y)$, and $\Psi$ is not valid in $(Y,\pval_Y)$.
Thus $\Phi_1,\dots,\Phi_n\,/\Psi$ is not valid in $Y$.}
These include the Cantor set, the Baire space, and the set of rational numbers.
On the other hand, intuitionistic logic is not complete with respect to the Tarski model with any $\D$ in any 
locally connected space, including open subsets of Euclidean spaces \cite{Krem4}.

\subsubsection{Disjunction property}

Interestingly, if $\Phi$ is a closed formula such that $|\Phi\lor\neg\Phi|^\pval=X$ for some valuation $\pval$
for the Tarski model in a connected space $X$, then either $|\Phi|^\pval=X$ or $|\neg\Phi|^\pval=X$
(since the only clopen subsets of $X$ are $\emptyset$ and $X$).
In other words, for closed formulas $\Phi$ the judgement $\Phi\lor\neg\Phi\Turnstile\Phi\parallel\neg\Phi$ 
is valid in Tarski models with connected $X$ (for instance, for $X=\R$).
It is clearly not valid for any disconnected $X$; in particular, the judgement 
$\Phi\lor\neg\Phi\turnstile\Phi\parallel\neg\Phi$ does not hold in zero-order intuitionistic logic.

However, the corresponding external judgement holds: in zero-order intuitionistic logic,
if $\turnstile\Phi\lor\neg\Phi$, then either $\turnstile\Phi$ or $\turnstile\neg\Phi$.
This is a special case of the following remarkable fact, originally announced by G\"odel
and proved by Gentzen (see also \cite{Kl}*{\S80, Theorem 57(a)},
\cite{RS}*{X.8.1}, \cite{McT3}*{Theorem 4.4}):

\begin{theorem}[Disjunction Property]\label{disjunction property}
In intuitionistic logic, if $\turnstile\Phi\lor\Psi$ then either
$\turnstile\Phi$ or $\turnstile\Psi$, for any closed formulas $\Phi$ and $\Psi$.
\end{theorem}

It is easy to deduce this from the completeness of Tarski models.

The proof employs the following construction, which will also be used in sequel: the Alexandroff
dual cone $C^*W$ over the space $W$ is the union $W\cup\{\hat 0\}$ (as a set) endowed with
the topology whose open sets are as follows: (i) open subsets of $W$; (ii) $W\cup\{\hat 0\}$.

\begin{proof} Suppose that $\turnstile\Phi\lor\Psi$, but $\not\turnstile\Phi$ and $\not\turnstile\Psi$.
Then there are valuations $\pval_X$, $\pval_Y$ for the Tarski models in topological spaces $X$, $Y$ with
domains $\D_X$ and $\D_Y$ such that $|\Phi|^{\pval_X}\ne X$ and $|\Psi|^{\pval_Y}\ne Y$.
Let $\D$ be a set that admits surjections $d_X$ onto $\D_X$ and $d_Y$ onto $\D_Y$. 
Let us define valuations $\pval_X'$, $\pval_Y'$ for the Tarski models in $X$ and $Y$ with domain $\D$ by 
$|\alpha|_\iass^{\pval_X'}=|\alpha|_{d_X\circ\iass}^{\pval_X}$ and
$|\alpha|_\iass^{\pval_Y'}=|\alpha|_{d_Y\circ\iass}^{\pval_Y}$ for every problem variable $\alpha$
and every assignment $\iass$ in $\D$.
Then it is easy to see that $|\Xi|^{\pval_X'}=|\Xi|^{\pval_X}$ and $|\Xi|^{\pval_Y'}=|\Xi|^{\pval_Y}$
for every closed formula $\Xi$. 
In particular, $|\Phi|^{\pval_X'}\ne X$ and $|\Psi|^{\pval_Y'}\ne Y$.

Let $Z:=C^*(X\sqcup Y)$ be the Alexandroff dual cone over the disjoint union $X\sqcup Y$.
Let us define a valuation $\pval_Z$ for the Tarski model in $Z$ with domain $\D$ by 
$|\alpha|_\iass^{\pval_Z}=|\alpha|_\iass^{\pval_X'}\cup|\alpha|_\iass^{\pval_Y'}$ for every problem variable 
$\alpha$ and every assignment $\iass$ in $\D$.
Then it is easy to check by induction (using that $X$ and $Y$ are open in $Z$) that
$|\Xi|_\iass^{\pval_X}=|\Xi|_\iass^{\pval_Z}\cap X$ and $|\Xi|_\iass^{\pval_Y}=|\Xi|_\iass^{\pval_Z}\cap Y$ 
for any formula $\Xi$ and any assignment $\iass$ in $\D$.
In particular, $|\Phi|^{\pval_Z}\not\supset X$ and $|\Psi|^{\pval_Z}\not\supset Y$.

By our hypothesis, $|\Phi\lor\Psi|^{\pval_Z}=Z$.
In particular, either $\hat 0\in|\Phi|^{\pval_Z}$ or $\hat 0\in|\Psi|^{\pval_Z}$.
However, the minimal open set containing $\hat 0$ is the entire space $Z$.
Hence either $|\Phi|^{\pval_Z}=Z$ or $|\Psi|^{\pval_Z}=Z$, which is a contradiction.
\end{proof}

\subsubsection{Harrop's rule}
The Disjunction Property can be regarded as a multiple-conclusion admissible rule in zero-order
intuitionistic logic.
A rather similar argument establishes the admissibility of one genuine (single-conclusion) rule
in first-order intuitionisitc logic:

\begin{theorem} \label{Harrop-admissible} Harrop's rule 
\FM{\frac{\neg\gamma\to\phi\lor\psi}
{(\neg\gamma\to\phi)\lor(\neg\gamma\to\psi)}}
is an admissible rule for intuitionistic logic.
\end{theorem}

See \cite{Ie2} for a direct syntactic proof not appealing to the completeness theorem.

It should be mentioned that Harrop's rule is, in fact, stably admissible \cite{Pru}.

The admissibility of Harrop's rule implies the following generalization of the Disjunction Property:

\begin{corollary} In intuitionistic logic, if $\neg\Gamma\turnstile\Phi\lor\Psi$ then either 
$\neg\Gamma\turnstile\Phi$ or $\neg\Gamma\turnstile\Psi$, for any closed formulas $\Gamma$, $\Phi$ and $\Psi$.
\end{corollary}

\begin{proof} 
The hypothesis along with the Deduction Theorem imply $\turnstile\neg\Gamma\to\Phi\lor\Psi$.
Since Harrop's rule is admissible, we have $\turnstile(\neg\Gamma\to\Phi)\lor(\neg\Gamma\to\Psi)$.
By the Disjunction Property, it follows that either $\turnstile\neg\Gamma\to\Phi$ or
$\turnstile\neg\Gamma\to\Psi$.
This along with the {\it modus ponens} implies the assertion.
\end{proof}

\begin{proof}[Proof of Theorem \ref{Harrop-admissible}] 
We will prove the admissibility of the following rule, which as shown in \S\ref{Harrop1}
is equivalent to Harrop's rule:
\FM{\frac{\neg\neg(\gamma\lor\delta)\to(\gamma\lor\delta)}{(\gamma\to\delta)\lor(\delta\to\gamma)}.}
Suppose that $\turnstile\neg\neg(\Gamma\lor\Delta)\to(\Gamma\lor\Delta)$ and
$\not\turnstile (\Gamma\to\Delta)\lor(\Delta\to\Gamma)$ for some formulas $\Gamma$ and $\Delta$.
Then by Tarski's completeness theorem, there exist a space $X$ and a domain $\D$ such that
$|(\Gamma\to\Delta)\lor(\Delta\to\Gamma)|_{\iass_0}^{\pval_X}\ne X$ for some assignment $\iass_0$ in $\D$
and some valuation $\pval_X$ in the Tarski model in $X$ with domain $\D$.
Then, in particular, $|\Gamma|_{\iass_0}^{\pval_X}\not\subset|\Delta|_{\iass_0}^{\pval_X}$ and 
$|\Delta|_{\iass_0}^{\pval_X}\not\subset|\Gamma|_{\iass_0}^{\pval_X}$.

Let us define a valuation $\pval_Y$ in the Tarski model in 
$Y:=|\Gamma\lor\Delta|_{\iass_0}^{\pval_X}$ with domain $\D$ by 
$|\alpha|_{\iass}^{\pval_Y}=|\alpha|_{\iass}^{\pval_X}\cap Y$ for every problem variable $\alpha$ and 
every assignment $\iass$ in $\D$.
Then $|\Xi|_{\iass}^{\pval_Y}=|\Xi|_{\iass}^{\pval_X}\cap Y$ for every formula $\Xi$ and every assignment $\iass$
(using that $Y$ is open in $X$).
In particular, $|\Gamma\lor\Delta|_{\iass_0}^{\pval_Y}=Y$, and neither of 
$|\Gamma|_{\iass_0}^{\pval_Y}$, $|\Delta|_{\iass_0}^{\pval_Y}$ lies in the other one.

Let $Z$ be the Alexandroff dual cone $C^*Y=Y\cup\{\hat 0\}$.
Let us define a valuation $\pval_Z$ in the Tarski model in $Z$ with domain $\D$ by
$|\alpha|_{\iass}^{\pval_Z}=|\alpha|_{\iass}^{\pval_Y}$ for every problem variable $\alpha$ and every assignment 
$\iass$ in $\D$.
Then  $|\Xi|_{\iass}^{\pval_Y}=|\Xi|_{\iass}^{\pval_Z}\cap Z$ for every formula $\Xi$ and every assignment $\iass$ 
(using that $Y$ is open in $Z$).
In particular, neither of $|\Gamma|_{\iass_0}^{\pval_Z}$, $|\Delta|_{\iass_0}^{\pval_Z}$ lies in the other one.
Also, $|\Gamma\lor\Delta|_{\iass_0}^{\pval_Z}$ contains $Y$, and consequently
$|\neg\neg(\Gamma\lor\Delta)|_{\iass_0}^{\pval_Z}=Z$ (since $\Cl Y=Z$).
But then our hypothesis $\turnstile\neg\neg(\Gamma\lor\Delta)\to(\Gamma\lor\Delta)$ implies
that $|\Gamma\lor\Delta|_{\iass_0}^{\pval_Z}=Z$.

In particular, either $\hat 0\in|\Gamma|_{\iass_0}^{\pval_Z}$ or $\hat 0\in|\Delta|_{\iass_0}^{\pval_Z}$.
However, the minimal open set containing $\hat 0$ is the entire space $Z$.
Thus either $|\Gamma|_{\iass_0}^{\pval_Z}=Z$ or $|\Delta|_{\iass_0}^{\pval_Z}=Z$, which is a contradiction.
\end{proof}

\subsubsection{Alexandroff spaces}

McKinsey and Tarski also proved that zero-order intuitionistic logic is complete with respect to
the class of Tarski models with finite $X$.
Finite topological spaces are included in the class of {\it Alexandroff spaces}, where
the intersection of any (possibly infinite) family of open subsets of $X$ is open.
The relation $x\in\Cl\,\{y\}$ on the points $x$, $y$ of an Alexandroff
space is a preorder, i.e.\ it is reflexive and transitive.
Conversely, any preordered set $P$ can be endowed with its Alexandroff topology, where a subset of $P$
is defined to be closed if it is an order ideal.
This gives a one-to-one correspondence between Alexandroff spaces and preordered sets, under which $T_0$
spaces%
\footnote{That is, spaces where at least one out of any two distinct points is contained in an open set
not containing the other point.}
correspond precisely to posets.

In the zero-order case, Tarski models in Alexandroff spaces with fixed valuations are also known 
(in somewhat different terms) as Kripke models.
In general, Tarski models in Alexandroff spaces with fixed valuations are the same as 
``Kripke models with constant domain''.
(See \ref{Kripke} concerning the case of ``non-constant domain'').
First-order intuitionistic logic is not complete with respect to the class of Tarski models in 
Alexandroff spaces: clearly, the Constant Domain Principle holds in all such models.
In fact, this is the only obstacle to completeness: by a result of S. G\"orneman, the extension of
intuitionistic logic by the Constant Domain Principle is complete with respect to Tarski models
in Alexandroff spaces (see \cite{Gab}*{\S3.3, Corollary 8}).

Let us note that if the Alexandroff topology on a poset is $T_1$,%
\footnote{A topological space is called $T_1$ if for every pair of distinct points, each is contained in
an open set not containing the other one; or equivalently if all singleton subsets are closed.}
then it is discrete.
Let us note also that every Alexandroff space $X$ admits a continuous open surjection onto the $T_0$ Alexandroff
space obtained as the quotient of $X$ by the equivalence relation whose equivalence classes are
the minimal nonempty open sets of $X$.

If $K$ is a simplicial complex (or more generally a cell complex or a cone complex,
see \cite{M4}), viewed as a topological space (with the metric topology, see \cite{M5}), let $X(K)$ be
the face poset of $K$, viewed as a $T_0$ Alexandroff space.
By sending a point of $K$ to the minimal simplex (or cell or cone) that contains it, we obtain
a continuous open surjection $K\to X(K)$.
Conversely, given a $T_0$ Alexandroff space $X$, viewed as a poset, we have its order complex,
whose simplices can be arranged into cones of a cone complex $K(X)$ such that $X(K(X))=X$
(see \cite{M4}).
Thus we have a continuous open surjection $K(X)\to X$.
Thus a Tarski model in an Alexandroff space $X$ gives rise to a Tarski model in the polyhedron $K(X)$,
and all completeness results can be transferred accordingly.

%If $X$ is countable, then $K(X)$ is separable - but not complete!!!, and so it is well known that there
%exists a continuous open surjection $\N^\N\to K(X)$, where $\N^\N$ is the Baire space.%
%\footnote{Indeed, one can first construct an open surjection $\N^\N\to\R$, and then use that
%$(\N^\N)^\N$ is homeomorphic to $\N^\N$ to get an open surjection $\N^\N\to\R^\N$.
%Now the preimage of $K(X)\subset\N^\N$ must be closed and hence homeomorphic to $\N^\N$.}

\begin{example}\label{KP vs GD}
The G\"odel--Dummett principle $\fm{\prin(\beta\to\gamma)\lor(\gamma\to\beta)}$ is obviously valid in 
all Alexandroff spaces associated to totally ordered posets.
It is not valid in every Alexandroff space $X$ corresponding to a poset that
contains a pair of non-comparable elements $q,r$ with a lower bound $p$.
Indeed, the smallest open set $U_p$ containing $p$ also contains $q$ and $r$, and consequently $p$ lies
neither in $\Int\big(U_q\cup(X\but U_r)\big)$ (which does not contain $r$) nor in 
$\Int\big(U_r\cup(X\but U_q)\big)$ (which does not contain $q$).

Meanwhile, the Kreisel--Putnam principle
$\fm{\prin(\neg\alpha\to\beta\lor\gamma)\to (\neg\alpha\to\beta)\lor(\neg\alpha\to\gamma)}$
is valid, for instance, in the three-element poset $P$ consisting of a pair of incomparable elements
$q,r$ and their lower bound $p$.
Indeed, $\neg\fm\alpha$ can only be interpreted by a regular open set, and the only regular open sets in $P$ 
are $\emptyset$, $P$, $\{q\}$ and $\{r\}$.
It is easy to see that the Kreisel--Putnam principle is satisfied under any valuation such that
$|\neg\fm\alpha|=|\ab|$ or $|\fm\neg\alpha|=|\triv|$ or
$|\fm\gamma|\subset|\fm\beta|$ or $|\fm\beta|\subset|\fm\gamma|$.
This leaves only the following options: $|\fm\beta|=\{q\}$ and $|\fm\gamma|=\{r\}$ (or vice versa),
with $\neg\fm\alpha$ interpreted by either $\{q\}$ or $\{r\}$.
In each of these cases either $|\neg\fm\alpha|=|\fm\beta|$ or $|\neg\fm\alpha|=|\fm\gamma|$,
and the Kreisel--Putnam principle is again satisfied.

Thus the Kreisel--Putnam principle does not imply the G\"odel--Dummett principle.
\end{example}

\begin{remark} In fact, the Kreisel--Putnam principle holds in every poset $P$ satisfying the
following {\it van Benthem condition} (cf.\ \cite{Hum}*{p.\ 886}):
$$\forall x\in P\ \forall y\,,z\ge x\ \exists u\ge x\
\big(u\le y,z\text{ and }\forall v\ge u\ \exists w\ge v\ (w\ge y\text{ or } w\ge z)\big).$$
Indeed, suppose that $P$ satisfies this condition, but the Tarski model in $P$ does not
satisfy the Kreisel--Putnam principle.
Then $$\Int(\Cl(A)\cup B\cup C)\not\subset\Int(\Cl(A)\cup B)\cup\Int(\Cl(A)\cup C)$$
for some open subsets $A$, $B$, $C$ of $P$.
So there exists an $x\in\Int(\Cl(A)\cup B\cup C)$ such that
$x\notin\Int(\Cl(A)\cup B)$ and $x\notin\Int(\Cl(A)\cup C)$.
Then the smallest open set $U_x$ containing $x$ contains some $y\notin\Cl(A)\cup B$
and some $z\notin\Cl(A)\cup C$.
Thus $y,z\ge x$ and $y,z\notin\Cl A$; also, $y\notin B$ and $z\notin C$.
Let $u$ be given by the van Benthem condition.
Then $u\notin B$, $u\notin C$, so $u\notin B\cup C$.
Also, $u\in \Cl(A)\cup B\cup C$ since $u\ge x$.
Hence $u\in\Cl A$.
Then there exists a $v\ge u$ such that $v\in A$.
If $w$ is given by the van Benthem condition, then $w\in A$ and either $w\ge y$ or $w\ge z$.
Then either $y\in\Cl A$ or $z\in\Cl A$, which is a contradiction.
\end{remark}

\begin{remark}
Iemhoff \cite{Ie}*{3.14} characterized admissible rules of zero-order intuitionistic logic as
those rules that are valid in the Tarski model in every $T_0$ Alexandroff space whose
corresponding poset has a smallest element and a ``tight'' lower bound for every finite subset $Y$,
that is, a lower bound $z$ such that every $x>z$ satisfies $x\ge y$ for some $y\in Y$.
(It should be noted here that the admissibility of a rule $\Phi/\Psi$ means not just that 
$\turnstile\Phi$ implies $\turnstile\Psi$, but that $\turnstile\Phi'$ implies $\turnstile\Psi'$ for every 
special case $\Phi'/\Psi'$ of $\Psi/\Psi$; whereas the validity of $\Phi/\Psi$ in a class of models means 
just that $\Turnstile_\pval\Phi$ implies $\Turnstile_\pval\Psi$ for all valuations $\pval$ in all those models.
But, of course, the validity of $\Phi/\Psi$ in a class of models implies the validity of every special case 
$\Phi'/\Psi'$ of $\Psi/\Psi$ in the same class of models, due to the quantification over all valuations.
It is essential here that valuations are not included in our definition of a ``model''.)

Iemhoff also gave another characterization of admissible rules of zero-order intuitionistic logic.
Let us define several operations on Tarski models-with-valuations $(X,\pval)$: ($O_1$) ``restrict'' a given 
$(X,\pval)$ by reducing $X$ to the smallest open neighborhood of some point;
($O_2$) extend a given $(X,\pval)$ in any way over the Alexandroff dual cone $C^*X$;
($O_3$) combine a given collection of $(X_i,\pval_i)$ by taking the disjoint union of $X_i$.
(All these operations are used in the proofs of \ref{disjunction property}, \ref{Harrop-admissible} above.)
Then a rule is admissible for zero-order intuitionistic logic if and only if it is valid in every Tarski 
model-with-valuation obtained by applying $O_3$ to any family $F$ of Tarski models-with-valuation 
in finite $T_0$ spaces such that $F$ is in the image of $O_2$, and $F$ is closed under $O_1$ and $O_2\circ O_3$ 
\cite{Ie}*{3.15}.
\end{remark}

\subsection{Jankov's principle}\label{Jankov}

A striking omission in our list of intuitionistic validities in \S\ref{tautologies} is
the converse to one the de Morgan laws (\ref{deMorgan2}):
\FM{\prin\neg(\gamma\land\delta)\To\neg\gamma\lor\neg\delta\text.
\tag{JP}}

To see that this {\it Jankov's principle} is not derivable in intuitionistic logic, it suffices to substitute
a stable but not decidable formula $\Phi$ (e.g.\ $\Phi=\neg\fm\alpha$) for $\fm\gamma$ and 
$\neg\Phi$ for $\fm\delta$.
Jankov's principle (also known as ``de Morgan's principle'' or the ``weak principle of excluded middle'')
may have various appearances:

\formulas
\begin{proposition}\label{Jankov's logic} The following are equivalent to (JP):
\begin{enumerate}
\item $\prin\neg\neg(\alpha\lor\beta)\Tofrom\neg\neg\alpha\lor\neg\neg\beta$;
\item $\prin\neg\alpha\lor\neg\neg\alpha$;
\smallskip
\item $\dfrac{\neg\neg\beta\to\beta}{\beta\lor\neg\beta}$.
\end{enumerate}
\end{proposition}

Let us note that the admissibility of the rule (3) amounts to the assertion that stable formulas are decidable
(which of course does not hold in intuitionisitc logic).

\begin{proof} To obtain (1) from (JP), we substitute $\neg\alpha$ for $\gamma$ and $\neg\beta$ for $\delta$, 
and use the contrapositive of the other de Morgan tautology (\ref{deMorgan1}),
$\turnstile\neg(\alpha\lor\beta)\tofrom\neg\alpha\land\neg\beta$.

To obtain (2) from (1), we apply the latter with $\neg\alpha$ susbstituted for $\beta$, and use 
(\ref{triple negation}) and (\ref{not-not-LEM}).

Next, from (\ref{implication-exp}) and (\ref{neg-neg-drop}) we get
$\turnstile\neg(\gamma\land\delta)\To(\neg\neg\gamma\to\neg\delta)$, and it follows that
$\turnstile\neg(\gamma\land\delta)\To(\neg\gamma\lor\neg\neg\gamma\to\neg\gamma\lor\neg\delta)$.
Upon a double application of the exponential law (\ref{exponential}), we get
$\turnstile\neg\gamma\lor\neg\neg\gamma\to\big(\neg(\gamma\land\delta)\to\neg\gamma\lor\neg\delta)\big)$,
so that (2) implies (JP).

Finally, applying the rule (3) with $\neg\alpha$ substituted for $\beta$, we get (2). Conversely,
we trivially have $\turnstile (\neg\alpha\tofrom\beta)\To(\neg\alpha\lor\neg\neg\alpha\to\beta\lor\neg\beta)$.
The exponential law transforms this validity into
$\turnstile\neg\alpha\lor\neg\neg\alpha\To\big((\neg\alpha\tofrom\beta)\to\beta\lor\neg\beta\big)$.
Applying the latter with $\neg\beta$ substituted for $\alpha$, we get that (2) implies (3).
\end{proof}
\metameta

Jankov's principle is satisfied in all {\it extremally disconnected} spaces, that is, spaces where
the closure of every open set is open (or equivalently, every regular open set is clopen).
All extremally disconnected metrizable spaces are discrete (see
\cite{En}*{Exer.\ 6.2.G(a)}), and in a certain model of ZF, all extremally disconnected compact
Hausdorff spaces are finite \cite{Mor}.
However, in ZFC, a well-known example of an extremally disconnected compact Hausdorff space is
the Stone--\v Cech compactification of any infinite discrete space \cite{En}.

On the other hand, the Alexandroff space corresponding to a poset with a greatest element
is always extremally disconnected.
In general, the Alexandroff space corresponding to a poset $P$ is extremally disconnected if and only if
``principal filters of $P$ are directed sets'', that is, every pair of elements of $P$ that has 
a lower bound also has an upper bound.
Indeed, if $U$ is an open subset of $P$ whose closure is not open, then some $z\in\Cl U$ is
bounded above by some $y\notin\Cl U$ (since $\Cl U$ is not open).
Also $z\notin U$ (since $U$ is open), but it is bounded above also by some $x\in U$ (since $z\in \Cl U$).
However, if $u$ is an upper bound of $x$ and $y$, then $u\in U$ (since $x\in U$) and
therefore $y\in\Cl U$, which is a contradiction.
Conversely, if $x,y\in P$ have a lower bound $z$ but no upper bound, then the minimal open set $U_x$
containing $x$ has a non-open closure (since $\Cl U_x$ contains $z$ but not $y$).

\formulas

The G\"odel--Dummett principle implies Jankov's principle; more precisely,
the latter is a special case of the principle
\[
\prin(\gamma\land\delta\to\neg\alpha)\To(\gamma\to\neg\alpha)\lor(\delta\to\neg\alpha)\text,
\tag{JP$'$}
\]
which is in turn a special case of Skolem's form of the G\"odel--Dummett principle (see \S\ref{Harrop1}).
Jankov's principle does not imply the G\"odel--Dummett principle by considering, for instance,
the four-element poset consisting of a pair of incomparable elements along with their upper and
lower bounds.

On the other hand, Jankov's principle implies the Kreisel--Putnam principle,
\[\prin(\neg\alpha\to\gamma\lor\delta)\To (\neg\alpha\to\gamma)\lor(\neg\alpha\to\delta)\text.\]
Indeed, we clearly have
$\turnstile (\neg\alpha\to\gamma\lor\delta)\To(\neg\alpha\lor\neg\neg\alpha\to\gamma\lor\delta\lor\neg\neg\alpha)$.
Hence by the exponential law, $\prin\neg\alpha\lor\neg\neg\alpha$ implies
$\prin(\neg\alpha\to\gamma\lor\delta)\To\gamma\lor\delta\lor\neg\neg\alpha$, and it remains to
observe that
$\turnstile\gamma\lor\delta\lor\neg\neg\alpha\To
(\gamma\lor\neg\neg\alpha)\lor(\delta\lor\neg\neg\alpha)$ and by (\ref{implication0}),
$\turnstile (\gamma\lor\neg\neg\alpha)\lor(\delta\lor\neg\neg\alpha)\To
(\neg\alpha\to\gamma)\lor(\neg\alpha\to\delta)$.
The Kreisel--Putnam principle does not imply Jankov's principle by considering, for instance,
the three-element poset of Example \ref{KP vs GD}.

By contrast, (JP$'$) is in fact equivalent to Jankov's principle.
Indeed, by the exponential law (JP$'$) is equivalent to
\[\prin\big(\alpha\to\neg(\gamma\land\delta)\big)\To(\alpha\to\neg\gamma)\lor(\alpha\to\neg\delta)\text,\]
where substituting $\neg\beta$ for $\alpha$ leads to no loss of generality due to (\ref{neg-neg-drop}).
Now Jankov's principle implies
$\prin\big(\neg\beta\to\neg(\gamma\land\delta)\big)\To(\neg\beta\to\neg\gamma\lor\neg\delta)$, whereas
the Kreisel--Putnam principle implies
$\prin(\neg\beta\to\neg\gamma\lor\neg\delta)\To(\neg\beta\to\neg\gamma)\lor(\neg\beta\to\neg\delta)$.

\metameta

To summarize, the following strict entailments of principles have been discussed:
\begin{center}
G\"odel--Dummett pr.\ $\turnstile $ \ Jankov's principle \ $\turnstile $ \ Kreisel--Putnam pr.\
$\turnstile $ \ Harrop's rule
\end{center}

\subsubsection{Mints-Citkin rule and Rose's formula} \label{Mints-Citkin}

\formulas
Harrop's rule is a consequence of the following Mints--Citkin rule \cite{Ci}
\[\frac{(\lambda\to\mu)\to\phi\lor\psi}{\big((\lambda\to\mu)\to\phi\big)\lor
\big((\lambda\to\mu)\to\psi\big)\lor\big((\lambda\to\mu)\to\lambda\big)}\text.
\]
Indeed, the Mints--Citkin rule with $\mu=\ab$ implies Harrop's rule,
since $\turnstile (\neg\lambda\to\lambda)\to(\neg\lambda\to\phi)$ (due to
$\turnstile\neg\lambda\land(\neg\lambda\to\lambda)\land\neg\lambda\to\ab$ and $\turnstile\ab\to\phi$).

Since Harrop's rule is not derivable (see \S\ref{Harrop1}), the Mints--Citkin rule is also not derivable.
But it is known to be admissible (see \cite{Ie}*{proof of Theorem 3.13}).
In fact, a slight generalization of this rule is the first in an infinite list of admissible rules
(the so-called Visser rules) that imply every admissible rule of zero-order intuitionistic logic,
by a celebrated result of P. Rozi\`ere \cite{Roz} and R. Iemhoff \cite{Ie}.

But in contrast to Harrop's rule, the Mints--Citkin rule is not stably admissible
\cite{Ie0}*{Theorem 24}.
Indeed, if it were stably admissible, then the Kreisel--Putnam principle
\[\prin(\neg\alpha\to\beta\lor\gamma)\To (\neg\alpha\to\beta)\lor(\neg\alpha\to\gamma)\text,\]
whose formula happens to be a special case of the premise of the Mints--Citkin rule,
would imply the principle
\[\prin
\big((\neg\alpha\to\beta\lor\gamma)\to(\neg\alpha\to\beta)\big)\lor
\big((\neg\alpha\to\beta\lor\gamma)\to(\neg\alpha\to\gamma)\big)\lor
\big((\neg\alpha\to\beta\lor\gamma)\to\neg\alpha\big)\text.\]
By a well-known theorem of Kreisel and Putnam, the extension of zero-order intuitionistic logic
by the Kreisel--Putnam principle has the Disjunction Property (see \cite{dJ}*{p.\ 3} for a short proof).
Thus the Kreisel--Putnam principle would have to imply either
$\prin(\neg\alpha\to\beta\lor\gamma)\to(\neg\alpha\to\beta)$
or $\prin(\neg\alpha\to\beta\lor\gamma)\to(\neg\alpha\to\gamma)$ or
$\prin(\neg\alpha\to\beta\lor\gamma)\to\neg\alpha$.
However, none of these three principles are derivable even in classical logic, in which 
the Kreisel--Putnam principle is certainly derivable.
\metameta

\formulas
It follows from Proposition \ref{Jankov's logic}, (3)$\imp$(2), that the principle
$\prin(\neg\neg\alpha\to\alpha)\to\alpha\lor\neg\alpha$ implies 
$\prin\neg\alpha\lor\neg\neg\alpha$.
In contrast, the Lemmon--Scott rule
\[
\frac{(\neg\neg\alpha\to\alpha)\to\alpha\lor\neg\alpha}{\neg\alpha\lor\neg\neg\alpha}
\]
\metameta
is not derivable.
Indeed, let us consider the Tarski model with $X=\R\x(\R_+\cup\R_-)\cup\{(0,0)\}$, where 
$\R_+=\{x\in\R\mid x>0\}$ and $\R_-=\{x\in\R\mid x<0\}$ and a valuation with
$|\fm\alpha|=(\R_+\cup\R_-)\x\R_+$.
Then $|\neg\fm\alpha|=\R\x\R_-$ and $|\neg\neg\fm\alpha|=\R\x\R_+$, so that
$|\neg\fm\alpha\lor\neg\neg\fm\alpha|=\R\x(\R_+\cup\R_-)=X\but\{(0,0)\}$.
On the other hand, $|\fm\alpha\lor\neg\fm\alpha|=\R\x\R_-\cup(\R_+\x\R_-)\x\R_+=|\neg\neg\fm\alpha\to\fm\alpha|$,
so that $|(\neg\neg\fm\alpha\to\fm\alpha)\to\fm\alpha\lor\neg\fm\alpha|=X$.
(See \cite{Ry}*{3.5.21} for a model in an Alexandroff space.)

\begin{remark} Our example above works to show non-derivability of the special case of the Lemmon--Scott rule 
with $\fm\alpha$ substituted by $\neg\fm\beta\lor\neg\fm\gamma$, by considering a valuation with
$|\fm\beta|=\R_+\x\R_+\cup\R\x\R_-$ and $|\fm\gamma|=\R_-\x\R_+\cup\R\x\R_-$.
On the other hand, the Lemmon--Scott rule is clearly equivalent to the rule
\FM{\frac{(\neg\neg\alpha\to\alpha)\to\neg\alpha\lor\neg\neg\alpha}{\neg\alpha\lor\neg\neg\alpha}\text.}
Thus our model shows non-derivability of Rose's formula
$\big((\neg\neg\Phi\to\Phi)\to\neg\Phi\lor\neg\neg\Phi\big)\to\neg\Phi\lor\neg\neg\Phi$, where
$\Phi=\neg\fm\beta\lor\neg\fm\gamma$.
(See \cite{N}*{\S III.6.9} for a model in the real line.)
\end{remark}

\begin{remark} It is not essential in the previous examples that $X$ is non-locally compact.
The same effect can be achieved in a Tarski model in $Y=\{(x,y)\in\R^2\mid |y|\ge|x|\}$, by considering 
a valuation with $|\fm\alpha|$, $|\fm\beta|$, $|\fm\gamma|$ as above but intersected with $Y$. 
\end{remark}

\formulas
However, the Lemmon-Scott rule is admissible; in fact, as observed in \cite{Ie3}, it follows easily from a special case of 
the Mints--Citkin rule:
\[\frac{(\neg\neg\alpha\to\alpha)\to\alpha\lor\neg\alpha}{\big((\neg\neg\alpha\to\alpha)\to\alpha\big)\lor
\big((\neg\neg\alpha\to\alpha)\to\neg\alpha\big)\lor
\big((\neg\neg\alpha\to\alpha)\to\neg\neg\alpha\big)}\text.
\]
Indeed, it is not hard to check that the second disjunct of the conclusion implies $\neg\alpha$, whereas 
the first and the third disjuncts each imply $\neg\neg\alpha$.

Let us now sketch an alternative argument that the Mints--Citkin rule is not stably admissible (a variation of
\cite{Ie1}*{proof of Proposition 23}).
If the Mints--Citkin rule were stably admissible, the Scott principle
\[\prin\big((\neg\neg\alpha\to\alpha)\to\alpha\lor\neg\alpha\big)\To\neg\alpha\lor\neg\neg\alpha\text,\]
whose formula happens to be a special case of the premise of the Mints--Citkin rule,
would imply the principle
\[\prin\big(\hat\alpha\to\neg\alpha\big)\lor\big(\hat\alpha\to\neg\neg\alpha\big)\lor
\big(\hat\alpha\to(\neg\neg\alpha\to\alpha)\big)\text,\]
where $\hat\alpha=(\neg\neg\alpha\to\alpha)\to\alpha\lor\neg\alpha$.
By a well-known theorem of Scott, the extension of zero-order intuitionistic logic
by the Scott principle has the Disjunction Property (see \cite{dJ}*{pp.\ 4--5}).
Thus the Scott principle would have to imply either
$\prin\big((\neg\neg\alpha\to\alpha)\to\alpha\lor\neg\alpha\big)\to\neg\alpha$
or $\prin\big((\neg\neg\alpha\to\alpha)\to\alpha\lor\neg\alpha\big)\to\neg\neg\alpha$
or $\prin\big((\neg\neg\alpha\to\alpha)\to\alpha\lor\neg\alpha\big)\to(\neg\neg\alpha\to\alpha)$.
However, the first two of the latter tree principles are not derivable even in classical logic, in which
the Scott principle is certainly derivable; and the third implies $\prin\beta\lor\neg\beta$ by substituting 
$\beta\lor\neg\beta$ for $\alpha$.
However, the Scott principle does not imply $\prin\beta\lor\neg\beta$, since Jankov's principle
$\prin\neg\alpha\lor\neg\neg\alpha$ implies the Scott principle but does not imply $\prin\beta\lor\neg\beta$
(see \S\ref{Jankov}).

\begin{remark} The rule
\[\frac{(\neg\neg\alpha\to\alpha)\to\alpha\lor\neg\alpha}{\alpha\lor\neg\alpha}\]
\vskip3pt
\noindent
and its corresponding principle 
$\prin\big((\neg\neg\alpha\to\alpha)\to\alpha\lor\neg\alpha\big)\to\alpha\lor\neg\alpha$
are equivalent to $\prin\alpha\lor\neg\alpha$.
Indeed, the latter principle clearly implies the former principle, whereas the rule is clearly equivalent to
\[\frac{(\neg\neg\alpha\to\alpha)\to\alpha\lor\neg\alpha}{\neg\neg\alpha\to\alpha}\text,\]
\vskip6pt
\noindent
which implies $\prin\alpha\lor\neg\alpha$ by substituting $\beta\lor\neg\beta$ for $\alpha$.

Moreover, the formula $\big((\neg\neg\alpha\to\alpha)\to\alpha\lor\neg\alpha\big)\to\alpha\lor\neg\alpha$
is equivalent to $\neg\neg\alpha\to\alpha$.
Indeed, it is clearly equivalent to
$\big((\neg\neg\alpha\to\alpha)\to\alpha\lor\neg\alpha\big)\to(\neg\neg\alpha\to\alpha)$,
and it suffices to show that
\[\turnstile\Big(\big((\neg\neg\alpha\to\alpha)\to\alpha\lor\neg\alpha\big)\to(\neg\neg\alpha\to\alpha)\Big)\To
(\neg\neg\alpha\to\alpha)\text.\]
Indeed, we can rewrite this judgement, by the exponential law, as
\[\turnstile\neg\neg\alpha\land\Big(\big((\neg\neg\alpha\to\alpha)\to\alpha\lor\neg\alpha\big)\to
(\neg\neg\alpha\to\alpha)\Big)\To\alpha\text.\tag{$*$}\]
The left hand side of ($*$) implies, using (\ref{co-exponential}),
\[\Big(\neg\neg\alpha\to\big((\neg\neg\alpha\to\alpha)\to\alpha\lor\neg\alpha\big)\Big)\to
(\neg\neg\alpha\to\alpha)\text,\]
which by the exponential law rewrites as
\[\big((\neg\neg\alpha\to\alpha)\to(\neg\neg\alpha\to\alpha\lor\neg\alpha)\big)\to
(\neg\neg\alpha\to\alpha)\text,\]
which in turn is equivalent to
$\neg\neg\alpha\to\alpha$.
On the other hand, the left hand side of ($*$) also implies $\neg\neg\alpha$; thus it implies
$\neg\neg\alpha\land(\neg\neg\alpha\to\alpha)$, which in turn implies the right hand side of ($*$).
\end{remark}

\metameta

\begin{remark} See \cite{Bek} concerning stable admissibility of rules in arithmetic.
\end{remark}

\subsection{Markov's principle}\label{Markov}

\formulas
Similarly missing among intuitionistic laws is the following converse to (\ref{quantifiers2}), which
is sometimes referred to as the {\it Generalized Markov Principle}:
\[
\prin\neg\forall \tr x\,\pi(\tr x)\To\exists \tr x\,\neg \pi(\tr x)\text.
\tag{GMP}
\]
This is of course the ``generalization'' of Jankov's principle above with $\exists$ and $\forall$
in place of $\lor$ and $\land$.
On the other hand, (GMP) can also be ``specialized'' by replacing $\exists$ and $\forall$ with 
$\land$ and $\to$, and the resulting principle $\prin\turnstile\neg (\chi\to\pi)\To(\chi\land\neg\pi)$ 
is also non-derivable.
Indeed, it specializes to $\prin\neg\neg\chi\to\chi$ by substituting $\pi$ with $\ab$.

The stable case of (GMP),
\[\frac{\neg\neg\pi(\tr x)\to\pi(\tr x)}{\neg\forall \tr x\,\pi(\tr x)\To\exists \tr x\,\neg \pi(\tr x)}\text,
\]
is clearly equivalent to the principle 
$\prin\neg\forall \tr x\,\neg\rho(\tr x)\To\exists \tr x\,\neg\neg\rho(\tr x)$, which is in turn equivalent,
via (\ref{quantifiers1}), to the following {\it Strong Markov Principle}
\[
\prin\neg\neg\exists \tr x\,\rho(\tr x)\To\exists \tr x\,\neg\neg\rho(\tr x)
\tag{SMP}
\]
whose converse is the intuitionistic law (\ref{SMP-converse}).
Using (\ref{quantifiers1}) and (\ref{triple negation}), (SMP) can be equivalently
rewritten as
\[
\prin\neg\neg\exists \tr x\,\neg\neg\rho(\tr x)\To\exists \tr x\,\neg\neg\rho(\tr x)\text,
\]
which is in turn clearly equivalent to the rule
\[\frac{\neg\neg\rho(\tr x)\to\rho(\tr x)}{\neg\neg\exists\tr x\,\rho(\tr x)\to\exists\tr x\,\rho(\tr x)}\text,
\]
whose admissibility amounts to the assertion that if $\rho(\tr x)$ is stable, then so is 
$\exists \tr x\,\rho(\tr x)$.
Let us note that $\forall \tr x\,\rho(\tr x)$ is always stable if $\rho(\tr x)$ is, due to
the intuitionistic law (\ref{DNS-converse}).

\metameta
It is easy to see that (SMP) holds in all Alexandroff spaces whose corresponding preorder is
a directed set (i.e., every pair of elements has an upper bound).
Nevertheless, Jankov's principle does not imply (SMP) by considering the Stone--\v Cech compactification
of $\N$ (where the union of the regular open sets $\{n\}$, $n\in\N$, is not regular open).
It is not hard to construct a simpler model of this kind that does not depend on the uncountable axiom
of choice (see Proposition \ref{Stone-Cech} below).

\formulas
If $\exists$ is ``specialized'' to $\land$ in (SMP), the resulting principle
$\prin\neg\neg(\chi\land\rho)\To(\chi\land\neg\neg\rho)$ is still non-derivable (indeed, 
it specializes to $\prin\neg\neg\chi\to\chi$ by substituting $\rho$ with $\triv$), but its stable case
$\prin\neg\neg(\neg\psi\land\neg\pi)\To(\neg\psi\land\neg\pi)$ is derivable by (\ref{deMorgan3}) and
(\ref{deMorgan1}).
If $\exists$ is ``specialized'' to $\lor$ in (SMP), the resulting principle
$\prin\neg\neg(\rho_1\lor\rho_2)\To\neg\neg\rho_1\lor\neg\neg\rho_2$ is still non-derivable, and so
is its stable case (since the union of two regular open sets may well fail to be regular open).
However, the decidable case
\[
\frac{\rho_1\lor\neg\rho_1\text,\ \ \rho_2\lor\neg\rho_2}
{\neg\neg(\rho_1\lor\rho_2)\To\neg\neg\rho_1\lor\neg\neg\rho_2}
\]
is derivable, since the premise implies $(\rho_1\lor\rho_2)\lor\neg(\rho_1\lor\rho_2)$ using (\ref{deMorgan1}), 
which in turn implies $\neg\neg(\rho_1\lor\rho_2)\To\rho_1\lor\rho_2$ by (\ref{decidable-stable}).

The following {\it Strong Markov Rule} asserts that (SMP)
holds for all decidable $\rho(\tr x)$:
\[
\frac{\rho(\tr x)\lor\neg\rho(\tr x)}
{\neg\neg\exists \tr x\,\rho(\tr x)\To\exists \tr x\,\rho(\tr x)}\text;
\tag{SMR}
\]
or equivalently that (GMP) holds for all decidable $\pi(\tr x)$:
\[
\frac{\pi(\tr x)\lor\neg\pi(\tr x)}
{\neg\forall \tr x\,\pi(\tr x)\To\exists \tr x\,\neg\pi(\tr x)}\text.
\tag{SMR$'$}
\]
\metameta
To see that (SMR) is not derivable, let us consider the Tarski model with $\D=\N$ and $X=\N_+$, the one-point
compactification of the countable discrete space $\N$, and a valuation with $|\fm\rho|(n)=\{n\}$.
Then each $|\fm\rho|(n)$ is clopen, but their union is not regular open.
Another useful Tarski model where (SMR) fails has $\D=\N$ and $X=2^\N$, the Cantor set of all functions
$\N\to\{0,1\}$; each subset $|\fm\rho|(n)$ of $2^\N$ consisting of all functions $f$ such that $f(n)=1$
is clopen, but the union of these subsets, $2^\N\but\{(0,0,\dots)\}$, is not regular open.

These examples actually show that even the following {\it Markov Rule} is not derivable:
\FM{\frac{\rho(\tr x)\lor\neg\rho(\tr x),\ \
\neg\neg\exists \tr x\,\rho(\tr x)}{\exists \tr x\,\rho(\tr x)}\text.}
But the latter is strictly weaker than the Strong Markov Rule.
Indeed, let us consider a Tarski model with $\D=\N$ and $X=\N_+\x[0,1]\cup\{\infty\}\x(-1,0]$, and
a valuation with $|\fm\rho|(n)=\{n\}\x[0,1]$.
Again each $|\fm\rho|(n)$ is clopen, and their union is not regular open.
On the other hand, if $\Cl(\bigcup_{i=1}^\infty U_i)=X$, where each $U_i$ is clopen, then at least
one $U_i$ contains a neighborhood of $\{\infty\}\x[-1,1]$, and it follows that in fact $\bigcup U_i=X$.

\begin{theorem} The Markov rule is admissible in intuitionistic logic.
\end{theorem}

\begin{proof}
Suppose that for some 1-formula $\Phi$ we have $\turnstile\Phi(\tr x)\lor\neg\Phi(\tr x)$
and $\turnstile\neg\neg\exists \tr x\,\Phi(\tr x)$, but $\not\turnstile\exists \tr x\,\Phi(\tr x)$.
Then by the completeness theorem, there exists a Tarski model in some space $X$ with some domain $\D$
such that some point $p_X\in X$ is not contained in $|\exists \tr x\,\Phi(\tr x)|^{\pval_X}_{\iass_0}$ for some 
valuation $\pval_X$ and some assignment $\iass_0$.
(Of course, the latter is needed only if $\exists\tr x\,\Phi(\tr x)$ is not a closed formula.) 

Let $Y$ be the Alexandroff space corresponding to the three element poset consisting of a pair of
incomparable elements $p_Y$, $q$ and of their lower bound $r$.
Let $Z$ be the gluing $X\cup_{p_X=p_Y}Y$, that is, the quotient space of the disjoint union $X\sqcup Y$
by the equivalence relation whose only non-singleton class, to be denoted by $p$, consists of $p_X$ and $p_Y$.
We will identify $X$ and $Y$ with their images in $Z$, and in particular from now on we will write $p$
in place of both $p_X$ and $p_Y$.
Let $\pval_Z$ be the valuation for the Tarski model in $Z$ with domain $\D$ defined by 
$|\alpha|^{\pval_Z}_\iass=|\alpha|^{\pval_X}_\iass$ for every problem variable $\alpha$ 
and every assignment $\iass$.
Since $X$ is open in $Z$, it is easy to check by induction that 
$|\Xi|^{\pval_X}_\iass=|\Xi|^{\pval_Z}_\iass\cap X$ for every
formula $\Xi$ and every assignment $\iass$.
In particular, $|\Phi|^{\pval_X}_{\iass_0}(d)=|\Phi|^{\pval_Z}_{\iass_0}(d)\cap X$ for each $d\in\D$.

Then $p\notin|\Phi|^{\pval_Z}_{\iass_0}(d)$ for each $d\in\D$.
Since $Y$ is connected and each $|\Phi|^{\pval_Z}_{\iass_0}(d)$ is clopen (due to our hypothesis that
$\Phi(\tr x)$ is decidable), we also have $q\notin|\Phi|^{\pval_Z}_{\iass_0}(d)$ for each $d\in\D$.
Hence $q\notin|\exists \tr x\,\Phi(\tr x)|^{\pval_Z}_{\iass_0}$.
Since $Z\but\{q\}$ is closed, $q$ also does not belong to
$|\neg\neg\exists\tr x\,\Phi(\tr x)|^{\pval_Z}_{\iass_0}=\Int\Cl|\exists \tr x\,\Phi(\tr x)|^{\pval_Z}_{\iass_0}$.
But this contradicts our hypothesis that $\turnstile\neg\neg\exists \tr x\,\Phi(\tr x)$.
\end{proof}

The following is sometimes called the {\it Markov Principle}:
\FM{\prin\forall \tr x\,\big(\rho(\tr x)\lor\neg\rho(\tr x)\big)\To
\big(\neg\neg\exists \tr x\,\rho(\tr x)\to\exists \tr x\,\rho(\tr x)\big)\text.}
It is strictly stronger than the Markov Rule due to the admissibility of the latter; moreover, it is
strictly stronger than the Strong Markov Rule.
Indeed, let us consider a Tarski model with $\D=\N$ and $X=\N_+\x[0,1]\cup[0,\infty]\x\{0\}$ and
a valuation with $|\fm\rho|(n)=U_n:=\{n\}\x(0,1]$.
Then $\bigcup_{i=1}^\infty U_i=\N\x(0,1]$ and $\Int\Cl(\bigcup_{i=1}^\infty U_i)=\N_+\x(0,1]$, and
consequently
$|\neg\neg\exists\tr x\,\fm\rho(\tr x)\to\exists\tr x\,\fm\rho(\tr x)|=X\but(\{\infty\}\x[0,1])$.
On the other hand, $U_i\cup\Int(X\but U_i)=X\but\{(n,0)\}$, and therefore
$|\forall\tr x\,\big(\fm\rho(\tr x)\lor\neg\fm\rho(\tr x)\big)|=X\but(\N_+\x\{0\})$, which is not a subset of
$X\but(\{\infty\}\x[0,1])$.
Nevertheless, the model satisfies the Strong Markov Rule, since the only clopen subsets of $X$ are $\emptyset$
and $X$.

\begin{remark}
Markov's original formulation of his principle was concerned with the case where $x$ ranges over $\N$.%
\footnote{In some form, Markov's Principle was discussed long before A. A. Markov, Jr.\ (who himself called it
the ``Leningrad Principle''), in particular by P. S. Novikov \cite{N2} and, according to Hilbert
\cite{Hi2}*{p.\ 268}, already by Kronecker.}
A decidable problem $\rho(n)$ is equivalent to the problem of verifying that $\rho_n=1$,
where each $\rho_n\in\{0,1\}$ is defined by $\rho_n=1$ if $\rho(n)$ is soluble,
and $\rho_n=0$ if $\neg\rho(n)$ is soluble.
Under the BHK interpretation (assuming that $\rho$ is decidable), the problem $\exists n\,\rho(n)$ asks
essentially to find a one in the sequence $(\rho_i)$, and the problem $\neg\neg\exists n\,\rho(n)$,
or equivalently $\neg\forall n\,\neg\rho(n)$, asks essentially to prove that there exists no proof
that each entry in the sequence is zero.
Markov's Principle is then asserting that there exists a general method that, given a sequence of zeroes
and ones, and a proof that this sequence cannot be proved to be all $0$'s, produces a $1$ somewhere in
this sequence.
Of course, we are not really told how to find this $1$ by constructive means, so this principle is
not justified by the clarified BHK interpretation.

However, on a very computational reading of the BHK interpretation, Markov's principle is validated.
Indeed, suppose we have a Turing machine that reads a sequence of zeroes and ones until it finds a one.
If this machine continues forever on some sequence, this can (perhaps) be equated with a proof that
the sequence is all zeroes.
Now, given a proof that the machine does not continue forever on some sequence, by actually running it on
this sequence we get a general method of finding a one in the sequence.
In this connection Markov's constructivist school considered Markov's Principle to be constructively
acceptable (cf.\ \cite{TV}).
\end{remark}

\subsubsection{Principle of Omniscience}\label{omniscience}
\formulas
The following {\it Rule of Omniscience}:%
\footnote{This is a form of Bishop's ``Principle of Omniscience'' \cite{Bi}.}
\[
\frac{\pi(\tr x)\lor\neg\pi(\tr x)}{\exists \tr x\,\neg\pi(\tr x)\,\lor\,\forall \tr x\,\pi(\tr x)}
\tag{RO}
\]
implies (SMR$'$) by (\ref{implication0'}).
Using (\ref{decidable-stable}) and (\ref{quantifiers1}), one can equivalently reformulate (RO) 
as
\[\frac{\rho(\tr x)\lor\neg\rho(\tr x)}
{\exists \tr x\,\rho(\tr x)\,\lor\,\neg\exists \tr x\,\rho(\tr x)}\text.\]
(To obtain this from (RO), substitute $\neg\rho(\tr x)$ for $\pi(\tr x)$; and for the converse, substitute
$\neg\pi(\tr x)$ for $\rho(\tr x)$.)

\metameta
From the latter form of (RO) it is clear that its admissibility amounts to the assertion that if 
$\fm\rho(\tr x)$ is decidable, then so is $\exists \tr x\,\fm\rho(\tr x)$.
Also, (RO) is valid in those spaces where arbitrary union of clopen sets is clopen.
These include all Alexandroff spaces and all connected spaces.
In particular, Tarski models in Euclidean spaces satisfy (RO), and so intuitionistic
logic is not strongly complete with respect to this class of models.
On the other hand, (RO) is not valid in a Tarski model in the subset $[-1,0]\cup\{\frac1n\mid n\in\N\}$
of the real line, where (SMR) is valid; indeed, in this space, a union of clopen sets is regular open, but
generally not clopen.

\formulas
The symmetric assertion, that $\forall\tr x\,\rho(\tr x)$ is decidable whenever
$\rho(\tr x)$ is, amounts to the admissibility of what can be called the {\it Weak Rule of Omniscience}:
\[
\frac{\pi(\tr x)\lor\neg\pi(\tr x)}
{\forall \tr x\,\pi(\tr x)\,\lor\,\neg\forall \tr x\,\pi(\tr x)}\text,
\tag{WRO}
\]
because it follows from (RO) by (\ref{quantifiers2}).
\metameta
The converse implication fails by considering the Tarski model in $\N_+$, where
(RO) is not valid (indeed, already (SMR) is not valid), but (WRO) is valid, since the only open set of $\N_+$ 
that is not clopen is $\N$, but it is neither the interior of any larger set nor the intersection of
any family of clopen sets.
On the other hand, (WRO) together with (SMR) obviously entail (RO).
Thus in fact (RO) is equivalent to the meta-conjunction of (WRO) and (SMR):

\[\turnstile\quad\text{(RO) }\Iff\text{ (WRO) }\mand\text{ (SMR)}.\]

\begin{remark}
When $x$ ranges over $\N$, the three principles can be recast under the BHK interpretation in terms
of the Cantor set $2^\N$ of all functions $\N\to\{0,1\}$.
If $\infty$ denotes the constant function $n\mapsto 1$, and
$p$ is understood to range over $2^\N$, then
the three principles assert the existence of  general methods for solving the following problems:

\begin{itemize}
\item Weak Rule of Omniscience:
$p=\infty\lor\neg (p=\infty)$;
\item Strong Markov Rule:
$\neg (p=\infty)\to p\ne\infty$;
\item Rule of Omniscience:
$p=\infty\lor p\ne\infty$,
\end{itemize}
where $p=q$ denotes the problem, {\it Prove that $p(n)=q(n)$ for
all $n\in\N$}, and $p\ne q$ denotes the problem, {\it Find an $n\in\N$
such that $p(n)\ne q(n)$}.
As observed by Escardo \cite{Es2}, there is no loss
of generality in permitting $p$ to range just over the subset
$\N_+\subset 2^\N$ consisting of all non-increasing functions.
(Note that $\N_+=N\cup\{\infty\}$, where $N$ is a copy of $\N$.)
This follows from the existence of a constructive retraction $r$
of $2^\N$ onto $\N_+$, defined by $r(p)(n)=\min\{p(k)\mid k\le n\}$.
\end{remark}

%[Markov = $(\forall \tr x\in\R) \neg(x=0)\ \to\ (\exists \tr y\,\in\R) xy=1$]

\subsection{$\neg\neg$-Translation}\label{negneg}

A translation of classical predicate logic QC
into intuitionistic predicate logic QH was discovered essentially by
Kolmogorov (even though he only had his formalization of a fragment of QH
available at the moment) \cite{Kol0} (see also \cite{Coq}, \cite{UP}, \cite{Pl}).

\begin{theorem}\label{negneg-theorem}
Given a formula $F$ in the language of classical logic, insert the double negation $\neg\neg$
in front of every subformula, and replace $\top$ and $\bot$ by $\triv$ and $\ab$, respectively.
Then the resulting formula $F_{\neg\neg}$ is derivable in intuitionistic logic if and only if 
$F$ is derivable in classical logic.

Moreover, a rule $F_1,\dots,F_k\,/\,G$ is derivable in classical logic if and only if the rule 
$(F_1)_{\neg\neg},\dots,(F_k)_{\neg\neg}\,/\,G_{\neg\neg}$ is derivable in intuitionistic logic.
\end{theorem}

Kolmogorov's translation was later rediscovered by G\"odel and
Gentzen, who gave proofs in the setup of QH and also observed that
prefixing $\land$, $\forall$ and (in the case of Gentzen) $\to$ by
$\neg\neg$ is superfluous:

\formulas
\begin{proposition} \label{Goedel-Gentzen}
The following holds in intuitionistic logic:

(a) $\turnstile\neg\neg\alpha\land\neg\neg\beta\Tofrom
\neg\neg(\neg\neg\alpha\land\neg\neg\beta)$;

(b) $\turnstile (\neg\neg\alpha\to\neg\neg\beta)\Tofrom
\neg\neg(\neg\neg\alpha\to\neg\neg\beta)$;

(c) $\turnstile\forall \tr x\,\neg\neg\alpha(\tr x)\Tofrom
\neg\neg\forall \tr x\,\neg\neg\alpha(\tr x)$.
\end{proposition}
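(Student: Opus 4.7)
The plan is to observe that each equivalence has the form $\phi \iff \neg\neg\phi$ for a specific shape of $\phi$. The direction $\phi \imp \neg\neg\phi$ is an immediate instance of (\ref{double negation}) in all three cases, so everything reduces to establishing $\neg\neg\phi \imp \phi$, i.e., the \emph{stability} of the three forms $\neg\neg\alpha\land\neg\neg\beta$, $\neg\neg\alpha\to\neg\neg\beta$, and $\forall x\,\neg\neg\alpha(x)$. Each of these will follow by reading an existing validity from \S\ref{tautologies} backwards and then collapsing a quadruple negation via (\ref{triple negation}).

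For (a), I would specialize (\ref{neg-neg-and}) by setting $\gamma:=\neg\neg\alpha$ and $\delta:=\neg\neg\beta$ to obtain
\[
\neg\neg\neg\neg\alpha\land\neg\neg\neg\neg\beta\iff\neg\neg(\neg\neg\alpha\land\neg\neg\beta).
\]
Then (\ref{triple negation}) (in the form $\neg\neg\neg\neg\gamma\iff\neg\neg\gamma$, which follows from $\neg\gamma\iff\neg\neg\neg\gamma$ by substitution) rewrites the left-hand side as $\neg\neg\alpha\land\neg\neg\beta$, yielding the required equivalence. For (b), the same maneuver starting from (\ref{neg-neg-imp}) with $\gamma:=\neg\neg\alpha$, $\delta:=\neg\neg\beta$ gives $\neg\neg\neg\neg\alpha\to\neg\neg\neg\neg\beta\iff\neg\neg(\neg\neg\alpha\to\neg\neg\beta)$, and the left-hand side collapses to $\neg\neg\alpha\to\neg\neg\beta$ by congruence using (\ref{triple negation}) on antecedent and consequent.

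For (c), the analogous ingredient is (\ref{DNS-converse}): specializing its $\gamma(x):=\neg\neg\alpha(x)$ gives $\neg\neg\forall x\,\neg\neg\alpha(x)\imp\forall x\,\neg\neg\neg\neg\alpha(x)$, and again (\ref{triple negation}) applied pointwise under the $\forall$ contracts $\neg\neg\neg\neg\alpha(x)$ to $\neg\neg\alpha(x)$. Combined with the $\imp$ instance of (\ref{double negation}) with $\gamma:=\forall x\,\neg\neg\alpha(x)$, this establishes (c).

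None of the three steps should be genuinely obstructive: the only thing to be careful about is the implicit congruence-of-equivalents reasoning used to rewrite subformulas (e.g.\ replacing $\neg\neg\neg\neg\alpha$ by $\neg\neg\alpha$ inside $\to$, $\land$, and $\forall$). In the formal deductive system of \S\ref{logics}, such substitution of provable equivalents is justified directly by rules \ref{rule:modus ponens}, \ref{rule:conjunction}, \ref{rule:composition}, \ref{rule:exponential}, and \ref{rule:generalization} together with (\ref{rule:forall}), so no additional machinery is required.
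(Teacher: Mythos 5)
Your proof is correct, but it takes a genuinely different (though closely parallel) route from the paper's. The paper proves all three parts at once by isolating a single structural fact: negated formulas are closed under $\land$, under $\to$ with negated consequent, and under $\forall$ --- via (\ref{deMorgan1}), (\ref{implication-exp}) and (\ref{quantifiers1}) --- so each of the three compound formulas is itself equivalent to a negation and therefore satisfies $\epsilon\iff\neg\neg\epsilon$ by (\ref{triple negation}). You instead instantiate the $\neg\neg$-distribution laws (\ref{neg-neg-and}), (\ref{neg-neg-imp}) and (\ref{DNS-converse}) at doubly negated arguments and then contract the resulting quadruple negations via (\ref{triple negation}). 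Since your three lemmas are themselves derived in \S\ref{tautologies} from the very facts the paper uses, the two arguments rest on the same underlying de Morgan-type identities; the difference is one of organization. Your version is a direct substitute-and-collapse computation, carried out separately for each connective; the paper's version buys a reusable closure property of stable formulas (the same property that drives the whole $\neg\neg$-translation, as the discussion following the proposition makes explicit). Your remark that the only delicate point is the congruence step --- replacing provable equivalents under $\land$, $\to$ and $\forall$ --- is accurate, and the rules you cite from \S\ref{logics} do justify it.
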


\begin{proof} By (\ref{deMorgan1}) and (\ref{implication-exp}),
if $\gamma$ and $\delta$ are stable, then so are $\gamma\land\delta$
and $\gamma\to\delta$.
Also, by (\ref{quantifiers1}), if $\gamma(\tr x)$ is
stable, then so is $\forall \tr x\,\gamma(\tr x)$.
But by (\ref{triple negation}), $\turnstile\neg\neg\epsilon\tofrom\epsilon$ holds
for stable $\epsilon$.
\end{proof}

\metameta
\begin{proof}[Proof of Theorem \ref{negneg-theorem}]
The ``only if'' assertion is trivial: any derivation of $F_{\neg\neg}$ in intuitionistic logic is
also valid classically, but clearly $F_{\neg\neg}$ is classically equivalent to $F$.
Similarly for rules.

For the converse, we may consider instead of Kolmogorov's interpretation $F\mapsto F_{\neg\neg}$
its (Gentzen's) simplified version described above, to be denoted $F\mapsto F'$.
Then it suffices to check that if $\prin F$ is a law of classical logic, then 
$\prin F'$ is derivable in intuitionistic logic; and if $F_1,\dots,F_k\,/\,G$ is 
an inference rule of classical logic, then $F_1',\dots,F_k'\,/\,G'$
is derivable in intuitionistic logic.

\formulas
For principles and rules that do not involve $\lor$ or $\exists$, this is trivial, by \ref{Goedel-Gentzen}.
From the law $\prin\alpha(\tr x)\to\exists\tr x\,\alpha(\tr x)$ we get that
$\prin\neg\neg\alpha(\tr x)\to\neg\neg\exists\tr x\,\neg\neg\alpha(\tr x)$ is derivable, using that 
$\turnstile\beta\to\neg\neg\beta$.
From the inference rule $\alpha(\tr x)\to\gamma/\,\exists\tr x\,\alpha(x)\to\gamma$ we get that
$\neg\neg\alpha(\tr x)\to\neg\neg\gamma\,/\,\neg\neg\exists\tr x\,\neg\neg\alpha(\tr x)\to\neg\neg\gamma$ 
is derivable, using that $\turnstile\neg\neg(\neg\neg\gamma)\to\neg\neg\gamma$.
The analogous law and inference rule involving $\lor$ are treated similarly.
Finally, $\prin\neg\neg(\neg\neg\alpha\lor\neg(\neg\neg\alpha))$ is derivable, using 
$\turnstile\neg\neg(\gamma\lor\neg\gamma)$.
\end{proof}

As for $\lor$ and $\exists$, let us note that one can eliminate them altogether using the intuitionistic 
de Morgan laws (\ref{deMorgan1}), (\ref{quantifiers1}):
\formulas
\[\turnstile\neg\neg(\alpha\lor\beta)\Tofrom\neg(\neg\alpha\land\neg\beta)\text,\]
\[\turnstile\neg\neg\exists \tr x\,\alpha(\tr x)\Tofrom\neg\forall \tr x\,\neg\alpha(\tr x)\text.\]
\metameta
To summarize, the $\neg\neg$-translation of classical logic into
intuitionistic can be presented as follows:
\begin{itemize}
\item predicate variables are interpreted as problem variables prefixed by $\neg\neg$;
\item $\top$ and $\bot$ are replaced by $\triv$ and $\ab$, respectively;
\item classical $\land$, $\forall$ and $\to$ (and hence $\neg$)
are interpreted as the intuitionistic ones;
\item classical $\lor$ and $\exists$ are interpreted as the
$\neg$-conjugates of intuitionistic $\land$ and $\forall$.
\end{itemize}

Topologically, we have interpreted classical logic as the logic of
regular open sets, where $\land$, $\forall$ and $\to$ (and hence $\neg$)
are interpreted as in Tarski models, whereas $\lor$ and $\exists$
are interpreted via the Tarski interpretations of $\land$ and $\forall$.

In embedding classical logic into intuitionistic in a type theoretic
context it may be essential to avoid the prefixing
of atomic subformulas (see \cite{BR}).
This is achieved by the Kuroda embedding, which places $\neg\neg$ before
the entire formula and also {\it after} each universal quantifier.
It is still equivalent to Kolmogorov's embedding:%
\footnote{Other variants of the $\neg\neg$-translation achieve still more
economy in the use of $\neg$ (see \cite{FO}).}

\formulas
\begin{proposition} \label{Kuroda}
The following holds in intuitionistic logic:

(a) $\turnstile\neg\neg(\alpha\land\beta)\Tofrom
\neg\neg(\neg\neg\alpha\land\neg\neg\beta)$;

(b) $\turnstile\neg\neg(\alpha\lor\beta)\Tofrom
\neg\neg(\neg\neg\alpha\lor\neg\neg\beta)$;

(c) $\turnstile\neg\neg(\alpha\to\beta)\Tofrom
\neg\neg(\neg\neg\alpha\to\neg\neg\beta)$;

(d) $\turnstile\neg\neg\exists \tr x\,\alpha(\tr x)\Tofrom\neg\neg\exists \tr x\,\neg\neg\alpha(\tr x)$.
\end{proposition}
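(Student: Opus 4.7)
The plan is to reduce each of the four equivalences to the validities already collected in \S\ref{tautologies}, using two simple general facts about the double negation operator that are immediate consequences of those validities. First, $\neg\neg$ is \emph{monotonic}: from $\gamma\imp\delta$ one obtains $\neg\delta\to\neg\gamma$ by (\ref{contrapositive}), and applying (\ref{contrapositive}) once more yields $\neg\neg\gamma\imp\neg\neg\delta$. Second, $\neg\neg$ is \emph{idempotent}: substituting $\neg\gamma$ for $\gamma$ in (\ref{triple negation}) gives $\neg\neg\gamma\iff\neg\neg\neg\neg\gamma$.

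Parts (a) and (c) will then be obtained without any recourse to Proposition~\ref{Goedel-Gentzen}, simply by observing that both sides of each equivalence reduce to a common intermediate form. For (a), the left-hand side $\neg\neg(\alpha\land\beta)$ is equivalent to $\neg\neg\alpha\land\neg\neg\beta$ by (\ref{neg-neg-and}); for the right-hand side, (\ref{neg-neg-and}) gives $\neg\neg(\neg\neg\alpha\land\neg\neg\beta)\iff\neg\neg\neg\neg\alpha\land\neg\neg\neg\neg\beta$, which collapses to $\neg\neg\alpha\land\neg\neg\beta$ by idempotence of $\neg\neg$. The argument for (c) is formally identical, using (\ref{neg-neg-imp}) in place of (\ref{neg-neg-and}); both sides reduce to $\neg\neg\alpha\to\neg\neg\beta$.

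For (b) and (d) we cannot proceed quite so mechanically, since (\ref{neg-neg-or}) and (\ref{SMP-converse}) are only one-directional, so both directions must be established separately by applying monotonicity of $\neg\neg$ to a suitable implication. For (b), the forward direction follows from $\alpha\lor\beta\imp\neg\neg\alpha\lor\neg\neg\beta$ (an instance of (\ref{double negation}) inside each disjunct) by monotonicity of $\neg\neg$; the reverse direction follows by applying $\neg\neg$ to (\ref{neg-neg-or}) and then invoking idempotence to collapse the resulting $\neg\neg\neg\neg(\alpha\lor\beta)$ to $\neg\neg(\alpha\lor\beta)$. For (d), the forward direction uses (\ref{double negation}) under the $\exists$, yielding $\exists x\,\alpha(x)\imp\exists x\,\neg\neg\alpha(x)$, and then monotonicity of $\neg\neg$; the reverse direction applies $\neg\neg$ to (\ref{SMP-converse}) and collapses by idempotence.

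I do not expect any real obstacle here: the entire proof is bookkeeping with monotonicity and idempotence of $\neg\neg$ on top of the validities already in hand. The only step worth flagging is the consistent use of the idempotence $\neg\neg\neg\neg\gamma\iff\neg\neg\gamma$, which is what makes it possible to push $\neg\neg$ inside subformulas without penalty and is ultimately the reason Kuroda's economical translation agrees with Kolmogorov's.
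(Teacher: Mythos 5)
Your proof is correct, but it is organized differently from the paper's. The paper gives a single uniform two-step argument for all four parts: using (\ref{deMorgan3}), (\ref{deMorgan1}), (\ref{implication1}) and (\ref{quantifiers1}) it rewrites each left-hand side into a form in which every atomic subformula sits directly under a $\neg$, and then replaces each such $\neg\gamma$ by $\neg\neg\neg\gamma$ via (\ref{triple negation}); reading the same de Morgan-type law backwards then produces the right-hand side. You instead split the cases: for (a) and (c) you reduce both sides to a common normal form using the already-packaged biconditionals (\ref{neg-neg-and}) and (\ref{neg-neg-imp}) together with idempotence $\neg\neg\neg\neg\gamma\iff\neg\neg\gamma$, while for (b) and (d) you argue each direction separately, using monotonicity of $\neg\neg$ (two applications of (\ref{contrapositive})) on top of (\ref{double negation}), (\ref{neg-neg-or}) and (\ref{SMP-converse}). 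Since (\ref{neg-neg-and}), (\ref{neg-neg-imp}) and (\ref{neg-neg-or}) are themselves derived in \S\ref{tautologies} from the very de Morgan laws the paper invokes, the two proofs rest on the same stock of validities; what your version buys is two reusable general lemmas (monotonicity and idempotence of $\neg\neg$) and no need to pass through the negated-atomic normal form, at the cost of losing the uniform treatment of the four connectives. Both arguments tacitly use replacement of equivalent subformulas (e.g.\ passing from $\neg\neg\neg\neg\alpha\iff\neg\neg\alpha$ to $\neg\neg\neg\neg\alpha\land\neg\neg\neg\neg\beta\iff\neg\neg\alpha\land\neg\neg\beta$, and likewise under $\lor$ and $\exists$); this congruence is routine from \ref{rule:composition}, \ref{rule:conjunction}, \ref{rule:disjunction} and \ref{rule:exists}, and the paper's own proof relies on it just as silently, so it is not a gap.
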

\metameta

\begin{proof} Using respectively (\ref{deMorgan3}), (\ref{deMorgan1}),
(\ref{implication1}) and (\ref{quantifiers1}), the left hand sides
can be equivalently rewritten in a form where each atomic subformula
is prefixed by a $\neg$.
In this form, each atomic subformula can be replaced by its double
negation, due to (\ref{triple negation}).
\end{proof}

Of course, instead of placing $\neg\neg$ in front of the entire formula it suffices
(in view of \ref{Goedel-Gentzen}) to put it before every $\exists$ and $\lor$.
This yields an ``essentially local'' version of the translation (similar to Dowek's version \cite{Dow}):
\begin{itemize}
\item prefix every classical $\exists$ and $\lor$ by a $\neg\neg$;
\item postfix every classical $\forall$ and $\land$ by a $\neg\neg$;
\item if the entire formula is a single atom, prefix it by a $\neg\neg$;
\item replace $\top$, $\bot$ by $\triv$, $\ab$.
\end{itemize}
(Here a classical $\land$ really needs to be postfixed by a $\neg\neg$ only when there are
no other types of connectives ``around''.)

\subsubsection{$\neg\neg$-Shift principle}\label{shift}
Prefixing only the entire formula with $\neg\neg$ (and replacing $\top$, $\bot$ by $\triv$, $\ab$), which 
is known as Glivenko's transformation, does not work in the presence of universal quantifiers.
Indeed, let $F$ denote the formula
$\fm{\forall \tr x\,\pi(\tr x)\,\lor\,\exists \tr x\,\neg\pi(\tr x)}$, which we
encountered in the Rule of Omniscience and which is derivable in classical logic.
If Glivenko's transformation were taking classically derivable formulas
to intuitionistically derivable formulas, then $\prin\neg\neg F$, that is,
\formulas
\[\prin\neg\neg\,\big(\forall \tr x\,\pi(\tr x)\,\lor\,\exists \tr x\,\neg\pi(\tr x)\big)\text,\]
would be an intuitionistically derivable principle.
The latter principle is equivalent, via (\ref{deMorgan1}) and (\ref{implication-exp}), to
\[\prin\neg\exists \tr x\,\neg\pi(\tr x)\To\neg\neg\forall \tr x\,\pi(\tr x)\text,\]
which is nothing but the contrapositive of (GMP), Generalized Markov's Principle.
By (\ref{quantifiers1}), the latter principle is in turn equivalent to
the following {\it $\neg\neg$-Shift Principle}:
\[\prin\forall \tr x\,\neg\neg\pi(\tr x)\To\neg\neg\forall \tr x\,\pi(\tr x)\text. \tag{DNS}\]
Note that the converse to the $\neg\neg$-Shift Principle is nothing but
the intuitionistically derivable principle (\ref{DNS-converse}).
If $\forall$ is ``specialized'' to $\land$, the $\neg\neg$-Shift
Principle itself becomes intuitionistically derivable (\ref{neg-neg-and});
and if $\forall$ is ``specialized'' to $\to$, the resulting principle
$\prin(\chi\to\neg\neg\pi)\To\neg\neg (\chi\to\pi)$ is again
intuitionistically derivable (by (\ref{neg-neg-imp}) and (\ref{neg-neg-drop})).
\metameta

To see that the $\neg\neg$-Shift Principle is not derivable, let us consider
the Tarski model with $X=\R$ and $\D=\Q$, and a valuation with $|\fm\pi|(q)=\R\but\{q\}$.
Then $|\forall \tr x\,\neg\neg\fm\pi(\tr x)|=\R$, whereas
$|\neg\neg\forall \tr x\,\fm\pi(\tr x)|=\emptyset$ (indeed, $|\forall\tr x\,\fm\pi(\tr x)|=\emptyset$
since $\R\but\Q$ has empty interior in $\R$).
Another Tarski model where the $\neg\neg$-Shift Principle is not valid can be obtained by observing that 
the poset of integers (ordered in the usual way) with its Alexandroff topology
contains only countably many open sets, whose intersection is empty,
and no regular open sets other than $\emptyset$ and the whole space.

\formulas
By \ref{Goedel-Gentzen}, the $\neg\neg$-Shift Principle
can be equivalently rewritten as
\[\prin\neg\neg\forall \tr x\,\neg\neg\pi(\tr x)\To\neg\neg\forall \tr x\,\pi(\tr x)\text. \tag{DNS$'$}\]
It follows that the $\neg\neg$-Shift Principle holds for stable $\pi(\tr x)$.
In other words, the contrapositive of (SMP), the Strong Markov Principle,
is derivable in intuitionistic logic.

Yet another equivalent formulation of the $\neg\neg$-Shift Principle was discussed in \S\ref{PEM}:
\[\prin\neg\neg\forall \tr x\,(\pi(\tr x)\lor\neg\pi(\tr x))\text. \tag{DNS$''$}
\]
To see that (DNS) implies (DNS$''$) it suffices to note that by (\ref{not-not-LEM}),
$\turnstile\forall \tr x\,\neg\neg(\pi(\tr x)\lor\neg \pi(\tr x))$.
Conversely, (DNS$''$) implies (DNS$'$) using that
$\turnstile\neg\neg\alpha\land(\alpha\lor\neg\alpha)\To\alpha$.

Let us note that the $\neg\neg$-Shift Principle is valid, for instance, in the Tarski model in
the one-point compactification $\N_+$ of the countable discrete space $\N$ with any $\D$.
Indeed, $|\alpha\lor\neg\alpha|$ must be dense in $\N_+$ due to
$\turnstile\neg\neg(\alpha\lor\neg \alpha)$ (see (\ref{not-not-LEM})).
Hence $|\alpha\lor\neg\alpha|$ contains $\N$.
Therefore $|\forall \tr x\,(\pi(\tr x)\lor\neg\pi(\tr x))|$ also contains $\N$, and thus
$|\neg\neg\forall \tr x\,(\pi(\tr x)\lor\neg\pi(\tr x))|=\N_+$.
\metameta

%[DNS proves the translation of classical choice]

\subsection{Provability translation}

\subsubsection{Modal logic QS4}\label{S4}
Suppose that classical connectives and quantifiers are interpreted by
set-theoretic operations on subsets of a set $X$ (as in \S\ref{Euler}),
and that $X$ also happens to be a topological space.
As long as the Leibniz--Euler model is closed under the topological interior operator $\Int$,
this operator can be regarded as interpreting some additional unary connective $\Box$.
In a Leibniz--Euler model, equivalent formulas are always interpreted by the same subset;
thus such a $\Box$ must certainly be well-defined on equivalence classes of formulas, in the sense that if 
$F\tofrom G$ is derivable in classical logic, then $\Box F\tofrom\Box G$ is to be postulated.
This condition along with the usual axioms of a topological space $X$ in terms of interior,
\begin{itemize}
\item $\Int P\subset P$;
\item $\Int P\subset\Int(\Int P)$;
\item $X\subset\Int X$;
\item $\Int P\cap\Int Q=\Int(P\cap Q)$,
\end{itemize}
correspond to the following properties of $\Box$, which suggest the
intended reading of $\Box F$ as ``there exists a proof of $F$''.%
\footnote{It is remarkable that Orlov, who first introduced these axioms
in 1928, did so in order to give a provability explanation of propositional
intuitionistic logic \cite{Or}*{\S\S6,7} (see \cite{M1}*{\S\ref{g1:letters1}}).
His work remained virtually unknown, while the same axioms were rediscovered
in a few years by Lewis, Becker and G\"odel (see \cite{Goe2}, \cite{McT3}*{Theorem 2.1}).}
\formulas
\begin{itemize}
\item (reflection) $\prin\Box p\to p$;
\item (proof checking) $\prin\Box p\to\Box\Box p$;
\item (modus ponens) $\prin\Box p\land\Box (p\to q)\To\Box q$.
\medskip
\item (necessitation) $\dfrac{p}{\Box p}$;
\end{itemize}
\medskip
Indeed, the modus ponens principle follows from:
\begin{enumerate}[label=(\roman*)]
\item $\prin\Box p\land\Box q\Tofrom\Box(p\land q)$;
\medskip
\item $\dfrac{p\tofrom q}{\Box p\tofrom\Box q}$.
\end{enumerate}
\medskip
using that in classical logic, $\turnstile p\land(p\to q)\Tofrom p\land q$.
Conversely, the modus ponens principle implies
\begin{enumerate}[resume*]
\item $\prin\Box(p\to q)\To(\Box p\to\Box q)$
\end{enumerate}
by the exponential law (regarded as a law of classical logic).
Now (iii) and necessitation imply (ii) and (i).
In more detail, the $\leftarrow$ implication in (i) is proved using
that in classical logic, $\turnstile p\land q\to p$ and $\turnstile p\land q\to q$; and
the $\to$ implication in (i), transformed by the exponential law,
is proved using that in classical logic, $\turnstile p\to (q\to (p\land q))$.
\metameta

The four properties of $\Box$ (reflection, proof checking, necessitation and modus ponens), regarded as three 
laws and one inference rule, are supplemented by the usual laws and inference
rules of classical logic (see \S\ref{logics}), with understanding that they now apply to all formulas, 
possibly containing the new connective $\Box$.
The resulting logic is known as QS4, and its zero-order version as S4.
The connective $\Box$ is called a modality and is assigned the same level of precedence as $\neg$.

By design, a Leibniz--Euler model of classical logic in subsets of a topological
space extends to a {\it topological model} of QS4 by interpreting $\Box$ by
the interior operator.
It is well-known that QS4 is complete with respect to its topological models \cite{RS}
with countable domain (see also \cite{Krem3}).

\formulas
Let us note that the laws of QS4 can be equivalently restated in terms of
$\Diamond:=\neg\Box\neg$:
\begin{enumerate}
\item $\prin p\to\Diamond p$;
\item $\prin \Diamond\Diamond p\to\Diamond p$;
\item $\prin (\Diamond p\to\Diamond q)\to\Diamond(p\to q)$.
\end{enumerate}
However, the rule $\Diamond p\,/\,p$, which is the ``contrapositive'' of the necessitation rule
$p\,/\,\Box p$, is not even an admissible rule of QS4.
Indeed, $q\to\Box q$ is not derivable in QS4, but $\Diamond(q\to\Box q)$ is 
(see \S\ref{formal provability} below).
Alternatively, since $\alpha\lor\neg\alpha$ is not derivable in intuitionistic logic, whereas
$\prin\neg\neg(\alpha\lor\neg\alpha)$ is, it follows (see \S\ref{provability} below) that
$\Box q\lor\Box\neg\Box q$ is not derivable in QS4, whereas $\neg\Box\neg(\Box q\lor\Box\neg\Box q)$ is.
\metameta

\begin{example}\label{QS4-admissible}
An example of an admissible non-derivable rule of QS4 is \FM{\frac{\neg\Box p\land\neg\Box\neg p}{\bot}\text.}
Indeed, in a topological model, $|\fm{\neg\Box p\land\neg\Box\neg p}|=\Cl |\fm p|\but\Int |\fm p|$, that is,
the topological frontier $\Fr |\fm p|$.
Since $\Fr |\fm p|$ may well be the entire space (for instance, the frontier of $\Q$ as a subset of $\R$ is $\R$),
the rule is not derivable.

On the other hand, if $F$ is a formula, we claim that $\not\turnstile\neg\Box F\land\neg\Box\neg F$.
Indeed, in a topological model with a valuation and an assignment such that all predicate variables 
are interpreted by the entire space, $|F|$ will be either empty or the entire space; in either case, 
$|\neg\Box F\land\neg\Box\neg F|=\Fr(|F|)=\emptyset$.
\end{example}

\subsubsection{QS4 and formal provability}\label{formal provability}

In what sense does the modality $\Box$ of S4 and QS4 correpond to provability?
Clearly, provability in the sense of the ``classical BHK'' (see \S\ref{confusion}) satisfies the laws
and inference rules of QS4 --- albeit in a trivial way, with $\Box F\tofrom F$ being valid.

However, $\Box$ fails to directly represent formal provability in any theory containing Peano Arithmetic.
Indeed, reflection and necessitation imply $\turnstile\Box(\Box\bot\to\bot)$; if $\Box$ is to be interpreted
as formal provability, then this would be saying that the consistency of the theory can be proved within 
the theory, contradicting G\"odel's second incompleteness theorem (see \cite{Sm}).
Thus G\"odel spoke of derivability ``understood not in a particular system, but in the absolute sense
(that is, one can make it evident)'' and suggested an intended reading of $\Box$ as ``is provable in
the absolute sense'' \cite{Goe1}.
This, of course, raises the question of what, if anything, $\Box$ ever has to do with actual proof theory.

In fact, it is more natural to compare $\Box$ not with G\"odel's predicate of formal provability 
$\Bew(\# F):=\exists m\Prov(m,\# F)$, where $\Prov(m,n)$ is a formula expressing the predicate that 
the $m$th proof is a proof of the $n$th formula in the G\"odel numbering, but rather with its strengthened 
variants such as Mostowski's
\[M(\# F):=\exists m\big(\Prov(m,\# F)\land\neg\Prov(m,\#\ 0\!=\!1)\big),\] which is
equivalent to $\Bew(\# F)$ but the equivalence is not a theorem of Peano Arithmetic.
Indeed, the consistency formula for Mostowski's predicate, $\neg M(\#\ 0\!=\!1)$, is trivially a theorem of
Peano Arithmetic --- just like $\neg\Box\bot$ is trivially a theorem of QS4.

However, neither $M(\# F)$ nor any other arithmetic provability predicate $B(\# F)$
expressible in Peano Arithmetic can interpret the modality $\Box$ of QS4.
Indeed, by L\"ob's theorem (see \cite{Sm}*{4.1.1}, \cite{JdeJ}), any arithmetic provability predicate of
the form $\boxdot F=B(\# F)$ satisfying the (schematic) principles $\boxdot(F\to G)\to(\boxdot F\to\boxdot G)$ 
and $\boxdot F\to\boxdot\boxdot F$ and the (schematic) rule $F\,/\,\boxdot F$ also satisfies L\"ob's principle:
$\boxdot(\boxdot F\to\boxdot F)\to\boxdot F$.
On the other hand, the contrapositive of L\"ob's principle for $F=\bot$:
$\neg\boxdot\bot\to\neg\boxdot(\neg\boxdot\bot)$ is a modal version of G\"odel's second incompleteness theorem,
which is inconsistent with the theorem $\neg\Box\bot$ of QS4.
(It should be noted that L\"ob's theorem implies also the original second incompleteness theorem, since
$\boxdot F=\Bew(\# F)$ does satisfy the three principles in the hypothesis of L\"ob's theorem.)

The next natural step in the quest for a proof-theoretic interpretation of the modality $\Box$ of QS4
would be to get rid of the constraint imposed by working within a fixed first-order theory (such as
the Peano Arithmetic).
If we extend a theory $T$ that admits a G\"odel numbering by the reflection principle
$R_T:=\Bew_T(\# F)\to F$, where $\Bew_T$ denotes G\"odel's provability predicate for $T$, then 
the formula expressing the consistency of $T$, $\Consis(T):=\Bew_T(\#\ 0\!=\!1)\to 0\!=\!1$ follows, and so does
$\Consis\big(T+\Consis(T)\big)=\Bew_T\big(\#\Bew_T(\#\ 0\!=\!1)\big)\to 0\!=\!1$; and so on.
If we now write $\boxdot F=\Bew_T(\# F)$, then the principles $\boxdot F\to F$, $\boxdot F\to\boxdot\boxdot F$ 
and $\boxdot(F\to G)\to(\boxdot F\to\boxdot G)$ are derivable in $T+R_T$.
However, the rule $F\,/\,\boxdot F$ is not even admissible for $T+R_T$: while $\neg\boxdot\bot$ is provable 
(due to $R_T$), $\boxdot(\neg\boxdot\bot)$ is not (since $\neg\boxdot(\neg\boxdot\bot)$ is provable, due 
to L\"ob's theorem for $T$).

An analysis of the latter failure suggests that maybe instead of finding a single proof-theoretic 
interpretation of $\Box$, different occurrences of $\Box$ could be interpreted differently, according to their 
``depth'' in a formula.
This effectively means that we are lifting a formula of QS4 to a formula of a {\it polymodal} logic,
which has infinitely many modalities: $[0],\,[1],\,[2],\,\dots$, by labelling every occurrence of $\Box$
with some number; and then seeking an arithmetical interpretation of this polymodal logic, with each $[n]$
interpreted by formal provability in a theory $T_n$, where each $T_{i+1}$ contains at least $T_i$ and
its reflection principle $R_{T_i}$.
Because of L\"ob's theorem, we do not want to label an occurrence of $\Box$ as $[n]$ if its scope
contains an occurrence of $\Box$ already labelled as $[k]$ for some $k\ge n$.

\formulas
But it turns out that such a lift is impossible.
Indeed, let us consider the formula $\Diamond(p\to\Box p)$, which can be
equivalently presented as $\neg\Box(p\land\neg\Box p)$.
To see that it is derivable in QS4, observe that $\Box(p\land\neg\Box p)$ implies, firstly, $\Box p$
and, secondly, $\Box\neg\Box p$, which in turns implies $\neg\Box p$.
Thus $\Box(p\land\neg\Box p)$ implies $\bot$, as desired.
As observed in \cite{Ku10}, this argument has ``a flavor of self-referentiality''; indeed, if we try to
repeat it with labels, we get that $[1](p\land\neg [0]p)$ implies, firstly, $[1]p$ and, secondly,
$[1]\neg[0] p$, which in turn implies $\neg[0]p$.
Yet there is nothing contradictory about this, as $[0]p$ implies $[1]p$ but not conversely.
\metameta

A different approach was suggested informally by G\"odel, but remained unpublished until 1995; a closely related, 
but somewhat different approach was suggested and formalized independently by Art\"emov (see \cite{JdeJ}*{\S10}).
Their idea is to lift a formula of S4 to a polymodal logic whose modalities are indexed by ``proof terms'',
which are to be interpreted by actual proofs in the arithmetical model.
Art\"emov's approach takes advantage of the fact that Peano Arithmetic proves $\Prov(n,\# F)$
for some numeral $n$ if and only if it proves $F$ (see \cite{Sm}*{3.2.4}).
Thus it can be said that on Art\"emov's interpretation, the modality $\Box$ of S4 represents ``existence
of proofs'' in Peano Arithmetic (and not just in its extensions by reflection principles), where ``proofs''
have the usual meaning of formal proofs (except that Art\"emov needs one ``proof'' to be able to prove
several formulas), but ``existence'' is understood in an explicit sense, not expressible internally
in Peano Arithmetic.

Another well-known approach is that $\Box F$ can be represented in the form $F\land\boxdot F$,
where it is $\boxdot$ that should be interpreted as an arithmetic provability predicate.
(Topologically, $\neg\boxdot\neg$ corresponds to Cantor's derivative operation; see \cite{Esa}
for further details.)
In particular, by well-known results of (i) Solovay and (ii) Kuznetsov--Muravitsky, Goldblatt and Boolos 
(see \cite{Tr1}, \cite{Esa}, \cite{JdeJ}), $\Box F:=F\land\Bew(\# F)$ satisfies the laws and 
inference rules of QS4, and besides those only one additional quantifier-free principle, namely the following 
principle Grz: $\Box(\Box(F\to\Box F)\to F)\to F$.
This does not contradict L\"ob's theorem because $\Bew(\# F)\land F$ is not expressible in the form $B(\# F)$.

\subsubsection{Provability translation}\label{provability}
Tarski models motivate the following embedding of intuitionistic logic into QS4: interpret every problem
variable as a predicate variable prefixed by a box (because each problem variable corresponds to an 
open set in a Tarski model with chosen valuation and assignment), replace $\ab$ and $\triv$ by
$\bot$ and $\top$ respectively, and insert a box in front of every implication and in front of every 
universal quantifier (because of their special treatment in Tarski models).
This is a version of G\"odel's translation (see below) due to McKinsey--Tarski,
as extended to the first-order case by Rasiowa--Sikorski, Maehara and
Prawitz--Malmn\"as (see \cite{Tr1}, \cite{N}, \cite{FF}).

Thus every formula $\Phi$ in the language of intuitionistic logic corresponds to a formula $\Phi'$
in the language of QS4 (namely, the formula obtained from $\Phi$ by the modification just described) so that 
the interpretation of $\Phi$ in a Tarski model of intuitionistic logic is the same as the interpretation 
of $\Phi'$ in the corresponding topological model of QS4.
  
It is straightforward to check directly that if $\prin\Phi$ is a law of intuitionistic logic, 
then $\prin\Phi'$ is derivable in QS4; and if $\Phi_1,\dots,\Phi_k\,/\,\Psi$ is 
an inference rule of intuitionistic logic, then $\Phi_1',\dots,\Phi_k'\,/\,\Psi'$
is derivable in QS4. Alternatively, this follows from the strong completeness of QS4 with respect to
topological models.

The converse implications follow from the strong completeness of intuitionistic logic with respect 
to Tarski models. 
(A syntactic verification is also possible, but it relies on Gentzen's cut elimination theorem; 
see e.g.\ \cite{Min2}*{p.~489}.)

Of course, one could as well insert a box in front of {\it every} subformula of $\Phi$ (in addition to
replacing $\ab$ and $\triv$ by $\bot$ and $\top$); the resulting formula $\Phi_\Box$ will be equivalent 
to $\Phi'$ in QS4:

\formulas
\begin{proposition} \label{QS4-McKinsey}
The following holds in QS4:

(a) $\turnstile(\Box p\land\Box q)\Tofrom\Box(\Box p\land\Box q)$;

(b) $\turnstile(\Box p\lor\Box q)\Tofrom\Box(\Box p\lor\Box q)$;

(c) $\turnstile\exists \tr x\,\Box p(\tr x)\Tofrom\Box\exists \tr x\,\Box p(\tr x)$.
\end{proposition}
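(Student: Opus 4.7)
The plan is to handle each of (a), (b), (c) by a short modal calculation using only the four primary schemata/rules of QS4 --- reflection, proof checking, necessitation, and modus ponens --- together with the immediately preceding derived properties (i) and (iii). In each part, the $\to$ half of the $\Tofrom$ is a direct instance of the reflection schema $\Box\phi\to\phi$, so all the work lies in the $\from$ halves.

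For (a), I would apply proof checking to $p$ and $q$ to obtain $\Box p\to\Box\Box p$ and $\Box q\to\Box\Box q$, and conjoin these to get $\Box p\land\Box q\to\Box\Box p\land\Box\Box q$. Property (i), applied with $\Box p$ and $\Box q$ in place of $p$ and $q$, then rewrites the right-hand side as $\Box(\Box p\land\Box q)$, which finishes (a).

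For (b) and (c) the recipe is uniform: starting from a classical tautology $\turnstile\psi\to\chi$, necessitation gives $\turnstile\Box(\psi\to\chi)$, and (iii) with modus ponens upgrades this to $\turnstile\Box\psi\to\Box\chi$; if $\psi$ already has the form $\Box\psi'$, proof checking yields $\turnstile\psi\to\Box\chi$ as well. For (b), applying this to the tautologies $\Box p\to\Box p\lor\Box q$ and $\Box q\to\Box p\lor\Box q$ produces $\Box p\to\Box(\Box p\lor\Box q)$ and $\Box q\to\Box(\Box p\lor\Box q)$, which combine via the disjunction elimination rule (VII) into $\Box p\lor\Box q\to\Box(\Box p\lor\Box q)$. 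For (c), I would apply the recipe to the instance $\Box p(y)\to\exists x\,\Box p(x)$ of axiom (XI), with $y$ a fresh term variable; this yields $\Box p(y)\to\Box\exists x\,\Box p(x)$, and since $y$ does not occur freely in the right-hand side, existential introduction (rule (XIV) after generalization, or equivalently the variant (XI$'$)) then delivers $\exists y\,\Box p(y)\to\Box\exists x\,\Box p(x)$, which is (c) modulo an inessential renaming of the bound variable.

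There is no substantive obstacle; the only point that deserves any attention is the variable-freshness side condition for the existential rule in (c), and this is automatically satisfied. Conceptually, all three parts are instances of a single QS4 principle --- any formula built from already-boxed ingredients by classical connectives implies its own box --- which is precisely what makes the streamlined McKinsey--Tarski translation $\alpha\mapsto\alpha'$ equivalent over QS4 to the uniform translation $\alpha\mapsto\alpha_\Box$.
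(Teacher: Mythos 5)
Your proof is correct and is essentially the explicit syntactic version of what the paper only gestures at: the paper merely remarks that the proposition is the QS4 transcription of the topological facts that finite intersections and arbitrary unions of open sets are open, and that any proof of those facts from the interior axioms "should translate" into QS4. Your derivations (reflection for the forward implications; proof checking plus property (i) for (a); necessitation, property (iii), proof checking and disjunction/existential elimination for (b) and (c), with the variable-freshness condition correctly checked) carry out exactly that translation, so there is nothing to add.
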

\metameta

The topological counterpart of this proposition is saying essentially
that finite intersections and arbitrary unions of open sets are open.
Any proof of this fact from the axioms of a topological space in terms
of $\Int$ should translate into a proof of \ref{QS4-McKinsey}.

%\begin{proof} We have $\Box p\land\Box q\To\Box p$.
%By applying $\Box$ to both sides, we get
%$\Box(\Box p\land\Box q)\To\Box p$.
%Similarly $\Box(\Box p\land\Box q)\To\Box q$.
%Hence $\Box(\Box p\land\Box q)\To(\Box p\land\Box q)$.
%The converse is trivial.
%
%The second and third tautologies are proved similarly
%to their topological counterpart,
%$\bigcup_i\Int S_i=\Int\bigcup_i\Int S_i$.
%Here $\supset$ follows from $T\supset\Int T$.
%Conversely, we have $\Int S_n\subset\bigcup_i\Int S_i$ for each $n$.
%By applying $\Int$ to both sides we get
%$\Int S_n\subset\Int\bigcup_i\Int S_i$.
%Since this holds for each $n$, we get
%$\bigcup_i\Int S_i\subset\Int\bigcup_i\Int S_i$.
%\end{proof}

By combining the above, we obtain

\begin{theorem}\label{box-translation}
Given a formula $\Phi$ in the language of intuitionistic logic, insert a box in front of every subformula, 
and replace $\ab$, $\triv$ by $\bot$, $\top$ respectively.
Then the resulting formula $\Phi_\Box$ is derivable in QS4 if and only if
$\Phi$ is derivable in intuitionistic logic.

Moreover, a rule $\Phi_1,\dots,\Phi_k\,/\,\Psi$ is derivable in intuitionistic logic if and only if the rule 
$(F_1)_{\Box},\dots,(F_k)_{\Box}\,/\,G_{\Box}$ is derivable in QS4.
\end{theorem}

G\"odel's original translation, as extended to the first-order case (cf.\ \cite{Tr1}) is as follows:
given a formula $\Phi$, insert a box {\it after} each instance of $\lor$, $\to$ and $\exists$.
Then the resulting formula $\Phi''$ is derivable in QS4 if and only if
$\Phi$ is derivable in intuitionistic logic.
Indeed, $\Phi''$ is derivable in QS4 if and only if $\Box\Phi''$ is.
On the other hand, $\Phi_\Box\tofrom\Box\Phi''$ is derivable in QS4:

\formulas
\begin{proposition} \label{QS4-Goedel}
The following holds in QS4:

(a) $\turnstile\Box (p\land q)\Tofrom\Box(\Box p\land\Box q)$;

(b) $\turnstile\Box\forall \tr x\, p(\tr x)\Tofrom\Box\forall \tr x\,\Box p(\tr x)$.
\end{proposition}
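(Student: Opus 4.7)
The plan is to prove both equivalences by repeatedly applying: reflection ($\Box p\to p$), proof checking ($\Box p\to\Box\Box p$), necessitation, and the derived monotonicity rule ``if $\turnstile r\to s$, then $\turnstile\Box r\to\Box s$'' (which follows from necessitation and property (iii) $\Box(r\to s)\to(\Box r\to\Box s)$, itself already noted in the proof of Proposition \ref{QS4-McKinsey}). These are the only moves we will need, used in a very symmetric pattern: the $\from$ directions come straight from reflection plus monotonicity, and the $\to$ directions will come from combining proof checking with a monotonic lift of a simpler QS4 law.

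First I would dispatch (a). For $\from$: from reflection we get $\Box p\to p$ and $\Box q\to q$, hence $\Box p\land\Box q\to p\land q$ is a classical law, so by monotonicity $\Box(\Box p\land\Box q)\to\Box(p\land q)$. For $\to$: the classical laws $p\land q\to p$ and $p\land q\to q$ give, by monotonicity, $\Box(p\land q)\to\Box p$ and $\Box(p\land q)\to\Box q$, so $\Box(p\land q)\to\Box p\land\Box q$. Applying monotonicity once more yields $\Box\Box(p\land q)\to\Box(\Box p\land\Box q)$; composing with proof checking $\Box(p\land q)\to\Box\Box(p\land q)$ finishes (a).

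For (b) the pattern is identical, with the quantifier playing the role that $\land$ played in (a). For $\from$: reflection gives $\Box p(x)\to p(x)$; by the rule \ref{rule:forall} (or generalization plus $(\forall x\,\alpha(x))\to\alpha(x)$), and since $x$ is not free in $\forall y\,\Box p(y)$ after renaming, we obtain $\forall x\,\Box p(x)\to\forall x\,p(x)$ as a classical law, and monotonicity gives $\Box\forall x\,\Box p(x)\to\Box\forall x\,p(x)$. For $\to$: the classical law $(\forall x\,p(x))\to p(x)$ gives by monotonicity $\Box\forall x\,p(x)\to\Box p(x)$; since $x$ is not free in $\Box\forall x\,p(x)$, rule \ref{rule:forall} turns this into $\Box\forall x\,p(x)\to\forall x\,\Box p(x)$. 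Applying monotonicity yields $\Box\Box\forall x\,p(x)\to\Box\forall x\,\Box p(x)$, and composition with proof checking $\Box\forall x\,p(x)\to\Box\Box\forall x\,p(x)$ completes the argument.

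The only subtle point, which is where I would be most careful, is the side condition in applying rule \ref{rule:forall} in part (b): one must verify that the variable of quantification is not free in the antecedent $\Box\forall x\,p(x)$, which is the case after the usual renaming convention. Apart from this bookkeeping, there is no real obstacle; the proof is essentially the observation that $\Box$ is monotone and idempotent (via proof checking and reflection), so it can absorb any wrapping by those connectives over which it distributes in the sense of Proposition \ref{QS4-McKinsey}.
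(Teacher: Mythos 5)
Your proof is correct and is essentially the paper's argument: the paper states the same reasoning in topological language ($\Int\bigcap_i S_i=\Int\bigcap_i\Int S_i$, with the forward inclusion obtained by intersecting $\Int\bigcap_i S_i\subset\Int S_n$ over $n$ and then ``applying $\Int$ to both sides''), and your derivation is precisely its modal transcription, with reflection playing the role of $\Int S\subset S$, monotonicity that of $\Int$-monotonicity, and proof checking that of idempotence of the interior. The care you take with the side condition on rule (XIII) in part (b) is the right point to watch, and it is satisfied since $x$ is not a parameter of $\Box\forall x\,p(x)$.
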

\metameta

Here each of the two judgements is proved similarly to their
topological counterpart, $\Int\bigcap_i S_i=\Int\bigcap_i\Int S_i$.
Indeed, here $\supset$ follows from $S_i\supset\Int S_i$.
Conversely, we have $\bigcap_i S_i\subset S_n$ for each $n$, whence
$\Int\bigcap_i S_i\subset\Int S_n$.
Since this holds for each $n$, we get
$\Int\bigcap_i S_i\subset\bigcap_i\Int S_i$.
Applying $\Int$ to both sides completes the proof.

Here is a sample application of the $\Box$-translation:

\begin{proposition}\label{Stone-Cech} Jankov's principle does not imply the Strong Markov Principle.
\end{proposition}

\begin{proof}
Let us consider the Tarski model in the one-point compactification $\N_+$ of the countable discrete space $\N$
with $\D=\N$.
Let us consider the valuation $\pval$ such that $|\fm q|^\pval(n)=\{n\}$ and $|\fm r|^\pval=\{\infty\}$, 
where $\fm q$ is unary and $\fm r$ is nullary, and any other problem variable is also interpreted by 
$\{\infty\}$ on every input.
Since the union of the clopen sets $\{n\}$ is not regular open, (SMR) and in particular (SMP) are not valid
with respect to the valuation field $\left<\pval\right>$ (since their formulas are not valid with respect 
to $\pval$).
To show that, nevertheless, Jankov's principle is valid with respect to $\left<\pval\right>$, it is convenient 
to use the $\Box$-translation.
Let us note that every formula of QS4 is $\pval$-equivalent to (i.e., has the same interpretation under $\pval$
as) a formula whose only problem variables are $\fm q$, $\fm r$.

If $F$ is a quantifier-free closed $m$-formula of QS4 containing no $\Box$'es, then either $\infty$
is contained in $|F|^\pval(\vec n)$ for all $\vec n$, or $\infty$ is contained in $|\neg F|^\pval(\vec n)$ 
for all $\vec n$; also, either $|F|^\pval(\vec n)$ contains a punctured neighborhood of $\infty$ for all $\vec n$, 
or $|\neg F|^\pval(\vec n)$ contains a punctured neighborhood of $\infty$ for all $\vec n$.
Indeed, both assertions are true if $F$ is either one of $\fm q$, $\neg\fm q$, $\fm r$, $\neg\fm r$, and
remain true under conjunction and disjunction; thus they follow by considering the disjunctive normal
form of $F$.

If $|F|^\pval(\vec n)$ contains $\infty$ and $|\neg F|^\pval(\vec n)$ contains a punctured neighborhood 
of $\infty$ for all $\vec n$, then $\Box F$ is $\pval$-equivalent to $F\land\neg\fm r$; 
and in the remaining cases either $|F|^\pval(\vec n)$ contains a punctured neighborhood of $\infty$ 
for all $\vec n$, or $|\neg F|^\pval(\vec n)$ contains a neighborhood of $\infty$ for all $\vec n$, whence 
$\Box F$ is $\pval$-equivalent to $F$.
It follows by induction that every quantifier-free closed $m$-formula is $\pval$-equivalent to
a quantifier-free closed $m$-formula without $\Box$'es.
In other words, every quantifier-free formula is $\pval$-equivalent to a quantifier-free formula 
without $\Box$'es.

If $F$ is a quantifier-free $1$-formula without $\Box$'es, by considering its conjunctive normal form 
we get that $F(\tr x)$ is $\pval$-equivalent to a formula of the form 
$\big(\fm q(\tr x)\lor G\big)\land\big(\neg\fm q(\tr x_1)\lor H\big)$, where $G$ and $H$ contain no 
occurrences of $\tr x$.
Consequently $\forall \tr x\, F(\tr x)$ is $\pval$-equivalent to
$\big(\forall \tr x\,\fm q(\tr x)\lor G\big)\land\big(\forall \tr x_1\,\neg\fm q(\tr x)\lor H\big)$,
which is $\pval$-equivalent to $G\land\big(\fm r\lor H\big)$.
By similarly using the disjunctive normal form, we get that $\exists \tr x\,F(\tr x)$
is $\pval$-equivalent to a formula that contains no quantification over $\tr x$.
It follows by induction that every formula is $\pval$-equivalent to a quantifier-free formula 
without $\Box$'es.

Finally, by combining the above results, we get that for each $m$-formula $F$,
every set $|F|^\pval(n_1,\dots,n_m)$ is either clopen or differs from a clopen set only in the point $\infty$.
(So, for example, it cannot be the set of all even integers.)
Then for each $\pval'\in\left<\pval\right>$ and for each $m$-ary predicate variable $p$,
every set $|p|^{\pval'}(n_1,\dots,n_m)$ is either clopen or differs from a clopen set only in the point $\infty$.
In either case, it is easy to see that $|\Box\neg\Box p|^{\pval'}(n_1,\dots,n_m)$ is clopen.
Thus the $\Box$-translation of $\neg\fm\alpha$, where $\fm\alpha$ is nullary, is interpreted by 
a clopen set under $\pval'$.
Hence the $\Box$-translation of $\prin\neg\fm\alpha\lor\neg\neg\fm\alpha$,
a form of Jankov's principle (see \ref{Jankov's logic}), is valid with respect to $\left<\pval\right>$.
\end{proof}

 \subsection{Independence of connectives and quantifiers}\label{independence}

\begin{theorem} Neither of the intuitionistic connectives and quantifiers $\ab$, $\land$, $\lor$, $\to$, 
$\exists$, $\forall$ can be expressed in terms of the other ones.
\end{theorem}

Our argument attempts to focus on properties that distinguish the connectives, but occasionally
resorts to specific models.
For a proof-theoretic approach see \cite{Pr1}*{p.\ 59}.

\begin{proof}
It is easy to see that $\ab$ is not expressible in terms of the other connectives and quantifiers by considering
a Tarski model in a nonempty space $X$ with the valuation that associates $X$ to every problem variable and 
every tuple of elements of the domain.
Then every formula not involving $\ab$ is interpreted by $X$, but in every Tarski model, $\ab$ is interpreted
by $\emptyset$.

In intuitionistic logic, decidability is preserved under $\lor$ (using (\ref{deMorgan1})), $\land$
(using (\ref{deMorgan2})) and $\to$ (using (\ref{implication0}) and (\ref{implication-coexp})).
In the model showing that the Weak Rule of Omniscience does not imply the Rule of Omniscience
(see \S\ref{omniscience}), decidability is also preserved under $\forall$, but not under $\exists$.
Thus $\exists$ is not expressible in terms of the other connectives and quantifiers.

Due to the intuitionistically derivable principles (\ref{neg-neg-and}), (\ref{neg-neg-imp}), (\ref{neg-neg-or}),
(\ref{DNS-converse}) and (\ref{SMP-converse}), and to the independence of the $\neg\neg$-Shift Principle
(see \S\ref{shift}), $\forall$ is the only connective or quantifier through which $\neg\neg$ cannot be 
``pushed inside''.
Thus $\forall$ is not expressible in terms of the other connectives and quantifiers.

Due to the same intuitionistically derivable principles and to the independence of Jankov's principle (see \S\ref{Markov}),
$\lor$ and $\exists$ are the only connectives or quantifiers through which $\neg\neg$ cannot be 
``pushed outside''.
In models with singleton domain, however, $\neg\neg$ can be pushed outside through $\exists$
(but generally not through $\lor$).
Thus $\lor$ cannot be expressed in terms of other connectives or quantifiers.

(Alternatively, one can combine the use of the singleton domain with the observation that $\lor$ is 
the only connective that does not preserve stability; or with the observation that $\lor$ is the only 
connective that is not preserved under the $\neg\neg$-translation, but if it were expressible in terms
of other connectives and quantifiers in intuitionistic logic, then the same expression would also work
in classical logic.)

If $\to$ is expressible in terms of $\land$, $\lor$, $\forall$, $\exists$ and $\ab$, then so is $\neg$, 
and if so, such an expression would also work in classical logic (with $\ab$ replaced by $\bot$), and 
consequently also in QS4.
Now $\land$, $\lor$ and $\exists$ are preserved under the $\Box$-translation (in the McKinsey--Tarski 
form, see \ref{QS4-McKinsey}), $\ab$ turns into $\bot$, and $\forall$ is preserved from the viewpoint 
of models with singleton domain (Tarksi models of intuitionisitc logic and topological models of QS4).
But $\neg$ is not preserved: $\neg\Box\fm p\to\Box\neg\Box\fm p$ is not derivable in QS4, since in 
topological models, the complement of an open set does not have to be open.

Finally, given a collection $\Gamma_1,\dots,\Gamma_n$ of closed formulas such that $\turnstile\neg\neg\Gamma_i$
for each $i$, any closed formula $\Xi$ obtained from $\Gamma_1,\dots,\Gamma_n$ using 
$\lor$, $\to$ and $\ab$ satisfies either $\turnstile\Gamma_i\to\Xi$ for some $i$ or $\turnstile\Xi\to\ab$.
Indeed, arguing by induction, we may assume that $\Xi$ is either $\Phi\lor\Psi$ or $\Phi\to\Psi$ or $\ab$, 
where either $\turnstile\Gamma_i\to\Phi$ or $\turnstile\Phi\to\ab$, and either $\turnstile\Gamma_j\to\Psi$ 
or $\turnstile\Psi\to\ab$.
If $\Xi=\ab$, then $\turnstile\Xi\to\ab$.
In the case $\Xi=\Phi\lor\Psi$, if $\turnstile\Gamma_i\to\Phi$, then $\turnstile\Gamma_i\to\Xi$; 
if $\turnstile\Gamma_j\to\Psi$, then $\turnstile\Gamma_j\to\Xi$; and 
if $\turnstile\Phi\to\ab$ and $\turnstile\Psi\to\ab$, then $\turnstile(\Phi\lor\Psi)\to\ab$.
It remains to consider the case $\Xi=\Phi\to\Psi$.
If either $\turnstile\Gamma_j\to\Psi$ or $\turnstile\Phi\to\ab$, then $\turnstile\Gamma_j\to\Xi$.
So we may assume that $\turnstile\Gamma_i\to\Phi$ and $\turnstile\Psi\to\ab$. 
Then $\turnstile\Xi\to\neg\Phi$ and $\turnstile\neg\neg\Gamma_i\to\neg\neg\Phi$.
By the hypothesis $\turnstile\neg\neg\Gamma_i$, so $\turnstile\neg\neg\Phi$.
Hence $\turnstile\Xi\to\ab$.

Now as far as models with singleton domain are concerned, the use of $\exists$ and $\forall$ does not yield 
anything new --- but the use of $\land$ does, by considering the Tarski model in $\R$ (with singleton domain) 
and a valuation with $|\fm\alpha|=\R\but\{0\}$ and $|\fm\beta|=\R\but\{1\}$, which suffices to detect that 
$\Xi=\fm\alpha\land\fm\beta$ satisfies neither $\turnstile\fm\alpha\to\Xi$ nor $\turnstile\fm\beta\to\Xi$ 
nor $\turnstile\Xi\to\ab$.
\end{proof}

\section{Sheaf-valued models} \label{sheaves}

The first two sections below contain, in particular, a review of some basic sheaf theory.
This goes only slightly beyond an exposition of a standard material (for sheaves of sets), which can be found 
in Bredon's \cite{Bre} and/or Godement's \cite{God} books (mostly for sheaves of modules).

\subsection{Sheaves}

\subsubsection{Sheaves and presheaves}
A {\it sheaf} (of sets) on a (topological) space $B$ is a map $\F\:E\to B$ that is 
a local homeomorphism (i.e.\ each $e\in E$ has a neighborhood $O$ in $E$ such that 
$\F(O)$ is a neighborhood of $f(e)$ and $\F|_O\:O\to \F(O)$ is a homeomorphism).
Clearly, such an $\F$ is an open map; in particular, $\Supp\F:=\F(E)$ is always open in $B$.
If $\F\:E\to B$ is a sheaf and $b\in B$, the point-inverse $\F_b:=\F^{-1}(b)$ is called 
the {\it stalk} of $\F$ at $b$.

A {\it section} of a sheaf or more generally of a continuous map $f\:E\to B$ over 
an open subset $U\subset B$ is a continuous map $s\:U\to E$ such that $fs=\id_U$.
Clearly, every section is a homeomorphism onto its image.
A section $s\:U\to E$ is said to {\it extend} a section $t\:V\to E$ if
$V\subset U$ and $t=s|_V$.
If $\F\:E\to B$ is a sheaf, it is easy to see that a base of topology on $E$ is given by 
the images of all sections of $\F$ (over all open subsets of $B$).

A {\it morphism} $\F\to\F'$ between sheaves $\F\:E\to B$ and
$\F'\:E'\to B$ is a continuous map $f\:E\to E'$ such that $\F'f=\F$.
Clearly, such a map is itself a local homeomorphism.
A {\it constant} sheaf is the projection $B\x\Xi\to B$, where $\Xi$ is
a discrete space.
A sheaf is {\it locally constant} if it is a covering map, that is,
its restriction over a sufficiently small neighborhood of every point
of $B$ is isomorphic to a constant sheaf.

There are two basic reasons why a sheaf can fail to be
locally constant:
\begin{itemize}
\item Even locally, sections need not extend, as is the case for
the {\it characteristic sheaf} $\chi_U\:U\to B$, where $U$ is
an open subset of $B$ and $\chi_U$ is the inclusion map.
\item Even locally, sections may extend non-uniquely, as is the case
for the {\it amalgamated union} $\chi_U\cup_{\chi_W}\chi_V\:
U\cup_W V\to B$, where $U$ and $V$ are open subsets of $B$ and $W$
is an open subset of $U\cap V$.
Here $U\cup_W V$ is the quotient space of the disjoint union
$U\sqcup V=U\x\{0\}\cup V\x\{1\}\subset B\x\{0,1\}$
by the equivalence relation $(b,0)\sim (b,1)$ if $b\in W$, and
$\chi_U\cup_{\chi_W}\chi_V$ sends the class of $(b,i)$ to $b$.
\end{itemize}
Note that $U\cup_W V$ is non-Hausdorff, as long as some
$b\in (U\cap V)\but W$ lies in the closure of $W$ --- in this case
$(b,0)$ and $(b,1)$ have no disjoint neighborhoods in $U\cup_W V$.
In general, if $s\:U\to E$ and $t\:V\to E$ are sections of a sheaf
$\F\:E\to B$ over open sets, then $\{b\in U\cap V\mid s(b)=t(b)\}$
is open.
If $E$ happens to be Hausdorff, then this set must also
be closed in $U\cap V$.

A {\it presheaf} (of sets) on a (topological) space $B$ is a contravariant 
functor from the category of open subsets of $B$ and their inclusions into
the category of sets.
A {\it morphism} $\phi\:F\to G$ of presheaves on $B$ is a natural
transformation of functors, that is, a collection of maps
$\phi_U\:F(U)\to G(U)$ for all open $U\subset B$ that commute with the restriction maps
$F(j)\:F(U)\to F(V)$ and $G(j)\:G(U)\to G(V)$ for all inclusions $j\:V\emb U$ between open subsets of $B$.

Given a sheaf, or more generally a continuous map $f\:X\to B$, its {\it presheaf of sections}
$\sigma f$ assigns to an open subset $U\subset B$ the set $(\sigma f)(U)$
of all continuous sections of $f$ over $U$, and to an inclusion $j\:V\emb U$ of
open subsets of $B$ the map $(\sigma f)(U)\to(\sigma f)(V)$ given by
restriction of sections.
In general, for an arbitrary presheaf $F$, elements of each $F(U)$
are called {\it sections} over $U$, and the image of a section
$s\in F(U)$ under $F(j)\:F(U)\to F(V)$ is called the {\it restriction}
of $s$ and is denoted $s|_V$.
If $U$ is an open subset of $B$, the presheaf $\Char U:=\sigma\chi_U$
can be described by $(\Char U)(V)=\{\id_V\}$ if $V\subset U$ and
$(\Char U)(V)=\emptyset$ otherwise.

If $F$ is a presheaf on a space $B$ and $b\in B$, a {\it germ} of $F$ at $b$
is an element of the direct limit (=colimit, inductive limit)
$F_b:=\underset{\longrightarrow}{\lim}F(U)$ over all open neighborhoods
$U$ of $b$ ordered by inclusion.
In other words, a germ of $F$ at $b$ is an equivalence class of sections 
$s_U\in F(U)$ over open neighborhoods $U$ of $b$, where $s_U\sim t_V$ 
if $U\cap V$ contains an open neighborhood $W$ of $b$ such that $s_U|_W=t_V|_W$.
In particular, $F_b=\emptyset$ if and only if $F(U)=\emptyset$
for all open neighborhoods $U$ of $b$.

A presheaf $F$ on a space $B$ gives rise to the {\it sheaf of germs} (or sheafafication) of $F$, 
denoted $\gamma F$, whose stalk $(\gamma F)_b$ at $b$ is the set of germs $F_b$;
a base of topology on $E=\bigcup_{b\in B} F_b$ consists of sets of the form 
$O_{s,U}=\{\phi_{U,b}(s)\mid b\in U\}$, where $U\subset B$ is open and 
$\phi_{U,b}\:F(U)\to F_b$ is the natural map.
The open set $\Supp F:=\Supp \gamma F$ can be described as the union of
all open subsets $U$ of $B$ such that $F(U)\ne\emptyset$.

\subsubsection{Sheaf conditions}
Given a sheaf $\F$, it is easy to see that $\gamma\sigma\F$ is isomorphic to $\F$.
Given a presheaf $F$, we have an obvious morphism $F\to\sigma\gamma F$, whose component 
$F(U)\to\sigma\gamma F(U)$ is a bijection if and only if the following two {\it sheaf conditions} hold:
\begin{itemize}
\item If $\{U_\alpha\}$ is an open cover of $U$, and $s,t\in F(U)$ are such that $s|_{U_\alpha}=t|_{U_\alpha}$
for all $\alpha$, then $s=t$.
\item If $\{U_\alpha\}$ is an open cover of $U$, and $s_\alpha\in F(U_\alpha)$ are such that
$s_\alpha|_{U_\alpha\cap U_\beta}=s_\beta|_{U_\alpha\cap U_\beta}$
for all $\alpha$, $\beta$, then there exists an $s\in F(U)$ such that
each $s_\alpha=s|_{U_\alpha}$.
\end{itemize}
Thus sheaves on $B$ are in one-to-one correspondence with presheaves
on $B$ satisfying the sheaf conditions for every open $U\subset B$.
For example, a presheaf of the form $\sigma f$ always satisfies
the sheaf conditions, so it is isomorphic to $\sigma\F$, where
$\F=\gamma\sigma f$ is called the {\it sheaf of germs of sections}
of $f$.
Moreover, every sheaf morphism $\F\to\G$ clearly determines a presheaf
morphism $\sigma\F\to\sigma\G$; and every presheaf morphism $F\to G$
clearly determines a sheaf morphism $\gamma F\to\gamma G$.
It follows that the category of sheaves on $B$ can be identified with
a full subcategory of the category of presheaves on $B$.

\begin{example} If $\pi\:B\x X\to B$ is the projection, then
$\gamma\sigma\pi$ is the {\it sheaf of germs of continuous functions}
$B\to X$.
Note that the total space $E$ of this sheaf is non-Hausdorff already
for $B=X=\R$, since the germs at $0$ of the constant function $f(b)=0$
and the function $g(b)=\max(b,0)$ are distinct, but have no disjoint neighborhoods.
\end{example}

If $C$ is a category, a {\it $C$-valued presheaf} on a space $B$ is a contravariant functor
from the category of open subsets of $B$ and their inclusions to $C$.
When $C$ is a concrete category (for example, the category $Ab$ of abelian groups), each
$C$-valued presheaf on $B$ is also a presheaf of sets on $B$.

Conversely, if $F$ is a presheaf of sets on a space $B$, it determines an $Ab$-valued presheaf $\Z F$ on $B$
by setting $\Z F(U)$ to be the free abelian group $\Z[F(U)]$ with generator set $F(U)$ for every open 
$U\subset B$, and $\Z F(j)\:\Z[F(U)]\to\Z[F(V)]$ to be the linear extension of $F(j)\:F(U)\to F(V)$ 
for every inclusion $j\:V\emb U$ between open subsets of $B$.
It is clear that $F$ satisfies the sheaf conditions if and only if $\Z F$ does.

On the other hand, the sheaf conditions for an $Ab$-valued presheaf $F$ are easily seen to be equivalent to 
exactness of the sequence
\[0\to F(U)\xr{\phi}\prod_\alpha F(U_\alpha)\xr{\psi}\prod_{(\alpha,\beta)} F(U_\alpha\cap U_\beta)\tag{$*$}\]
for any open subset $U\subset B$ and any open cover $\{U_\alpha\}$ of $U$,
where $\phi(s)=(\alpha\mapsto s|_{U_\alpha})$ and
$\psi(\alpha\mapsto s_\alpha)=\left((\alpha,\beta)\mapsto
s_\alpha|_{U_\alpha\cap U_\beta}-s_\beta|_{U_\alpha\cap U_\beta}\right)$.

\begin{example} Let $f\:E\to B$ be a monotone map between posets.
If we endow $E$ and $B$ with the Alexandroff topology, then $f$ is continuous.
Every poset endowed with the Alexandroff topology is weakly homotopy equivalent to its order complex \cite{Mc},
so singular (co)homology of a poset, absolute or relative, may be thought of as (co)homology of its order complex.
For each $i=0,1,\dots$ let us define presheaves $L^i$ and $L_i$ on $B$ by $L^i(U)=H^i(f^{-1}(U);\,\Z)$ and 
$L_i(U)=H_i(E,\,E\but f^{-1}(U);\,\Z)$; the restriction maps $L^i(U)\to L^i(V)$ and $L_i(U)\to L_i(V)$ are
the usual forgetful homomorphisms.
By the Mayer--Vietoris spectral sequence, the sequence ($*$) is exact for $F=L^0$ and, if the order complex of $E$ 
is of dimension $n<\infty$, then also for $F=L_n$.
Thus $L^0$ and $L_n$ satisfy the sheaf conditions.

In general, the sheaf of germs $\H^i(f)=\gamma L^i$ is known as the $i$th Leray sheaf of $f$, and when 
$f=\id_X\:X\to X$, the sheaf of germs $\H_i(X)=\gamma L_i$ is known as the $i$th local homology sheaf of $X$.
If $X$ is the face poset of a graph, then $\H_1(X)$ has stalk $\Z$ at every edge and stalk $\Z^{d-1}$ 
at every vertex of degree $d$.
If $X$ is the face poset of a triangulation of a closed $n$-manifold, then $\H_n(X)$ is locally constant, 
with stalks $\Z$; it will be constant precisely when the manifold is orientable.
If $B$ and $E$ are the face posets of simplicial complexes, and $f$ comes from a simplicial map that triangulates 
a bundle with fiber a closed orientable $k$-manifold, then $\H^k(f)$ is locally constant, with stalks $\Z$; 
it will be constant precisely when the bundle is orientable.
\end{example}

\subsubsection{Sheaves and problems}\label{equations}
Let $f\:X\to B$ be a continuous map (for instance, this could be a real polynomial $f\:\R\to\R$ or a complex
polynomial $f\:\C\to\C$, $f(x)=a_nx^n+\dots+a_0$), and consider the following parametric problem
$\Gamma_f(b)$, $b\in B$:
\begin{center}
{\it Find a solution of the equation $f(x)=b$.}
\end{center}
Thus each point-inverse $f^{-1}(b)$ is nothing but the set of solutions of $\Gamma_f(b)$.

If the parameter $b$ represents experimental data (which inevitably contains noise), then any talk
about the exact value of $b$ (e.g.\ whether $b$ is a rational or irrational number when $B=\R$)
is pointless.
It would be not so unreasonable, however, to assume that the value of $b$ can in principle be
determined up to arbitrary precision.
Indeed, the point of continuous maps is that if a point in the domain is known only up to a sufficient
accuracy, it still makes sense to speak of its image --- it will be known up to a desired accuracy.
This leads us to consider only {\it stable solutions} of the equation $f(x)=b$, that is, such $x_0\in X$
that $f(x_0)=b$ and there exists a neighborhood $U$ of $b$ in the space $B$ over which $f$
has a section.
Indeed, if the parameter $b$ is only known to us up to a certain degree of precision, we should only stick
with a solution $x_0$ of $f(x)=b$ as long as we can be certain that it would not disappear when our
knowledge of $b$ improves.%
\footnote{This may remind the reader of Brouwer's idea that ``all functions are continuous'',
as well as some of Poincar\'e's writings; a more rigorous exposition of these ideas can be found in
M. Escardo's notes \cite{Es} and in P. S. Novikov's physical interpretation of intuitionisic logic
in terms of weighting of masses \cite{N}*{\S III.4}.}
Thus we will consider the following parametric problem $\Delta_b$, $b\in B$:
\begin{center}
{\it Find a stable solution of the equation $f(x)=b$.}
\end{center}
If is easy to see that the set $\F_b\subset f^{-1}(b)$ of all stable solutions of $f(x)=b$
is nothing but the stalk of the sheaf $\F=\gamma\sigma f$ of germs of sections of the map $f$.

For example, if $f\:\R\to\R$ is a polynomial with real coefficients, then stable solutions
of $f(x)=b$ are all the non-repeated real roots of the polynomial $f(x)-b$, along with
its other roots of odd multiplicity.
Indeed, $f$ is a local homeomorphism at each of those roots, but has a local extremum at every root
$x_0$ of even multiplicity.
Hence the perturbed equation $f(x)=b\pm\eps$ (with $+$ chosen in the case of a local maximum,
and $-$ in the case of a local minimum) has no real solutions near $x_0$ as long as $\eps>0$ is
sufficiently small.

If $f\:\C\to\C$ is a polynomial with complex coefficients, then stable solutions of the equation
$f(z)=b$, where $b\in\C$, are precisely all the non-repeated complex roots of the polynomial $f(z)-b$.
Indeed, at a root $z_0$ of multiplicity $d>1$ we can write $f(z)=b+k(z-z_0)^d+o\left((z-z_0)^d\right)$.
So $f$ is locally a $d$-fold branched covering, and therefore admits no section over any neighborhood
of $b=f(z_0)$.
In other words, even though the perturbed equation $f(z)=b+\eps$ does have solutions near $z_0$
whenever $|\eps|$ is sufficiently small, we cannot choose such a solution continuously
depending on $\eps$; but with discontinuous choice, we can never be sure that our chosen explicit
solution will persist when our knowledge of $b$ improves.

Similar behaviour occurs if we consider the problem of finding a root of a complex polynomial
$f_b=a_nz^n+\dots+a_0$ as a function of all its coefficients $b=(a_0,\dots,a_n)$.
Set $\Sigma_n=\{(a_0,\dots,a_n,x)\in\C^{n+2}\mid a_nx^n+\dots+a_0=0\}$,
and let $\phi\:\Sigma_n\to\C^{n+1}$ be the projection along the last coordinate.
Then $\phi^{-1}(b)$ is the set of roots of $f_b$, and the stalk
$(\sigma\phi)_b$ is the set of its stable roots.
By varying just $a_0$ like above we see that no multiple root is stable;
clearly, all non-repeated roots are stable.

\subsection{Operations on sheaves}

\subsubsection{Product and coproduct}
Given a collection of presheaves $F^i$, $i\in I$, their {\it product} $\prod F^i$ and {\it coproduct} 
$\bigsqcup F^i$ are defined respectively by $U\mapsto\prod_i F^i(U)$ and by $U\mapsto\bigsqcup_i F^i(U)$,
with the obvious restriction maps.

If the presheaves $F^i$ satisfy the sheaf conditions, then clearly so does $\prod F^i$.
On the other hand, $\bigsqcup F^i$ generally does not satisfy the second sheaf condition, since for 
a disconnected open set $U\sqcup V$ the sheaf conditions for each $F^i$ imply 
$F^i(U\sqcup V)\simeq F^i(U)\x F^i(V)$, so that 
$\big(\bigsqcup_i F^i\big)(U\sqcup V)\simeq\bigsqcup_i F^i(U)\x F^i(V)$, which is generally not the same as
$\big(\bigsqcup_i F^i\big)(U)\x\big(\bigsqcup_i F^i\big)(V)=
\big(\bigsqcup_i F^i(U)\big)\x\big(\bigsqcup_i F^i(V)\big)$.

Given a collection of sheaves $\F^i$, $i\in I$, their {\it product} $\prod\F^i:=\gamma\prod\sigma\F^i$ 
satisfies $\sigma\prod\F^i\simeq\prod\sigma\F^i$ by the above.
Their {\it coproduct} is defined by $\bigsqcup\F^i=\gamma\bigsqcup\sigma\F^i$.

It is not hard to see that the operations just introduced are precisely the product and the coproduct in 
the categories of sheaves and presheaves.

The coproduct of sheaves $\F^i\:E_i\to B$ can be alternatively
described as the sheaf $\bigsqcup\F^i\:\bigsqcup E_i\to B$,
where $\bigsqcup E_i$ is the disjoint union of spaces, and
$\bigsqcup\F^i$ is defined by $\F^i$ on the $i$th summand.
Thus $\big(\bigsqcup\F^i\big)_b\simeq\bigsqcup(\F^i)_b$
(isomorphism in the category of sets) for each $b\in B$.
More generally, $\big(\bigsqcup F^i\big)_b\simeq\bigsqcup (F^i)_b$
for any presheaves $F^i$ since direct limit commutes with coproducts.

The product of {\it two} sheaves $\F\:E\to B$ and $\F'\:E'\to B$ can be
alternatively described as the fiberwise product
$\F\x\F'\:E\x_B E'\to B$, where $E\x_B E'$ is the subspace of
the Cartesian product $E\x E'$ consisting of all pairs $(e,e')$
such that $\F(e)=\F'(e')$.
Thus $(\F\x\F')_b\simeq\F_b\x\F_b'$.
More generally, $(F\x F')_b\simeq F_b\x F_b'$ for any presheaves $F$
and $F'$ since direct limit commutes with finite products.

The fiberwise product $\big(\prod_B E_i\big)\to B$ of
the sheaves $\F^i\:E_i\to B$ need not be a sheaf when $I$ is infinite.
For, although every $x_i\in(\F^i)_b$ has a neighborhood
in $E_i$ that maps homeomorphically onto a neighborhood $U_i$ of
$b$ in $B$, the fiberwise product may fail to be a local homemorphism
at the point $(i\mapsto x_i)$ if $\bigcap_i U_i$ is not
a neighborhood of $b$.

Given presheaves $F^i$, $i\in I$, for each $n\in I$, the projection $\prod_i F^i\to F^n$ induces
a map $\big(\prod_i F^i\big)_b\to (F^n)_b$ for each $b\in B$.
These in turn yield a map
$$\script G_b\:\left(\prod F^i\right)_b\to\prod\big(F^i\big)_b.$$
An element of the right hand side is a collection of germs of local sections $s_i\:U_i\to E_i$, whereas 
an element of the left hand side is a germ of a collection of local sections $U\to E_i$.
The germs of $s_i$'s may fail to define such a single germ if $\bigcap U_i$ is not a neighborhood of $b$; 
thus $\script G_b$ is not surjective in general.
On the other hand, given two collections of local sections, $\sigma_i\:U\to E_i$ and $\tau_i\:V\to E_i$ 
such that each $\sigma_i$ equals $\tau_i$ on some open set $W_i\subset U\cap V$, there might
be no single open set $W\subset U\cap V$ such that each $\sigma_i|_W=\tau_i|_W$; thus $\script G_b$ is 
not injective in general.%
\footnote{Of course, it is easy to provide a specific example, thus refuting an assertion in 
\cite{God}*{\S II.1.10}.}

It is clear, however, that $\script G_b$ is a bijection when $B$ is an Alexandroff space.

Let us note that the existence of $\script G_b$ implies that if $\big(\prod F^i\big)_b\ne\emptyset$, then each
$(F^i)_b\ne\emptyset$.
The converse fails:

\begin{proposition} \label{Tarski forall fails}
There exist sheaves $\F^1,\F^2,\dots$ on $\R$ such that
each $(\F^i)_b\ne\emptyset$ for each $b\in\R$ but
$\prod\F^i=\chi_\emptyset$.
\end{proposition}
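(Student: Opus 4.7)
The plan is to construct the $\F_n$ as coproducts of characteristic sheaves on small overlapping intervals covering $\R$, so that sections of each $\F_n$ exist locally near every point but no single open neighborhood supports sections of all $\F_n$ simultaneously. Concretely, for $k \in \Z$ and $n \in \N$ let $I_{k,n} = (k/n,(k+2)/n)$, and set
\[
\F_n = \bigsqcup_{k \in \Z} \chi_{I_{k,n}}.
\]
Since the intervals $\{I_{k,n}\}_{k \in \Z}$ cover $\R$ for every $n$, the stalk $(\F_n)_b = \bigsqcup_k (\chi_{I_{k,n}})_b$ is nonempty for every $b \in \R$.

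To show $\prod_n \F_n = \chi_\emptyset$, I would use that this product is the sheafification of the presheaf $U \mapsto \prod_n (\sigma\F_n)(U)$, so its stalk at $b$ is the direct limit of these products over open neighborhoods of $b$; this direct limit of sets vanishes as soon as every open $U \ni b$ has some factor $(\sigma\F_n)(U) = \emptyset$. Now a section of $\F_n$ over $U$ is a locally constant map $s \: U \to \Z$ with $s^{-1}(k) \subset I_{k,n}$ for each $k$, so in particular each connected component of $U$ must be contained in a single interval $I_{k,n}$, and hence have length at most $2/n$. Given any open $U \ni b$, let $J$ be the connected component of $U$ containing $b$; then $J$ is a nonempty open interval in $\R$ of some positive length $\ell > 0$, and for every integer $n > 2/\ell$ the component $J$ exceeds length $2/n$ and thus is not contained in any $I_{k,n}$, forcing $(\sigma\F_n)(U) = \emptyset$.

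This yields $(\prod_n \F_n)_b = \emptyset$ for every $b \in \R$; hence the total space of $\prod_n \F_n$ is empty and the product sheaf coincides with $\chi_\emptyset$. The only delicate step is the identification of the product stalk with a direct limit of products of sections together with the observation that the limit is empty as soon as one factor in the diagram is empty (which rests on the sheafification formula and the fact that direct limit commutes with coproducts, both recalled earlier in the section); the remaining ingredients are the elementary length comparison above and the observation that connected components of an open subset of $\R$ are open intervals.
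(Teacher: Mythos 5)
Your proof is correct and is essentially the paper's own argument: the paper likewise takes each $\F_i$ to be a coproduct of characteristic sheaves of intervals of length $2/i$ covering $\R$ (it uses all such intervals rather than your countable subfamily, which changes nothing), so that stalks are nonempty while any section over an open $U$ forces each component of $U$ into a single interval of length $2/i$. The details you supply — the locally constant index map, the length comparison on the component of $b$, and the vanishing of the direct limit of the product presheaf — are exactly the ones the paper leaves implicit.
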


\begin{proof} It suffices to consider, for each $i$, the cover
$C_i=\{(r-\frac1i,r+\frac1i)\mid r\in\R\}$ of $\R$ by all open intervals
of length $2/i$, and the sheaf $\F^i=\bigsqcup_r\chi_{(r-1/i,\,r+1/i)}$.
\end{proof}

\subsubsection{The support of a product}
A topological space $X$ is of {\it covering dimension zero} if every open cover of $X$ has a refinement
consisting of disjoint open sets; and of {\it inductive dimension zero} if it has a base of topology
consisting of clopen sets.
For $T_4$ spaces (i.e.\ $T_1$ spaces where disjoint closed subsets are separated by neighborhoods),
covering dimension zero implies inductive dimension zero (see \cite{En}*{6.2.5, 6.2.6}).
The converse holds for spaces whose topology has a countable base (see \cite{En}*{6.2.7}).
In particular, the two notions are equivalent for separable metrizable spaces, so we may speak of
{\it zero-dimensional separable metrizable spaces}.
Examples of such spaces include the Cantor set and the Baire space, as well as their arbitrary subspaces.
Indeed, it is easy to see that if a space is of inductive dimension zero, then so is every its subspace.

\begin{lemma}\label{0-dim}
If $\F$ is a sheaf on a zero-dimensional separable metrizable space $B$, and $U$ is
an open subset of $B$, then $\F$ has a section over $U$ if and only if $U\subset\Supp\F$.
\end{lemma}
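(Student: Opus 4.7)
The ``only if'' direction is immediate from the definitions: if $s\colon U\to E$ is a section of $\F$, then for each $b\in U$ the germ of $s$ at $b$ shows that $\F_b\neq\emptyset$, so that $b\in\Supp\F$; equivalently, $U\subset \F(E)=\Supp\F$. All the work is in the converse.

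For the ``if'' direction, assume $U\subset\Supp\F$. The plan is to cover $U$ by pairwise disjoint clopen-in-$U$ pieces, each small enough to admit a local section of $\F$, and then glue. First, for every $b\in U$, pick a section $s_b\colon V_b\to E$ of $\F$ defined on some open neighborhood $V_b\subset B$ of $b$ (which exists because $b\in\Supp\F$), and replace $V_b$ by $V_b\cap U$ so as to have $V_b\subset U$. Next I will use the zero-dimensionality of $B$. As a subspace of a zero-dimensional separable metrizable space, $U$ is itself zero-dimensional separable metrizable, so it has a countable base consisting of sets that are clopen in $U$. Using this base, shrink each $V_b$ to a clopen-in-$U$ neighborhood $W_b\subset V_b$ of $b$, over which $\F$ still has a section (namely $s_b|_{W_b}$).

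Now $\{W_b\}_{b\in U}$ is an open cover of $U$; since $U$ is separable metrizable, hence Lindel\"of, it has a countable subcover $W_1,W_2,\dots$. The standard disjointification trick works because our pieces are clopen: set
\[
W'_n=W_n\setminus (W_1\cup\dots\cup W_{n-1}).
\]
Since each $W_1,\dots,W_{n-1}$ is clopen in $U$, so is their union, hence so is $W'_n$. The sets $W'_n$ are pairwise disjoint, open in $U$ (hence open in $B$), and cover $U$. Over each $W'_n\subset W_n$ we have the restriction $s'_n$ of the chosen section of $\F$ on $W_n$. Define $s\colon U\to E$ by $s|_{W'_n}=s'_n$ for each $n$; this is unambiguous because the $W'_n$ are disjoint, and it is continuous because it is continuous on each of the open sets $W'_n$, which form an open cover of $U$. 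Finally $\F\circ s=\id_U$ piecewise, so $s$ is a section of $\F$ over $U$, as desired.

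The main point where the hypotheses are used is the disjointification step: without clopen refinements one only obtains an overlapping cover, and the locally chosen sections need not agree on overlaps (see, e.g., Proposition~\ref{Tarski forall fails} for the kind of obstruction that arises in general). Zero-dimensionality produces the clopen refinement, and separability lets us enumerate it so that a single countable disjointification suffices; these are exactly the ingredients that make the naive gluing go through.
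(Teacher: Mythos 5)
Your proof is correct and follows essentially the same route as the paper's: cover $U$ by open sets over which $\F$ has sections, pass to a refinement by pairwise disjoint open sets, and glue. The only difference is that the paper invokes covering dimension zero directly (every open cover has a disjoint open refinement, which it has already noted is equivalent to having a clopen base for separable metrizable spaces), whereas you re-derive the disjoint refinement by hand from the clopen base via the Lindel\"of property and clopen disjointification --- a harmless, slightly more self-contained detour.
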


\begin{proof} The ``only if'' implication follows from the definitions.
Given an open $U\subset\Supp\F$, each $x\in U$ is contained in an open set $V_x$ over which $\F$
has a section.
Without loss of generality $V_x\subset U$.
Since $U$ is zero-dimensional, the cover $\{V_x\mid x\in U\}$ of $U$ has a refinement $\{U_\alpha\}$
consisting of disjoint open sets.
Since $\F$ has a section over each $U_\alpha$ and is a sheaf (not just a presheaf), by combining
these sections we get a section over $U$.
\end{proof}

\begin{proposition} \label{product sheaf}
If $F^i$ are presheaves on a space $B$, then $\Supp\prod F^i\subset\Int\big(\bigcap\Supp F^i\big)$.
The reverse inclusion holds in each of the following cases:

(a) $B$ is an Alexandroff space;

(b) $B$ is a zero-dimensional separable metrizable space, and $F^i=\sigma\F^i$ for some $\F^i$.
\end{proposition}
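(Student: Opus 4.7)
The plan is to unwind the defining description of $\Supp$ (as the union of open sets where the presheaf has sections) in each case and use that a section of $\prod F_i$ over $U$ is nothing but a family of sections of the $F_i$ over $U$.

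For the forward inclusion, I would start with $b \in \Supp\prod F_i$, pick an open $U \ni b$ with $(\prod F_i)(U) = \prod_i F_i(U) \neq \emptyset$, and observe that then each factor $F_i(U)$ is nonempty. This immediately gives $U \subset \Supp F_i$ for every $i$, so $U \subset \bigcap_i \Supp F_i$, whence $b \in \Int \bigcap_i \Supp F_i$. This step uses nothing beyond the definitions.

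For (a), the key fact about Alexandroff spaces is that every point $b \in B$ has a smallest open neighborhood $V_b$ (the intersection of all open neighborhoods of $b$, which is open by the Alexandroff property). Given $b \in \Int \bigcap_i \Supp F_i$, I would not actually need this interior hypothesis: for each $i$, since $b \in \Supp F_i$, there is some open $W_i \ni b$ with $F_i(W_i) \neq \emptyset$; now $V_b \subset W_i$, so the restriction map $F_i(W_i) \to F_i(V_b)$ forces $F_i(V_b) \neq \emptyset$. Taking the product over $i$ yields $(\prod F_i)(V_b) \neq \emptyset$, hence $V_b \subset \Supp \prod F_i$ and in particular $b \in \Supp \prod F_i$.

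For (b), I would use Lemma \ref{0-dim} directly. Note first that when $F_i = \sigma \F_i$, the support $\Supp F_i$ coincides with $\Supp \F_i$ (the image of the sheaf map). Let $b \in \Int \bigcap_i \Supp \F_i$ and choose an open $U \ni b$ with $U \subset \bigcap_i \Supp \F_i$. By Lemma \ref{0-dim}, the inclusion $U \subset \Supp \F_i$ means exactly that $\F_i$ admits a section over $U$, i.e.\ $(\sigma\F_i)(U) \neq \emptyset$. Taking the product over $i$, $(\prod F_i)(U) = \prod_i (\sigma \F_i)(U) \neq \emptyset$, so $U \subset \Supp \prod F_i$ and $b \in \Supp \prod F_i$.

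There is no real obstacle: the whole proposition reduces to bookkeeping with the definition of $\Supp$ once one knows (i) that in an Alexandroff space smallest open neighborhoods exist and restrictions are nonempty because target sets are nonempty from any enclosing open, and (ii) that in a zero-dimensional separable metrizable space the previously proved Lemma \ref{0-dim} upgrades ``locally has a section'' to ``has a section over any open subset of the support''. The only point that deserves care is to remember that in (b) the hypothesis $F_i = \sigma\F_i$ is used precisely to apply Lemma \ref{0-dim}, which is a statement about sheaves and not general presheaves.
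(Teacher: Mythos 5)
Your proof is correct and follows essentially the same route as the paper's: the forward inclusion by observing that a section of $\prod F_i$ over $U$ projects to a section of each $F_i$ (the paper phrases this at the stalk level via the map $\frak G_b$, which is equivalent), case (a) by unwinding exactly why $\frak G_b$ is bijective over an Alexandroff space (minimal open neighborhoods), and case (b) by the same appeal to Lemma \ref{0-dim}. Your remark that the interior hypothesis is not needed in case (a) is also consistent with the paper, since in an Alexandroff space $\bigcap_i\Supp F_i$ is already open.
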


\begin{proof}
The inclusion follows from the existence of the map $\script G_b$ and from the fact that the left hand side
is an open set.
The reverse inclusion for holds for Alexandroff spaces since their $\script G_b$ is bijective.
If $B$ is a zero-dimensional separable metrizable space and $x$ lies in an open set $U$ contained in
$\Supp\F^i$ for each $i$, then by \ref{0-dim} each $\F^i$ has a section over $U$.
Hence so does their product, whose support therefore contains $x$.
\end{proof}

\subsubsection{Hom-sheaf}
If $F$ is a presheaf on a space $B$, and $A\subset B$ is open, then open subsets of $A$ are open in $B$, 
so $F$ determines the restriction presheaf $F|_A$ on $A$.
In the case of a sheaf $\F\:E\to B$, the corresponding construction
generalizes to the case of an arbitrary continuous map $f\:A\to B$,
which induces the inverse image $f^*\F\:f^*E\to A$, where
$f^*E=\{(a,e)\in A\x E\mid f(a)=\F(e)\}$ and $(f^*\F)(a,e)=a$.

If $F$ and $G$ are presheaves on $B$, let us consider the set
$\Hom(F,G)(U)$ of morphisms $F|_U\to G|_U$.
If $V$ is an open subset of $U$, every such morphism $\phi$ restricts to
a morphism $\phi|_V\:F|_V\to G|_V$, and thus we get the
{\it exponential} presheaf $\Hom(F,G)$.
If $G$ satisfies the sheaf conditions, then clearly so does $\Hom(F,G)$.
This enables one to define the sheaf
$\Hom(\F,\G):=\gamma\Hom(\sigma\F,\sigma\G)$, called the {\it sheaf
of germs of morphisms} between the sheaves $\F$ and $\G$.
In general, for any presheaves $F$ and $G$, the stalk
$\Hom(F,G)_b$ consists of germs of morphisms $F\to G$, that is,
of equivalence classes of local morphisms
$\phi_U\:F|_U\to G|_U$ over open neighborhoods $U$ of $b$, where
$\phi_U\sim\phi_V$ if $U\cap V$ contains an open neighborhood
$W$ of $b$ such that $\phi_U|_W=\phi_V|_W$.
A germ of morphisms determines a map between germs, so we get a map
$$\script F_b\:\Hom(F,G)_b\to\Hom(F_b,G_b),$$ where $\Hom(S,T)$ denotes
the set of all maps between the sets $S$ and $T$.
This map is neither injective nor surjective in general:

\begin{proposition} \label{Tarski implication fails}
For any set $S$ there exist sheaves $\F$, $\G$ on $\R$ such that
$\F_0=\G_0=\emptyset$ and $\Hom(\F,\G)_0\simeq S$.
\end{proposition}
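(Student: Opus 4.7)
The plan is to take both $\F$ and $\G$ to be sheaves supported on the positive half-line, which forces $\F_0=\G_0=\emptyset$, while arranging the Hom-sheaf near $0$ to pick up a single copy of $S$ from locally constant $S$-valued functions on a one-sided punctured neighborhood.

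Concretely, I would set $\F=\chi_{(0,1)}$, and take $\G$ to be the projection $(0,\infty)\x S\to\R$, where $S$ is given the discrete topology and the map to $\R$ is the composition of projection onto the first coordinate with the inclusion $(0,\infty)\emb\R$. Both maps are visibly local embeddings, hence sheaves. A section of either $\F$ or $\G$ over a set $V\ni 0$ would have to project to $V$, yet in each case the image of the structure map avoids $0$; thus neither admits any section over a neighborhood of $0$, and $\F_0=\G_0=\emptyset$.

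For the Hom-sheaf computation, the key observation is the canonical bijection $\Hom(\chi_U,\H)\simeq\H(U)$ for any open $U\subset V$ and any sheaf $\H$ on $V$: a morphism is completely determined by where it sends the tautological section $\id_U\in\sigma\chi_U(U)$, and the resulting assignment is a bijection by the evident universal property of $\Char U$ at the presheaf level combined with the sheaf condition for $\H$. Applied with $\F|_V=\chi_{V\cap(0,1)}$ and $\H=\G|_V$, this yields $\Hom(\F,\G)(V)\simeq\G(V\cap(0,1))$. For $V=(-\eta,\eta)$ with $0<\eta<1$ one has $V\cap(0,1)=(0,\eta)$, and a section of $\G$ over this interval is a continuous lift into $(0,\infty)\x S$ whose first coordinate is the inclusion; since $S$ is discrete and $(0,\eta)$ is connected, such a lift is just a constant function into $S$. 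Hence $\Hom(\F,\G)\bigl((-\eta,\eta)\bigr)\simeq S$, and the restriction maps for shrinking $\eta$ are identities on $S$, so passing to the direct limit over neighborhoods of $0$ gives $\Hom(\F,\G)_0\simeq S$.

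There is no substantive obstacle here: the only step requiring a brief verification is the identification $\Hom(\chi_U,\H)\simeq\H(U)$, which is a routine consequence of the sheaf axioms reviewed earlier in the text. The main design choice to be careful about is placing the supports of $\F$ and $\G$ both to the right of $0$ (rather than on both sides); using, say, $\chi_{\R\setminus\{0\}}$ in place of $\chi_{(0,1)}$ would double the count and yield $S\x S$ in the stalk instead of $S$. The construction also works for $S=\emptyset$, in which case $\G$ is the empty sheaf and $\G\bigl((0,\eta)\bigr)=\emptyset$, matching $S$.
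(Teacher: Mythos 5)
Your proof is correct and follows essentially the same route as the paper's: take $\F$ to be the characteristic sheaf of an open set missing $0$, take $\G$ to be a disjoint union of copies of such a sheaf indexed by $S$, and use the fact that morphisms from $\chi_U$ to a sheaf $\H$ over $V$ correspond to sections of $\H$ over $U\cap V$. The one substantive difference is your choice of a \emph{one-sided} open set, and it is a real improvement: the paper takes $U=\R\but\{0\}$, for which a punctured interval around $0$ has two components, so the stalk of the Hom-sheaf at $0$ is then $S\x S$ rather than $S$ (a germ records a value on each side of $0$); your $(0,1)$ and $(0,\infty)\x S$ avoid this, so your version is the one that literally yields $\Hom(\F,\G)_0\simeq S$ for every $S$, including $S$ of non-square finite cardinality and $S=\emptyset$.
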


\begin{proof}
Let $U=\R\but\{0\}$ and let $\F=\chi_U$.
Let $\G$ be the disjoint union of copies of $\F$ indexed by $S$.
Then for every open neighborhood $V$ of $0$ the set of sheaf
morphisms $\F|_V\to\G|_V$ is in a bijection with $S$, and its elements
persist under restriction.
\end{proof}

Let us mention two more peculiar examples:

\begin{example} \label{Tarski implication fails2} There exist sheaves $\F$, $\G$ on $\R^2$ such that
$\Supp\F=\Supp\G=\R^2\but\{0\}$, yet $\Hom(\F,\G)_{0}=\emptyset$.
Indeed, let $U=\R^2\but\{0\}$ and let $\F=\chi_U$.
Let $\G\:\tilde U\to U\subset\R^2$ be the nontrivial double covering over $U$.
Then for every open neighborhood $V$ of $0$ there exist no sheaf
morphisms $\F|_V\to\G|_V$.
\end{example}

\begin{example} There exist sheaves $\F^n$ and $\G$ on $\R$, $n\in\N$, such that each
$\Hom(\F^n,\G)\ne\chi_\emptyset$ but
$\Hom(\bigsqcup_n\F^n,\G)=\chi_\emptyset$.
Indeed, let $\F^n=\chi_{(-\infty,-1/n)\cup(1/n,\infty)}$ and let $\G=\chi_\emptyset$.
Then $\Hom(\F^n,\G)\simeq\chi_{(-1/n,1/n)}$.
It follows that $\Hom(\bigsqcup_n\F^n,\G)\simeq
\prod_{n\in\N}\chi_{(-1/n,1/n)}\simeq\chi_\emptyset$.
\end{example}

\subsubsection{The support of a Hom-sheaf}

\begin{proposition} \label{Hom-sheaf}
$\Supp\Hom(F,G)\subset\Int\big(\Supp G\cup (B\but\Supp F)\big)$ for presheaves $F$, $G$
on a space $B$, and the reverse inclusion holds in each of the following cases:

(a) $G$ is the characteristic presheaf of an open set, in which case so is $\Hom(F,G)$;

(b) $B$ is a zero-dimensional separable metrizable space, and $G=\sigma\G$ for some $\G$.
\end{proposition}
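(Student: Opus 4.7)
The plan is to establish the general inclusion and then each of the reverse inclusions separately.

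For the general inclusion, since $\Supp\Hom(F,G)$ is open it suffices to show $\Supp\Hom(F,G)\subset\Supp G\cup(B\but\Supp F)$. A point $b$ in the left-hand side has an open neighborhood $U$ over which some presheaf morphism $\phi\:F|_U\to G|_U$ exists, and this induces a map of stalks $\phi_b\:F_b\to G_b$. If $F_b=\emptyset$ then $b\notin\Supp F$; otherwise $F_b$ is nonempty, forcing $G_b\ne\emptyset$, so that $b\in\Supp G$.

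For case (a), I will compute $\Hom(F,\Char U)$ directly. The key observation, used throughout, is that $F(V)\ne\emptyset$ forces $V\subset\Supp F$ (since $V$ then witnesses its own membership in the defining union of $\Supp F$), and equivalently $V\not\subset\Supp F$ implies $F(V)=\emptyset$. A morphism $\phi\:F|_V\to\Char U|_V$ is then simply the assignment $V'\mapsto\id_{V'}$ on those open $V'\subset V$ with $F(V')\ne\emptyset$; it exists iff every such $V'$ lies in $U$, iff $V\cap\Supp F\subset U$, iff $V\subset U\cup(B\but\Supp F)$, and in that case it is unique. This identifies $\Hom(F,\Char U)$ with $\Char W$, where $W=\Int(U\cup(B\but\Supp F))$, and so $\Supp\Hom(F,\Char U)=W$.

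For case (b), given a point $b\in\Int(\Supp G\cup(B\but\Supp F))$, I pick an open neighborhood $U$ of $b$ with $U\cap\Supp F\subset\Supp G=\Supp\G$. By Lemma \ref{0-dim} applied to $\G$ and the open subset $U\cap\Supp F$ of the zero-dimensional separable metrizable space $B$, there is a section $s$ of $\G$ over $U\cap\Supp F$. I will use $s$ to construct a morphism $\phi\:F|_U\to G|_U$ by setting $\phi_V$ to be the constant map with value $s|_V\in G(V)$ whenever $V\subset\Supp F$ (so that $V\subset U\cap\Supp F$ and $s|_V$ makes sense), and the empty map otherwise, where $F(V)=\emptyset$ by the key observation. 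Naturality with respect to a restriction $V\subset V'\subset U$ is automatic: if $V'\subset\Supp F$ then so is $V$ and both composites produce the constant $s|_V$, while if $V'\not\subset\Supp F$ then the source $F(V')$ is empty. This exhibits an element of $\Hom(F,G)(U)$ and so puts $b$ in $\Supp\Hom(F,G)$.

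The main obstacle is really the construction in case (b); the existence of a section of $\G$ over the possibly complicated open set $U\cap\Supp F$ depends crucially on zero-dimensionality via Lemma \ref{0-dim}, and the argument would fail without the hypothesis $G=\sigma\G$ (one cannot in general promote a generic presheaf element to something globally coherent). Once $s$ is at hand, the verification that the constant-value-$s|_V$ prescription defines a natural transformation reduces to applying the dichotomy $F(V')\ne\emptyset\Rightarrow V'\subset\Supp F$.
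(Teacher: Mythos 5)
Your proof is correct and follows essentially the same route as the paper's: the general inclusion via the induced map on stalks $\frak F_b$ plus openness of the support, case (a) by the direct computation $\Hom(F,\Char U)\simeq\Char\bigl(\Int(U\cup(B\but\Supp F))\bigr)$, and case (b) by using Lemma \ref{0-dim} to produce a section of $\G$ and sending everything to its restrictions. The only (immaterial) difference is that you take the section over $U\cap\Supp F$ and split cases on $V\subset\Supp F$, where the paper uses $U\cap\Supp G$ and splits on $V\subset\Supp G$; both dichotomies yield the same natural transformation.
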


\begin{proof} The inclusion follows from the existence of the map $\script F_b$ and from the fact that
the left hand side is an open set.
Suppose that $x$ lies in an open set $U$ contained in $\Supp G\cup (B\but\Supp F)$.
Thus if $y\in U$ and $y\in\Supp F$, then $y\in\Supp G$; that is, $U\cap\Supp F\subset U\cap\Supp G$.
Then for each open $V\subset U$ such that $V\not\subset\Supp G$ we have $V\not\subset\Supp F$, hence
$F(V)=\emptyset$ and there is a unique map $F(V)\to G(V)$.
Let us now consider an open $V\subset U$ such that $V\subset\Supp G$.

If $G=\Char(\Supp G)$, then $G(V)=\{\id_V\}$ and there is a unique map $F(V)\to G(V)$.

If $B$ is a zero-dimensional separable metrizable space, and $G=\sigma\G$, then by \ref{0-dim}
$\G$ has a section $s$ over $U\cap\Supp G$, and we have the constant map $F(V)\to G(V)$ onto $\{s|_V\}$.

In either case, the constructed maps $F(V)\to G(V)$ commute with the restriction maps $G(V)\to G(W)$
and $F(V)\to F(W)$ for each open $W\subset V\subset U$, and so determine a morphism of presheaves
$F|_U\to G|_U$.
The germ at $x$ of this morphism is an element of the stalk $\Hom(F,G)_x$, so $x$ is contained in
$\Supp\Hom(F,G)$.

Moreover, in the case (a) we have shown that for every open $U$ contained in $\Supp G\cup (B\but\Supp F)$,
we have a unique morphism $F|_U\to G|_U$; thus $\Hom(F,G)(U)$ contains precisely one element.
If $U$ is not contained in $\Supp G\cup (B\cup\Supp F)$, then $U$ contains a point $y$ such that
$y\notin\Supp G$ and $y\in\Supp F$.
The latter implies that $y$ lies in an open set $V$ such that $F(V)\ne\emptyset$.
Without loss of generality $V\subset U$.
On the other hand, since $y\notin\Supp G$, we have $G(V)=\emptyset$.
Hence there exists no map $F(V)\to G(V)$, and consequently no morphism $F|_U\to G|_U$.
Thus $\Hom(F,G)(U)=\emptyset$.
It then follows that $\Hom(F,G)$ is isomorphic to $\Char\big(\Supp G\cup (B\cup\Supp F)\big)$.
\end{proof}

\begin{corollary} \label{sheaf negation}
$\Hom(F,\Char\emptyset)\simeq\Char\big(\Int(B\but\Supp F)\big)$.
\end{corollary}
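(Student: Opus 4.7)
The plan is to derive this corollary as the immediate specialization of Proposition \ref{Hom-sheaf}(a) to the case $G = \Char\emptyset$. So first I would check that $\Char\emptyset$ genuinely falls under hypothesis (a): by the defining formula $(\Char U)(V) = \{\id_V\}$ if $V \subset U$ and $\emptyset$ otherwise, we see that $(\Char\emptyset)(V) \ne \emptyset$ precisely when $V = \emptyset$. Hence $\Supp\Char\emptyset = \emptyset$, and $\Char\emptyset$ is the characteristic presheaf of the open set $\emptyset \subset B$.

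Next, substituting $\Supp G = \emptyset$ into the formula $\Supp\Hom(F,G) = \Int\bigl(\Supp G \cup (B \setminus \Supp F)\bigr)$ given by (a), one obtains
\[
\Supp\Hom(F, \Char\emptyset) = \Int\bigl(\emptyset \cup (B \setminus \Supp F)\bigr) = \Int(B \setminus \Supp F).
\]
Moreover, the conclusion of (a) asserts that $\Hom(F, \Char\emptyset)$ is itself a characteristic presheaf of an open set. Since a characteristic presheaf of an open set is recovered up to isomorphism from its support (and automatically satisfies the sheaf conditions, so coincides with its sheafification), this yields the stated isomorphism $\Hom(F, \Char\emptyset) \simeq \Char\bigl(\Int(B \setminus \Supp F)\bigr)$.

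There is no genuine obstacle here: the whole content lies in Proposition \ref{Hom-sheaf}(a), and the present statement is essentially a notational rephrasing in the special case $G = \Char\emptyset$. The only thing worth double-checking directly, as a sanity test, is the morphism count: for any open $U \subset B$, a morphism $F|_U \to (\Char\emptyset)|_U$ must send each $F(V)$ to $(\Char\emptyset)(V)$ for every open $V \subset U$, which forces $F(V) = \emptyset$ for every nonempty open $V \subset U$, i.e., $U \cap \Supp F = \emptyset$, i.e., $U \subset \Int(B \setminus \Supp F)$; and in that case the morphism is unique. This matches $\Char\bigl(\Int(B \setminus \Supp F)\bigr)$ exactly, confirming the claim.
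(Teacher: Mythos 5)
Your proposal is correct and matches the paper's intent exactly: the corollary is stated without proof precisely because it is the specialization of Proposition \ref{Hom-sheaf}(a) to $G=\Char\emptyset$, using that $\Supp\Char\emptyset=\emptyset$ and that part (a) guarantees $\Hom(F,G)$ is again a characteristic presheaf, hence determined by its support. Your closing direct count of morphisms is just the $G=\Char\emptyset$ case of the argument already carried out in the proof of \ref{Hom-sheaf}(a), so nothing further is needed.
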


The inclusion in \ref{Hom-sheaf} cannot be reversed for sheaves over an Alexandroff space:

\begin{proposition} \label{Alexandroff-implication}
There exist sheaves $\F$, $\G$ over a certain Alexandroff space $B$ such that
$\Supp\F=\Supp\G$, yet $\Supp\Hom(\F,\G)\ne B$.
\end{proposition}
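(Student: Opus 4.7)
The plan is to give an Alexandroff analogue of the preceding Example (the $\R^2$ one), replacing the nontrivial double cover of $\R^2\but\{0\}$ by a locally constant sheaf with $\Z/2$-monodromy on a combinatorial circle. Recall that on an Alexandroff space $B$ (viewed as a poset via $x\leq y\iff x\in\Cl\{y\}$) each point $p$ has a smallest open neighborhood $U_p=\{q\mid q\geq p\}$, so the stalk of any presheaf $F$ at $p$ is simply $F(U_p)$. Consequently a sheaf on $B$ is the same datum as a covariant functor from the poset $B$ to $\Set$ (stalks $F_p$ together with ``generization'' maps $F_p\to F_q$ for $p\leq q$, compatible with composition), and a sheaf morphism is a natural transformation; the idea is to choose $\G$ so that these naturality constraints are inconsistent around a combinatorial cycle.

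I would take $B=\{p_0,v_1,v_2,v_3,e_{12},e_{23},e_{31}\}$ with $p_0<v_i$ for each $i$ and $v_i,v_j<e_{ij}$ for $\{i,j\}\subset\{1,2,3\}$ of size $2$ (and no other relations) --- the face poset of a triangulated $S^1$ coned off at a new bottom point $p_0$. Define $\F$ by $\F_{p_0}=\emptyset$ and $\F_q=\{*\}$ for $q\neq p_0$, with the obvious (unique) generizations. Define $\G$ by $\G_{p_0}=\emptyset$ and $\G_q=\{a,b\}$ for $q\neq p_0$, declaring the generization maps $\G_{v_i}\to\G_{e_{ij}}$ to be the identity of $\{a,b\}$ for all five pairs $(v_1,e_{12})$, $(v_2,e_{12})$, $(v_2,e_{23})$, $(v_3,e_{23})$, $(v_3,e_{31})$, and the swap $a\leftrightarrow b$ for the remaining pair $(v_1,e_{31})$. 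These are honest functors: coherence along the chains $p_0<v_i<e_{ij}$ is automatic because $\G_{p_0}=\emptyset$ forces every arrow out of $\G_{p_0}$ to be the unique empty map. So both are sheaves, and by inspection $\Supp\F=\Supp\G=B\but\{p_0\}$.

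It remains to show $p_0\notin\Supp\Hom(\F,\G)$. Since $U_{p_0}=B$, this amounts to the claim that no sheaf morphism $\F\to\G$ exists. A natural transformation would assign elements $f_{v_i},f_{e_{ij}}\in\{a,b\}$ (with $f_{p_0}$ the unique empty map). Naturality at $e_{12}$ and $e_{23}$, where all generizations are the identity, forces $f_{v_1}=f_{v_2}=f_{v_3}=f_{e_{12}}=f_{e_{23}}$. Naturality at $e_{31}$ forces both $f_{e_{31}}=f_{v_3}$ (from $v_3\to e_{31}$, identity) and $f_{e_{31}}=\mathrm{swap}(f_{v_1})$ (from $v_1\to e_{31}$, swap), hence $f_{v_1}=\mathrm{swap}(f_{v_1})$, a contradiction. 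On the other hand, over each of the minimal open neighborhoods $U_{v_i}$ and $U_{e_{ij}}$ no combinatorial cycle is present, and one checks immediately that exactly two natural transformations exist there; so $\Supp\Hom(\F,\G)=B\but\{p_0\}\neq B$, as required.

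The only non-routine ingredient is the monodromy obstruction in the last paragraph, which is nothing but the combinatorial shadow of the topological obstruction (no section of the nontrivial double cover) that drives the preceding $\R^2$ example; everything else is bookkeeping about functors on a $7$-element poset.
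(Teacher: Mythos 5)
Your construction is correct: on an Alexandroff space a sheaf is indeed the functor of its stalks along generization, the stalk of $\Hom(\F,\G)$ at the bottom point $p_0$ is the set of morphisms over $U_{p_0}=B$, and your monodromy argument correctly shows that set is empty while both supports equal $B\but\{p_0\}$. The paper proves the proposition by the same general method --- an explicit finite poset with both stalks empty at the bottom and no natural transformation over the whole space --- but with a smaller example: $B=\{0<1\}\times\{0<1\}$, $\F$ the constant singleton above the bottom point, and $\G$ with $\G_{(1,0)}=\{x\}$, $\G_{(0,1)}=\{y\}$, $\G_{(1,1)}=\{x,y\}$, so that the component of a would-be morphism at $(1,1)$ is forced to be simultaneously $x$ and $y$. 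The obstruction there is a branching of $\G$ over its support rather than monodromy of a locally constant sheaf; your seven-point example buys a closer structural parallel with the preceding double-cover example over $\R^2\but\{(0,0)\}$, at the cost of a larger poset and slightly more case-checking. Both are complete proofs.
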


For future purposes we fix some terminology regarding sheaves on Alexandroff spaces.
If $B$ in an Alexandroff space and $p\in B$, let $\left<p\right>=\{q\in B\mid q\ge p\}$, the minimal
open set containing $p$.
Given a sheaf $\F\:E\to B$, for each $p,q\in B$ with $p\le q$ let $\F_{pq}\:\F_p\to\F_q$
denote the restriction map $(\sigma\F)(j)\:(\sigma\F)(\left<p\right>)\to(\sigma\F)(\left<q\right>)$
corresponding to the inclusion $j\:\left<q\right>\emb\left<p\right>$.
Let us note that the topology on $E$ is an Alexandroff topology; its corresponding preorder is given by
$x\le y$ if $x\in\F_p$, $y\in\F_q$, $p\le q$ and $y=\F_{pq}(x)$.

\begin{proof} Let $B$ be the product of two copies of the poset $0<1$.
Let $\F_{(0,0)}=\emptyset$ and $\F_{(1,0)}=\F_{(0,1)}=\F_{(1,1)}=\{z\}$.
Let $\G_{(0,0)}=\emptyset$, $\G_{(1,0)}=\{x\}$, $\G_{(0,1)}=\{y\}$ and $\G_{(1,1)}=\{x,y\}$,
where the maps $\G_{pq}$ are treated as inclusions.
Clearly, $\Hom(\F,\G)_{(0,0)}=\emptyset$.
\end{proof}

In the general case, the inclusion in \ref{Hom-sheaf} can be partially reversed:

\begin{proposition} \label{sheaf implication}
$\Supp G\cup\Int(B\but\Supp F)\subset\Supp\Hom(F,G)$.
\end{proposition}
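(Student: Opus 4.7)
The plan is to directly construct, for each $x$ in the left-hand side, an explicit local morphism $F|_U\to G|_U$ over some open neighborhood $U$ of $x$; its germ at $x$ will be an element of $\Hom(F,G)_x$, showing $x\in\Supp\Hom(F,G)$. Since $\Supp\Hom(F,G)$ is open, and the left-hand side is a union of open sets, it suffices to treat two cases separately: $x\in\Int(B\setminus\Supp F)$ and $x\in\Supp G$.

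For the first case, I would pick an open neighborhood $U$ of $x$ with $U\cap\Supp F=\emptyset$. Every open $V\subset U$ then satisfies $F(V)=\emptyset$ (for otherwise $V\subset\Supp F$ by definition of $\Supp F$, contradicting $V\cap\Supp F=\emptyset$). Hence for each open $V\subset U$ there is a unique vacuous map $F(V)\to G(V)$, and commutativity with the restriction maps is automatic. This defines a morphism $F|_U\to G|_U$, whose germ at $x$ witnesses $x\in\Supp\Hom(F,G)$.

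For the second case, I would choose an open neighborhood $U$ of $x$ with $G(U)\ne\emptyset$, pick some $s\in G(U)$, and define $\phi_V\:F(V)\to G(V)$ for each open $V\subset U$ by the constant map sending every section to $s|_V$ (vacuous if $F(V)=\emptyset$). The required compatibility with restrictions reduces to the identity $(s|_V)|_W=s|_W$ for $W\subset V\subset U$, which is immediate. The resulting morphism $F|_U\to G|_U$ again gives a germ at $x$ inside $\Hom(F,G)_x$.

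There is no real obstacle here: the argument is a straightforward case analysis, and the only point to be slightly careful about is the observation that $V\not\subset\Supp F$ forces $F(V)=\emptyset$, which follows directly from the definition of $\Supp F$ as the union of all opens on which $F$ has a section. I would conclude by combining the two cases to obtain the claimed inclusion.
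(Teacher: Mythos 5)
Your proof is correct and follows essentially the same route as the paper's: the constant morphism onto the restrictions of a fixed section $s\in G(U)$ in the case $x\in\Supp G$, and the vacuous morphism in the case $x\in\Int(B\setminus\Supp F)$, using that $F(V)\ne\emptyset$ for open $V$ forces $V\subset\Supp F$. No gaps.
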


\begin{proof} If $b\in\Supp G$, then $G(U)$ is nonempty for some open neighborhood $U$ of $b$.
Then we can pick some $s_U\in G(U)$ and define a morphism $F|_U\to G|_U$
by sending every $F(V)$, $V\subset U$, onto $\{s_U|_V\}\subset G(V)$.
If $B\but\Supp F$ contains an open neighborhood $U$ of $b$,
then $F(V)=\emptyset$ for all open $V\subset U$.
Hence a morphism $F|_U\to G|_U$ is defined by observing that all objects
in the domain are empty.
\end{proof}

\subsection{Sheaf-valued models} Let us summarize the behavior of stalks under operations on presheaves:

\begin{proposition}\label{strong BHK-verification}
If $F$, $G$ and $F^i$, $i\in I$, are presheaves on a space $B$, $b\in B$,
then
\begin{itemize}
\item $(F\x G)_b\simeq F_b\x G_b$;
\item $(F\sqcup G)_b\simeq F_b\sqcup G_b$;
\item there is a map $\script F_b\:\Hom(F,G)_b\to\Hom(F_b,G_b)$;
\item $(\Char\emptyset)_b=\emptyset$;
\item $\big(\bigsqcup F^i\big)_b\simeq\bigsqcup (F^i)_b$;
\item there is a map
$\script G_b\:\big(\prod F^i\big)_b\to\prod (F^i)_b$.
\end{itemize}
\end{proposition}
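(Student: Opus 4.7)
The plan is to verify each of the six items by unpacking the definitions, noting that most of them have essentially been observed already in the surrounding exposition. Throughout, a germ at $b$ is, by definition, an equivalence class of local sections over open neighborhoods of $b$, so everything reduces to manipulation of direct limits $\underset{\longrightarrow}{\lim}\,F(U)$ over the filtering system of open neighborhoods $U$ of $b$.

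First, for $(F\x G)_b\simeq F_b\x G_b$ and $(F\sqcup G)_b\simeq F_b\sqcup G_b$ (and, more generally, $(\bigsqcup F_i)_b\simeq\bigsqcup(F_i)_b$), I would invoke the standard fact that filtered colimits of sets commute with finite products and with arbitrary coproducts. Concretely, the commutation with coproducts holds because a compatible family of local sections of $\bigsqcup_i F_i$ over a sufficiently small neighborhood of $b$ selects a single index $i$ (as the filtered system of neighborhoods of $b$ is directed). For products, two germs $[s]\in F_b$, $[t]\in G_b$ represented by $s\in F(U)$ and $t\in G(V)$ together give a germ $[(s|_{U\cap V},t|_{U\cap V})]\in (F\x G)_b$; this is well-defined and a bijection with inverse induced by the component projections. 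These verifications use no hypotheses on $B$.

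For $(\Char\emptyset)_b=\emptyset$, I would simply note that $(\Char\emptyset)(U)=\emptyset$ for every nonempty open $U$, so the direct limit is empty. For the map $\frak F_b\:\Hom(F,G)_b\to\Hom(F_b,G_b)$, a germ of morphisms is represented by some $\phi_U\:F|_U\to G|_U$; this induces componentwise, via passage to the colimit, a map $(F|_U)_b\to (G|_U)_b$, which is the same data as a map $F_b\to G_b$; well-definedness on equivalence classes is immediate since two representatives agreeing on some $W\ni b$ induce the same map on the colimits. For $\frak G_b\:(\prod F_i)_b\to\prod(F_i)_b$, I would take, for each $n\in I$, the projection morphism $\prod_i F_i\to F_n$, apply the already-established functoriality of stalks to get $(\prod_i F_i)_b\to(F_n)_b$, and assemble these into the desired map to $\prod_i(F_i)_b$.

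This is all routine and there is no serious obstacle. The only point worth stressing is that for $\frak F_b$ and $\frak G_b$ one only asserts the \emph{existence} of a natural map, not that it is a bijection; as the discussion preceding the proposition showed (including Propositions \ref{Tarski forall fails}, \ref{Tarski implication fails} and \ref{Alexandroff-implication}), these maps fail to be bijective in general, and correct behavior is recovered only under extra assumptions on $B$ (Alexandroff, or zero-dimensional separable metrizable). So the proof should be a clean, structural one, avoiding any temptation to argue via stalkwise bijections.
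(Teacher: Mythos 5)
Your proposal is correct and matches the paper's treatment: the paper establishes each item in the preceding subsections by exactly these observations (direct limits commute with coproducts and finite products; $\frak G_b$ is assembled from the stalk maps induced by the projections $\prod_i F_i\to F_n$; $\frak F_b$ sends a germ of morphisms to the induced map of germs), and the proposition itself is stated as a summary without further proof. The only cosmetic remark is that for the coproduct no shrinking of neighborhoods is needed, since $(\bigsqcup_i F_i)(U)=\bigsqcup_i F_i(U)$ already places every section in a single summand.
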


This looks suspiciously similar to the BHK interpretation (see, in particular, \S\ref{confusion}),
which suggests that sheaves might have something to do with intuitionistic logic.

\subsubsection{Sheaf-valued structures}
Let $B$ be a topological space and $\D$ a set.
The set $\O$ will consist of all sheaves over $B$, and the function $\ocf:\O\to\{\Top,\Bot\}$ will send 
a sheaf $\F$ to $\Top$ if and only if it has a global section, i.e.\ $(\sigma\F)(B)\ne\emptyset$.

Thus, to specify a valuation, each problem variable $\alpha$ of arity $n$ must be interpreted by 
a $\D^n$-indexed family of sheaves on $B$ (i.e., by a function $|\alpha|\:\D^n\to\O$).
Upon a choice of a valuation $\pval$, a closed formula will be interpreted by a sheaf on $B$, and 
more generally a closed $n$-formula $\Phi$ will be interpreted by a $\D^n$-indexed family of sheaves on $B$, 
to be denoted $|\Phi|$ or in more detail $|\Phi|^{\pval}$, or in still more detail $|\Phi|^\pval_B$.
(The valuation will be suppressed in the notation when it is clear from context.)
Upon a further choice of a variable assignment $\iass$, an arbitrary formula will be interpreted by 
a sheaf on $B$, and more generally an arbitrary $n$-formula $\Phi$ will be interpreted by 
a $\D^n$-indexed family of sheaves on $B$, to be denoted $|\Phi|$ or in more detail 
$|\Phi|_\iass$ or in still more detail $|\Phi|^\pval_\iass$.
(The assignment will be suppressed in the notation when it is clear from context.)
Validity of an $n$-formula $\Phi$ in the model, $\Turnstile\Phi$, means that the sheaf 
$|\Phi|^\pval_\iass(t_1,\dots,t_n)$ has a global section for each tuple $(t_1,\dots,t_n)\in\D^n$ for an
arbitrary valuation $\pval$ and assignment $\iass$.

Intuitionistic connectives and quantifiers are interpreted by the basic operations on sheaves:
\begin{itemize}
\item $|\Phi\lor\Psi|=|\Phi|\sqcup|\Psi|$;
\item $|\Phi\land\Psi|=|\Phi|\x|\Psi|$;
\item $|\Phi\to\Psi|=\Hom(|\Phi|,|\Psi|)$;
\item $|\ab|=\Char\emptyset$;
\item $|\exists \tr x\,\Xi(\tr x)|=\bigsqcup_{d\in D}|\Xi|(d)$;
\item $|\forall \tr x\,\Xi(\tr x)|=\prod_{d\in D}|\Xi|(d)$,
\end{itemize}
where $\Phi$ and $\Psi$ are formulas and $\Xi$ is a 1-formula, and $\iass$ and $\pval$ are fixed.

In exactly the same way we define {\it pre\/}sheaf-valued structures.
The only essential difference between sheaf- and presheaf-valued structures is in the interpretations 
of $\lor$ and $\exists$, due to the fact that $\sigma\F\sqcup\sigma\G\ne\sigma(\F\sqcup\G)$ in general.

\begin{theorem} \label{global1} Sheaf- and presheaf-valued structures are models of intuitionisitic logic.
\end{theorem}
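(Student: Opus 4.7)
The plan is to verify, one by one, that each primary law of the Spector system (I)--(XIV) from \S\ref{logics} is satisfied in the two models, and that each inference rule is a semantic consequence. The observation making this routine is that a global section of $|\alpha\to\beta|=\Hom(|\alpha|,|\beta|)$ is simply a morphism $|\alpha|\to|\beta|$ of (pre)sheaves over $B$: for presheaves this is immediate from $\Hom(F,G)(B)=\Hom(F,G)$, and for sheaves from the fact, noted earlier, that $\Hom(\sigma\F,\sigma\G)$ already satisfies the sheaf conditions, so $\Hom(\F,\G)(B)=\Hom(\sigma\F,\sigma\G)$. Thus validity of an implicational axiom reduces to exhibiting a morphism, and admissibility of an inference rule whose premisses and conclusion are all implications reduces to a natural construction on morphisms.

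First I would dispatch the ``propositional'' items (I)--(IX). Axioms (II), (IV), (V), (IX) and inference rules (III), (VI), (VII), (VIII) are witnessed, respectively, by the identity morphism, the canonical projections $F\x G\to F$ and $F\x G\to G$, the canonical coprojections $F\to F\sqcup G$ and $G\to F\sqcup G$, the unique morphism from the initial object $\Char\emptyset=|\bot|$, composition of morphisms, pairing into a product, copairing out of a coproduct, and the currying bijection $\Hom(F\x G,H)\simeq\Hom(F,\Hom(G,H))$. All of these are elementary properties of the category of (pre)sheaves on $B$, which follow in a completely formal way from the constructions of $\x$, $\sqcup$, $\Hom$ and $\Char\emptyset$ in the previous subsections. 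Modus ponens (I) is just the evaluation of a morphism $|\alpha|\to|\beta|$ on a global section of $|\alpha|$.

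For the quantifier axioms (X), (XI) and the quantifier rules (XIII), (XIV), the interpretations of $\forall$ and $\exists$ by indexed products and coproducts mean that the same recipe still works: (X) and (XI) are given by an indexed projection and an indexed coprojection; (XIII) and (XIV) follow from the universal properties of $\prod$ and $\bigsqcup$, noting that the side condition ``$x$ not free in $\beta$'' ensures $|\beta|$ does not depend on the parameter being quantified, so that a family of morphisms $|\beta|\to|\alpha|(t)$ indexed by $t\in D$ assembles into a single morphism $|\beta|\to\prod_t|\alpha|(t)$, and dually.

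The main obstacle, and the only rule requiring a distinct argument, is the generalization rule (XII), $\alpha(x)\,/\,\forall x\,\alpha(x)$. For presheaves there is nothing to do since $(\prod_t F_t)(B)=\prod_t F_t(B)$, so a choice of a global section of $F_t$ for every $t\in D$ is literally a global section of $\prod_t F_t$. For sheaves, $|\forall x\,\alpha(x)|=\gamma\prod_t\sigma|\alpha|(t)$ is in general strictly smaller than the presheaf product (cf.\ Proposition~\ref{Tarski forall fails}); however, the canonical morphism of presheaves $\prod_t\sigma|\alpha|(t)\to\sigma\gamma\prod_t\sigma|\alpha|(t)$, evaluated at $B$, carries a tuple of global sections of the individual $|\alpha|(t)$ to a global section of $|\forall x\,\alpha(x)|$, which is exactly what is needed. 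Throughout, the only bookkeeping task is to verify that the morphisms produced are natural in the anonymous parameters of the scheme; this is mechanical but must be carried out uniformly in order to handle the quantifier rules correctly.
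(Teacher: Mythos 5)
Your proof is correct and takes essentially the same route as the paper's: a case-by-case verification of Spector's primary laws and inference rules using the categorical structure of (pre)sheaves, organized around the identification of global sections of $\Hom(|\alpha|,|\beta|)$ with morphisms $|\alpha|\to|\beta|$ (which the paper uses implicitly in its modus ponens step). Your treatment of the generalization rule for sheaves, via the canonical map from the presheaf product to its sheafification evaluated at $B$, matches what the paper does when it passes from a tuple of global sections of the $|\alpha|(d)$ to a global section of $|\forall x\,\alpha(x)|$.
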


Since (pre)sheaves form a topos, this is a special case of Palmgren's models of intuitionistic logic
in locally cartesian closed categories with finite sums \cite{Pa1}.

\begin{proof} We need to show that all laws and inference rules of the modified
Spector's derivation system in \S\ref{logics} are valid in the two kinds of structures.
It suffices to show that these laws and inference rules are valid with respect to 
an arbitrary predicate valuation $\pval$.
Thus we fix $\pval$, and $|\cdot|$ will mean $|\cdot|^\pval$.
For those laws and inference rules that involve neither $\lor$ nor $\exists$, the case of
sheaves reduces to the case of presheaves, and so need not be considered separately.

\ref{rule:modus ponens} (modus ponens).
We are given a global section $s$ of $|\fm\alpha|$ and a global section of $|\fm{\alpha\to\beta}|$, 
that is, a presheaf morphism $f\:|\fm\alpha|\to|\fm\beta|$.
Then $f(B)\:|\fm\alpha|(B)\to|\fm\beta|(B)$ sends $s$ to a global section of $|\fm\beta|$.

\ref{axiom:identity}, \ref{rule:composition}
The identity is a natural transformation, and composition of natural transformations is 
a natural transformation.

\ref{axiom:conjunction}, \ref{axiom:disjunction}
The projections of the product of presheaves onto its factors as well as the inclusions of summands into 
the coproduct of presheaves are presheaf morphisms.
Also, the inclusions of summands into the coproduct of sheaves are sheaf morphisms.

\ref{rule:conjunction}, \ref{rule:disjunction}
(Co)product of presheaves is their (co)product in the category of presheaves.
Also, coproduct of sheaves is their coproduct in the category of sheaves.

\ref{rule:exponential} (exponential law)
Given a presheaf morphism $f\:|\fm\alpha|\x|\fm\beta|\to|\fm\gamma|$, we need to construct a presheaf morphism
$g\:|\fm\alpha|\to\Hom(|\fm\beta|,|\fm\gamma|)$.
Let $g(U)$ send an $s\in|\fm\alpha|(U)$ to the following presheaf morphism $h\:|\fm\beta||_U\to|\fm\gamma||_U$.
If $V\subset U$, define $h(V)\:|\fm\beta|(V)\to|\fm\gamma|(V)$ by $t\mapsto f(V)(s|_V,t)$.
If $V'\subset V$, then $f(V')(s|_{V'},t|_{V'})=f(V)(s|_V,t)|_{V'}$ since $s|_{V'}=(s|_V)|_{V'}$ and 
$f$ is a natural transformation.
Hence $h=g(U)(s)$ is a natural transformation.
If $U'\subset U$, then $g(U')(s|_{U'})=g(U)(s)|_{U'}$ as presheaf morphisms 
$|\fm\beta||_{U'}\to|\fm\gamma||_{U'}$ due to $(s|_{U'})|_V=s|_V$ for each $V\subset U'$.
Thus $g$ is a natural transformation.

Conversely, given a $g$, we need to construct an $f$.
Let $f(U)\:|\fm\alpha|(U)\x|\fm\beta|(U)\to|\fm\gamma|(U)$ send a pair $(s,t)$ to $h(U)(t)$, where $h=g(U)(s)$.
If $U'\subset U$, then $f(U')(s|_{U'},t|_{U'})=h'(U')(t|_{U'})$, where $h'=g(U')(s|_{U'})$.
Since $g$ is a natural transformation, $h'=h|_{U'}$ as presheaf morphisms 
$|\fm\beta||_{U'}\to|\fm\gamma||_{U'}$, where again $h=g(U)(s)$.
In particular, $h'(U')=h(U')$ as maps $|\fm\beta|(U')\to|\fm\gamma|(U')$.
Since $h$ is a natural transformation, $h(U')(t|_{U'})=h(U)(t)|_{U'}$.
Thus $f(U')(s|_{U'},t|_{U'})=f(U)(s,t)|_{U'}$, whence $f$ is a natural transformation.

\ref{axiom:explosion} (explosion principle)
$\chi_\emptyset$ is the initial object in the category of sheaves.

\ref{axiom:forall}, \ref{axiom:exists}
For each $t\in D$, the projection onto the $t$th factor is a presheaf morphism
$\prod_{d\in D}|\fm\alpha|(d)\to|\fm\alpha|(t)$, and the inclusion onto the $t$th summand is 
a (pre)sheaf morphism $|\fm\alpha|(t)\to\bigsqcup_{d\in D}|\fm\alpha|(t)$.

\ref{rule:generalization} (generalization rule)
We need to show that if $\fm\alpha(\tr x)$ is valid in a presheaf-valued structure for every variable assignment,
then $\forall \tr x\,\fm\alpha(\tr x)$ is valid in the structure.

Indeed, the hypothesis says that for each $d\in D$ we have a global section of $|\fm\alpha|(d)$.
These yield a global section of the presheaf $\prod_{d\in D}|\fm\alpha|(d)$.

\ref{rule:forall}, \ref{rule:exists}
We need to prove that if $\forall \tr x\,\big(\fm\beta\to\fm\alpha(\tr x)\big)$ is valid in 
a (pre)sheaf structure, then so is $\fm\beta\to\forall \tr x\,\fm\alpha(\tr x)$; and if 
$\forall \tr x\,\big(\fm\alpha(\tr x)\to\fm\beta\big)$ is valid, then so is 
$\exists \tr x\,\fm\alpha(\tr x)\to\fm\beta$.

The hypothesis implies that for each $d\in D$ we have a presheaf morphism
$|\fm\beta|\to|\fm\alpha|(d)$ (resp.\ $|\fm\alpha|(d)\to|\fm\beta|$).
Since (co)product of presheaves is their (co)product in the category of presheaves, we get a presheaf morphism
$|\fm\beta|\to\prod_{d\in D}|\fm\alpha|(d)$ (resp.\ $\bigsqcup_{d\in D}|\fm\alpha|(d)\to|\fm\beta|$).
This works similarly for coproduct of sheaves.
\end{proof}

\subsubsection{Completeness}
We note the following consequence of \ref{strong BHK-verification}.

\begin{proposition} \label{BHK-verification}
If $F$, $G$ and $F^i$, $i\in I$, are presheaves on a space $B$, then
\begin{enumerate}
\item $\Supp (F\x G)=\Supp F\cap\Supp G$;
\item $\Supp(F\sqcup G)=\Supp F\cup\Supp G$;
\item $\Supp\Hom(F,G)\subset\Int\big(\Supp G\cup (B\but\Supp F)\big)$;
\item $\Supp(\Char\emptyset)=\emptyset$;
\item $\Supp\big(\bigsqcup F^i\big)=\bigcup\Supp F^i$;
\item $\Supp\big(\prod F^i\big)\subset\Int\big(\bigcap\Supp F^i\big)$.
\end{enumerate}
\end{proposition}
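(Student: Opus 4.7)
The plan is to derive each of the six claims from the stalkwise identities of Proposition \ref{strong BHK-verification}, using the following basic observation: for any presheaf $F$ on $B$, a point $b\in B$ lies in $\Supp F$ if and only if the stalk $F_b$ is nonempty. Indeed, by definition $\Supp F$ is the union of all open sets $U$ with $F(U)\neq\emptyset$; since $F_b$ is the direct limit of $F(U)$ over open neighborhoods $U$ of $b$, we have $F_b\neq\emptyset$ iff some such $U$ has $F(U)\neq\emptyset$, iff $b\in\Supp F$. Thus the support of a presheaf is completely determined by which stalks are inhabited.

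First I would dispatch (1), (2), (4), (5) by purely set-theoretic reasoning on stalks. For (1), $(F\x G)_b\simeq F_b\x G_b$ is nonempty iff both $F_b$ and $G_b$ are nonempty, giving $\Supp(F\x G)=\Supp F\cap\Supp G$. For (2) and (5), $(\bigsqcup F_i)_b\simeq\bigsqcup(F_i)_b$ is nonempty iff some $(F_i)_b$ is nonempty, giving the union formula (here the support of a union of open sets is automatically open, so no interior operator is needed). For (4), the presheaf $\Char\emptyset$ assigns the empty set to every nonempty open $V$, hence all its stalks are empty and its support is $\emptyset$.

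Next I would address (3) and (6), which are just the inclusion halves of Propositions \ref{Hom-sheaf} and \ref{product sheaf} that have already been established in the preceding subsections, so it suffices to quote them. Conceptually, both are strict inclusions in general because the maps $\frak F_b$ and $\frak G_b$ of Proposition \ref{strong BHK-verification} need not be surjective: a compatible family of local data at the level of individual stalks need not come from a single local section defined on one common neighborhood of $b$. The passage from ``nonempty stalk for each $i$'' to ``nonempty stalk of the product'' fails precisely at this step, and it is the occurrence of the interior operator $\Int$ on the right-hand sides of (3) and (6) that records this failure, while forcing the support on the left to be an open set.

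There is no real obstacle here; the proposition is essentially a bookkeeping exercise that transfers the stalkwise results of \ref{strong BHK-verification} (together with the two propositions \ref{Hom-sheaf} and \ref{product sheaf}) to the global level of supports. The only mildly delicate point is to observe that in (1), (2), (5) the right-hand sides are automatically open (intersection of two open sets, or union of open sets), so that no interior appears; whereas for (3) and (6) one genuinely needs to take $\Int$ on the right because the relevant stalk maps $\frak F_b$ and $\frak G_b$ are not bijective in general, as witnessed by Propositions \ref{Tarski implication fails} and \ref{Tarski forall fails}.
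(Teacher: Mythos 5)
Your proposal is correct and is exactly the argument the paper intends: it states the proposition as an immediate consequence of Proposition \ref{strong BHK-verification}, with the key observation being precisely your remark that $b\in\Supp F$ iff $F_b\ne\emptyset$, and with items (3) and (6) already contained in the inclusion halves of Propositions \ref{Hom-sheaf} and \ref{product sheaf}. Nothing is missing.
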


This is reminiscent of Tarski models, except that we have seen in the previous sections that 
the inclusions in (3) and (6) are strict in general, with (3) being strict even for some Alexandroff spaces.

However, by \ref{product sheaf} and \ref{Hom-sheaf}, the inclusions in (3) and (6) can be reversed
if the presheaves satisfy the sheaf conditions, and $B$ is a zero-dimensional separable metrizable space.
Also, by \ref{0-dim} a sheaf over such a space $B$ has a global section if and only if its support is
the entire $B$.
Thus from Dragalin's and Kramer's completeness results for Tarski models (see \S\ref{Tarski completeness}) 
we obtain

\begin{theorem} \label{completeness}
Intuitionistic logic is strongly complete with respect to its sheaf-valued model with countable domain
over the Baire space $\N^\N$, and complete with respect to its sheaf-valued model with countable domain
over any given zero-dimensional separable metrizable space with no isolated points.
\end{theorem}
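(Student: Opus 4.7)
The plan is to transfer the completeness results for Tarski models from \S\ref{Tarski completeness} to sheaf-valued models by a support-preserving construction. Fix a zero-dimensional separable metrizable space $B$ and a countable domain $D$. Starting from any Tarski model on $B$ and $D$, I would build a sheaf-valued model on the same $B$ and $D$ by assigning to each atomic formula whose Tarski interpretation is the open set $U \subset B$ (possibly depending on parameters in $D$) the characteristic sheaf $\chi_U$, noting that $\Supp \chi_U = U$.

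The central task is to prove, by induction on formula complexity, that for every formula $\alpha$ and every variable assignment $\mu$, the sheaf interpretation $|\alpha|^{\mathrm{sh}}_\mu$ is a sheaf whose support equals the Tarski interpretation $|\alpha|^{\mathrm{T}}_\mu$. The base case is immediate. The cases $\alpha = \beta \land \gamma$, $\alpha = \beta \lor \gamma$, $\alpha = \bot$, and $\alpha = \exists x\,\beta(x)$ follow directly from clauses (1), (2), (4), (5) of Proposition~\ref{BHK-verification}, which match the Tarski clauses verbatim (with sheafification of the coproduct preserving support, since $b \in \Supp F$ iff $F_b \ne \emptyset$, and stalks are preserved by sheafification). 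The cases $\alpha = \beta \to \gamma$ and $\alpha = \forall x\,\beta(x)$ require reproducing the interior operator in the Tarski clauses $|\beta \to \gamma|^{\mathrm{T}} = \Int((B \but |\beta|^{\mathrm{T}}) \cup |\gamma|^{\mathrm{T}})$ and $|\forall x\,\beta(x)|^{\mathrm{T}} = \Int \bigcap_{d \in D} |\beta(x)|^{\mathrm{T}}$. For these, clauses (3) and (6) of Proposition~\ref{BHK-verification} give only containments, but Propositions~\ref{Hom-sheaf}(b) and~\ref{product sheaf}(b) upgrade them to equalities precisely because $B$ is zero-dimensional separable metrizable and the inductive hypothesis furnishes genuine sheaves $|\beta|^{\mathrm{sh}}$, $|\gamma|^{\mathrm{sh}}$.

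Once supports are matched, Lemma~\ref{0-dim} says a parameterless formula has a global section in the sheaf model iff its support is all of $B$, i.e., iff it is valid in the Tarski model. Consequently, for any instance $\phi_1,\dots,\phi_k\,/\,\psi$ of schemata $\Phi_1,\dots,\Phi_k\,/\,\Psi$, each $\phi_i$ is valid in the sheaf model iff it is valid in the originating Tarski model, and similarly for $\psi$; hence both schema satisfaction and semantic consequence transfer across the correspondence. Applying this transfer to Kremer's completeness for Tarski models over zero-dimensional separable metrizable spaces without isolated points yields the completeness half of the theorem, while applying it to Dragalin's strong completeness for Tarski models over the Baire space with countable domain yields the strong completeness half. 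The main technical point to verify is that the sheaf condition persists through the induction so that \ref{Hom-sheaf}(b) and \ref{product sheaf}(b) are legitimately applied at each step; this is routine since products of sheaves and $\Hom$-sheaves into sheaves are automatically sheaves (see \S6.2--6.3), and the only operation requiring sheafification is the coproduct for $\exists$, whose support is already controlled before sheafification by \ref{BHK-verification}(5) and is unchanged by it.
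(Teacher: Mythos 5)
Your proposal is correct and follows essentially the same route as the paper: compute supports of the sheaf interpretations by induction, use Propositions~\ref{product sheaf}(b) and~\ref{Hom-sheaf}(b) to upgrade the inclusions in clauses (3) and (6) of Proposition~\ref{BHK-verification} to equalities over a zero-dimensional separable metrizable base, invoke Lemma~\ref{0-dim} to identify validity with full support, and then transfer Dragalin's and Kremer's completeness results for Tarski models. Your explicit choice of characteristic sheaves for the atomic formulas and your remark on preservation of the sheaf condition through the induction merely spell out what the paper leaves implicit.
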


Although over zero-dimensional separable metrizable spaces sheaf-valued models can be seen as
merely a proof-relevant conservative extension of Tarski models, in general they tend to be very
different from Tarski models.

In particular, from \ref{Tarski forall fails} it is easy to see that the Tarski model of
intuitionistic logic in $\R$ with $\D=\N$ is not obtainable by taking supports from 
the sheaf-valued model in $\R$ with $\D=\N$.
This example can be improved:

\begin{proposition} The Tarski model of zero-order intuitionistic logic in some Alexandroff space $B$ 
is not obtainable by taking supports from the sheaf-valued model in $B$.
\end{proposition}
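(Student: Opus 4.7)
The plan is to leverage Proposition \ref{Alexandroff-implication}, which exhibits sheaves $\F,\G$ over an Alexandroff space $B$ with $\Supp\F=\Supp\G$ but $\Supp\Hom(\F,\G)\ne B$. I take $B=\{0<1\}^2$, as in that proof, and write $U_0=B\setminus\{(0,0)\}$, $U_1=\{(1,0),(1,1)\}$, $U_2=\{(0,1),(1,1)\}$, so $U_1\cup U_2=U_0$. The aim is to construct a Tarski model on $B$ whose compound Tarski values, by virtue of further Tarski identities that rigidify any candidate sheaf interpretation, cannot simultaneously be matched as supports of any sheaf-valued model.

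First I would consider a Tarski model with atomic propositional variables $\balpha,\bbeta_1,\bbeta_2$ assigned $|\balpha|=U_0$ and $|\bbeta_i|=U_i$. The formula $\balpha\to\bbeta_1\lor\bbeta_2$ has Tarski value $\Int((B\setminus U_0)\cup U_0)=B$. For any sheaf-valued model with matching atomic supports to yield this Tarski shadow, one would need a global sheaf morphism $\F_\alpha\to\F_{\beta_1}\sqcup\F_{\beta_2}$. Analyzing such a morphism at the point $(1,1)$ as in the proof of Proposition \ref{Alexandroff-implication}, the restriction maps into the coproduct force the images of $(\F_\alpha)_{(1,0)}$ and $(\F_\alpha)_{(0,1)}$ in $(\F_\alpha)_{(1,1)}$ to be disjoint, and to map into the first and second summand of $(\F_{\beta_1}\sqcup\F_{\beta_2})_{(1,1)}$ respectively. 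In particular, with $\F_\alpha=\chi_{U_0}$ and $\F_{\beta_i}=\chi_{U_i}$, no such morphism exists.

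Next I would augment the Tarski model with further atomic formulas and Tarski-level constraints engineered so that, in any sheaf realization, the only possible stalk-and-restriction data for $\F_{\beta_i}$ is that of $\chi_{U_i}$ and for $\F_\alpha$ is that of $\chi_{U_0}$. Because $B$ has only four points and a finite lattice of open sets, this rigidification reduces to a finite bookkeeping: one pins down each restriction map $\F_{pq}$ by exhibiting auxiliary atomic variables whose Tarski-prescribed supports of $\to,\land,\lor$ combinations leave no other possibility (for instance, adding decidable atomics $\bdelta_U$ of support $U$ and equalities like $|\balpha\land\bdelta_U|=U\cap U_0$ pin down the sheaf $\F_\alpha$ stalkwise on such a small Alexandroff space). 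Once this rigidity is in place, any candidate sheaf realization must be precisely the configuration of Proposition \ref{Alexandroff-implication}, which supplies the obstruction to the required global morphism, yielding the desired non-realizability.

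The main obstacle is the rigidification step: making the chosen Tarski identities combinatorially tight enough that the only sheaves on the finite Alexandroff space $B$ respecting all of them, up to isomorphism, are the specific $\F$ and $\G$ of Proposition \ref{Alexandroff-implication}. I expect this to go through by exploiting that, on such a finite space, a sheaf is determined by a finite diagram of sets and maps, so the Tarski-visible invariants of supports of $\land,\lor,\to$-combinations of sufficiently many atomics with carefully chosen supports can encode each individual restriction map uniquely; the technical work lies in verifying that this encoding requires only Tarski data and no additional sheaf-specific structure.
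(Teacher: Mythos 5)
There is a genuine gap, and it sits exactly where you locate it: the rigidification step. Your first step establishes the obstruction only for the characteristic sheaves $\chi_{U_0},\chi_{U_1},\chi_{U_2}$. For a general $\F_\alpha$ with $\Supp\F_\alpha=U_0$ the argument at $(1,1)$ needs some element of $(\F_\alpha)_{(1,1)}$ to lie in the image of both restriction maps $(\F_\alpha)_{(1,0)}\to(\F_\alpha)_{(1,1)}$ and $(\F_\alpha)_{(0,1)}\to(\F_\alpha)_{(1,1)}$, and this fails for $\F_\alpha=\chi_{U_1}\sqcup\chi_{U_2}$, which also has support $U_0$. Worse, taking $\F_{\beta_i}=\chi_{U_i}$ and $\F_\alpha=\F_{\beta_1}\sqcup\F_{\beta_2}$, the sheaf $\Hom(\F_\alpha,\F_{\beta_1}\sqcup\F_{\beta_2})$ contains the identity as a global section and hence has support $B$ --- so your key Tarski identity \emph{is} realized by a sheaf-valued model. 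Everything therefore hinges on rigidification, and I do not believe it can be carried out by support data on this space: supports of $\land$- and $\lor$-compounds are already determined by the atomic supports (\ref{BHK-verification}(1),(2)), so constraints such as $|\balpha\land\bdelta_U|=U\cap U_0$ are vacuous; the only clopen subsets of $\{0<1\}^2$ are $\emptyset$ and $B$, so ``decidable atomics of support $U$'' are unavailable; and no support constraint visibly excludes $\F_\alpha\cong\F_{\beta_1}\sqcup\F_{\beta_2}$.

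The paper's proof needs no rigidification because it transfers the obstruction from characteristic sheaves to \emph{arbitrary} sheaves by functoriality, and this forces a different space. It takes the five-point poset $B=\{x,y,xy,yx,\hat 1\}$ with $x,y<xy$; $x,y<yx$; $xy,yx<\hat 1$, and interprets four atomics by the opens $\left<x\right>,\left<y\right>,\left<xy\right>,\left<yx\right>$; then $(\balpha\land\bbeta)\to(\bgamma\lor\bdelta)$ has Tarski value $B$ since $\left<x\right>\cap\left<y\right>=\left<xy\right>\cup\left<yx\right>$. The crucial point is that the atomic supports are principal (minimal) open sets: any sheaf $\F$ with $x\in\Supp\F$ has a section over $\left<x\right>$, i.e.\ receives a morphism $\Char\left<x\right>\to\F$, and any sheaf $\G$ admits a morphism $\G\to\Char(\Supp\G)$. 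By the variance of $\Hom$ these yield a morphism from $\Hom(\F\x\F',\G\sqcup\G')$ to $\Hom(\Char\left<x\right>\x\Char\left<y\right>,\,\Char\left<xy\right>\sqcup\Char\left<yx\right>)$, whose stalks at $x$ and $y$ are empty by the diamond argument of \ref{Alexandroff-implication}; hence the former also has empty stalks there, for arbitrary sheaves with the prescribed supports. On your four-point space $U_0$ is not an intersection of principal opens (the unique minimal upper bound of $(1,0)$ and $(0,1)$ is $(1,1)$), so no such comparison morphism into $\F_\alpha$ is available --- which is precisely why your construction gets stuck.
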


\begin{proof}
Let $B=\{x,y,xy,yx,\hat 1\}$ with partial order generated by $x,y<xy$; $x,y<yx$ and $xy,yx<\hat 1$.
Then $\left<x\right>\cap\left<y\right>=\left<xy\right>\cup\left<yx\right>$, and therefore if
$\fm\alpha$, $\fm\beta$, $\fm\gamma$, $\fm\delta$ are interpreted respectively by $\left<x\right>$,
$\left<y\right>$, $\left<xy\right>$, $\left<yx\right>$, then
$(\fm\alpha\land\fm\beta)\to(\fm\gamma\lor\fm\delta)$ is interpreted by $B$.

On the other hand, if $\F$, $\F'$, $\G$, $\G'$ are sheaves with supports
$\left<x\right>$, $\left<y\right>$, $\left<xy\right>$ and $\left<yx\right>$ respectively,
then we have sheaf sections $\Char\left<x\right>\to\F$ and $\Char\left<y\right>\to\F'$
and sheaf morphisms $\G\to\Char\left<xy\right>$ and $\G'\to\Char\left<yx\right>$.
Consequently we have a sheaf morphism $\Hom(\F\x\F',\G\sqcup\G')\to
\Hom(\Char\left<x\right>\x\Char\left<y\right>,\Char\left<xy\right>\sqcup\Char\left<yx\right>)$.
Therefore from \ref{Alexandroff-implication},
$\Hom(\F\x\F',\G\sqcup\G')_x=\emptyset=\Hom(\F\x\F',\G\sqcup\G')_y$.
Thus if $\fm\alpha$, $\fm\beta$, $\fm\gamma$, $\fm\delta$ are interpreted respectively by
$\F$, $\F'$, $\G$, $\G'$, then $(\fm\alpha\land\fm\beta)\to(\fm\gamma\lor\fm\delta)$ is interpreted by
a sheaf with support $\{xy,yx,\hat 1\}$ and not $B$.
\end{proof}

\begin{example} \label{Harrop2}
Let us discuss the Kreisel--Putnam principle and Harrop's rule in the light of sheaf-valued models.
Let us fix a valuation in a sheaf-valued model of intuitionistic logic in a space $B$, and set 
$|\fm\beta|=\chi_{U_1}$, $|\fm\gamma|=\chi_{U_2}$ and $|\neg\fm\alpha|=\chi_U$.
Then by \ref{Hom-sheaf}(a), each $\Hom(\chi_U,\chi_{U_i})\simeq\chi_{V_i}$, where
$V_i=\Int\big(U_i\cup (X\but U)\big)$.
On the other hand, $|\fm\beta\lor\fm\gamma|\simeq\chi_{U_1}\sqcup\chi_{U_2}$.

In our previous discussion of the Kreisel--Putnam principle and Harrop's rule, which was based on
Tarski models (see \S\ref{Harrop1}), we had $B=\R^2$,
$U_1=\{(x,y)\mid x>0\}$, $U_2=\{(x,y)\mid y>0\}$ and $U=U_1\cup U_2$.
It is not hard to see that in this case,
$|\neg\fm\alpha\to\fm\beta\lor\fm\gamma|=\Hom(\chi_U,\chi_{U_1}\sqcup\chi_{U_2})$ is isomorphic
to the amalgamated union $\chi_{V_1}\cup_{\chi_V}\chi_{V_2}$, where $V=\{(x,y)\mid x<0\land y<0\}$.
The support of this amalgamated union is $\R^2\but\{(0,0)\}$, so it does not have a global section
(in fact, it does not even have a section over its support).
So this example no longer works to refute Harrop's rule.
However, it still applies to refute the Kreisel--Putnam principle
(even though in this case $\Supp|\neg\fm\alpha\to\fm\beta\lor\fm\gamma|$ coincides with
$\Supp|(\neg\fm\alpha\to\fm\beta)\lor(\neg\fm\alpha\to\fm\beta)|$), since there are no sheaf morphisms
$(\chi_{V_1}\cup_{\chi_V}\chi_{V_2})|_O\to
(\chi_{V_1}\sqcup\chi_{V_2})|_O$ for any neighborhood $O$ of $(0,0)$.

To refute Harrop's rule with a sheaf-valued model we can, however, take $B=\R^2$,
$U_1=\{(x,y)\mid x>0\land y>0\}$, $U_2=\{(x,y)\mid x<0\land y<0\}$ and $U=U_1\cup U_2$.
Then $V_1=\{(x,y)\mid x>0\lor y>0\}$ and $V_2=\{(x,y)\mid x<0\lor y<0\}$, so that
the support of $|(\neg\fm\alpha\to\fm\beta)\lor(\neg\fm\alpha\to\fm\gamma)|$ is $V_1\cup V_2=\R^2\but\{(0,0)\}$.
On the other hand, $|\neg\fm\alpha\to\fm\beta\lor\fm\gamma|\simeq\chi_{\R^2}$.

Let us now analyze the difference between the two models just considered.
If we write $U_0=\Int((U_1\cup U_2)\but (U_1\cap U_2))$, then
the sheaf morphism $\chi_{U_0}\to\chi_{U_1}\sqcup\chi_{U_2}$
induces a sheaf morphism $\Hom(\chi_U,\chi_{U_0})\to\Hom(\chi_U,\chi_{U_1}\sqcup\chi_{U_2})$.
By \ref{Hom-sheaf}(a), $\Hom(\chi_U,\chi_{U_0})\simeq\chi_{V_0}$, where $V_0=\Int(U_0\cup (X\but U))$.
Thus $|\neg\fm\alpha\to\fm\beta\lor\fm\gamma|$ will have a global section as long as $V_0$ is the entire space,
and in general, $\Supp|\neg\fm\alpha\to\fm\beta\lor\fm\gamma|$ contains $V_0$.
However, $V_0$ need not be contained in
$\Supp|(\neg\fm\alpha\to\fm\beta)\lor(\neg\fm\alpha\to\fm\gamma)|=V_1\cup V_2$ as shown by the second example.
On the other hand, if $V_0\subset V_1\cup V_2$, then $\Hom(\chi_U,\chi_{U_1}\sqcup\chi_{U_2})$
is isomorphic to the amalgamated union
$\chi_{V_1}\cup_{\chi_V}\chi_{V_2}$, where $V=\Int(X\but U)$.
In general, one can show that $\Hom(\chi_U,\chi_{U_1}\sqcup\chi_{U_2})\simeq
(\chi_{V_1}\cup_{\chi_V}\chi_{V_2})\cup_{\chi_W}\chi_{V_0}$, where $W=V_0\cap(V_1\cup V_2)$.
\end{example}

\begin{example}
Let us show that the Negative Constant Domain principle is not valid in some sheaf-valued model.
Let $\D=\N$ and let $B=(-\infty,0]\cup N\subset\R$, where $N=\{\frac1n\mid n\in\N\}$.
Then $N$ is a regular open set in $B$, so $\F=\chi_N$ could interpret a negated formula.
Let $\F^i=\chi_{B\but\{1/i\}}$.
Then $\prod_i\F^i\simeq\chi_{(-\infty,0)}$, so $\F\sqcup\prod_i\F^i\simeq\chi_{B\but\{0\}}$, which has 
no global section.
On the other hand, each $\F\sqcup\F^i$ has a global section, hence so does their product.
Thus there exist no sheaf morphisms $\prod_i(\F\sqcup\F^i)\to\F\sqcup\prod_i\F^i$.
\end{example}

Presheaf-valued models are not as good as sheaf-valued models:

\begin{proposition}\label{global2}
Presheaf-valued models satisfy the Negative Constant Domain Principle.
\end{proposition}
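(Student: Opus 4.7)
The plan is to unwind the definition and exhibit an explicit presheaf morphism
$$\phi\:\prod_{d\in D}\bigl(N\sqcup G_d\bigr)\To N\sqcup\prod_{d\in D}G_d,$$
where $N:=|\neg\alpha|=\Hom(|\alpha|,\Char\emptyset)$ and $G_d:=|\pi|(d)$; such a morphism is precisely a global section of $|\forall x\,(\neg\alpha\lor\pi(x))\to\neg\alpha\lor\forall x\,\pi(x)|$. For presheaves the coproduct and product are computed sectionwise, so for each open $U\subset B$ we need a natural map
$$\phi_U\:\prod_{d\in D}\bigl(N(U)\sqcup G_d(U)\bigr)\To N(U)\sqcup\prod_{d\in D}G_d(U).$$

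The crucial observation is that $N(U)$ is \emph{subterminal} for every open $U\subset B$: a presheaf morphism $|\alpha|_U\to(\Char\emptyset)|_U$ assigns to each nonempty open $V\subset U$ a map $|\alpha|(V)\to(\Char\emptyset)(V)=\emptyset$, so such a morphism exists iff $|\alpha|(V)=\emptyset$ for every nonempty open $V\subset U$, and when it exists it is unique. Consequently, $|N(U)|\le 1$ for every $U$. Using this, define $\phi_U$ by the following dichotomy: given a tuple $(s_d)_{d\in D}$, if there exists $d_0$ with $s_{d_0}\in N(U)$, send the tuple to that (unique) element $n\in N(U)$; otherwise every $s_d$ lies in $G_d(U)$, and we send the tuple to $(s_d)_{d\in D}\in\prod_{d\in D}G_d(U)$.

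Naturality is then routine. For open $V\subset U$, restriction in the presheaf coproduct preserves the summand decomposition: the restriction map $(N\sqcup G_d)(U)\to(N\sqcup G_d)(V)$ sends $N(U)$ into $N(V)$ and $G_d(U)$ into $G_d(V)$. Therefore if some $s_{d_0}\in N(U)$, then $s_{d_0}|_V\in N(V)$, whence both $\phi_U((s_d)_d)|_V$ and $\phi_V((s_d|_V)_d)$ equal the unique element of $N(V)$; and if every $s_d\in G_d(U)$, then every $s_d|_V\in G_d(V)$, so both composites yield the tuple $(s_d|_V)_d$. Hence $\phi$ is a well-defined presheaf morphism, establishing validity of the Negative Constant Domain Principle.

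The main obstacle, conceptually, is to pinpoint why the analogous construction fails for sheaves (as the preceding example shows): sheafification of the coproduct $N\sqcup G_d$ allows global sections obtained by gluing local sections drawn from different summands, so a global section of $\prod_d\gamma(N\sqcup G_d)$ need not choose a summand coherently across $D$, and the naive case analysis collapses. By contrast, the presheaf coproduct keeps the summands strictly separated at every open set, and the subterminality of $N(U)$ ensures that the "$N$-case" can always be selected without ambiguity or incompatibility across restrictions.
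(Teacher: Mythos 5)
Your proof is correct and takes essentially the same route as the paper's: a case analysis on whether some component of the tuple lands in the $|\neg\alpha|$-summand, using the fact that $|\neg\alpha|(U)=\Hom(|\alpha|,\Char\emptyset)(U)$ has at most one element so that the choice of index is immaterial and naturality is automatic. The only (harmless) differences are that you derive the subterminality of $|\neg\alpha|(U)$ directly from the definition rather than citing the earlier identification of $\Hom(F,\Char\emptyset)$ with a characteristic presheaf, and that you spell out the naturality check which the paper leaves as ``clearly natural''.
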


\begin{proof}
We need to construct a morphism of presheaves $\prod_i(F\sqcup F^i)\to F\sqcup\prod_i F^i$, where $F$ is of
the form $|\neg\Phi|$.
Given an open set $U$ and a collection $s=(s_i)$ of sections $s_i\in F(U)\sqcup F^i(U)$, we consider two cases.
If none of the $s_i$ actually belongs to $F(U)$, then we send $s$ via the inclusion
 $\prod_i F^i(U)\subset F(U)\sqcup\prod_i F^i(U)$.
If some $s_i$ belongs to $F(U)$, we send $s$ to the image of this $s_i$ under the inclusion 
$F(U)\subset F(U)\sqcup\prod_i F^i(U)$.
Since $F=|\neg\Phi|$ is the characteristic presheaf of some open set (in fact, of 
a regular open set), $F(U)$ contains at most one element, so the choice of $i$ is irrelevant.
This construction is clearly natural in $U$.
\end{proof}

\begin{proposition} \label{global3}
Presheaf-valued models satisfy the Rule of Omniscience and the Disjunction Property.
\end{proposition}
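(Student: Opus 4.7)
The plan is to exploit the one key feature of presheaves that distinguishes them from sheaves: the coproduct is computed pointwise, so $(F \sqcup G)(U) = F(U) \sqcup G(U)$ for every open $U \subset B$, and in particular $(F \sqcup G)(B) = F(B) \sqcup G(B)$. This is exactly the computation that was noted (and lamented) at the start of \S\ref{sheaves} just before sheafification was invoked to define $\bigsqcup \F_i$ for sheaves. Once this is isolated, both statements are essentially immediate.

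For the Rule of Omniscience, suppose that the premise $\forall x\,(\pi(x) \lor \neg\pi(x))$ is valid in a presheaf-valued model $V$. Then for each relevant parameter assignment the presheaf $\prod_{d \in D}\bigl(|\pi|(d) \sqcup |\neg\pi|(d)\bigr)$ admits a global section, which unpacks to a family $(s_d)_{d \in D}$ with each $s_d \in |\pi|(d)(B) \sqcup |\neg\pi|(d)(B)$. I would then split on cases: either every $s_d$ lies in the left summand $|\pi|(d)(B)$, in which case $(s_d)$ is itself a global section of $\prod_d |\pi|(d) = |\forall x\,\pi(x)|$; or some $s_{d_0}$ lies in $|\neg\pi|(d_0)(B)$, in which case $s_{d_0}$ provides a global section of $\bigsqcup_d |\neg\pi|(d) = |\exists x\,\neg\pi(x)|$. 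Either way, one has a global section of $|\forall x\,\pi(x)| \sqcup |\exists x\,\neg\pi(x)| = |\forall x\,\pi(x) \lor \exists x\,\neg\pi(x)|$, as required.

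For the Disjunction Property, given parameterless formulas $\phi$ and $\psi$ such that $|\phi \lor \psi|$ admits a global section, the pointwise formula for coproduct at $U = B$ gives that this section is an element of $|\phi|(B) \sqcup |\psi|(B)$, hence a global section of $|\phi|$ or of $|\psi|$.

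There is no real obstacle: both claims fall out as soon as one writes down the pointwise formula for $\sqcup$ on presheaves. The reason the same argument fails for sheaf-valued models (and must fail, by the Kreisel--Putnam counterexample in Example \ref{Harrop2} and the Tarski models of \S\ref{omniscience}) is precisely that there the coproduct has to be sheafified, so that a global section of $\F \sqcup \G$ can be assembled by gluing local sections drawn from different summands over an open cover of $B$. Thus the only point worth emphasizing in the write-up is the contrast with the sheaf case.
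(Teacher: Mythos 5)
Your proof is correct and rests on exactly the same observation as the paper's: coproducts of presheaves are computed pointwise, so a global section of $F\sqcup G$ is a global section of $F$ or of $G$. The only (cosmetic) difference is in the Rule of Omniscience, where you split cases directly on the family $(s_d)$, while the paper first rewrites (RO) as ``decidability of $\rho(x)$ implies decidability of $\exists x\,\rho(x)$'' and uses that a negated formula with a global section forces $|\rho|$ to have empty support; both routes use the same pointwise computation and are equally valid.
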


In sheaf-valued models, every formula interpreted by the characteristic
sheaf of a clopen set is decidable.
Hence the Disjunction Property fails if $B$ is disconnected; and the Rule of Omniscience can be
refuted similarly to \S\ref{Markov}.

\begin{proof}
To prove that the Disjunction Property is valid in presheaf-valued models it suffices to note that $F\sqcup G$ 
has a global section if and only if at least one of the presheaves $F$, $G$ has a global section.

In particular, if $\Phi$ is a formula such that $\Phi\lor\neg\Phi$ is valid with respect to some valuation 
$\pval$ and some assignment $\iass$ in a presheaf-valued model, then either $|\Phi|$ has a global 
section or $|\neg\Phi|$ has a global section; the latter amounts to saying that $|\Phi|=\Char\emptyset$.
If each $|\fm\alpha|(n)$, $n\in\N$, coincides with $\Char\emptyset$, then so does 
$|\exists\tr x\,\fm\alpha(\tr x)|=\bigsqcup_n|\fm\alpha|(n)$; and if some $|\fm\alpha|(n)$ has a global section, 
then so does $\bigsqcup_n|\fm\alpha|(n)$.
Thus $\exists\tr x\,\fm\alpha(\tr x)\lor\neg\exists\tr x\,\fm\alpha(\tr x)$ is valid with respect to $\pval$ 
and $\iass$ as long as $\fm\alpha(\tr x)\lor\neg\fm\alpha(\tr x)$ is.
\end{proof}

It is clear from the proofs of \ref{global2} and \ref{global3} that when the space $B$ is connected,
sheaf-valued models also satisfy the Negative Constant Domain Principle and the Disjunction Property
--- in contrast to Tarski models.

\subsubsection{Relation to Medvedev--Skvortsov models}\label{Medvedev}
Let $B$ be the two-element poset $0<1$ endowed with its
Alexandroff topology.
A presheaf (of sets) $F$ over $B$ such that $F(\emptyset)$ is a singleton is always a sheaf.
(In general, sheafafication of presheaves over $B$ has the effect of identifying the points
of $F(\emptyset)$ with each other.)

A sheaf $\F$ over $B$ consists of two stalks $\F_0=\sigma\F(\{0,1\})$ and $\F_1=\sigma\F(\{0\})$ and
a map $\F_{01}\:\F_0\to\F_1$ corresponding to the inclusion $\{0\}\subset\{0,1\}$.
We have $\Hom(\F,\G)_1=\Hom(\F_1,\G_1)$, whereas $\Hom(\F,\G)_0$ consists of commutative diagrams
$$\begin{CD}
\F_0@>\F_{01}>>\F_1\\
@VVV@VVV\\
\G_0@>\G_{01}>>\G_1.
\end{CD}$$
If $\F_{01}$ and $\G_{01}$ are inclusions, such a diagram amounts
to a map $\phi\:\F_0\to\G_0$ such that $\phi(\F_1)\subset\G_1$.

In the case where the restriction maps of the sheaves are inclusions, by
\ref{strong BHK-verification} we further have
$\left(\left(\bigsqcup_i\F^i\right)_1,\left(\bigsqcup_i\F^i\right)_0\right)=
\left(\bigsqcup_i (\F^i)_1,\bigsqcup_i (\F^i)_0\right)$ and, using that $B$ is
an Alexandroff space, also
$\left(\left(\prod_i\F^i\right)_1,\left(\prod_i\F^i\right)_0\right)=
\left(\prod_i (\F^i)_1,\prod_i (\F^i)_0\right)$.

This is reminiscent of the interpretation of connectives and quantifiers in Medvedev--Skvortsov models 
(see \S\ref{Medvedev-Skvortsov}).
To recover an actual version of Medvedev--Skvortsov models, let us fix a domain $\D$, and one set $X_{rk}$ 
corresponding to each problem variable $\var_k^{\0^r\too\1}$.
Let $X=\prod_{r\in\omega}\prod_{k\in\N}\Hom(\D^r,2^{X_{rk}})$, where $\omega=\{0,1,\dots\}$ and
$\N=\{1,2,\dots\}$. 
Here $2^S$ denotes the set of all subsets of $S$ and $\Hom(Y,Z)$ denotes the set of all maps $Y\to Z$.
Let us consider the Alexandroff space $X^+=X\cup\{\hat 1\}$, where $X$ is regarded as a discrete space, 
or alternatively as a poset with no comparable pair of elements, and $\hat 1$ is an additional element 
that is greater than all elements of $X_\Phi$.
Each $\vec Y=(Y_{rk})\in X$ determines the two-element subspace $\{\vec Y,\hat 1\}$ of $X^+$, which 
is homeomorphic to $B$.

Let us interpret the $k$th $r$-ary problem variable, $\var_k^{\0^r\too\1}$,
by the following $r$-ary family $\F^{rk}(t_1,\dots,t_r)$ of sheaves over $X^+$.
Let $\F^{rk}(\vec t)_{\hat 1}=X_{rk}$ and $\F^{rk}(\vec t)_{\vec Y}=Y_{rk}(\vec t)$, and let
$\F^{rk}(\vec t)_{\vec Y\hat 1}\:Y_{rk}(\vec t)\to X_{rk}$ be the inclusion map.

Let connectives and quantifiers be interpreted as in a sheaf-valued model, except for $\ab$,
which is interpreted by the sheaf $\F$ with $\F_{\vec Y}=\emptyset$ for each $\vec Y\in X$ and
$\F_{\hat 1}=\{\emptyset\}$.
As long as all the sets $X_{rk}$ are non-empty, it is easy to see that this yields a model of
intuitionistic logic with respect to the valuation field generated by the above valuation.

Given a closed formula $\Phi$, let $\var_{k_i}^{\0^{r_i}\too\1}$, $i=1,\dots,m$, be all the problem variables
that occur in $\Phi$.
Let $X_\Phi=\prod_{i=1}^m\Hom(\D^{r_i},2^{X_{r_ik_i}})$ and let $\pi_\Phi\:X\to X_\Phi$ be the projection
onto the subproduct, $\vec Y\mapsto (Y_{r_1k_1},\dots,Y_{r_mk_m})$.

Then $\Phi$ is interpreted by the sheaf $\G$, where, in the notation of \S\ref{Medvedev-Skvortsov}, 
the stalk $\G_{\hat 1}=S$ and the stalk $\G_{\vec Y'}=T(\vec Y)$ for each $\vec Y\in X_\Phi$ and each 
$\vec Y'\in\pi_\Phi^{-1}(\vec Y)$.
Thus the set of global sections of $\G$ can be identified with 
$\bigcap_{\vec Y\in X_\Phi}T(\vec Y)\subset S$, and in particular $\G$ has a global section if and only if
this intersection is nonempty.

\begin{remark}
By adding the least element $\hat 0$ to the poset $X^+$ and considering sheaves over
the resulting Alexandroff space $X^\pm=X\cup\{\hat 0,\hat 1\}$ one can also model a modification
of Medvedev--Skvortsov models (a non-equivariant version of L\"auchli models \cite{La}) where 
all $X_{ij}$ are taken to be a fixed set $U$, and $\ab$ is interpreted not by $(\{\emptyset\},\emptyset)$, 
but by $(U,Z)$ for some fixed $Z\subset U$ (possibly nonempty!), and only $Y_{rk}$ containing $Z$
are considered.
\end{remark}

\subsubsection{Modified BHK interpretation}\label{modified+}

Kolmogorov's interpretation of $\prin\fm\gamma\lor\neg\fm\gamma$ as ``the problem of finding 
a general method of solving the problem $\Gamma\lor\neg\Gamma$ for every contentful problem $\Gamma$''
contains {\it constructive} quantification over all problems; it does not seem to be compatible 
with usual, Tarski-style model theory, which only deals with {\it classical} quantification over 
all valuations.
(It is compatible with a certain alternative model theory constructed below; see \S\ref{extended+}.)
The following modification of Kolmogorov's interpretation overcomes this difficulty by replacing 
the informal (i.e., contentual) constructive quantifier with a combination of an informal classical quantifier 
(enabling compatibility with standard model theory) and a potentially formal constructive quantifier (preventing 
intuitionistic logic from degeneration into classical).
Related ideas are also found in \cite{LOD}*{Definition 4.2} (see also \cite{Ra}*{Ch.\ 7}).

In addition to a domain of discourse $\D$, we fix a ``hidden parameter'' domain $\E$ 
(just like in the modified Platonist interpretation of classical logic).
Now $\O$ is taken to be a class of problems with parameter in $\E$, consisting of a prescribed class 
of contentful (e.g.\ mathematical) primitive parametric problems and of composite parametric problems, which 
are obtained inductively from the primitive ones by using contentual intuitionistic connectives and 
quantifiers over arbitrary $\D$-indexed families of parametric problems.
For instance, if $\E=P\x P$, where $P$ is the set of all finite presentations of groups, primitive
parametric problems could be of the form ``Find an isomorphism between the groups given by 
presentations $p$ and $q$''.
In this case composite parametric problems would include ``Find an isomorphism between the groups given
by $p$ and $q$ or find a proof that it is impossible''.
Formal intuitionistic connectives and quantifiers are interpreted straightforwardly by the contentual ones.

The value of $\ocf\:\O\to\Qm=\{\Top,\Bot\}$ on a parametric problem $\Theta$ will be $\Top$ 
if and only if there exists a general method of solving the problem $\Theta(e)$ for all $e\in\E$.
What is meant by ``solving'' $\Theta(e)$ is defined explicitly for primitive $\Theta\in\O$ and 
all $e\in\E$
and is extended inductively to composite $\Theta\in\O$ and all $e\in\E$ by the usual BHK interpretation 
(see \S\ref{BHK}), separately for each $e\in\E$.
This determines a contentual interpretation of intuitionistic logic and its meta-logic, which 
we will call the {\it modified BHK} interpretation.

Let us note that the arguments of \S\ref{PEM} and \S\ref{Medvedev-Skvortsov} still work to show that
$\prin\fm\gamma\lor\neg\fm\gamma$ is not valid under the modified BHK interpretation.

\begin{example} Let us record the modified BHK interpretations of the judgements of admissibility, 
stable admissibility and derivability of a purely logical rule $\Gamma\,/\,\Delta$.
The tuples $\vec\phi$ and $\vec\Phi$ have the same meaning as in Example \ref{platonist example},
and $e$ is again understood to run over $\E$ as in Example \ref{platonist example2}. 
Like before, $\wn\Theta$ denotes the proposition ``The problem $\Theta$ has a solution''.

\begin{itemize}
\item Derivability: $\forall\D,\E,\O\ \forall\pval\,
\big(\forall\iass\,\wn\forall e\,|\Gamma|^\pval_\iass(e)\to\forall\iass\,\wn\forall e\,|\Delta|^\pval_\iass(e)\big)$;

\item Stable admissibility: 

$\forall\vec\Phi\,\forall\D,\E,\O\ 
\big(\forall\pval,\iass\,\wn\forall e\,|\Gamma[\vec\phi/\vec\Phi]|^\pval_\iass(e)
\To\forall\pval,\iass\,\wn\forall e\,|\Delta[\vec\phi/\vec\Phi]|^\pval_\iass(e)\big)$;

\item Admissibility:

$\forall\vec\Phi\,\big(\forall\D,\E,\O\ \forall\pval,\iass\,\wn\forall e\,|\Gamma[\vec\phi/\vec\Phi]|^\pval_\iass(e)
\To\forall\D,\E,\O\ \forall\pval,\iass\,\wn\forall e\,|\Delta[\vec\phi/\vec\Phi]|^\pval_\iass(e)\big)$.
\end{itemize}
\end{example}

\begin{remark} Let us observe that sheaf-valued models provide a formalization 
of the modified BHK semantics.
\end{remark}

\section{Semantics of the meta-logic} \label{meta-semantics}

\subsection{Generalized models} \label{generalized Tarski-style}

All the models considered so far are based on classical, moreover, two-valued interpretation of the meta-logic, 
in accordance with traditional model theory.
But the meta-logic is intuitionistic, and so is severely collapsed in such models. 

Given a logic $L$ based on a language $\L$ and given by a derivation system $\Ds$, a meta-$\L$-structure $M$ 
will be called a {\it generalized model} (resp.\ a {\it semi-generalized model}) of $L$ if it is a model 
(resp.\ a semi-standard model) of the meta-logic (see \S\ref{models}) and the meta-formula $\Ds$ is valid 
in $M$.
When $M$ is an $\L$-structure (i.e., the interpretation of the meta-logic is two-valued), this is the same
as a model in the usual sense (see \S\ref{models2}).

\subsubsection{Generalized Tarski models}\label{generalized Tarski}

Let $\D$ be a set, let $X$ and $Y$ be topological spaces, and let $f\:X\to Y$ be a continuous map.
Let $\O$ be the collection of all open sets in $X$, $\Qm$ the collection of all open sets in $Y$, and
$\ocf\:\O\to\Qm$ be defined by $\ocf(U)=\Int\big(Y\but f(X\but U)\big)$.%
\footnote{Let us note that if $X$ is compact, or more generally $f$ is a closed map, then $Y\but f(X\but U)$
is open.}
Let us interpret the intuitionistic connectives and quantifiers in $X$ in the usual way and 
the meta-connectives and the first-order meta-quantifier in $Y$ in the similar way (that is, in the way 
the corresponding connectives and the universal quantifier are interpreted in the Tarski model in $Y$ 
with domain $\D$).
The second-order meta-quantifier $\q^n$ is also interpreted similarly; namely, its interpretation $|\q^n|$
associates to a $\Hom(\D^n,\O)$-indexed family of open sets in $Y$ the interior of their intersection. 
Let $\wnf\:\Qm\to\{\Top,\Bot\}$ be defined by sending $Y$ to $\Top$ and any its proper subset to $\Bot$.

This yields a meta-structure for the language of intuitionistic logic, which will be called 
the {\it generalized Tarski meta-structure} in the map $f\:X\to Y$ with domain $\D$.

\begin{theorem} Generalized Tarski meta-structures are semi-generalized models of intuitionistic logic.
\end{theorem}

\begin{proof}
It is clear that all the laws of the Hilbert-style derivation system of intuitionistic logic
(see \S\ref{intlogic-modified}) are valid in this meta-structure (because they are valid in 
the two-valued structure given by the Tarski model in $X$ with domain $\D$), and that
all the meta-laws of the Hilbert-style formulation of the meta-logic (see \S\ref{hilbert-style meta-logic})
are also valid in it (because they are similar to the said laws of intuitionistic logic).
Also, the $\imp$-elimination meta-rule and the conditions (A) and (B) of \S\ref{models} are satisfied
for the same reasons that the {\it modus ponens} and the generalization rule of intuitionistic logic
are valid in Tarski models.
Thus it remains to check that the {\it modus ponens} and the generalization rule of intuitionistic logic
are valid in our meta-structure.

Let us check the validity of the {\it modus ponens} rule.
Given two open sets $U,V\subset X$, let $W=\Int\big(V\cup(X\but U)\big)$.
Since $\prin\fm{\alpha\land(\alpha\to\beta)\to\beta}$ is derivable in intuitionistic logic 
(or by direct reasoning), $U\cap W\subset V$.
Then $(X\but U)\cup(X\but W)=X\but(U\cap W)\supset X\but V$.
Hence $f(X\but U)\cup f(X\but W)=f\big((X\but U)\cup (X\but V)\big)\supset f(X\but V)$.
Since $\Int A\cap\Int B=\Int(A\cap B)$, we get $\ocf(U)\cap\ocf(W)\subset\ocf(V)$.

Let us check the validity of the generalization rule.
Given a family of open sets $U_x$, $x\in\D$, we need to show that
$\Int\bigcap_{x\in\D}\ocf(U_x)\subset\ocf(\Int\bigcap_{x\in\D} U_x)$.
Since $\Int\bigcap_x\Int S_x=\Int\bigcap_x S_x$ (see \S\ref{provability}), and the left hand side is open,
this is equivalent to
\[\Int\bigcap_{x\in\D}\big(Y\but f(X\but U_x)\big)\subset Y\but f\left(X\but\Int\bigcap_{x\in\D} U_x\right).\]
Since the complement to the interior is the closure of the complement, and the complement to an intersection
is the union of the complements, this is equivalent to
\[Y\but\Cl\bigcup_{x\in\D} f(X\but U_x)\subset Y\but f\left(\Cl\bigcup_{x\in\D}(X\but U_x)\right).\]
But since the image of a union is the union of the images, the latter is the same as
\[\Cl f\left(\bigcup_{x\in\D}(X\but U_x)\right)\supset f\left(\Cl\bigcup_{x\in\D}(X\but U_x)\right),\] 
which follows from the continuity of $f$.
(In fact, $f\:X\to Y$ is continuous if and only if $f(\Cl A)\subset\Cl{f(A)}$ for each $A\subset X$.)
\end{proof}

\begin{example} Let $c$ be the constant map from a topological space $X$ to a point.
It is easy to see that the generalized Tarski model of intuitionistic logic in $c$ with domain $\D$ is
nothing but the usual Tarski model in $X$ with domain $\D$.
\end{example}

\begin{example} \label{generalized classical}
If $X$ and $Y$ are topological spaces and $X$ is discrete, any map $f\:X\to Y$ is continuous.
Also, since usual Tarski models in $X$ satisfy the principle $\fm{\prin\alpha\lor\neg\alpha}$,
this principle is also valid in generalized Tarski models in $f$.
So they are in fact semi-generalized models of classical logic.
\end{example}

\begin{example} \label{interior-based example}
If $Y$ is a topological space and $Y^d$ is the same underlying set re-endowed with the discrete
topology, the previous example (Example \ref{generalized classical}) applies to the identity map $f\:Y^d\to Y$.
Let us note that $\ocf$ is interpreted by $S\mapsto\Int S$ in this case.
\end{example}

\begin{example} \label{LE'}
As a special case of the previous example (Example \ref{interior-based example}), we have semi-generalized
models of classical logic in any set $X$, with $\O=\Qm=2^Y$ and $\ocf=\id$.
\end{example}

\begin{example} There exists a semi-generalized model $W$ of classical logic with respect to a valuation field,
and principles $\prin F$ and $\prin G$ such that neither $\prin F$ nor $\prin G$ nor $\prin F\imp\prin G$ is 
valid in $W$. 

Indeed, let us consider $X=\{\mu,\nu\}$ and let
$F=\neg\fm{\big(\exists\tr x\, p(\tr x)\land\exists\tr x\,\neg p(\tr x)\big)}$ and $G=\bot$.
The usual Leibniz--Euler model $V$ in $X$ with respect to a certain valuation field $K$, constructed in 
Example \ref{deduction example}, yields a semi-generalized Leibniz--Euler model $W$ in $X$ with respect to $K$
by re-interpreting $\ocf$ and the meta-connectives and the meta-quantifiers as in Example \ref{LE'}.
Since $|\neg F|_V^\pval$ does not contain $\nu$ for each $\pval$ in $K$, but contains $\mu$ for some $\pval$ in $K$,
$|F|_V^\pval$ contains $\nu$ for each $\pval$ in $K$ and does not contain $\mu$ for some $\pval$ in $K$.
Hence $|\Rt F|_W^\pval$ also contains $\nu$ for each $\pval$ in $K$ and does not contain $\mu$ for some 
$\pval$ in $K$.
Consequently $|{\prin F}|_W^K=\{\nu\}$.
Then $|{\prin F}\imp\bot|_W^K=\{\mu\}$.
Thus neither $\prin F$ nor $\prin F\imp\bot$ is valid in $W$ with respect to $K$.
\end{example}

\subsubsection{Tarski/sheaf-valued models} \label{Tarski+sheaves}

Let us note a slightly different type of generalized models.
Let $\D$ be a set, let $X$ and $Y$ be topological spaces, and let $f\:X\to Y$ be a continuous map.
Let $\O$ be the collection of all open sets in $X$.
Let $\Qm$ the collection of sheaves on $Y$.
Let $\ocf\:\O\to\Qm$ be defined by setting $\ocf(U)$ to be the characteristic sheaf of the open set
$\Int\big(Y\but f(X\but U)\big)$.
Let us interpret the intuitionistic connectives and quantifiers in $X$ in the usual way (as in a Tarski model)
and the meta-connectives and the meta-quantifiers in $Y$ in the similar way (as in a sheaf-valued model).
Let us interpret the intuitionistic connectives and quantifiers in $X$ in the usual way (as in a Tarski model) 
and the meta-connectives and the first-order meta-quantifier in $Y$ in the similar way (as in a sheaf-valued 
model).
The second-order meta-quantifier $\q^n$ is also interpreted similarly; namely, its interpretation $|\q^n|$
associates to a $\Hom(\D^n,\O)$-indexed family of sheaves on $Y$ their product (in the category of sheaves). 
Let $\wnf\:\Qm\to\{\Top,\Bot\}$ be defined by sending $Y$ to $\Top$ and any its proper subset to $\Bot$.

This yields a meta-structure for the language of intuitionistic logic.
The arguments of the previous subsection (\S\ref{generalized Tarski}) apply without changes to show
that this is a semi-generalized model of intuitionistic logic.
Indeed, the verification of the {\it modus ponens} and generalization rules really involves only 
products of characteristic sheaves (a product of two of them and a $\D$-indexed product) which are the same
as the characteristic sheaves of the intersection.
Let us call this semi-generalized model of intuitionistic logic the {\it Tarski/sheaf-valued model}
in the map $f\:X\to Y$ with domain $\D$.

\begin{example} \label{subset/sheaf-valued example}
If $X$ is a topological space and $X^d$ is the same underlying set re-endowed with the discrete
topology, the identity map $f\:X^d\to X$ is continuous.
But since usual Tarski models in $X^d$ satisfy the principle $\fm{\prin\alpha\lor\neg\alpha}$,
this principle is also valid in Tarski/sheaf-valued models in $f$.
So they are in fact semi-generalized models of classical logic.
Let us note that $\ocf$ is interpreted by $S\mapsto\chi_{\Int S}$ in this case.
\end{example}

\subsubsection{Verificationist interpretation}\label{verificationist+}

We fix a domain of discourse $\D$.
Like in the Platonist interpretation, $\O$ is taken to be a class of propositions, consisting of 
a prescribed class of contentful (e.g.\ mathematical) primitive propositions and composite propositions, which are 
obtained inductively from the primitive ones by using contentual classical connectives and quantifiers over 
arbitrary $\D$-indexed families of propositions.
Formal classical connectives and quantifiers are interpreted straightforwardly by the contentual ones.
We also have the notion of ``truth'', defined explicitly for primitive $P\in\O$, and extended 
inductively to composite $P\in\O$ by the usual truth tables.

Now $\Qm$ is taken to be a class of problems, consisting of primitive problems of the form 
$\ocf(P)=$ ``Prove $P$'', which are the values of $\ocf\:\O\to\Qm$, and of composite 
problems, which are obtained inductively from the primitive ones by using contentual 
intuitionistic connectives and quantifiers over arbitrary $\D$-indexed and $\Hom(\D^i,\O)$-indexed 
($i=0,1,\dots$) families of problems.

The value of $\wnf\:\Qm\to\{\Top,\Bot\}$ on a problem $\Theta$ will be $\Top$ if and only if $\Theta$ 
has a solution.
To ``solve'' a primitive $\Theta=\ocf(P)$ means to prove that $P$ is true; this notion of ``solving'' is 
extended inductively to composite $\Theta\in\Qm$ by the usual BHK interpretation (see \S\ref{BHK}).
This determines a contentual interpretation of classical logic and its meta-logic, which we will call 
the {\it modified Verificationist} interpretation.

The Verificationist interpretation is very close to the sketch of an interpretation in \S\ref{contentual},
which however suggests fully classical interpretation of the meta-logic.

\begin{example} Let us record the modified Verificationist interpretations of the judgements of admissibility, 
stable admissibility and derivability of a purely logical rule $F\,/\,G$.
The tuples $\vec\phi$ and $\vec\Phi$ have the same meaning as in Example \ref{platonist example},
and $e$ is again understood to run over $\E$ as in Example \ref{platonist example2}. 
Like before, $\wn\Theta$ denotes the proposition ``The problem $\Theta$ has a solution'', and
$\oc P$ denotes the problem ``Prove the proposition $P$''.

\begin{itemize}
\item Derivability: $\forall\D,\E,\O\ \wn\forall\pval\,
\big(\forall\iass\,\oc|F|^\pval_\iass\to\forall\iass\,\oc|G|^\pval_\iass\big)$;

\item Stable admissibility: 

$\forall\vec\Phi\,\forall\D,\E,\O\ \wn
\big(\forall\pval,\iass\,\oc|F[\vec\phi/\vec\Phi]|^\pval_\iass
\To\forall\pval,\iass\,\oc|F[\vec\phi/\vec\Phi]|^\pval_\iass\big)$;

\item Admissibility:

$\forall\vec\Phi\,\big(\forall\D,\E,\O\ \wn\forall\pval,\iass\,\oc|F[\vec\phi/\vec\Phi]|^\pval_\iass
\To\forall\D,\E,\O\ \wn\forall\pval,\iass\,\oc|F[\vec\phi/\vec\Phi]|^\pval_\iass\big)$.
\end{itemize}
\end{example}

\begin{remark} \label{verificationist2}
The Verificationist interpretation can be straightforwardly generalized using 
a ``hidden parameter'' domain $\E$, so that $\O$ consists of predicates on $\E$ (similarly to the modified 
Platonist interpretation) and $\Qm$ consists of parametric problems with parameter in $\E$ (similarly to 
the modified BHK interpretation).

Let us note that this modified Verificationist semantics is formalized by the generalized models of Example 
\ref{subset/sheaf-valued example}.
\end{remark}

\subsubsection{Absolute realizability} \label{uniform realizability}

In 1945, Kleene introduced the notion of realizability for arithmetical formulas, which 
is weaker than provability in intuitionistic arithmetic (see \cite{Kl}*{\S82}, \cite{Pl3}).
A parallel notion of realizability for predicate formulas, called ``abstract realizability'', was introduced
by Plisko in the 1970s (see \cite{Pl2}, \cite{Pl3}).
However, traditionally, realizability for predicate formulas has been more extensively studied in 
an indirect way, by substituting arithmetical formulas for predicate variables, which leads to several 
inequivalent definitions of realizability for predicate formulas (see \cite{Pl3}*{\S4, \S8} for 
a comparison of these definitions with each other and \cite{VSSC}*{\S1, Assertion 1 and subsequent remarks}%
\footnote{Not found in the 2002 conference proceedings paper by the same authors with the same title.} 
concerning their relationship with abstract realizability).

All notions of realizability for predicate formulas are weaker than provability in intuitionistic logic.
Thus one can consider them as models of intuitionistic logic.
However, these are not models in the sense of the traditional, Tarski-style model theory.

We will now see that abstract realizability can in fact be regarded as a generalized model of 
intuitionistic logic.

Let $\D=\N$ and $\O=2^\N$, the set of all subsets of $\N$.
The connectives and quantifiers are interpreted as follows.
For $m,n\in\N$ let $\#(x,y)=2^m\cdot 3^n$.
Let $\frak R$ denote the set of all (partial) recursive functions $\N\supset\Dom f\xr{f}\N$, and 
for any $f\in\frak R$ let $\#f\in\N$ be the G\"odel number of $f$.%
\footnote{It is essential here that $f$ need not be total.
The set of all total recursive functions is not recursively enumerable.}
Then for any subsets $X,Y\subset\N$ and any $P\:\N\to 2^\N$ we set:
\begin{itemize}
\item $X\,|{\lor}|\,Y=\{\#(0,x)\mid x\in X\}\cup\{\#(1,x)\mid y\in Y\}$;
\item $X\,|{\land}|\,Y=\{\#(x,y)\mid x\in X,\,y\in Y\}$;
\item $X\,|{\to}|\,Y=\{\#f\mid f\in\frak R,\,X\subset\Dom f\text{ and }f(X)\subset Y\}$;
\item $|\ab|=\emptyset$;
\item $|{\exists}|(P)=\{\#(x,y)\mid y\in P(x)\}$;
\item $|{\forall}|(P)=\{\#f\mid \Dom f=\N\text{ and }f(x)\in P(x)\text{ for each }x\in\N\}$.
\end{itemize}
Let us note that $\Dom f$ may be strictly larger than $X$ in the definition of $X\,|{\to}|\,Y$.%
\footnote{This freedom is essential.
Domains of recursive functions coincide with recursively enumerable sets.}
Thus, in fact, $|\ab\to\ab|$ consists of {\it all} partial recursive functions.

It should be noted that this interpretation of the connectives is nothing but a rather computational
reading of the BHK interpretation: the set of solutions of every problem is identified with a set of 
natural numbers, and ``general method'' is interpreted as ``algorithm''.
This at once guarantees that, in the notation of \S\ref{confusion}, $\script F$ is neither injective 
nor surjective, and $\script G$ is not surjective.

Then it is natural (cf.\ the meta-clarified BHK interpretation in \S\ref{extended+} below) to set 
$\Qm=\O=2^\N$, $\ocf=\id\:\O\to\Qm$, and 
interpret the meta-connectives and the first-order meta-quantifier similarly to the corresponding 
connectives and the universal quantifier.
Thus for any subsets $X,Y\subset\N$ and any $P\:\N\to 2^\N$ and $Q_n\:\Hom(\N^n,2^\N)\to 2^\N$ we set
\begin{itemize}
\item $X\,|{\mand}|\,Y=X\,|{\land}|\,Y$;
\item $X\,|{\imp}|\,Y=X\,|{\to}|\,Y$;
\item $|\q|(P)=|{\forall}|(P)$;
\item $|\q_n|(Q_n)=\bigcap_{P\:\N^n\to 2^\N}Q_n(P)$.
\end{itemize}
Here the second-order meta-quantifier cannot be interpreted directly similarly to the first-order one,
since the set $\Hom(\N^n,2^\N)$ is uncountable, so we cannot speak of recursive functions defined on
this set (as long as ``recursive'' is understood in the original sense). 
However, a constant function would arguably qualify as ``recursive''; so one straightforward option is
to be content with constant functions; this yields the above definition of $|\q_n|$.
Finally, it is natural to define $\wnf\:\Qm\to\{\Top,\Bot\}$ by $\wnf(X)=\Top$ if and only if 
$X\ne\emptyset$.

Thus for a closed formula $\Phi$, if $\vec\gamma$ is the tuple of all problem variables that occur in 
$\Phi$, and $r_1,\dots,r_m$ are their arities, then we have $\wnf(|{\prin\Phi}|)=\Top$ if and only if 
there exists a natural number $e$ such that $e\in |\vec\gamma\mapsto\Phi|(|\vec\gamma|)$
for all tuples $|\vec\gamma|$ of functions $|\gamma_i|\:\N^{r_i}\to 2^\N$.
This is the precisely one of the two equivalent definitions (``absolute uniform realizability'') of $\Phi$ 
being abstractly realizable in the sense of Plisko \cite{Pl2}, \cite{Pl3}.

For this reason we will call the meta-structure just described the {\it abstract realizability meta-structure}.

\begin{theorem} The abstract realizability meta-structure is a generalized model of intuitionistic logic. 
\end{theorem}

\begin{proof}
It is well-known that intuitionistically valid formulas are abstractly realizable (see \cite{Pl2}, \cite{Pl3}),
so all the laws of the Hilbert-style derivation system of intuitionistic logic
(see \S\ref{intlogic-modified}) must be valid in this meta-structure.
It follows that the meta-laws of the Enderton-style formulation of the meta-logic (see 
\S\ref{enderton}) that do not involve the second-order quantifier are also valid in it.

There remain three meta-laws involving the second-order quantifier.
Let us check the validity of
$\mq{\vec z}\mq{\vec\beta}\big(\mq{\gamma}(\F\imp\G)\imp(\mq{\gamma}\F\imp\mq{\gamma}\G)\big)$.
Given an individual assignment $\iass$ and a predicate valuation $\pval$, the set $|\F\imp\G|^{\iass\pval}$ 
consists of the G\"odel numbers of all total recursive functions $f$ such that 
$f(|\F|^{\iass\pval})\subset|\G|^{\iass\pval}$.
By definition, $|\mq{\gamma}\H|^{\iass\pval}$ is the intersection of the sets $|\H|^{\iass\pval'}$ over 
all $\pval'\in V$, where $V$ consists of all predicate valuations that agree 
with $\pval$ on all predicate variables except possibly $\gamma$.
Thus $|\mq{\gamma}(\F\imp\G)|^{\iass\pval}$ consists of the G\"odel numbers of all total recursive $f$ 
such that $f(|\F|^{\iass\pval'})\subset|\G|^{\iass\pval'}$ for every $\pval'\in V$.
This is clearly a subset of the set $|\mq{\gamma}\F\imp\mq{\gamma}\G|^{\iass\pval}$ of the G\"odel numbers 
of all total recursive $f$ such that 
$f(\bigcap_{\pval'\in V}|\F|^{\iass\pval'})\subset
\bigcap_{\pval'\in V}|\G|^{\iass\pval'}$.
Thus $|\mq{\gamma}(\F\imp\G)\imp(\F\imp\mq{\gamma}\G)|^{\iass\pval}$ contains the G\"odel number of 
the identity function.
Consequently so does 
$|\mq{\vec z}\mq{\vec\beta}\big(\mq{\gamma}(\F\imp\G)\imp(\mq{\gamma}\F\imp\mq{\gamma}\G)\big)|^{\iass\pval}$.

The validity of the two remaining meta-laws involving the second-order quantifier,
$\mq{\vec z}\mq{\vec\beta}\big(\mq{\gamma}\F\imp\F[\gamma/\Phi]\big)$, provided that $\Phi$ is free for 
$\gamma$ in $\F$; and $\mq{\vec z}\mq{\vec\beta}\big(\F\imp\mq{\gamma}\F\big)$, provided that $\gamma$ 
does not occur freely in $\F$, is checked similarly, also using the G\"odel number of the identity function. 

The validity of the $\imp$-elimination meta-rule is easy: if $|\F|^{\iass\pval}$ is nonempty and there exists 
a total recursive function $f$ such that $f(|\F|^{\iass\pval})\subset|\G|^{\iass\pval}$,
then $|\G|^{\iass\pval}$ must be nonempty as well.

It remains to check the validity of the two object-level rules: the generalization rule
$\fm{\mq{\alpha}\big(\mq{\tr x}\,\alpha(\tr x)\imp\forall\tr x\,\alpha(\tr x)\big)}$ and 
the {\it modus ponens} $\fm{\mq{\alpha}\mq{\beta}(\alpha\mand\alpha\to\beta\imp\beta)}$,
or equivalently (using the exponential meta-law) 
$\fm{\mq{\alpha}\mq{\beta}\big(\alpha\to\beta\imp(\alpha\imp\beta)\big)}$.
But these are trivial, since $|{\to}|=|{\imp}|$ and $|\q|=|{\forall}|$.
\end{proof}

\subsection{Alternative semantics} \label{Frege-style}

\subsubsection{Kolmogorov's semantic consequence} \label{K-consequence}

The interpretations considered above are traditional, ``Tarski-style'' interpretations, which are based,
in particular, on Tarski's concept of semantic consequence \cite{Ta1}; and the meta-interpretations considered 
above can be called ``generalized Tarski-style''.

We will next do something different, and consider alternative, ``Kolmogorov-style'' interpretations, 
which are based,
in particular, on Kolmogorov's concept of semantic consequence, found in his problem interpretation 
of intuitionistic logic \cite{Kol}.

Kolmogorov's problem interpretation explains not only logical, but, in disguise, also meta-logical connectives 
and quantifiers by the same means of solutions of problems, and it is this coincidence of $\O$ and $\Qm$ that 
makes Kolmogorov's concept of semantic consequence particularly apposite to his context.
One might argue that the meta-logical constants should rather be interpreted by means of meta-solutions of
meta-problems, so that the issue of coincidence of $\O$ with $\Qm$ does not arise.
But what are the meta-problems, and where does one find them?
In fact, Kolmogorov's notion of semantic consequence yields a nice description of meta-problems in terms of 
problems (see \S\ref{extended+}) --- and one can hardly dream of anything simpler.
It also applies to yield a description of meta-propositions in terms of propositions (see \S\ref{universalist}).

Namely, Kolmogorov offered the following interpretation of the rules used by Heyting in his original 
axiomatization of intuitionistic logic: $\fm{p\land q\,/\,p}$, the {\it modus ponens} rule $\fm{p,p\to q\,/\,q}$ 
and the (non-structural) substitution rule --- or rather of the assumption that these are inference rules of 
the logic:%
\footnote{Kolmogorov's formulas are given here in a modern notation (replacing $\supset$ with $\to$ and
Peano's dots with the usual parentheses); also, Kolmogorov's symbol ``$\turnstile$'' is replaced 
with ``$\prin$'' since his interpretation of $\turnstile$ is precisely the same as our interpretation of $\prin$ 
(and different from our interpretation of $\turnstile$).}

\smallskip
\begin{center}
\parbox{14.7cm}{\small ``[W]e assume that we are always able (i.e.\ we possess a general method) to solve, 
for any given elementary problem functions [=formulas of zero-order intuitionistic logic regarded as 
functions of their problem variables] $p,q,r,s,\dots$, the following problems:

I. If $\prin (p\land q)$ is solved, to solve $\prin p$;

II. If $\prin p$ and $\prin (p\to q)$ are solved, to solve $\prin q$;

III. If $\prin p(a,b,c,\dots)$ is solved, to solve $\prin p(q,r,s,\dots)$.''}
\end{center}
\medskip

\begin{remark} 
It should be noted that Kolmogorov's ``If ... is solved, to solve ...'' is apparently meant to be the same
as his interpretation of object-level implication: ``$a\to b$ is the problem `assuming that a solution to $a$ 
is given, solve $b$', or, which means the same, `reduce the solution of $b$ to the solution of $a$'.''

But in our presentation of the BHK interpretation in \S\ref{BHK}, we follow Heyting's more nuanced reading of
$a\to b$ as the problem ``find a general method to transform the given solution of $a$ into a solution of $b$''.
Hence it is natural to apply this more nuanced reading here as well.
\end{remark}

It is not hard to see that the same approach applies to an arbitrary rule just as well, including those 
in first-order logic.
So here is the natural Kolmogorov-style interpretation of a rule 
\[\frac{\Gamma_1,\dots,\Gamma_k}{\Gamma_0}\]
over intuitionistic logic.
\begin{enumerate}
\item Suppose that all variables that occur freely in the formulas $\Gamma_i$ form a $p$-tuple $\vec t$ of 
individual variables and an $s$-tuple $\vec\phi$ of problem variables of arities $q_1,\dots,q_s$.
Then the rule $\Gamma_1,\dots,\Gamma_k\,/\,\Gamma_0$ is interpreted as the problem of finding a general method 
of solving a certain problem \[\Big|{\prin\Gamma_1}[\vec t/\vec T,\,\vec\phi/\vec\Phi]\ \mand\dots\mand\ 
{\prin\Gamma_k}[\vec t/\vec T,\,\vec\phi/\vec\Phi]\ \imp\ {\prin\Gamma_0}[\vec t/\vec T,\,\vec\phi/\vec\Phi]\Big|\] 
for any given $p$-tuple $\vec T$ of terms and any given $s$-tuple $\vec\Phi$ of $q_i$-formulas.
\item A solution of the problem $|{\prin\Delta_1\mand\dots\mand\prin\Delta_k\imp\prin\Delta_0}|$
is in turn a general method of transforming any given solutions of $|{\prin\Delta_1}|,\dots,|{\prin\Delta_k}|$
into a solution of $|{\prin\Delta_0}|$.
\item Suppose that all variables that occur freely in the formulas 
$\Delta_i=\Gamma_i[\vec t/\vec T,\,\vec\phi/\vec\Phi]$
form an $n$-tuple $\vec x$ of individual variables and an $m$-tuple $\vec\theta$ of problem variables of arities 
$r_1,\dots,r_m$.
Then a solution of each $|{\prin\Delta_i}|$ is a general method 
of solving the contentful problem $|\vec x,\vec\theta\mapsto\Delta_i|(\vec X,\vec\Theta)$ for any $n$-tuple 
$\vec X$ of elements of the domain and any $m$-tuple $\vec\Theta$ of contentful problems of 
arities $r_1,\dots,r_m$.
\item What is a solution of each contentful problem $|\vec x,\vec\theta\mapsto\Delta_i|(\vec X,\vec\Theta)$
is determined by the usual BHK interpretation (see \S\ref{BHK}).
\end{enumerate}

An obvious issue with this interpretation of rules is that step (1) applies what in essence is constructive 
quantification (``to find a general method'') to syntactic entities (``for any given formulas and terms''), 
thereby creating a dangerous mix of syntax and semantics.
(Perhaps this was not such an obvious issue back in 1932, when Kolmogorov published his interpretation of
rules \cite{Kol} but, for instance, Tarski's paper \cite{Ta1} on semantic consequence had not appeared yet.)

But it turns out that this mix of syntax and semantics can be disentangled.
What is important in Kolmogorov's interpretation of rules is that the formulas $\vec\Phi$ and terms $\vec T$
being quantified over are treated as functions of the problem variables $\vec\theta$ and the individual 
variables $\vec x$.
But such a functional dependence can well be understood on the semantic level.

\subsubsection{Alternative semantics}\label{Frege-definition}

Given a Tarski-style meta-interpretation $\J$ of a language $\L$ as in \S\ref{meta-formulas}, we
want to define an alternative meta-interpretation of $\L$.
We recall that $\J$ consists of the following data: an $\L$-structure $\I$, the set $\Qm$, the maps 
$\ocf\:\O\to\Qm$ and $\wnf\:\Qm\to\{\Top,\Bot\}$, elements $|\!\mand\!|$, $|\!\!\imp\!\!|$
of $\Hom(\Qm\x\Qm,\Qm)$, an element $|\q|$ of $\Hom(\Hom(\D,\Qm),\Qm)$ and an element
$|\q^n|$ of $\Hom(\Hom(\Hom(\D^n,\O),\Qm),\Qm)$ for each $n\in\N$.

The first step, which is also the crucial one, is to interpret $\Rt\:\1\to\mu$.
Its Tarski-style interpretation was simply by $\ocf\:\O\to\Qm$, which made it possible to routinely 
omit $\Rt$ altogether.
The alternative interpretation of $\Rt$ will be more sophisticated.

Given a $\lambda$-closed $(n,\vec r)$-meta-formula of the ``atomic'' form 
$\G=\vec\gamma\mapsto(\vec x\mapsto\Rt F)$, where $\vec r=(r_1,\dots,r_m)$, it can be 
interpreted by a function $||\G||\:\Fm_{r_1}\x\dots\x\Fm_{r_m}\to\Hom(\T^n,\Qm)$, where $\T=\T(\L)$ is the set 
of terms and $\Fm_i=\Fm_i(\L)$ is the set of $i$-formulas of the language $\L$.
Namely, given an $n$-tuple $\vec T\in\T^n$ and an $m$-tuple $\vec\Phi\in\Fm_{r_1}\x\dots\x\Fm_{r_m}$, we can 
find a formula $F'$, $\alpha$-equivalent to $F$ and such that $\vec T$ is free for $\vec x$ in $F$ and 
$\vec\Phi$ is free for $\vec\gamma$ in $F$.
Let $F_{\vec\Phi,\vec T}=F'[\vec\gamma/\vec\Phi,\,\vec x/\vec T]$.
The principle $\prin F_{\vec\Phi,\vec T}$ is Tarski-interpreted by a specific element 
$|{\prin F_{\vec\Phi,\vec T}}|\in\Qm$, and we set $||\G||(\vec F)(\vec T)$ to be this element.
Clearly, the $\alpha$-equivalence class of $\prin F_{\vec\Phi,\vec T}$ does not depend on the choice of $F'$,
and hence $||\G||$ is well-defined.

However, the set of terms $\T$ and the sets of $i$-formulas $\Fm_i$ are not really semantic notions, so it is 
not desirable to use them in our ``alternative semantics''.
To this end we will replace them with their semantic analogues.

(Incidentally, let us also note that the above definition of $||\G||$ does not seem to generalize in a reasonable
way to the case where $\G$ is not $\lambda$-closed, because same variables could have free occurrences in 
the original formula and also new occurrences introduced upon the substitution.)

Let $\omega$ denote, as usual, the totally ordered set $\{0,1,\dots\}$.
A {\it semantic term of degree} $k\in\omega$ is a function $T\:\D^k\to\D$, and a {\it semantic $n$-formula of 
degree} $(k,l)\in\omega\x\omega$ is a function $\Phi\:\D^k\x\O^{[l]}\to\Hom(\D^n,\O)$, where $\O^{[l]}$ denotes 
the product $\prod_{i=0}^{l-1}\Hom(\D^i,\O)^l$ of $l\x l$ factors.
(We allow for the possibility that $k\ne l$ only as a matter of convenience; it is not strictly necessary.)
A {\it semantic term} is an element $[k,T]$ of the disjoint union $|\T|:=\bigsqcup_{k\in\omega}\Hom(\D^k,\D)$,
and a {\it semantic $n$-formula} is an element $[k,l,\Phi]$ of the disjoint union 
$|\Fm_n|:=\bigsqcup_{k,l\in\omega}\Hom(\D^k\x\O^{[l]},\Hom(\D^n,\O))$.
A semantic 0-formula is also called a {\it semantic formula}.%
\footnote{Although ``semantic terms'' and ``semantic formulas'' are technically oxymorons, this 
nevertheless seems to be a reasonable terminology.}  

Given a $\lambda$-closed $(n,\vec r)$-meta-formula of the ``atomic'' form 
$\G=\vec\gamma\mapsto(\vec x\mapsto\Rt F)$, where $\vec r=(r_1,\dots,r_m)$, 
its alternative interpretation will be a map $\KK\G\KK:|\Fm_{\vec r}|\to\Hom(|\T|^n,\Qm)$, defined as follows.
Let us start from the usual Tarski-style interpretation 
$|\G|\:\prod_{i=1}^m\Hom(\D^{r_i},\O)\to\Hom(\D^n,\Qm)$.
Given an $n$-tuple $\vec T$ of semantic terms $T_i\:\D^{k_i}\to\D$, we can combine them into a map 
$T\:\D^k\to\D^n$, where $k=\max(k_1,\dots,k_n)$.
Given an $m$-tuple $\vec\Phi$ of semantic $r_i$-formulas 
$\Phi_k\:\D^{k'_i}\x\O^{[l_i]}\to\Hom(\D^{r_i},\O)$, we can combine them into a map 
$\Phi\:\D^{k'}\x\O^{[l]}\to\prod_{i=1}^m\Hom(\D^{r_i},\O)$,
where $k'=\max(k_1',\dots,k_m')$ and $l=\max(l_1,\dots,l_m)$.
Finally, $|\G|$, $T$ and $\Phi$ combine into a map $\D^{k''}\x\O^{[l]}\to\Qm$, where $k''=\max(k,k')$,
which we will denote, somewhat loosely, by $|\G|\circ(\vec T\x\vec\Phi)$.
We can apply to this map the Tarski-interpreted $k'$-fold first-order and $l\x l$-fold second-order 
meta-quantifier $|\q_{k''}^{[l]}|\:\Hom\big(\D^{k''}\x\O^{[l]},\Qm\big)\to\Qm$
and obtain an element $|\q_{k''}^{[l]}|\big(|\G|\circ(\vec T\x\vec\Phi)\big)\in\Qm$.
Thus we have described a map $\KK\G\KK:|\Fm_{\vec r}|\to\Hom(|\T|^n,\Qm)$.
Let us note that the Tarski-interpreted meta-quantifiers $|\q|$ and $|\q^n|$ are used here
to interpret $\Rt$ rather than any meta-quantifiers.

Next, we keep the Tarski-style interpretations $\KK\&\KK:=|\&|$ and $\KK\impord\KK:=|\impord|$ for 
the meta-connectives, whereas the first-order meta-quantifier $\q$ is re-interpreted 
by a function $\KK\q\KK:\Hom(|\T|,\Qm)\to\Qm$; and the $n$-ary second-order meta-quantifier $\q^n$ 
by a function $\KK\q^n\KK\:\Hom(|\Fm_n|,\Qm)\to\Qm$.

The above will be called the {\it alternative meta-interpretation} associated to the given 
Tarski-style meta-interpretation $\J$.
In the case of a Tarski-style interpretation $\J=\I_+$, we can 
define an {\it alternative interpretation} by providing explicit definitions of the functions
$\KK\q\KK$ and $\KK\q^n\KK$.
Namely, we set $\KK\q\KK(f)=\Top$ if and only $f(T)=\Top$ for all $T\in|\T|$ and $\KK\q^n\KK(f)=\Top$ 
if and only $f(\Phi)=\Top$ for all $\Phi\in|\Fm_n|$.

Given an alternative meta-interpretation, we can interpret any $\lambda$-closed 
$(n,\vec r)$-meta-formula $\G$ by a function $\KK\G\KK\:|\Fm_{\vec r}|\to\Hom(|\T|^n,\Qm)$ 
by a straightforward induction.
In particular, any $\lambda$-closed meta-formula $\F$ is interpreted by an element $\KK\F\KK\in\Qm$.
We may also write $\KK\F\KK_\J$ to emphasize the given meta-interpetation $\J$, or $\KK\F\KK_\I$
in the case of an interpretation $\J=\I_+$.

We will see below that the alternative interpretation turns out to be the same as the original Tarski-style 
interpretation for principles (\S\ref{principles-rules}), but generally different for rules
(\S\ref{Frege two-valued}).
In order to generalize this to many-valued meta-interpretations, we will need to reformulate alternative
interpretations in a different language (\S\ref{Frege-style2}).

\subsubsection{Principles and rules}\label{principles-rules}

A {\it pre-assignment} $\preass$ assigns a semantic term $\preass(x)\in|\T|$ to each individual variable $x$.
Clearly, a pre-assignment $\preass$ along with an interpretation $\I$ of function symbols yield 
an interpretation of each term $T$ by a semantic term $\KK T\KK^\I_\preass$, also denoted $\KK T\KK_\preass$.
Let us note that a usual variable assignment $\iass$ is a special case of a pre-assignment $\preass$ that 
assigns to each individual variable $x$ a semantic term $\preass(x)$ of the form $[0,T]$,
$T\:\D^0\to\D$, i.e.\ an element of $\D$.

A {\it pre-valuation} $\preval$ assigns a semantic $n$-formula $\preval(\gamma)\in|\Fm_n|$
to each $n$-ary predicate variable $\gamma$, for each $n=0,1,\dots$.
Let us note that a usual predicate valuation is a special case of a pre-valuation $\preval$ that assigns
to each $n$-ary predicate variable $\gamma$ a semantic $n$-formula 
$\preval(\gamma)$ of the form $[0,0,\Phi]$,
$\Phi\:\D^0\x\O^{(0)}\to\Hom(\D^n,\O)$, i.e.\ an element of $\Hom(\D^n,\O)$.

Clearly, a pre-valuation $\preval$ along with an interpretation $\I$ of function symbols, predicate constants,
connectives and quantifiers yield an interpretation of each closed $n$-formula $G$ by a semantic $n$-formula 
$\KK G\KK^\preval_\I$, also denoted $\KK G\KK^\preval$.
Given additionally a pre-assignment $\preass$, then also each formula $F$ is interpreted by
a semantic formula $\KK F\KK^\preval_\preass$.
Namely, if $\vec x=(x_1,\dots,x_n)$ is the tuple of all free individual variables of $F$, then
$\KK F\KK^\preval_\preass=[\max(k,k'),l,\Phi\circ T]$, $\Phi\circ T\:\D^{\max(k,k')}\x\O^{[l]}\to\O$,
is obtained by pre-composing the semantic $n$-formula $|\vec x\mapsto F|^\preval=[k',l,\Phi]$,
$\Phi\:\D^k\x\O^{[l]}\to\Hom(\D^n,\O)$
with the function $T\:\D^k\to\D^n$, $k=\max(k_1,\dots,k_n)$, obtained by combining 
the semantic terms $\preass(x_i)=[k_i,T_i]$, $T_i\:\D^{k_i}\to\D$.

Given an assignment $\iass$ and a pre-assignment $\preass$, we can define a new assignment 
$\preass[\iass]$ by $\preass[\iass](x)=f\big(\iass(\var^\0_1),\dots,\iass(\var^\0_k)\big)$, where
$\preass(x)=[k,f]$, $f\:\D^k\to\D$.
Then for each term $T$ we have $|T|_{\preass[\iass]}=f(|\var^\0_1|_\iass,\dots,|\var^\0_k|_\iass)$, 
where $\KK T\KK_\preass=[k,f]$, $f\:\D^k\to\D$.

Given an assignment $\iass$, a valuation $\pval$ and a pre-valuation $\preval$, we define a new valuation 
$\preval[\iass,\pval]$ by $\preval[\iass,\pval](\gamma)=f\Big(\iass(\var^\0_1),\dots,\iass(\var^\0_k);
\big(\pval(\var^{\0^i\too\1}_1),\dots,\pval(\var^{\0^i\too\1}_l)\big)_{i=0}^{l-1}\Big)$,
where $\preval(\gamma)=[k,l,f]$, $f\:\D^k\x\O^{[l]}\to\Hom(\D^n,\O)$.
Then for each closed $n$-formula $G$ we have
$|G|^{\preval[\iass,\pval]}=f\Big(\iass(\var^\0_1),\dots,\iass(\var^\0_k);
\big(\pval(\var^{\0^i\too\1}_1),\dots,\pval(\var^{\0^i\too\1}_l)\big)_{i=0}^{l-1}\Big)$,
where $\KK G\KK^\preval=(k,l,f)$, $f\:\D^k\x\O^{[l]}\to\Hom(\D^n,\O)$.

Consequently, given an assignment $\iass$, a valuation $\pval$, a pre-assignment $\preass$ and 
a pre-valuation $\preval$, for each formula $F$ we again have 
\[|F|^{\preval[\iass,\pval]}_{\preass[\iass]}=f\Big(\iass(\var^\0_1),\dots,\iass(\var^\0_k);
\big(\pval(\var^{\0^i\too\1}_1),\dots,\pval(\var^{\0^i\too\1}_l)\big)_{i=0}^{l-1}\Big),\]
where $\KK F\KK^\preval_\preass=(k,l,f)$, $f\:\D^k\x\O^{[l]}\to\O$.

\begin{example} \label{Frege principle}
Let us discuss the alternative interpretation (i.e.\ the alternative two-valued 
meta-interpretation) of a principle, $\prin F$.

Since $\Qm=\{\Top,\Bot\}$ and $\wnf\:\Qm\to\{\Top,\Bot\}$ is the identity map,
we may identify elements of $\Qm$ with judgements about pre-valuations and pre-assignments.
 
From the definition, we have
\[\KK{\prin F}\KK=\forall\preval,\preass\ \forall\pval,\iass\,\ocf\,
|F|_{\preass[\iass]}^{\preval[\iass,\pval]}.\]
This is clearly equivalent to the Tarski-style interpretation
\[|{\prin F}|=\forall\pval,\iass\,\ocf\,|F|_\iass^\pval.\]
\end{example}

\begin{example} \label{Frege rule}
Let us now discuss the alternative interpretation (i.e.\ the alternative two-valued 
meta-interpretation) of a rule, $F/G$.

From the definition, we have
\[\KK{F\,/\,G}\KK=\forall\preval\,(
\forall\preass\ \forall\pval,\iass\,\ocf\,|F|_{\preass[\iass]}^{\preval[\iass,\pval]}
\To\forall\preass\ \forall\pval,\iass\,\ocf\,|G|_{\preass[\iass]}^{\preval[\iass,\pval]}).\]
This clearly simplifies as 
\[\KK{F\,/\,G}\KK=\forall\preval\,(\forall\pval,\iass\,\ocf\,|F|_\iass^{\preval[\iass,\pval]}
\To\forall\pval,\iass\,\ocf\,|G|_\iass^{\preval[\iass,\pval]}).\]
The latter clearly follows from
\[|{F\,/\,G}|=\forall\preval\ \forall\pval\,(\forall\iass\,\ocf\,|F|_\iass^{\preval[\iass,\pval]}\To
\forall\iass\,\ocf\,|G|_\iass^{\preval[\iass,\pval]}),\]
which is clearly equivalent to the Tarski-style interpretation
\[|{F\,/\,G}|=\forall\pval\,(\forall\iass\,\ocf\,|F|_\iass^\pval\To
\forall\iass\,\ocf\,|G|_\iass^\pval).\]
However, we will see in Example \ref{Frege pure} below that there is no reverse implication, at least,
in a minor modification of the setup.
\end{example}

\subsubsection{Two-valued alternative models}\label{Frege two-valued}

Example \ref{Frege rule} (see also Example \ref{Frege-platonist example} below) suggests that at least 
with two-valued interpretation of the meta-logic, the alternative interpretation of the derivability of a rule 
is very similar to the interpretation of its stable admissibility (either Tarski-style or alternative, these 
are the same by Example \ref{Frege principle}). 
Let us show that despite this similarity, they do not coincide.

\begin{example} The rule $\exists \tr x\,\fm p(\tr x)\land\exists \tr x\,\neg\fm p(\tr x)\,/\,\clbot$
is stably admissible for classical logic (see Example \ref{st-adm}).
However, this rule is not alternatively-valid already in the two-valued model of classical logic
(two-valued both on the meta-level and the object-level, i.e.\ $\O=\Qm=\{\Top,\Bot\}$).
This is established in essence similarly to Example \ref{equality-st-adm}.

In more detail, we need to interpret $\fm p$ by a semantic 1-formula $\preval(\fm p)=[l,\Phi]\in|\Fm_1|$,
$\Phi\:\D^k\x\O^{[l]}\to\Hom(\D,\O)$.
Let us choose $k=1$, $l=0$ and $\Phi\:\D\to\Hom(\D,\O)$ defined by
$\Phi(y,\gamma)(x)=\Top$ if and only if $x=y$.
Then the semantic formula $\KK\exists \tr x\,\fm p(\tr x)\land\exists \tr x\,\neg\fm p(\tr x)\KK^\preval$
is $[1,0,\Psi]\in|\Fm_0|$, where $\Psi\:\D\to\O$ is the Tarski-style interpretation of the $\lambda$-closed 
1-formula $\tr y\mapsto\exists \tr x\ \,\tr x=\tr y\land\exists \tr x\ \,\neg\tr x=\tr y$ of the language
extended by the binary predicate constant $=$ with the obvious interpretation.

Therefore the alternative interpretation of the $\lambda$-closed $(0,(1))$-meta-formula
$\fm p\mapsto\Rt\big(\exists \tr x\,\fm p(\tr x)\land\exists \tr x\,\neg\fm p(\tr x)\big)\imp\bot$
is by a function $f\:|\Fm_1|\to\Qm$ whose value on $\preval(\fm p)$ is the Tarski-style interpretation of 
the $\lambda$-closed meta-formula
$(\mq{\tr y}\exists \tr x\ \,\tr x=\tr y\land\exists \tr x\ \,\neg\tr x=\tr y)\imp\bot$ of the extended language.
As long as $\D$ contains at least two elements, this value $f\big(\preval(\fm p)\big)\in\Qm$ is $\Bot$.
(see Example \ref{equality-st-adm}).
Hence the alternative interpretation of the rule in question, i.e.\ the $\lambda$-closed meta-formula
$\mq{\fm p}\Big(\Rt\big(\exists \tr x\,\fm p(\tr x)\land\exists \tr x\,\neg\fm p(\tr x)\big)\imp\bot\Big)$,
is also $\Bot$.
\end{example}

Let us call a semantic $n$-formula $(k,l,\Phi)\in|\Fm_k|$, $\Phi\:\D^k\x\O^{[l]}\to\Hom(\D^n,\O)$, {\it pure} 
if it uses the given $n+k$ elements of $\D$ only by feeding them into the given elements of 
the factors $\Hom(\D^i,\O)$ of $\O^{[l]}$; or more formally if there exists a semantic formula 
$(0,l,\Phi')\in|\Fm_0|$ such that $\Phi(\vec x,\vec\gamma)(\vec y)=\Phi'(\vec\gamma')$, where 
$\vec\gamma=(\gamma_{01},\dots,\gamma_{0l};\dots;\gamma_{l-1,1},\dots,\gamma_{l-1,l})$,
$\vec\gamma'=(\gamma_{01}\circ\phi_{01},\dots,\gamma_{0l}\circ\phi_{0l};\dots;
\gamma_{l-1,1}\circ\phi_{l-1,1},\dots,\gamma_{l-1,l}\circ\phi_{l-1,l})$ and each 
$\phi_{ij}\:\D^i\to\D^i$
replaces some of the given elements of $\D$ with some entries of the tuples $\vec x$ and $\vec y$.

Let $|\Fm_k|^p$ denote the subset of $|\Fm_k|$ consisting of pure semantic $k$-formulas.
If the language contains no function symbols and predicate constants, then we define a {\it pure alternative 
interpretation} and a {\it pure pre-valuation} by repeating the definitions of an alternative interpretation 
and a pre-valuation with $|\Fm_k|^p$ in place of $|\Fm_k|$.

\begin{remark}
One can define ``pre-valuation fields'' similarly to valuation fields (either of the two definitions in
\S\ref{valuation fields}), and check that the family $|\Fm_k|^p$ forms a pre-valuation field.
\end{remark}

\begin{example} \label{Frege pure} The rule 
$\exists \tr x\,\fm p(\tr x)\land\exists \tr x\,\neg\fm p(\tr x)\,/\,\clbot$
is not derivable in classical logic, and is not valid in the Tarski-style two-valued model of classical logic
with any non-singleton domain.
However, this rule is purely alternatively valid in the two-valued model of classical logic with every domain.
This is established in essence similarly to Example \ref{st-adm}.

In more detail, suppose that $\fm p$ is interpreted by a pure semantic 1-formula 
$\preval(\fm p)=(k,l,\Phi)\in|\Fm_1|^p$,
$\Phi\:\D^k\x\O^{[l]}\to\Hom(\D,\O)$.
Let $f_i\:\D^i\to\O$ send every input to $\Bot$, and let 
$\vec f=(f_0,\dots,f_0;\dots;f_{l-1},\dots,f_{l-1})\in\O^{[l]}$.
Since $\preval(\fm p)$ is pure, $\Phi(\vec y,\vec f)(x)$ does not depend on $\vec y$ and $x$.
Then the semantic formula $\KK\exists \tr x\,\fm p(\tr x)\land\exists \tr x\,\neg\fm p(\tr x)\KK^\preval$
is the same as $\KK\fm p(\tr x)\land\neg\fm p(\tr x)\KK^\preval_\iass$ for any individual assignment $\iass$.
But the latter is $\Bot$.
It follows that the rule in question is purely alternatively valid. 
\end{example}

\subsubsection{Platonist interpretation II (alternative)} \label{universalist}

We now present a new contentual interpretation of the meta-logic of classical logic based on the alternative 
semantics of \S\ref{Frege-definition} (two-valued, both on the meta-level and the object-level).

Like in the Platonist interpretation of \S\ref{Frege+}, a domain $\D$ is fixed, and $\O$ is taken to be 
a class of propositions, consisting of a prescribed class of contentful (e.g.\ mathematical) primitive 
propositions and of composite propositions, which are obtained inductively from the primitive ones by 
using contentual classical connectives and quantifiers over arbitrary $\D$-indexed families of propositions.
Formal classical connectives and quantifiers are interpreted straightforwardly by the contentual ones.

$\Qm$ is taken to be a larger class of propositions, consisting of those in $\O$, called ``meta-primitive'' 
(thus the function $\ocf\:\O\to\Qm$ is the inclusion) and of ``meta-composite'' propositions, obtained 
inductively from the meta-primitive ones by using contentual classical connectives and quantifiers over 
arbitrary $\D$-indexed, $\Hom(\D^i,\O)$-indexed ($i=0,1,\dots$), $|\T|$-indexed and $|\Fm_i|$-indexed 
($i=0,1,2,\dots$) families of propositions.
(We recall that $|\T|$ and $|\Fm_i|$ are defined in terms of $\D$ and $\O$, see \S\ref{Frege-definition}.)
Meta-connectives and meta-quantifiers are Tarski-interpreted by contentual classical connectives and 
$\D$-indexed and $\Hom(\D^i,\O)$-indexed quantifiers in the straightforward way; and meta-quantifiers 
are also alternatively-interpreted by contentual classical $|\T|$-indexed and $|\Fm_i|$-indexed quantifiers.

The function $\wnf\:\Qm\to\{\Top,\Bot\}$ sends a proposition $P$ to $\Top$ if and only if $P$ is true,
where ``true'' is defined explicitly for primitive $P\in\O$ and is extended inductively to composite 
$P\in\O$ and then further to all meta-composite $P\in\Qm$ by the usual truth tables.
This determines a contentual interpretation of classical logic and its meta-logic, which we will call 
the {\it alternative Platonist interpretation}.

\begin{example}\label{Frege-platonist example}
Let us record the alternative Platonist interpretations of the judgements of admissibility, 
stable admissibility and derivability of a purely logical rule $F\,/\,G$.
Let $\vec\phi$ be the tuple of all predicate variables that occur freely in $F$
or in $G$, and let $\vec r$ be the tuple of their arities.
Below $\vec\Phi$ is understood to run over $\Fm_{\vec r}$.

\begin{itemize}
\item Derivability: $\forall\D,\O\ \forall\preval\,
(\forall\pval,\iass\,|F|_\iass^{\preval[\iass,\pval]}
\To\forall\pval,\iass\,|G|_\iass^{\preval[\iass,\pval]})$;

\item Stable admissibility: $\forall\vec\Phi\,\forall\D,\O\ 
(\forall\pval,\iass\,|F[\vec\phi/\vec\Phi]|^\pval_\iass
\To\forall\pval,\iass\,|F[\vec\phi/\vec\Phi]|^\pval_\iass)$;

\item Admissibility:
$\forall\vec\Phi\,(\forall\D,\O\ \forall\pval,\iass\,|F[\vec\phi/\vec\Phi]|^\pval_\iass
\To\forall\D,\O\ \forall\pval,\iass\,|F[\vec\phi/\vec\Phi]|^\pval_\iass)$.
\end{itemize}
\end{example}

\subsection{Generalized alternative semantics} \label{Frege-style2}

Our next goal is to better understand alternative meta-interpretations, so as to be able to define
alternative meta-models.
In particular, we have seen above that principles are interpreted in the same way in Tarski-style and
in alternative two-valued semantics.
But so far we have no way to see that this is also so in the many-valued case.

To this end we will first look at an extension of the meta-logic, which besides the original copy of
the meta-logic will contain another, ``unintended'' embedding.
The point is that the straightforward, generalized-Tarski-style interpretation of the extended meta-logic
will restrict to the Kolmogorov-style interpretation on the re-embedded meta-logic (and, unsurprisingly,
to the Tarski-style interpretation on the original copy).

This extension of the meta-logic goes somewhat beyond being another special case of Paulson's formalism 
in {\tt Isabelle}'s metalogic: it employs $\lambda$-calculus with $\N$-indexed sums (i.e., countably 
infinite disjoint unions), and also assumes a generalized form of $\imp$-elimination.

\subsubsection{Extended $\lambda$-calculus}

We define an extension of the $\lambda$-calculus described in \S\ref{expressions} which involves
infinite sums and certain variable lists.

Let $\omega=\{0,1,\dots\}$ and for any $n\in\omega$ let $[n]=\{1,\dots,n\}$; thus $[0]=\emptyset$.
The extended $\lambda$-calculus has the following type forming rules:

\begin{enumerate}
\item base types are types;
\item if $\Gamma$ and $\Delta$ are types, then so is $\Gamma\too\Delta$;
\item if $n\in\omega$ and $\Gamma_i$ is a type for each $i\in [n]$, then so is $\prod_{i\in[n]}\Gamma_i$;
\item if $\Gamma_i$ is a type for each $i\in\omega$, then so is $\sum_{i\in\omega}\Gamma_i$.
\end{enumerate}

We write $\Gamma^n=\prod_{i\in[n]}\Gamma$.
We often shorten $\prod_{i\in[n]}\Gamma_i$ to $\prod_{i=1}^{n}\Gamma_i$ and 
$\sum_{i\in\N}\Gamma_i$ to $\sum_{i=1}^\infty\Gamma_i$.

While $n$-ary products as such could be defined in terms of binary products (see \S\ref{simultaneous}),
we will need them already to properly describe the expression forming rules (cf.\ Remark 
\ref{indices as variables} below.)

Constants and variables are denoted as before (see \S\ref{lambda}).
Expression forming rules are as follows, where $n\in\omega$:
\begin{enumerate}
\item a constant of type $\Gamma$ is an expression of type $\Gamma$;
\item a variable of type $\Gamma$ is an expression of type $\Gamma$;
\item (abstraction) if $T:\Delta$ on the assumption that $x$ is a variable of type $\Gamma$, then
$x\mapsto T:\Gamma\too\Delta$;
\item (application) if $F:\Delta\too\Gamma$, and $T:\Delta$, then $F(T):\Gamma$;
\item (projections) if $k\in[n]$, then $\pr_k:\prod_{i\in[n]}\Gamma_i\too\Gamma_k$;
\item (function tuples) if $F_i:\Delta\too\Gamma_i$ on the assumption that $i\in[n]$, then 
$(F_i)_{i\in[n]}:\Delta\too\prod_{i\in[n]}\Gamma_i$;%
\footnote{We use these ``function tuples'' instead of the traditional $n$-tuples in order to emphasize
symmetry between product and sum.}
\item (inclusions) if $k\in\omega$, then $\inc_k:\Gamma_k\too\sum_{i\in\omega}\Gamma_i$;
\item (definition by cases) if $F_i:\Gamma_i\too\Delta$ on the assumption that $i\in\omega$,
then $(F_i)^{i\in\omega}:\sum_{i\in\omega}\Gamma_i\too\Delta$;%
\footnote{This is yet another divergence from traditional notation of $\lambda$-calculus.}
\item (pre-tuple) there is a constant $\vec\emptyset:\prod_\emptyset$.
\end{enumerate}

Using the pre-tuple and function tuples, we can define usual tuples (at least, up to $\alpha$-equivalence): 
given a $T_i:\Gamma_i$ for each $i\in[n]$, let $(T_i)_{i\in[n]}:\prod_{i\in[n]}\Gamma_i$ denote 
$(x\mapsto T_i)_{i\in[n]}(\vec\emptyset)$,
where $x$ is a variable of type $\prod_\emptyset$ that does not occur freely in any of the $T_i$.
Thus the empty tuple $()\:\prod_\emptyset$ is the image of the pre-tuple $\vec\emptyset:\prod_\emptyset$ under 
the empty function tuple $()\:\prod_\emptyset\too\prod_\emptyset$.
(The latter has the meaning of a constant map, but secretly is also the identity map.)

\begin{remark} \label{indices as variables}
We treat the index $i$ of the variable $\var_i^\Gamma$ as a variable, rather than a meta-variable.
This means that we may say, for example, that ``$\var_i^\Gamma:\Gamma$ on the assumption that $i\in\omega$''.
It follows that $(\var_i^\Gamma)_{i\in[n]}$, which can also be written as $(\var_1^\Gamma,\dots,\var_n^\Gamma)$, 
is a valid expression of type $\Gamma^n$ on the assumption that $n\in\omega$.
Consequently,$\big(f_i\mapsto f_i(\var_1^\Gamma,\dots,\var_n^\Gamma)\big)^{i\in\omega}$ is a valid expression 
of type $\sum_{i\in\omega}(\Gamma^i\too\Gamma)\too\Gamma$.
Also, 
$\big((\var_1^{\Gamma_1},\dots,\var_n^{\Gamma_1}),\dots,(\var_1^{\Gamma_n},\dots,\var_n^{\Gamma_n})\big)$
is a valid expression of type $\prod_{i=1}^n\Gamma_i^n$ on the assumption that $n\in\omega$.
\end{remark}

\begin{example}\label{prodproduct1}
$\big(f\mapsto(\pr_i\circ f)\big)_{i\in[n]}:
\Big(\Delta\too\prod_{i\in[n]}\Gamma_i\Big)\too\prod_{i\in[n]}(\Delta\too\Gamma_i)$
is a $\lambda$-closed expression.
\end{example}

\begin{example}\label{prodproduct2}
$t\mapsto(\pr_i(t))_{i\in[n]}:\prod_{i\in[n]}(\Delta\too\Gamma_i)\too\Big(\Delta\too\prod_{i\in[n]}\Gamma_i\Big)$
is a $\lambda$-closed expression.
\end{example}

The previous two examples show that function tuples amount to a special case of usual tuples
(even though usual tuples were defined via a special case of function tuples).

Although we do not really need $n$-ary sums, they can be defined just like the infinite sums above, and
have the following relations with $n$-ary products:

\begin{example}\label{sumproduct1}
$t\mapsto\big(\pr_i(t)\big)^{i\in[n]}:
\prod_{i\in[n]}(\Gamma_i\too\Delta)\too\Big(\sum_{i\in[n]}\Gamma_i\too\Delta\Big)$
is a $\lambda$-closed expression.
\end{example}

\begin{example}\label{sumproduct2}
$f\mapsto(f\circ\inc_i)_{i\in[n]}:
\Big(\sum_{i\in[n]}\Gamma_i\too\Delta\Big)\too\prod_{i\in[n]}(\Gamma_i\too\Delta)$
is a $\lambda$-closed expression.
\end{example}

The previous two examples show that $(\cdot)^{i\in[n]}$ is in essence a special case of $(\cdot)_{i\in[n]}$.
(But technically they live in different types. Also, their comparison uses both rules, so none of the
two rules has been deduced from the other one.)

\begin{example}\label{sumproduct3}
$\Big(f_i\mapsto\big(t\mapsto f_i\circ\pr_i(t)\big)\Big)^{i\in[n]}:
\sum_{i\in[n]}(\Gamma_i\too\Delta)\too\Big(\prod_{i\in[n]}\Gamma_i\too\Delta\Big)$
is a $\lambda$-closed expression.
\end{example}

\begin{example}\label{sumproduct4}
$f_i\mapsto(\inc_i\circ f_i)^{i\in[n]}:
\sum_{i\in[n]}(\Delta\too\Gamma_i)\too\Big(\Delta\too\sum_{i\in[n]}\Gamma_i\Big)$
is a $\lambda$-closed expression.
The same goes for the infinite sums.
\end{example}

The definition of {\it free occurrence} is extended as follows:
\begin{enumerate}
\item $x$ occurs freely in $(F_i)_{i\in[n]}$ if it occurs freely in $F_i$ for some $i\in[n]$;
\item $x$ occurs freely in $(F_i)^{i\in\omega}$ if it occurs freely in $F_i$ for some $i\in\omega$.
\end{enumerate}

The definition of {\it substitution} is extended as follows:
\begin{enumerate}
\item $(F_i)_{i\in[n]}|_{x:=T}=(F_i|_{x:=T})_{i\in[n]}$;
\item $(F_i)^{i\in\omega}|_{x:=T}=(F_i|_{x:=T})^{i\in\omega}$.
\end{enumerate}

We also define terminal types and their units following \cite{CdC}:

\begin{enumerate}
\item If each $\Theta_i$ is a terminal type with unit $U_i$, then $\prod_{i\in[n]}\Theta_i$ is 
a terminal type with unit $(U_i)_{i\in[n]}$.
In particular, $\prod_\emptyset$ is a terminal type with unit $()$.
\item If $\Gamma$ is a type and $\Theta$ is a terminal type with unit $U$, then $\Gamma\to\Theta$ is 
a terminal type with unit $x\mapsto U$.
\end{enumerate}

The definition of {\it $\beta$-reduction} is extended as follows:
\begin{enumerate}
\item if $n\in[m]$, every expression of the form $\pr_n\big((F_i)_{i\in[m]}(T)\big)$ $\beta$-reduces to $F_n(T)$;
\item if $n\in\omega$, every expression of the form $(F_i)^{i\in\omega}\big(\inc_n(T)\big)$ $\beta$-reduces to $F_n(T)$.
\item if $\Theta$ is a terminal type with unit $U$, every $T:\Theta$ $\beta$-reduces to $U$.
\end{enumerate}

\begin{example} Let us see how to recover the $\beta$-reduction for usual tuples:
$\pr_n\big((T_i)_{i\in[m]}\big)$ $\beta$-reduces to $T_n$.
Indeed,
$\pr_n\big((x\mapsto T_i)_{i\in[m]}(\vec\emptyset)\big)$, where $x$ does not occur freely in any of the $T_i$,
$\beta$-reduces to $(x\mapsto T_n)(\vec\emptyset)$, which in turn $\beta$-reduces to $T_n$.
\end{example}

\begin{example} If $\Theta$ is a terminal type with unit $U$, we have $\phi:\Gamma\too(\Theta\too\Gamma)$, 
$\phi=t\mapsto(x\mapsto t)$, and $\psi:(\Theta\too\Gamma)\too\Gamma$, $\psi=f\mapsto f(U)$.
Now $\psi\big(\phi(t)\big)$ $\beta$-reduces to $(x\mapsto t)(U)$, which in turn $\beta$-reduces to $t$.
Conversely, $\phi\big(\psi(f)\big)$ $\beta$-reduces to $x\mapsto f(U)$.
This does not $\beta$-reduce to $f$, but we will now see that it $\beta\eta$-reduces to $f$.
\end{example}

The definition of {\it $\eta$-reduction} is extended as follows for $n>0$:
\begin{enumerate}
\item every expression of the form $\Big(x\mapsto\pr_i\big(F(x)\big)\Big)_{i\in[n]}$ $\eta$-reduces to $F$;
\item every expression of the form $\Big(x_i\mapsto F\big(\inc_i(x_i)\big)\Big)^{i\in\omega}$ $\eta$-reduces to $F$;
\item if $\Theta$ is a terminal type with unit $U$, for every $F:\Theta\to\Gamma$, the expression
$x\mapsto F(U)$ $\eta$-reduces to $F$;
\item if $\Theta$ is a terminal type with unit $U$, for every
$F:\Gamma\too\Theta\x\Delta$, the expression $\Big(x\mapsto U,\,x\mapsto\pr_1\big(F(x)\big)\Big)$ 
$\eta$-reduces to $F$;
\item if $\Theta$ is a terminal type with unit $U$, for every
$F:\Theta\sqcup\Gamma\too\Delta$, the expression 
$\Big(x_1\mapsto F\big(\inc_1(U)\big),\,x_2\mapsto F\big(\inc_1(x_2)\big)\Big)$ 
$\eta$-reduces to $F$.
\end{enumerate}

\begin{example} Let us see how to recover the $\eta$-reduction for usual tuples:
$\big(\pr_i(T)\big)_{i\in[n]}$ is $\beta\eta$-equivalent to $T$.
Indeed, $\big(x\mapsto\pr_i(T)\big)_{i\in[n]}(\vec\emptyset)$, where $x$ does not occur freely in $T$,
$\beta$-expands to $\Big(x\mapsto\pr_i\big((x\mapsto T)(x)\big)\Big)_{i\in[n]}(\vec\emptyset)$
which $\eta$-reduces to $(x\mapsto T)(\vec\emptyset)$,
which $\beta$-reduces to $T$.
\end{example}

\begin{remark} Let us note some immediate consequences of the $\beta$- and $\eta$-rules:
\begin{itemize}
\item $\pr_n\circ(F_i)_{i\in[m]}$ $\beta\eta$-reduces to $F_n$, as long as $n\in[m]$;
\item $(\pr_i\circ F)_{i\in[m]}$ $\eta$-reduces to $F$;
\item $(F_i)^{i\in\omega}\circ\inc_n$ $\beta\eta$-reduces to $F_n$, as long as $n\in\omega$;
\item $(F\circ\inc_i)^{i\in\omega}$ $\eta$-reduces to $F$.
\end{itemize}
\end{remark}

Here are some further consequences:

\begin{lemma} \label{composition-betaeta}
(a) $(F_i)_{i\in[n]}\circ G$ is $\beta\eta$-equivalent to $(F_i\circ G)_{i\in[n]}$;

(b) $G\circ(F_i)^{i\in\omega}$ is $\beta\eta$-equivalent to $(G\circ F_i)^{i\in\omega}$.
\end{lemma}

\begin{proof} By the previous remark, $(F_i\circ G)_{i\in[n]}$ $\beta\eta$-expands to 
$\big(\pr_i\circ(F_j)_{j\in[n]}\circ G\big)_{i\in[n]}$, which in turn $\eta$-reduces to
$(F_j)_{j\in[n]}\circ G$.

Similarly, $(G\circ F_i)^{i\in\omega}$ $\beta\eta$-expands to 
$\big(G\circ (F_j)^{j\in\omega}\circ\inc_i\big)^{i\in\omega}$, 
which in turn $\eta$-reduces to $G\circ (F_j)^{j\in\omega}$.
\end{proof}

\begin{remark} \label{sum-product}
Let $\Gamma=\sum_{i\in\omega}\Gamma_i$ and $\Delta=\sum_{i\in\omega}\Delta_i$.
Suppose that $F_{ij}:\Gamma_i\x\Delta_j\too\Theta$ for each $i,j\in I$.
We will define $(F_{ij})^{(i,j)\in\omega\x\omega}:\Gamma\x\Delta\too\Theta$.

We will use $\frak F$ and $\frak G$ defined in Example \ref{exp-lambda}.
We have $\frak F(F_{ij}):\Gamma_i\too(\Delta_j\too\Theta)$.
Then $\big(\frak F(F_{ij})\big)^{i\in\omega}:\Gamma\too(\Delta_j\too\Theta)$.
Let $\frak H=\frak F\circ\frak T\circ\frak G$, where 
$\frak T:(\Gamma\x\Delta\too\Theta)\too(\Delta\x\Gamma\too\Theta)$ is defined by
$f\mapsto\big(q\mapsto f(\pr_2q,\pr_1q)\big)$.
Then $\frak H\big(\frak F(F_{ij})\big)^{i\in\omega}:\Delta_j\too(\Gamma\too\Theta)$.
Hence $\Big(\frak H\big(\frak F(F_{ij})\big)^{i\in\omega}\Big)^{j\in\omega}:\Delta\too(\Gamma\too\Theta)$.
Finally, let 
\[(F_{ij})^{(i,j)\in\omega\x\omega}=\frak G\Big(\frak H\big(\frak F(F_{ij})\big)^{i\in\omega}\Big)^{j\in\omega}:
\Gamma\x\Delta\too\Theta.\]
More generally, given $F_{k_1,\dots,k_n}\:\Gamma_{1,k_1}\x\dots\x\Gamma_{n,k_n}\too\Theta$ for each
$k_1,\dots,k_n\in I$, one can similarly define 
\[(F_{k_1,\dots,k_n})^{(k_1,\dots,k_n)\in\omega^n}:
\sum_{i\in\omega}\Gamma_{1,i}\x\dots\x\sum_{i\in\omega}\Gamma_{n,i}\too\Theta.\]
\end{remark}

\subsubsection{Extended meta-logic} \label{extended meta-logic}

We use the notation introduced in \S\ref{Frege-definition}.
Given a $k\in\omega$ (i.e., a nonnegative integer), we denote by $\1^{[k]}$ 
the product $\prod_{i=0}^{k-1}(\0^i\too\1)^k$ of $k\x k$ factors.
We will also write $\var_{[k]}^\tau=(\var_1^\tau,\dots,\var_k^\tau):\tau^k$ and 
$\var_{[[k]]}^\1=(\var_{[k]}^{\0^0\too\1},\dots,\var_{[k]}^{\0^{k-1}\too\1})):\1^{[k]}$
(see Remark \ref{indices as variables}).

It is easy to see that every term $T$ of the language can be obtained as a $\beta$-reduction of 
$T_k(\var_{[k]}^\0)$ for some $\lambda$-closed expression $T_k:\0^k\too\0$, where $k$ is 
the maximal number such that $\var_k^\0$ occurs in $T$.
Similarly, every $n$-formula $F$ of the language can be obtained as a $\beta$-reduction of 
$F_k(\var_{[k]}^\0,\var_{[[k]]}^\1)$ for some $\lambda$-closed $F_k:\0^k\x\1^{[k]}\too(\0^n\too\1)$,
where $k$ is the maximal number such that either $\var_k^\0$ or $\var_k^{\0^{l-1}\too\1}$ or 
$\var_l^{\0^{k-1}\too\1}$ occurs in $F$ for some $l$.

Let $\0^*=\sum_{k\in\omega}(\0^k\too\0)$ and 
$\1_n^*=\sum_{k\in\omega}\big(\0^k\x\1^{[k]}\too(\0^n\too\1)\big)$.
Let us define {\it term evaluation} $\frak E:\0^*\too\0$ by $\frak E=(\frak E_k)^{k\in\omega}$, where 
$\frak E_k:(\0^k\too\0)\too\0$ is defined by $t_k\mapsto t_k(\var_{[k]}^\0)$.
(See Remark \ref{indices as variables}.)
Then every term $T$ of the language can also be obtained as a $\beta$-reduction of $\frak E(T^*)$ for some 
$\lambda$-closed $T^*:\0^*$.
Indeed, let $T^*=\inc_k(T_k)$, where $T_k$ is as above; then $\frak E\big(\inc_k(T_k)\big)$ $\beta$-reduces 
to $\frak E_k(T_k)$, which in turn $\beta$-reduces to $T_k(\var_{[k]}^\0)$.

Similarly, we have {\it $n$-formula evaluation} $\frak E^n:\1^*_n\too(\0^n\too\1)$,
$\frak E^n=(\frak E^n_k)^{k\in\omega}$, where 
$\frak E^n_k:\big(\0^k\x\1^{[k]}\too(\0^n\too\1)\big)\too(\0^n\too\1)$ is defined by
$\phi_k\mapsto\phi_k(\var_{[k]}^\0,\var_{[[k]]}^\1)$.
Every $n$-formula $F$ of the language can also be obtained as a $\beta$-reduction of $\frak E^n(F^*)$ 
for some $\lambda$-closed $F^*:\1^*_n$.
Namely, $F^*=\inc_k(F_k)$, where $F_k$ is as above.
Then $\frak E^n\big(\inc_k(F_k)\big)$ $\beta$-reduces to $\frak E^n_k(F_k)$, which in turn 
$\beta$-reduces to $F_k(\var_{[k]}^\0,\var_{[[k]}^\1)$.

The variables of type $\0^*$ will be called {\it term variables}, and the variables of type $\1^*_n$ will be 
called {\it $n$-formula variables} (or {\it formula variables} when $n=0$).
Corresponding to these two sorts of variables, there are two new types of meta-quantifiers: 
\begin{itemize}
\item {\it the term meta-quantifier} $\q^*:(\0^*\too\m)\too\m$;
\item {\it the $k$-formula meta-quantifier} $\q^*_n:(\1^*_n\too\m)\too\m$.
\end{itemize}
In line with previous notation, we write $\mq{t}\F$ to mean $\q^*(t\mapsto\F)$ where $t$ is
a term variable of an $n$-formula variable and $\F$ is a $\lambda$-expression of type $\m$.
We also write $\mq{\vec t}\F$ to mean $\mq{t_1}\dots\mq{t_n}\F$, where $\vec t=(t_1,\dots,t_n)$ is 
a tuple of term variables and $\mq{\vec\phi}\F$ to mean $\mq{\phi_1}\dots\mq{\phi_m}\F$, where 
$\vec\phi=(\phi_1,\dots,\phi_m)$ is a tuple of $r_i$-formula variables.

We define a {\it term scheme} (resp.\ a {\it term pre-scheme}) as an expression of type $\0$ 
(resp.\ $\0^*$) defined using function application, projections, inclusions, tuples and definition by cases;
function symbols of the language; individual and term variables and abstraction over them.
We define a {\it $k$-formula scheme} (resp.\ a {\it $k$-formula pre-scheme}) as an expression of type 
$\0^k\too\1$ (resp.\ $\1^*_k$) defined using all of the above, plus: predicate constants, 
connectives and quantifiers of the language; predicate and $n$-formula variables and abstraction over them.
Finally, we define a {\it meta-formula scheme} as an expression of type $\m$ defined using all of 
the above, plus $\Rt$, the two meta-connectives and the four sorts of meta-quantifiers.
In particular, terms schemes may involve $\frak E$, and $k$-formula schemes and meta-formula schemes
may involve $\frak E$ and each $\frak E^n$.

The inference meta-rules of \S\ref{meta-rules} are now applied to term schemes, $k$-formula schemes and
meta-formula schemes in place of terms, $k$-formulas and meta-formulas, and are augmented by the following
new meta-rules, where $\F$, $\F_k$, $\G_k$ are meta-formula schemes, $t$ is a term variable, $T$ is 
a term pre-scheme, $\phi$ is an $n$-formula variable and $\Phi$ is an $n$-formula pre-scheme.

\begin{enumerate}
\item (meta-generalization)

$\Dfrac{\begin{matrix}\vdots\\ \F\end{matrix}}{\mq{t}\F}$,
provided that $t$ does not occur freely in any of the assumptions;
\bigskip

$\Dfrac{\begin{matrix}\vdots\\ \F\end{matrix}}{\mq{\phi}\F}$,
provided that $\phi$ does not occur freely in any of the assumptions.
\bigskip

\item (meta-specialization) 

$\Dfrac{\begin{matrix}\vdots\\ \mq{t}\F\end{matrix}}{\F[t/T]}$, 
provided that $T$ is free for $t$ in $\F$;
\bigskip

$\Dfrac{\begin{matrix}\vdots\\ \mq{\phi}\F\end{matrix}}{\F[\phi/\Phi]}$,
provided that $\Phi$ is free for $\phi$ in $\F$.
\bigskip

\item (generalized $\imp$-elimination)

$\Dfrac{\begin{matrix}\vdots\\ (t_k\mapsto\F_k)^{k\in\omega}(t)\end{matrix}\qquad
\begin{matrix}\vdots\\ \F_k\imp\G_k\text{ for each $k\in\omega$}\end{matrix}}
{(t_k\mapsto\G_k)^{k\in\omega}(t)}$
\bigskip

$\Dfrac{\begin{matrix}\vdots\\ (\phi_k\mapsto\F_k)^{k\in\omega}(\phi)\end{matrix}\qquad
\begin{matrix}\vdots\\ \F_k\imp\G_k\text{ for each $k\in\omega$}\end{matrix}}
{(\phi_k\mapsto\G_k)^{k\in\omega}(\phi)}$
\end{enumerate}
\bigskip

Let us note that each of the latter two meta-rules has infinitely many premisses.

\begin{remark}
Here is a brief sketch of an equivalent formalism.
Instead of meta-quantifiers over term variables and $k$-formula variables, one could use their pre-limit
versions (i.e.\ meta-quantifiers over variables of types $\0^k\too\0$ and
$\0^l\x\1^{[k]}\too(\0^k\too\1)$) along with a meta-quantifier over variables running over $\omega$.
All three sorts of meta-quantifiers come with natural specialization and generalization rules.
Moreover, the first two types of meta-quantifiers subsume the original meta-quantifiers over individual
and predicate variables (and similarly for the corresponding rules).
Thus only three types of meta-quantifiers are needed, in contrast to the four types in the approach used
above; and additionally the generalized $\imp$-elimination meta-rule is not needed.

So in some sense the alternative formalism is simpler; also it is closer to dependent type theory.
On the other hand, the above formalism has precisely those four meta-quantifiers that we actually care 
about as primitive notions, and also avoids some excessive layers of indices by focusing on limit,
rather than pre-limit types.   
\end{remark}

\subsubsection{Enderton-style formulation} \label{enderton2}

The extended meta-logic also admits a straightforward Enderton-style formulation. 

Suppose that $\F$, $\F_k$, $\G$, $\G_k$ and $\H$ are meta-formula schemes, $x$ is an individual variable, 
$T$ is a term scheme, $\gamma$ is an $n$-ary predicate variable, $\Phi$ is an $n$-formula scheme, 
$t$ is a term variable, $W$ is a term pre-scheme, $\phi$ is an $n$-formula variable and $\Xi$ is 
an $n$-formula pre-scheme.
Further suppose that $\vec z$ is a tuple of individual variables, $\vec\beta$ is tuple of predicate variables,
$\vec s$ is a tuple of term variables and $\vec\psi$ is a tuple of $i$-formula variables.

\begin{enumerate}
\item $\mq{\vec z}\mq{\vec\beta}\mq{\vec s}\mq{\vec\psi}\big(\F\imp\G\big)$, 
if $\F$ is $\alpha$-equivalent to $\G$;
\smallskip

\item $\mq{\vec z}\mq{\vec\beta}\mq{\vec s}\mq{\vec\psi}\big(\F\imp\F\big)$;
\medskip

\item
$\mq{\vec z}\mq{\vec\beta}\mq{\vec s}\mq{\vec\psi}\big((\F\imp\G)\mand(\G\imp\H)\Imp(\F\imp\H)\big)$;
\medskip

\item
$\mq{\vec z}\mq{\vec\beta}\mq{\vec s}\mq{\vec\psi}
\Big(\big((\F\mand\G)\imp\H\big)\Iff\big(\F\imp(\G\imp\H)\big)\Big)$;
\medskip

\item
$\mq{\vec z}\mq{\vec\beta}\mq{\vec s}\mq{\vec\psi}\big(\F\mand\G\imp\F\big)$ \ and \ 
$\mq{\vec z}\mq{\vec\beta}\mq{\vec s}\mq{\vec\psi}\big(\F\mand\G\imp\G\big)$;
\smallskip

\item
$\mq{\vec z}\mq{\vec\beta}\mq{\vec s}\mq{\vec\psi}\big((\H\imp\F)\mand(\H\imp\G)\Imp(\H\imp\F\mand\G)\big)$;
\medskip

\item $\mq{\vec z}\mq{\vec\beta}\mq{\vec s}\mq{\vec\psi}\big(\mq{x}\F\imp\F[x/T]\big)$,
provided that $T$ is free for $x$ in $\F$;
\medskip

\item $\mq{\vec z}\mq{\vec\beta}\mq{\vec s}\mq{\vec\psi}\big(\mq{\gamma}\F\imp\F[\gamma/\Phi]\big)$,
provided that $\Phi$ is free for $\gamma$ in $\F$;
\medskip

\item $\mq{\vec z}\mq{\vec\beta}\mq{\vec s}\mq{\vec\psi}\big(\mq{t}\F\imp\F[t/W]\big)$,
provided that $W$ is free for $t$ in $\F$;
\medskip

\item $\mq{\vec z}\mq{\vec\beta}\mq{\vec s}\mq{\vec\psi}\big(\mq{\phi}\F\imp\F[\phi/\Xi]\big)$,
provided that $\Xi$ is free for $\phi$ in $\F$;
\medskip

\item $\mq{\vec z}\mq{\vec\beta}\mq{\vec s}\mq{\vec\psi}
\big(\mq{x}(\F\imp\G)\Imp(\mq{x}\F\imp\mq{x}\G)\big)$;
\medskip

\item $\mq{\vec z}\mq{\vec\beta}\mq{\vec s}\mq{\vec\psi}
\big(\mq{\gamma}(\F\imp\G)\Imp(\mq{\gamma}\F\imp\mq{\gamma}\G)\big)$;
\medskip

\item $\mq{\vec z}\mq{\vec\beta}\mq{\vec s}\mq{\vec\psi}
\big(\mq{t}(\F\imp\G)\Imp(\mq{t}\F\imp\mq{t}\G)\big)$;
\medskip

\item 
$\mq{\vec z}\mq{\vec\beta}\mq{\vec s}\mq{\vec\psi}
\big(\mq{\phi}(\F\imp\G)\Imp(\mq{\phi}\F\imp\mq{\phi}\G)\big)$;
\medskip

\item $\mq{\vec z}\mq{\vec\beta}\mq{\vec s}\mq{\vec\psi}\big(\F\imp\mq{x}\F\big)$,
provided that $x$ does not occur freely in $\F$;
\medskip

\item $\mq{\vec z}\mq{\vec\beta}\mq{\vec s}\mq{\vec\psi}\big(\F\imp\mq{\gamma}\F\big)$,
provided that $\gamma$ does not occur freely in $\F$;
\medskip

\item $\mq{\vec z}\mq{\vec\beta}\mq{\vec s}\mq{\vec\psi}\big(\F\imp\mq{t}\F\big)$,
provided that $t$ does not occur freely in $\F$;
\medskip

\item $\mq{\vec z}\mq{\vec\beta}\mq{\vec s}\mq{\vec\psi}\big(\F\imp\mq{\phi}\F\big)$,
provided that $\phi$ does not occur freely in $\F$;
\medskip

\item $\Dfrac{\F,\ \F\imp\G}{\G}$;
\bigskip

\item $\Dfrac{(t_k\mapsto\F_k)^{k\in\omega}(t),\quad
\F_k\imp\G_k\text{ for each $k\in\omega$}}
{(t_k\mapsto\G_k)^{k\in\omega}(t)}$;
\bigskip

\item $\Dfrac{(\phi_k\mapsto\F_k)^{k\in\omega}(\phi),\quad
\F_k\imp\G_k\text{ for each $k\in\omega$}}
{(\phi_k\mapsto\G_k)^{k\in\omega}(\phi)}$.
\end{enumerate}

\subsubsection{Principles and rules}

\begin{proposition} \label{mq-commute}
Let $\F(x)$ be a meta-formula scheme, where $x$ is an individual variable and no individual variable 
occurs freely in $\F$, and let $t$ be a term variable that does not occur freely in $\F$.
Then the meta-formula scheme 
\[\mq{x}\F(x)\Iff\mq{t}\Big(t_k\mapsto\mq{\var_{[k]}^\0}\F\big(\frak E_k(t_k)\big)\Big)^{k\in\omega}(t)\]
is deducible.
\end{proposition}

Let us note that the $\beta$-reduction of $\F\big(\frak E_k(t_k)\big)$ may require renaming of bound variables;
in fact, it {\it must} require renaming of bound variables if bound variables ever occur in $\F$, since 
{\it each} individual variable occurs freely in some $\frak E_k$.

\begin{proof} By the $\imp$-introduction meta-rule, it suffices to deduce the left hand side from 
the right hand side, and conversely.

Suppose that $x=\var_m^\0$.
By the meta-specialization for term variables, from the right hand side we obtain
\[
\Big(t_k\mapsto\mq{\var_{[k]}^\0}\F\big(\frak E_k(t_k)\big)\Big)^{k\in\omega}\big(\inc_m(\pr_m)\big),
\tag{$*$}
\]
where 
$\inc_m\:(\0^m\too\0)\too\0^*$ is the inclusion of the $m$th summand of $\0^*=\sum_{k\in\omega}(\0^k\too\0)$ 
and $\pr_m\:\0^m\too\0$ is the projection onto the $m$th factor.
The expression ($*$) $\beta$-reduces to 
$\Big(t_m\mapsto\mq{\var_{[m]}^\0}\F\big(\frak E_m(t_m)\big)\Big)(\pr_m)$, which in turn
$\beta$-reduces to $\mq{\var_{[m]}^\0}\F\big(\frak E_m(\pr_m)\big)$.
Here $\frak E_m(\pr_m)$ $\beta$-reduces to $\pr_m(\var_{[m]}^\0)$, and hence to $\var_m^\0=x$.
Thus we have deduced $\mq{x}\F(x)$.

Conversely, by the meta-specialization for individual variables, from $\mq{x}\F(x)$ we obtain
$\F\big(\frak E(t)\big)$, which $\beta$-expands to $\big(\F\circ\frak E\big)(t)$.
Here $\F\circ\frak E$ is $\beta\eta$-equivalent to $\big(\F\circ\frak E_k\big)^{k\in\omega}$ 
by Lemma \ref{composition-betaeta}.
Now $\F\circ\frak E_k$ is the same as $t_k\mapsto\F\big(\frak E_k(t_k)\big)$,
thus we have deduced $\Big(t_k\mapsto\F\big(\frak E_k(t_k)\big)\Big)^{k\in\omega}(t)$.
On the other hand, by the meta-generalization for individual variables and $\imp$-introduction
$\F\big(\frak E_k(t_k)\big)\imp\mq{\var_{[k]}^\0}\F\big(\frak E_k(t_k)\big)$ is deducible 
for each $k\in\omega$.
Hence by the generalized $\imp$-elimination we get
$\Big(t_k\mapsto\mq{\var_{[k]}^\0}\F\big(\frak E_k(t_k)\big)\Big)^{k\in\omega}(t)$.
Finally, by the meta-generalization for term variables, from the latter meta-formula scheme we get
$\mq{t}\Big(t_k\mapsto\mq{\var_{[k]}^\0}\F\big(\frak E_k(t_k)\big)\Big)^{k\in\omega}(t)$.
\end{proof}

Similarly one can handle second-order quantification:

\begin{proposition} \label{mq-commute2}
Let $\F(\gamma)$ be a meta-formula scheme, where $\gamma$ is an $r$-ary predicate 
variable and no individual or predicate variables occur freely in $\F$,
and let $\vec\phi$ be an $r$-formula variable that does not occur freely in $\F$.
Then the meta-formula scheme 
\[\mq{\gamma}\F(\gamma)\Iff
\mq{\phi}\Big(\phi_k\mapsto\mq{\var_{[k]}^\0}\mq{\var_{[[k]]}^\1}
\F\big(\frak E^r_k(\phi_k)\big)\Big)^{k\in\omega}(\phi)\]
is deducible.
\end{proposition}

Let us also formulate the multivariable case, using the notation of Remark \ref{sum-product}:

\begin{proposition} \label{mq-commute3}
Let $\F(\vec x,\vec\gamma)$ be a meta-formula scheme, where $\vec x$ is an $n$-tuple of 
individual variables, $\vec\gamma$ is an $m$-tuple of predicate 
variables of arities $(r_1,\dots,r_m)=\vec r$ and no individual or predicate variables occur freely in $\F$,
and let $\vec t$ be an $n$-tuple of term variables that do not occur freely in $\F$ and $\vec\phi$ be 
an $m$-tuple of $r_i$-formula variables that do not occur freely in $\F$.
Then the meta-formula scheme \[\mq{\vec x}\mq{\vec\gamma}\F(\vec x,\vec\gamma)\Iff
\mq{\vec t}\mq{\vec\phi}\Big(t_{\vec k},\phi_{\vec l}\mapsto
\mq{\var_{[\vec k,\vec l]}^\0}\mq{\var_{[[\vec l]]}^\1}
\F\big(\frak E_{\vec k}(t_{\vec k}),\frak E^{\vec r}_{\vec l}
(\phi_{\vec l})\big)\Big)^{(\vec k,\vec l)\in\omega^n\x\omega^m}(\vec t,\vec\phi)\]
is deducible.
\end{proposition}

Here for $\vec k=(k_1,\dots,k_n)$ we write $[\vec k]=[\max(k_1,\dots,k_n)]$, $[[\vec k]]=[[\max(k_1,\dots,k_n)]]$,
$t_{\vec k}=(t_{k_1},\dots,t_{k_n})$ and $\phi_{\vec k}=(\phi_{k_1},\dots,\phi_{k_n})$.
The evaluations $\frak E_{\vec k}:\prod_{i=1}^n(\0^{k_i}\too\0)\too\0^n$ and 
$\frak E^{\vec r}_{\vec l}:\prod_{i=1}^m\big(\0^{l_i}\x\1^{[l_i]}\too(\0^{r_i}\too\1)\big)\too
\prod_{i=1}^m(\0^{r_i}\too\1)$, where $\vec l=(l_1,\dots,l_m)$,
are defined respectively by $t_{\vec k}\mapsto\big(\frak E_{k_1}(t_{k_1}),\dots,\frak E_{k_n}(t_{k_n})\big)$ and
$\phi_{\vec l}\mapsto\big(\frak E^{r_1}_{l_1}(\phi_{l_1}),\dots,\frak E^{r_m}_{l_m}(\phi_{l_m})\big)$.

\subsubsection{Semantics of the extended meta-logic} \label{extended semantics}

A {\it co-assignment} $\preass$ assigns a semantic term $\preass(x)\in|\T|$ to each term variable $x$.
A {\it co-valuation} $\preval$ assigns a semantic $k$-formula $\preval(\gamma)\in|\Fm_k|$
to each $k$-formula variable $\gamma$, for each $k=0,1,\dots$.

Given a co-assignment $\preass$ and an assignment $\iass$ along with an interpretation $\I$ of function 
symbols, we clearly get an interpretation of each term scheme $T$ by an element $|T|^\I_{\iass\preass}\in\O$,
which does not depend on $\iass$ (resp.\ on $\preass$) if $T$ contains no free occurrences of individual
(resp.\ term) variables.

Given additionally a co-valuation $\preval$ and a valuation $\pval$ along with an interpretation $\I$ of 
predicate constants, connectives and quantifiers, we clearly get an interpretation of each formula scheme $F$ 
by an element $|F|_\I^{\pval\preval\iass\preass}\in\O$, which does not depend on $\iass$ (resp.\ on $\preass$)
if $F$ contains no free occurrences of individual (resp.\ term) variables and does not depend on $\pval$ 
(resp.\ on $\preval$) if $F$ contains no free occurrences of predicate (resp.\ $k$-formula) variables.

Given additionally also an interpretation $\J$ of meta-connectives and meta-quantifiers, we clearly also get
an interpretation of each meta-formula scheme $\F$ by an element $|\F|_\J^{\pval\preval\iass\preass}\in\Qm$,
with the same dependence properties.
In particular, a $\lambda$-closed meta-formula scheme $\F$ is interpreted by an element $|\F|_\J\in\Qm$.

We can now generalize the definitions of \S\ref{models} to the case of the extended meta-logic.
An absolute meta-rule is said to be {\it satisfied} in a meta-structure $\J$ if for every its instance 
\[\Dfrac{\F_1,\dots,\F_n}{\G}\]
and every individual assignment $\iass$, co-assignment $\preass$, predicate valuation $\pval$, 
and co-valuation $\preval$, if
$\wnf(|\F_i|^{\iass\preass\pval\preval}_\J)=\Top$ for each $i$, then 
$\wnf(|\G|^{\iass\preass\pval\preval}_\J)=\Top$.
In particular, a meta-law is satisfied in $\J$ if and only $\wnf(|\G|^{\iass\preass\pval\preval}_\J)=\Top$
for every its instance $\G$ and arbitrary $\iass$, $\preass$, $\pval$ and $\preval$.

A meta-interpretation $\J$ of the extended meta-logic is called a {\it model of the extended meta-logic}
if it satisfies all meta-rules of the Enderton-style formulation of the extended meta-logic (see 
\S\ref{enderton2}).

Now we define the {\it Kolmogorov translation} $\script F$ of $\lambda$-closed meta-formulas into 
$\lambda$-closed meta-formula schemes:
\begin{itemize}
\item $\script F$ sends each individual variable $\var_i^\0$ to the corresponding term variable $\var_i^{\0^*}$,
and each $k$-ary predicate variable $\var_i^{\0^k\to\1}$ to the corresponding $k$-formula variable 
$\var_i^{\1^*_k}$.
\item $\script F$ sends both meta-connectives to themselves, the first-order meta-quantifier to the
term meta-quantifier, and the $k$-ary second-order meta-quantifier to the $k$-formula meta-quantifier. 
\item If $\G$ is a $\lambda$-closed atomic $(n,\vec r)$-meta-formula $\vec x,\vec\gamma\mapsto\Rt F$, 
where $F$ is a formula and $\vec r=(r_1,\dots,r_m)$, then its Kolmogorov-image $\script F(\G)$ is 
the following $\lambda$-closed expression of type $(\0^*)^n\x\1^*_{r_1}\x\dots\x\1^*_{r_m}\too\m$:
\[\Big(t_{\vec k},\phi_{\vec l}\mapsto
\mq{\var_{[\vec k,\vec l]}^\0}\mq{\var_{[[\vec l]]}^\1}
\Rt F\big(\frak E_{\vec k}(t_{\vec k}),\frak E^{\vec r}_{\vec l}
(\phi_{\vec l})\big)\Big)^{(\vec k,\vec l)\in\omega^n\x\omega^m}.\]
\end{itemize}

If $\F$ is a $\lambda$-closed meta-formula, it is easy to see that $|\script F(\F)|_\J$ is
precisely the alternative meta-interpretation $\KK\F\KK_\J$ as defined in \S\ref{Frege-definition}.

Proposition \ref{mq-commute3} now implies that Examples \ref{Frege principle} and \ref{Frege rule} 
extend to the case of many-valued interpretations of the meta-logic:
 
\begin{theorem} \label{principles persist}
If $F$, $G$ be formulas and $\J$ a model of the extended meta-logic.
Then 

(a) $\KK{\prin F}\KK_\J=|{\prin F}|_\J$.

(b) $\wnf(|F\,/\,G|_\J)=\Top$ implies $\wnf(\KK F\,/\,G\KK_\J)=\Top$.
\end{theorem}

An {\it alternative model of the meta-logic} is defined by applying a model of the extended meta-logic
to the Kolmogorov translation from meta-logic to extended meta-logic.
Given a logic $L$ based on a language $\L$ and given by a derivation system $\Ds$, a meta-$\L$-structure $M$ 
is called a {\it generalized alternative model} of $L$ if it is an alternative model of the meta-logic and
the meta-formula $\Ds$ is valid in $M$.

By Theorem \ref{principles persist}, a principle is valid in a generalized alternative model if and only if
it is valid in the corresponding generalized model; this clearly remains so if valuation fields or
pre-valuation fields are used.
But by Example \ref{Frege pure}, this is not the case for rules, at least if pre-valuation fields are used.

\subsubsection{Meta-clarified BHK interpretation}\label{extended+}

This will be based on the alternative semantics of \S\ref{Frege-definition} and \S\ref{extended semantics} 
with many-valued meta-interpretation of the meta-logic.

A domain $\D$ is fixed, and $\O$ is taken to be a class of problems, consisting of a prescribed class 
of contentful (e.g.\ mathematical) primitive problems and of composite problems, which 
are obtained inductively from the primitive ones by using contentual intuitionistic connectives and 
quantifiers over arbitrary $\D$-indexed families of parametric problems.
Formal intuitionistic connectives and quantifiers are interpreted straightforwardly by the contentual ones.

$\Qm$ is taken to be a larger class of problems, consisting of those in $\O$, called ``meta-primitive'' 
(thus the function $\ocf\:\O\to\Qm$ is the inclusion) and of ``meta-composite'' problems obtained inductively 
from the meta-primitive ones by using contentual intuitionistic connectives and quantifiers over arbitrary 
$\D$-indexed, $\Hom(\D^i,\O)$-indexed ($i=0,1,\dots$), $|\T|$-indexed and $|\Fm_i|$-indexed ($i=0,1,2,\dots$) 
families of problems.
(We recall that $|\T|$ and $|\Fm_i|$ are defined in terms of $\D$ and $\O$, see \S\ref{Frege-definition}.)
Meta-connectives and meta-quantifiers are Tarski-interpreted by contentual intuitionistic connectives and 
$\D$-indexed and $\Hom(\D^i,\O)$-indexed quantifiers in the straightforward way; and 
meta-quantifiers are also alternatively-interpreted by contentual intuitionistic $|\T|$-indexed and 
$|\Fm_i|$-indexed quantifiers.

The function $\wnf\:\Qm\to\{\Top,\Bot\}$ sends a problem $\Theta$ to $\Top$ if and only if there exists 
a solution of $\Theta$.
What is meant by solving a problem $\Theta$ is defined explicitly for primitive $\Theta\in\O$ and is 
extended inductively to composite $\Theta\in\O$ and then further to all meta-composite $\Theta\in\Qm$ 
by the usual BHK-interpretation (see \S\ref{BHK}).
This determines a contentual interpretation of intuitionistic logic and its meta-logic, which we will call 
the {\it meta-clarified BHK} interpretation.

\begin{example} Let us record the meta-clarified BHK interpretations of the judgements of admissibility, 
stable admissibility and derivability of a purely logical rule $\Gamma\,/\,\Delta$.
The tuples $\vec\phi$ and $\vec\Phi$ have the same meaning as in Example (\ref{platonist example}).
Like before, $\wn\Theta$ denotes the proposition ``The problem $\Theta$ has a solution''.

\begin{itemize}
\item Derivability: $\forall\D,\O\ \wn\forall\preval\,
(\forall\pval,\iass\,|\Gamma|_\iass^{\preval[\iass,\pval]}
\To\forall\pval,\iass\,|\Delta|_\iass^{\preval[\iass,\pval]})$;

\item Stable admissibility: $\forall\vec\Phi\,\forall\D,\O\ 
\wn(\forall\pval,\iass\,|\Gamma[\vec\phi/\vec\Phi]|^\pval_\iass
\To\forall\pval,\iass\,|\Delta[\vec\phi/\vec\Phi]|^\pval_\iass)$;

\item Admissibility:
$\forall\vec\Phi\,(\forall\D,\O\ \wn\forall\pval,\iass\,|\Gamma[\vec\phi/\vec\Phi]|^\pval_\iass
\To\forall\D,\O\ \wn\forall\pval,\iass\,|\Delta[\vec\phi/\vec\Phi]|^\pval_\iass)$.
\end{itemize}
\end{example}

\begin{remark}\label{verificationist3}
Let us note that there is an obvious variation of the Verificationist interpretation with alternative 
interpretation of the meta-level, similar to the one in the meta-clarified BHK interpretation. 
\end{remark}

\end{document}